\let\oldaddcontentsline\addcontentsline
\newcommand{\stoptocentries}{\renewcommand{\addcontentsline}[3]{}}
\newcommand{\starttocentries}{\let\addcontentsline\oldaddcontentsline}
\newcommand{\R}{\mathbb{R}}
\renewcommand{\phi}{\varphi}
\newtheorem{lemma}{Lemma}
\newtheorem{theorem}{Theorem}
\newtheorem{proposition}{Proposition}
\newtheorem{remark}{Remark}
\newtheorem{corollary}{Corollary}
\newenvironment{talign*}
 {\csname align*\endcsname}
 {\endalign}
\newenvironment{talign}
 {\csname align\endcsname}
 {\endalign}
\title{Stochastic Optimal Control Matching}
\author[1,2]{Carles Domingo-Enrich}
\author[3]{Jiequn Han}
\author[2]{Brandon Amos}
\author[1,3]{\\Joan Bruna}
\author[2]{Ricky T. Q. Chen}
\affiliation[1]{Courant Institute of Mathematical Sciences, New York University}
\affiliation[2]{Meta AI}
\affiliation[3]{Flatiron Institute}
\abstract{
    Stochastic optimal control, which has the goal of driving the behavior of noisy systems, is broadly applicable in science, engineering and artificial intelligence. Our work introduces Stochastic Optimal Control Matching (SOCM), a novel Iterative Diffusion Optimization (IDO) technique for stochastic optimal control that stems from the same philosophy as the conditional score matching loss for diffusion models. That is, the control is learned via a least squares problem by trying to fit a matching vector field. The training loss, which is closely connected to the cross-entropy loss, is optimized with respect to both the control function and a family of reparameterization matrices which appear in the matching vector field. The optimization with respect to the reparameterization matrices aims at minimizing the variance of the matching vector field. Experimentally, our algorithm achieves lower error than all the existing IDO techniques for stochastic optimal control for three out of four control problems, in some cases by an order of magnitude. The key idea underlying SOCM is the path-wise reparameterization trick, a novel technique that may be of independent interest.
}
\begin{document}

\maketitle

\section{Introduction}
\label{sec:intro}

Stochastic optimal control aims to drive the behavior of a noisy system in order to minimize a given cost. It has myriad applications in science and engineering: examples include the simulation of rare events in molecular dynamics \citep{hartmann2014characterization,hartmann2012efficient,zhang2014applications,holdijk2023stochastic}, finance and economics \citep{pham2009continuous,fleming2004stochastic}, stochastic filtering and data assimilation \citep{mitter1996filtering,reich2019data}, nonconvex optimization \citep{chaudhari2018deep}, power systems and energy markets \citep{belloni2004power,powell2016tutorial}, and robotics \citep{theodorou2011aniterative,gorodetsky2018high}. Stochastic optimal has also been very impactful in neighboring fields such as mean-field games \citep{carmona2018probabilistic}, optimal transport \citep{villani2003topics,villani2008optimal}, backward stochastic differential equations (BSDEs) \citep{carmona2016lectures} and large deviations \citep{feng2006large}. Recently, it has been the basis of algorithms to sample from unnormalized densities \citep{zhang2022path,vargas2023denoising,berner2023optimal,richter2024improved}.

For continuous-time problems with low-dimensional state spaces, the standard approach to learn the optimal control is to solve the Hamilton-Jacobi-Bellman (HJB) partial differential equation (PDE) by gridding the space and using classical numerical methods. For high-dimensional problems, a large number of works
parameterize the control using a neural network and train it applying a stochastic optimization algorithm on a loss function. These methods are known as \textit{Iterative Diffusion Optimization} (IDO) techniques \citep{nüsken2023solving} (see \autoref{sec:existing}).

It is convenient to draw an analogy between stochastic optimal control and \textit{continuous normalizing flows} (CNFs), which are a generative modeling technique where samples are generated by solving an ordinary differential equation (ODE) for which the vector field has been learned, initialized at a Gaussian sample. CNFs were introduced by \cite{chen2018neural} (building on top of \citet{rezende2015variational}), and training them is similar to solving control problems because in both cases one needs to learn high-dimensional vector fields using neural networks, in continuous time. 

The first algorithm developed to train normalizing flows was based on maximizing the likelihood of the generated samples \citep[Sec.~4]{chen2018neural}. Obtaining the gradient of the maximum likelihood loss with respect to the vector field parameters requires backpropagating through the computation of the ODE trajectory, or equivalently, solving the \textit{adjoint} ODE in parallel to the original ODE. Maximum likelihood CNFs (ML-CNFs) were superseded by diffusion models \citep{song2019generative,ho2020denoising,song2021scorebased} and flow-matching, a.k.a. stochastic interpolant, methods \citep{lipman2022flow,albergo2022building,pooladian2023multisample,albergo2023stochastic}, which are currently the preferred algorithms to train CNFs. Aside from architectural improvements such as the UNet \citep{ronneberger2015u}, a potential reason for the success of diffusion and flow matching models is that their \textit{functional landscape} is convex, unlike for ML-CNFs. Namely, vector fields are learned by solving least squares regression problems where the goal is to fit a random matching vector field. Convex functional landscapes in combination with overparameterized models and moderate gradient variance can yield very stable training dynamics and help achieve low error.

Returning to stochastic optimal control, one of the best-performing IDO techniques amounts to choosing the control objective (equation \ref{eq:control_problem_def}) as the training loss (see \eqref{eq:L_RE}). As in ML-CNFs, computing the gradient of this loss requires backpropagating through the computation of the trajectories of the SDE \eqref{eq:controlled_SDE}, or equivalently, using an adjoint method. The functional landscape of the loss is highly non-convex, and the method is prone to unstable training (see green curve in the bottom right plot of \autoref{fig:control_l2_error_linear_OU_double_well}). In light of this, a natural idea is to develop the analog of diffusion model losses for the stochastic optimal control problem, to obtain more stable training and lower error, and this is what we set out to do in our work.
Our contributions are as follows:
\begin{itemize}
    \item We introduce Stochastic Optimal Control Matching (SOCM), a novel IDO algorithm in which the control is learned by solving a least-squares regression problem where the goal is to fit a random \textit{matching vector field} which depends on a family of \textit{reparameterization matrices} that are also optimized. 
    \item We derive a bias-variance decomposition of the SOCM loss (\autoref{prop:bias_variance}). The bias term is equal to an existing IDO loss: the \textit{cross-entropy loss}, which shows that both algorithms have the same landscape in expectation. 
    However, SOCM has an extra flexibility in the choice of reparameterization matrices, which affect only the variance.
    Hence, we propose optimizing the reparameterization matrices to reduce the variance of the SOCM objective.
    \item The key idea that underlies the SOCM algorithm is the \textit{path-wise reparameterization trick} (\autoref{prop:cond_exp_rewritten}), which is a novel technique for estimating gradients of an expectation of a functional of a random process with respect to its initial value. It is of independent interest and may be more generally applicable outside of the settings considered in this paper.
    \item We perform experiments on four different settings where we have access to the ground-truth control. For three of these, SOCM obtains a lower $L^2$ error with respect to the ground-truth control than all the existing IDO techniques, with around $10\times$ lower error than competing methods in some instances.
\end{itemize}

\section{Framework}
\label{sec:framework}
\subsection{Setup and Preliminaries}
Let $(\Omega, \mathcal{F}, (\mathcal{F}_t)_{t\geq 0}, \mathcal{P})$ be a fixed filtered probability space on which is defined a Brownian motion $B=(B_t)_{t\geq 0}$. We consider the control-affine problem
\begin{talign} \label{eq:control_problem_def}
    &\min_{u \in \mathcal{U}} \mathbb{E} \big[ \int_0^T 
    \big(\frac{1}{2} \|u(X^u_t,t)\|^2 + f(X^u_t,t) \big) \, \mathrm{d}t + 
    g(X^u_T) \big], \\
    \text{where } &\mathrm{d}X^u_t = (b(X^u_t,t) + \sigma(t) u(X^u_t,t)) \, \mathrm{d}t + \sqrt{\lambda} \sigma(t) \mathrm{d}B_t, \qquad X^u_0 \sim p_0. \label{eq:controlled_SDE}
\end{talign}
and where $X_t^u \in \R^d$ is the state, $u : \R^d \times [0,T] \to \R^d$ is the feedback control and belongs to the set of admissible controls $\mathcal{U}$, $f : \R^d \times [0,T] \to \R$ is the state cost, $g : \R^d \to \R$ is the terminal cost, $b : \R^d \times [0,T] \to \R^d$ is the base drift, and $\sigma : [0,T] \to \R^{d \times d}$ is the invertible diffusion coefficient and $\lambda \in (0, +\infty)$ is the noise level. 
In \autoref{sec:assumptions} we formally define the set $\mathcal{U}$ of admissible controls and describe the regularity assumptions needed on the control functions. In the remainder of the section we introduce relevant concepts in stochastic optimal control; we provide the most relevant proofs in \autoref{app:proofs_framework} and refer the reader to \citet[Chap.~11]{oksendal2013stochastic} and \citet[Sec.~2]{nüsken2023solving} for a similar, more extensive treatment. 

\paragraph{Cost functional and value function} The \textit{cost functional} for the control $u$, point $x$ and time $t$ is defined as
    $J(u;x,t) := \mathbb{E} \big[ \int_t^T 
    \big(\frac{1}{2} \| u_s(X^u_s) \|^2 + f_s(X^u_s) \big) \, \mathrm{d}t + 
    g(X^u_T) \big| X^u_t = x \big].$
That is, the cost functional is the expected value of the control objective restricted to the times $[t,T]$ with the initial value $x$ at time $t$. The \textit{value function} or \textit{optimal cost-to-go} at a point $x$ and time $t$ is defined as the minimum value of the cost functional across all possible controls:
\begin{talign} \label{eq:value_function}
    V(x,t) := \inf_{u \in \mathcal{U}} J(u;x,t).
\end{talign}

\paragraph{Hamilton-Jacobi-Bellman equation and optimal control} 
If we define the infinitesimal generator
    $L := \frac{\lambda}{2} \sum_{i,j=1}^{d}  (\sigma \sigma^{\top})_{ij} (t) \partial_{x_i} \partial_{x_j} + \sum_{i=1}^{d} b_i(x,t) \partial_{x_i}$,
the value function solves the following Hamilton-Jacobi-Bellman (HJB) partial differential equation:
\begin{talign}
\begin{split} \label{eq:HJB_setup}
    &(\partial_t + L) V(x,t) - \frac{1}{2} \| (\sigma^{\top} \nabla V) (x,t) \|^2 + f(x,t) = 0, \\
    &V(x,T) = g(x).
\end{split}
\end{talign}
The \textit{verification theorem} \citep[Sec.~2.3]{pavliotis2014stochastic} states that if a function $V$ solves the HJB equation above and has certain regularity conditions, then $V$ is the value function \eqref{eq:value_function} of the problem \eqref{eq:control_problem_def}-\eqref{eq:controlled_SDE}. 
An implication of the verification theorem is that for every $u \in \mathcal{U}$,
\begin{talign} \label{eq:V_J}
    V(x,t) + \mathbb{E} \big[ \frac{1}{2} \int_t^{T} \| \sigma^{\top} \nabla V + u \|^2 (X^u_s, s) \, \mathrm{d}s \, \big| \, X^u_t = x \big] = J(u,x,t).
\end{talign}
In particular, this implies that the unique optimal control is given in terms of the value function as $u^*(x,t) = - \sigma(t)^{\top} \nabla V(x,t)$. Equation \eqref{eq:V_J} can be deduced by integrating the HJB equation \eqref{eq:HJB_setup} over $[t,T]$, and taking the conditional expectation with respect to $X^u_t = x$. We include the proof of \eqref{eq:V_J} in \autoref{app:proofs_framework} for completeness.

\paragraph{A pair of forward and backward SDEs (FBSDEs)}
Consider the pair of SDEs
\begin{talign} \label{eq:pair_SDEs_1}
    \mathrm{d}X_t &= b(X_t,t) \, \mathrm{d}t + \sqrt{\lambda} \sigma(
    t) \mathrm{d}B_t, \qquad X_0 \sim p_0, \\
    \mathrm{d}Y_t &= (- f(X_t,t) + \frac{1}{2}\|Z_t\|^2) \, \mathrm{d}t + \sqrt{\lambda} \langle Z_t, \mathrm{d}B_t \rangle, \qquad Y_T = g(X_T).
    \label{eq:pair_SDEs_2}
\end{talign}
where $Y : \Omega \times [0,T] \to \R$ and $Z : \Omega \times [0,T] \to \R^d$ are progressively measurable 
\footnote{Being progressively measurable is a strictly stronger property than the notion of being a process adapted to the filtration $\mathcal{F}_t$ of $B_t$ (see \cite{karatzas1991brownian}).} random processes. It turns out that $Y_t$ and $Z_t$ defined as $Y_t := V(X_t,t)$ and $Z_t := \sigma(t)^{\top}  \nabla V(X_t,t) = - u^*(X_t,t)$ satisfy \eqref{eq:pair_SDEs_2}. We include the proof in \autoref{app:proofs_framework} for completeness. 

\paragraph{An analytic expression for the value function} 
From the forward-backward equations \eqref{eq:pair_SDEs_1}-\eqref{eq:pair_SDEs_2}, one can derive a closed-form expression for the value 
function $V$: 
\begin{talign} \label{eq:V}
    V(x,t) = - \lambda \log \mathbb{E} \big[ \exp \big( - \lambda^{-1} \int_t^T f(X_s,s) \, \mathrm{d}s - \lambda^{-1} g(X_T) \big) \big| X_t = x \big],
\end{talign}
where $X_t$ is the solution of the uncontrolled SDE \eqref{eq:pair_SDEs_1}.
This is a classical result, but we still include its proof in \autoref{app:proofs_framework}. 
Given that $u^*(x,t) = - \sigma(t)^{\top} \nabla V(x,t)$, an immediate, yet important, consequence of \eqref{eq:V} is the following representation of the optimal control:
\begin{lemma}[Path-integral representation of the optimal control {\citep{kappen2005path}}] \label{lemma:path_integral}
\begin{talign} \label{eq:u_optimal}
    u^*(x,t) \! = \! \lambda \sigma(t)^{\top} \nabla_x \log \mathbb{E} \big[ \exp \big( - \lambda^{-1} \int_t^T f(X_s,s) \, \mathrm{d}s - \lambda^{-1} g(X_T) \big) \big| X_t = x \big]~.
\end{talign}    
\end{lemma}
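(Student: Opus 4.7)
The plan is to chain two facts that the excerpt has already supplied: the identity $u^*(x,t) = -\sigma(t)^\top \nabla V(x,t)$ obtained from the verification theorem / equation \eqref{eq:V_J}, and the closed-form expression \eqref{eq:V} for $V$. Substituting \eqref{eq:V} into $u^*(x,t) = -\sigma(t)^\top \nabla V(x,t)$ gives
\begin{talign*}
    u^*(x,t) = -\sigma(t)^\top \nabla_x \Big( - \lambda \log \mathbb{E}\big[ \exp\big(-\lambda^{-1}\!\int_t^T f(X_s,s)\,\mathrm{d}s - \lambda^{-1} g(X_T)\big) \,\big|\, X_t = x \big] \Big),
\end{talign*}
and pulling the constant $-\lambda$ outside the gradient recovers exactly \eqref{eq:u_optimal}. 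So the result is essentially a one-line substitution modulo the right to commute $\nabla_x$ with the conditional expectation.

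The only genuine content, then, is justifying that one may differentiate under the expectation in $x \mapsto \mathbb{E}[\exp(\cdots) \mid X_t = x]$. I would handle this by invoking the regularity assumptions on $b,\sigma,f,g$ promised in \autoref{sec:assumptions} (ensuring enough smoothness and integrability so that the flow $x \mapsto X_s^{t,x}$, $s \geq t$, is differentiable in $x$ with integrable tangent process, and that the terminal functional has sub-exponential growth so dominated convergence applies). Concretely, I would reparameterize the conditional expectation as an unconditional expectation over the strong solution $X_s^{t,x}$ of \eqref{eq:pair_SDEs_1} started from $x$ at time $t$, then differentiate the integrand in $x$ via the chain rule and bound the result uniformly on compact $x$-sets to exchange $\nabla_x$ and $\mathbb{E}$.

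Given that the excerpt defers the precise admissibility assumptions to the appendix and states that the proof of \eqref{eq:V} itself is postponed to \autoref{app:proofs_framework}, I would keep the body of the proof short: cite \eqref{eq:V} and the relation $u^* = -\sigma^\top \nabla V$, perform the algebraic substitution, and note that the interchange of $\nabla_x$ and conditional expectation is legitimate under the admissibility assumptions of \autoref{sec:assumptions}. The main (and only) obstacle is the differentiation-under-the-expectation step, which is standard but technically the sole non-trivial piece; all the conceptual work has already been done in deriving \eqref{eq:V} and in identifying $u^*$ through the verification theorem.
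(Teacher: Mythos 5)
Your proposal is correct and follows exactly the paper's route: the paper presents \eqref{eq:u_optimal} as "an immediate, yet important, consequence of \eqref{eq:V}" combined with $u^*(x,t) = -\sigma(t)^\top\nabla V(x,t)$, which is precisely the one-line substitution you describe. Your additional remark that the differentiation under the expectation is the only technical point being glossed over is accurate and well placed.
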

Remark that the right-hand side of this equation involves the gradient of logarithm of a conditional expectation. 
This is reminiscent of the vector fields that are learned when training diffusion models or flow matching algorithms. For example, the target vector field for variance-exploding score-based diffusion loss \citep{song2021scorebased} can be expressed as $\nabla_x \log p_t(x) = \nabla_x \log \mathbb{E}_{Y \sim p_{\mathrm{data}}}[\frac{\exp(-\|x-Y\|^2/(2 \sigma_t^2))}{(2\pi \sigma_t^2)^{d/2}}]$. Note, however, that in \eqref{eq:u_optimal} the gradient is taken with respect to the initial condition of the process, which requires the development of novel techniques. 

\paragraph{Conditioned diffusions} Let $\mathcal{C} = C([0,T];\R^d)$ be the Wiener space of continuous functions from $[0,T]$ to $\R^d$ equipped with the supremum norm, and let $\mathcal{P}(\mathcal{C})$ be the space of Borel probability measures over $\mathcal{C}$. For each control $u \in \mathcal{U}$, the controlled process in equation \eqref{eq:controlled_SDE} induces a probability measure in $\mathcal{P}(\mathcal{C})$, as the law of the paths $X_t^u$, which we refer to as $\mathbb{P}^u$. We let $\mathbb{P}$ be the probability measure induced by the uncontrolled process \eqref{eq:pair_SDEs_1}, and define the \textit{work functional}
\begin{talign} \label{eq:W_def}
\mathcal{W}(X,t) := \int_t^T f(X_s,s) \, \mathrm{d}s + g(X_T). 
\end{talign}
It turns out (\autoref{lem:radon_nikodym} in \autoref{app:proofs_framework}) that the Radon-Nikodym derivative $\frac{d\mathbb{P}^{u^*}}{d\mathbb{P}}$ satisfies
    $\frac{d\mathbb{P}^{u^*}}{d\mathbb{P}}(X) = \exp \big( \lambda^{-1} \big( V(X_0,0) - \mathcal{W}(X,0) \big) \big)$.
Also, a straight-forward application of the Girsanov theorem for SDEs (\autoref{cor:girsanov_sdes}) shows that
\begin{talign} \label{eq:P_u_P_u_star}
    \frac{d\mathbb{P}^u}{d\mathbb{P}^{u^*}}(X^{u^*}) \! = \! \exp \big( \! - \! \lambda^{-1/2} \int_0^T \langle u^*(X^{u^*}_t,t) \! - \! u(X^{u^*}_t,t), \mathrm{d}B_t \rangle \! - \! \frac{\lambda^{-1}}{2} \int_0^T \|u^*(X^{u^*}_t,t) \! - \! u(X^{u^*}_t,t)\|^2 \, \mathrm{d}t \big),
\end{talign}
which means that the only control $u \in \mathcal{U}$ such that $\mathbb{P}^{u} = \mathbb{P}^{u^*}$ is the optimal control itself. Such changes of process are the basic tools to design IDO losses, and we leverage them as well.

\subsection{Existing approaches and related work} \label{sec:existing}

\paragraph{Low-dimensional case: solving the HJB equation} For low-dimensional control problems ($d \leq 3$), it is possible to grid the domain and use a numerical PDE solver to find a solution to the HJB equation \eqref{eq:HJB_setup}. The main approaches include \textit{finite difference methods} \citep{bonnans2004afast,ma2020finite,baas2022numerical}, which approximate the derivatives and gradients of the value function using finite differences, \textit{finite element methods} \citep{jensen2013onthe}, which involve restricting the solution to domain-dependent function spaces, and semi-Lagrangian schemes \citep{debrabant2013semilagrangian,carlini2020asemilagrangian,calzola2022asemilagrangian}, which trace back characteristics and have better stability than finite difference methods.
See \citet{greif2017numerical} for an overview on these techniques, and \citet{baas2022numerical} for a comparison between them. \citet{hutzenthaler2016multilevel} introduced the multilevel Picard method, which leverages the Feynman-Kac and the Bismut-Elworthy-Li formulas to beat the curse of dimensionality in some settings \citep{beck2019overcoming,hutzenthaler2019overcoming,hutzenthaler2018overcoming,hutzenthaler2020multilevel}.

\paragraph{High dimensional methods leveraging FBSDEs} 
The FBSDE formulation in equations \eqref{eq:pair_SDEs_1}-\eqref{eq:pair_SDEs_2} has given rise to multiple methods to learn controls. One such approach is \textit{least-squares Monte Carlo} (see \citet[Chapter 3]{pham2009continuous} and \citet{gobet2016monte} for an introduction, and \citet{gobet2005regression, zhang2004numerical} for an extensive analysis), where trajectories from the forward process \eqref{eq:pair_SDEs_1} are sampled, and then regression problems are solved backwards in time to estimate the expected future cost in the spirit of dynamic programming. A second method that exploits FBSDEs was proposed by \citet{weinan2017deep,han2018solving}.
They parameterize the control using a neural network $u_{\theta}$, and use stochastic gradient algorithms to minimize the loss $\mathcal{L}(u_{\theta},y_0) = \mathbb{E}[(Y_T(y_0,u_{\theta}) - g(X_T))^2]$, where $Y_T(y_0,u_{\theta})$ is the process in \eqref{eq:pair_SDEs_2} with initial condition $y_0$ and control $u_{\theta}$. This algorithm can be seen as a shooting method, where the initial condition and the control are learned to match the terminal condition.
Multiple recent works have combined neural networks with FBSDE Monte Carlo methods for parabolic and elliptic PDEs \citep{beck2018solving,chan2019machine,zhou2021actor}, control \citep{becker2019deep,hartmann2019variational}, multi-agent games \citep{han2020deep,carmona2021convergence1,carmona2022convergence2}; see \cite{weinan2021algorithms} for a more comprehensive review.

Many of the methods referenced above and some additional ones can be seen from a common perspective using controlled diffusions. As observed in equation \eqref{eq:P_u_P_u_star}, the key idea is that learning the optimal control is equivalent to finding a control $u$ such that the induced probability measure $\mathbb{P}^u$ on paths is equal to the probability measure $\mathbb{P}^{u^*}$ for the optimal control. In the paragraphs below we cover several loss that fall into this framework. All the losses below can be optimized using a common algorithmic framework, which we describe in Algorithm \ref{alg:IDO}. For more details, we refer the reader to \citet{nüsken2023solving}, which introduced this perspective and named such methods \textit{Iterative Diffusion Optimization} (IDO) techniques. For simplicity, we introduce the losses for the setting in which the initial distribution $p_0$ is concentrated at a single point $x_{\mathrm{init}}$; we cover the general setting in \autoref{app:proofs_framework}.

\begin{algorithm}[H]
\SetAlgoLined
\small{
\KwIn{State cost $f(x,t)$, terminal cost $g(x)$, diffusion coeff. $\sigma(t)$, base drift $b(x,t)$, noise level $\lambda$, number of iterations $N$, batch size $m$, number of time steps $K$, initial control parameters $\theta_0$, loss $\mathcal{L} \in \{ \mathcal{L}_{\mathrm{Adj}} \eqref{eq:L_RE}, \mathcal{L}_{\mathrm{CE}} \eqref{eq:L_CE}, \mathcal{L}_{\mathrm{Var}_v} \eqref{eq:variance_loss}, \mathcal{L}_{\mathrm{Var}_v}^{\mathrm{log}} \eqref{eq:log_variance_loss}, \mathcal{L}_{\mathrm{Mom}_v} \eqref{eq:moment_loss} \}$
}
  \For{$n \in \{0,\dots,N-1\}$}{
    Simulate $m$ trajectories of the process $X^{v}$ controlled by $v = u_{\theta_n}$, e.g., using Euler-Maruyama updates

    \lIf{$\mathcal{L} \neq \mathcal{L}_{\mathrm{Adj}}$}{detach the $m$ trajectories from the computational graph, so that gradients do not backpropagate}

    Using the $m$ trajectories, compute an $m$-sample Monte Carlo approximation $\hat{\mathcal{L}}(u_{\theta_n})$ of the loss $\mathcal{L}(u_{\theta_n})$

    Compute the gradients $\nabla_{\theta} \hat{\mathcal{L}}(u_{\theta_n})$  of $\hat{\mathcal{L}}(u_{\theta_n})$ w.r.t. $\theta_n$

    Obtain $\theta_{n+1}$ with via an Adam update on $\theta_n$ (or another stochastic algorithm)
  }
\KwOut{Learned control $u_{\theta_N}$}}
\caption{Iterative Diffusion Optimization (IDO) algorithms for stochastic optimal control}
\label{alg:IDO}
\end{algorithm}

\paragraph{The relative entropy loss and the adjoint method} The relative entropy loss is defined as the Kullback-Leibler divergence between $\mathbb{P}^{u}$ and $\mathbb{P}^{u^*}$: $\mathbb{E}_{\mathbb{P}^{u}} [ \log \frac{d\mathbb{P}^{u}}{d\mathbb{P}^{u^*}} ]$. 
Upon removing constant terms and factors, this loss is equivalent to (see \autoref{lem:loss_RE} in \autoref{app:proofs_framework}, or \citet{hartmann2012efficient,kappen2012optimal}):
\begin{talign} \label{eq:L_RE}
    \mathcal{L}_{\mathrm{Adj}}(u) := \mathbb{E} \big[ \int_0^T \big(\frac{1}{2} \|u(X^u_t,t)\|^2 + f(X^u_t,t) \big) \, \mathrm{d}t + g(X^u_T) \big].
\end{talign}
This is exactly the control objective in \eqref{eq:control_problem_def}. This connection has been studied extensively \citep{bierkens2014explicit,gomez2014policy,hartmann2012efficient,kappen2012optimal,rawlik2013stochastic}. Hence, the relative entropy loss is a very natural one, and is widely used; see \citet{onken2023aneural,zhang2022path} for some examples on multiagent systems and sampling.

Solving optimization problems of the form \eqref{eq:L_RE} has a long history that dates back to \citet{pontryagin1962mathematical}. Note that $\mathcal{L}_{\mathrm{Adj}}(u)$ depends on $u$ both explicitly, and implicitly through the process $X^u$. To compute the gradient $\nabla_{\theta} \hat{\mathcal{L}}_{\mathrm{Adj}}(u_{\theta_n})$ of a Monte Carlo approximation $\hat{\mathcal{L}}_{\mathrm{Adj}}(u_{\theta_n})$ of $\mathcal{L}_{\mathrm{Adj}}(u_{\theta_n})$ as required by Algorithm \ref{alg:IDO}, we need to backpropagate through the simulation of the $m$ trajectories, which is why we do \textit{not} detach them from the computational graph. One can alternatively compute the gradient $\nabla_{\theta} \hat{\mathcal{L}}_{\mathrm{Adj}}(u_{\theta_n})$ by explicitly solving an ODE, a technique which is known as the \textit{adjoint method}. The adjoint method was introduced by \citet{pontryagin1962mathematical}, popularized in deep learning by \citet{chen2018neural}, and further developed for SDEs in \citet{li2020scalable}.

\paragraph{The cross-entropy loss} The cross-entropy loss is defined as the Kullback-Leibler divergence between $\mathbb{P}^{u^*}$ and $\mathbb{P}^{u}$, i.e., flipping the order of the two measures: $\mathbb{E}_{\mathbb{P}^{u^*}} [\log \frac{d\mathbb{P}^{u^*}}{d\mathbb{P}^{u}}]$.
For an arbitrary $v \in \mathcal{U}$, this loss is equivalent to the following one (see \autoref{prop:CE_form}(i) in \autoref{app:proofs_framework}):
\begin{talign} 
\begin{split} \label{eq:L_CE}
    \mathcal{L}_{\mathrm{CE}}(u) &:= \mathbb{E} 
    \big[ \big( \! - \! \lambda^{-1/2} \int_0^T \langle u(X^{v}_t,t), \mathrm{d}B_t \rangle \! - \! \lambda^{-1} \int_0^T \langle u(X^{v}_t,t), v(X^{v}_t,t) \rangle \, \mathrm{d}t 
    \! + \! \frac{\lambda^{-1}}{2} \int_0^T \|u(X^{v}_t,t)\|^2 \, \mathrm{d}t 
    \big) 
    \\ &\qquad \times \exp\big( - \lambda^{-1} \mathcal{W}(X^v,0) \! - \! \lambda^{-1/2} \int_0^T \langle v(X^v_t,t), \mathrm{d}B_t \rangle \! - \! \frac{\lambda^{-1}}{2} \int_0^T \|v(X^v_t,t)\|^2 \, \mathrm{d}t \big) \big].
\end{split}
\end{talign}
The cross-entropy loss has a rich literature \citep{hartmann2017variational,kappen2016adaptive,rubinstein2013cross,zhang2014applications} and has been recently used in applications such as molecular dynamics \citep{holdijk2023stochastic}.

Furthermore, we note that the cross-entropy loss can be significantly simplified and written in terms of the $L^2$ error of the control $u$ with respect to the optimal control $u^*$: 
\begin{lemma}[Cross-entropy loss in terms of control $L^2$ error] \label{lem:CE_characterization}
    \begin{talign} \label{eq:L_CE_characterization}
    \mathcal{L}_{\mathrm{CE}}(u) &= \frac{\lambda^{-1}}{2} \mathbb{E} \big[ \int_0^T \|u^*(X^{u^*}_t,t) - u(X^{u^*}_t,t)\|^2 \, \mathrm{d}t \exp \big( - \lambda^{-1} V(X^{u^*}_0, 0) \big) \big]~. 
\end{talign}
\end{lemma}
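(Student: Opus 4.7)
My plan is to exploit the fact that the right-hand side of \eqref{eq:L_CE} is independent of the auxiliary control $v \in \mathcal{U}$, and to specialize to $v = u^{*}$. This reduces every ingredient in \eqref{eq:L_CE} to a closed-form expression in the optimally-controlled trajectory $X^{u^{*}}$ and the optimal control $u^{*}$.

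The first step is to simplify the exponential factor. I would apply Itô's formula to $V(X^{u^{*}}_{t},t)$: using $u^{*} = -\sigma^{\top}\nabla V$ together with the HJB equation \eqref{eq:HJB_setup}, the drift collapses to $-\tfrac{1}{2}\|u^{*}\|^{2} - f$ and the stochastic part to $-\sqrt{\lambda}\,\langle u^{*},\mathrm{d}B_{t}\rangle$. Integrating from $0$ to $T$ with the terminal condition $V(\cdot,T) = g$ gives
\[
\mathcal{W}(X^{u^{*}},0) = V(X^{u^{*}}_{0},0) - \tfrac{1}{2}\int_{0}^{T}\|u^{*}\|^{2}\,\mathrm{d}t - \sqrt{\lambda}\int_{0}^{T}\langle u^{*},\mathrm{d}B_{t}\rangle.
\]
Plugging this identity into the exponent in \eqref{eq:L_CE} at $v = u^{*}$, the $\int\|u^{*}\|^{2}\,\mathrm{d}t$ and $\int\langle u^{*},\mathrm{d}B\rangle$ contributions cancel exactly, so the whole exponential collapses to $\exp(-\lambda^{-1}V(X^{u^{*}}_{0},0))$.

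The second step is to handle the linear-in-$u$ bracket. At $v = u^{*}$ it reads $-\lambda^{-1/2}\int\langle u,\mathrm{d}B\rangle - \lambda^{-1}\int\langle u,u^{*}\rangle\,\mathrm{d}t + \tfrac{\lambda^{-1}}{2}\int\|u\|^{2}\,\mathrm{d}t$, and completing the square on the $\mathrm{d}t$ terms rewrites it, up to a $u$-independent constant absorbed by the ``equivalent to'' qualifier preceding \eqref{eq:L_CE}, as $-\lambda^{-1/2}\int\langle u,\mathrm{d}B\rangle + \tfrac{\lambda^{-1}}{2}\int\|u - u^{*}\|^{2}\,\mathrm{d}t$. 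Multiplying by the weight $\exp(-\lambda^{-1}V(X^{u^{*}}_{0},0))$ and taking $\mathbb{E}$, the Itô integral vanishes because its integrand is $\mathcal{F}_{t}$-adapted and the weight is $\mathcal{F}_{0}$-measurable, so tower-conditioning on $\mathcal{F}_{0}$ kills it. What remains is exactly $\tfrac{\lambda^{-1}}{2}\mathbb{E}\bigl[\int_{0}^{T}\|u^{*}(X^{u^{*}}_{t},t) - u(X^{u^{*}}_{t},t)\|^{2}\,\mathrm{d}t\,\exp(-\lambda^{-1}V(X^{u^{*}}_{0},0))\bigr]$.

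The main obstacle is the cancellation carried out in the first step: three separate $\|u^{*}\|^{2}$-type contributions (the quadratic-variation correction from Itô, the $\|\sigma^{\top}\nabla V\|^{2}$ term appearing in HJB after substituting $u^{*} = -\sigma^{\top}\nabla V$, and the explicit $\tfrac{\lambda^{-1}}{2}\|v\|^{2}$ in the exponent of \eqref{eq:L_CE}) must align with the correct signs in order to cancel, and likewise the three $\langle u^{*},\mathrm{d}B\rangle$ stochastic-integral contributions must cancel pairwise. Once this algebraic alignment is verified, the remainder of the proof is routine stochastic-calculus bookkeeping: the Itô isometry argument for the vanishing of the $\int\langle u,\mathrm{d}B\rangle$ term, and standard integrability assumptions from \autoref{sec:assumptions} to justify interchange of expectation and stochastic integration.
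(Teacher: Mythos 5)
Your proof is correct and reaches the right conclusion, but it is arranged differently from the paper's. The paper proves this via \autoref{prop:CE_form}: it works at the measure-theoretic level, computing $\log\tfrac{d\mathbb{P}^u}{d\mathbb{P}^{u^*}}(X^{u^*}) = -\lambda^{-1/2}\int_0^T\langle u^*-u,\mathrm{d}B_t\rangle - \tfrac{\lambda^{-1}}{2}\int_0^T\|u^*-u\|^2\,\mathrm{d}t$ directly from Girsanov (\autoref{cor:girsanov_sdes}), so that negating, taking $\mathbb{E}_{\mathbb{P}^{u^*}}$, and killing the martingale term yields the $\|u-u^*\|^2$ form with no square-completion needed; the $\exp(-\lambda^{-1}V(X_0^{u^*},0))$ weight is then inserted by a reweighting observation for generic $p_0$. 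You instead specialize the explicit formula \eqref{eq:L_CE} to $v=u^*$, re-derive inline (via Itô plus HJB) the identity $\alpha(u^*,X^{u^*},B)=\exp(-\lambda^{-1}V(X^{u^*}_0,0))$ — which the paper isolates as \autoref{eq:alpha_zero_variance} — and then complete the square and condition on $\mathcal{F}_0$ to drop the Itô integral. Both routes are Girsanov-based; yours is more self-contained and computational, while the paper's avoids the square-completion because the Radon--Nikodym derivative between $\mathbb{P}^u$ and $\mathbb{P}^{u^*}$ already produces the $\|u-u^*\|^2$ combination. One thing you handle more carefully than the main-text lemma: your argument (correctly) produces a residual $u$-independent term $-\tfrac{\lambda^{-1}}{2}\mathbb{E}\bigl[\int_0^T\|u^*\|^2\,\mathrm{d}t\,\exp(-\lambda^{-1}V(X^{u^*}_0,0))\bigr]$, so \eqref{eq:L_CE_characterization} as written holds only up to this additive constant $K$. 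The appendix version, \autoref{prop:CE_form}(ii), does include the ``$+K$''; the main-text statement of \autoref{lem:CE_characterization} silently drops it, which is a minor imprecision in the paper, not a gap in your proof.
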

This characterization, which is proven in \autoref{prop:CE_form}(ii) in \autoref{app:proofs_framework}, is relevant for us because a similar one can be written for the loss that we propose (see \autoref{prop:bias_variance}). 

\paragraph{Variance and log-variance losses} 
For an arbitrary $v \in \mathcal{U}$, the \textit{variance} and the \textit{log-variance losses} are defined as $\tilde{\mathcal{L}}_{\mathrm{Var}_v}(u) = \mathrm{Var}_{\mathbb{P}^v} ( \frac{\mathrm{d}\mathbb{P}^{u^*}}{\mathrm{d}\mathbb{P}^u} )$ and $\tilde{\mathcal{L}}_{\mathrm{Var}_v}^{\mathrm{log}}(u) = \mathrm{Var}_{\mathbb{P}^v} ( \log \frac{\mathrm{d}\mathbb{P}^{u^*}}{\mathrm{d}\mathbb{P}^u} )$
whenever $\mathbb{E}_{\mathbb{P}^v} | \frac{\mathrm{d}\mathbb{P}^{u^*}}{\mathrm{d}\mathbb{P}^u} | < + \infty$ and $\mathbb{E}_{\mathbb{P}^v} | \log \frac{d\mathbb{P}^{u^*}}{d\mathbb{P}^u} | < + \infty$, respectively. Define
\begin{talign}
\begin{split} \label{eq:tilde_Y_def}
    \tilde{Y}_T^{u,v} &= - \lambda^{-1} \int_0^T \langle u(X^v_t,t), v(X^v_t,t) \rangle \, \mathrm{d}t - \lambda^{-1} \int_0^T f(X^v_t,t) \, \mathrm{d}t - \lambda^{-1/2} \int_0^T \langle u(X^v_t,t), \mathrm{d}B_t \rangle \\ &\qquad + \frac{\lambda^{-1}}{2} \int_0^T \| u(X^v_t,t)\|^2 \, \mathrm{d}t. 
\end{split}
\end{talign}
Then, $\tilde{\mathcal{L}}_{\mathrm{Var}_v}$ and $\tilde{\mathcal{L}}_{\mathrm{Var}_v}^{\mathrm{log}}$ are equivalent, respectively, to the following losses (see \autoref{lem:variance_log_variance}):
\begin{align} \label{eq:variance_loss}
\mathcal{L}_{\mathrm{Var}_v}(u) &:=
\mathrm{Var} \big( \exp \big( \tilde{Y}^{u,v}_T - \lambda^{-1} g(X^v_T) \big) \big), \\ \mathcal{L}^{\mathrm{log}}_{\mathrm{Var}_v}(u) &:= \mathrm{Var}\big( \tilde{Y}^{u,v}_T - \lambda^{-1} g(X^v_T) \big), \label{eq:log_variance_loss}
\end{align}
The variance and log-variance losses were introduced by \citet{nüsken2023solving}. 
Unlike for the cross-entropy loss, the choice of the control $v$ does lead to different losses. When using $\mathcal{L}_{\mathrm{Var}_v}$ or $\mathcal{L}^{\mathrm{log}}_{\mathrm{Var}_v}$ in Algorithm \ref{alg:IDO}, the variance is computed across the $m$ trajectories in each batch.

\paragraph{Moment loss} For an arbitrary $v \in \mathcal{U}$, the moment loss is defined as
\begin{align} \label{eq:moment_loss}
    \mathcal{L}_{\mathrm{Mom}_v}(u,y_0) = \mathbb{E}[(\tilde{Y}^{u,v}_T + y_0 - \lambda^{-1} g(X^v_T))^2], 
\end{align}
where $\tilde{Y}^{u,v}_T$ is defined in \eqref{eq:tilde_Y_def}. Note the similarity with the log-variance loss \eqref{eq:log_variance_loss}; the optimal value of $y_0$ for a fixed $u$ is $y_0^* = \mathbb{E}[\lambda^{-1} g(X^v_T) - \tilde{Y}^{u,v}_T]$, and plugging this into \eqref{eq:moment_loss} yields exactly the log-variance loss. The moment loss was introduced by \citet[Section III.B]{hartmann2019variational}, and it is a generalization of the FBSDE method pioneered by \citet{weinan2017deep,han2018solving} and referenced earlier in this subsection. In fact, the original method corresponds to setting $v = 0$.

\section{Stochastic Optimal Control Matching} \label{sec:SOCM}

In this section we present our loss, \textit{Stochastic Optimal Control Matching} (SOCM). The corresponding method, which we describe in Algorithm \ref{alg:SOCM}, falls into the class of IDO techniques described in \autoref{sec:existing}. The general idea is to leverage the analytic expression of $u^*$ in \autoref{lemma:path_integral} to write a least squares loss for $u$, and the main challenge is to reexpress the gradient of a conditional expectation with respect to the initial condition of the process. We do that using a novel technique which introduces certain arbitrary matrix-valued functions $M_t$, that we also optimize. 

\begin{theorem}[SOCM loss] \label{thm:main} 
    For each $t \in [0,T]$, let $M_t : [t,T] \to \R^{d\times d}$ be an arbitrary 
    matrix-valued differentiable function such that $M_t(t) = \mathrm{Id}$. Let $v \in \mathcal{U}$ be an arbitrary control. Let $\mathcal{L}_{\mathrm{SOCM}} : L^2(\R^d \times [0,T]; \R^d) \times L^2([0,T]^2; \R^{d \times d}) \to \R$ be the loss function defined as 
    \begin{talign}
    \begin{split} \label{eq:mathcal_L_loss}
        \mathcal{L}_{\mathrm{SOCM}}(u,M) &:= \mathbb{E} \big[\frac{1}{T} \int_0^{T} \big\| u(X^v_t,t)
        - w(t,v,X^v,B,M_t) \big\|^2 \, \mathrm{d}t 
        \times \alpha(v, X^v, B) \big]~,
    \end{split}
    \end{talign}
    where $X^v$ is the process controlled by $v$ (i.e., $dX^v_t = (b(X^v_t,t) + \sigma(t) v(X^v_t,t)) \, \mathrm{d}t + \sqrt{\lambda} \sigma(t) \, \mathrm{d}B_t$ and $X^v_0 \sim p_0$),
    and
    \begin{talign} 
    \begin{split} \label{eq:matching_vector_field_def}
        w(t,v,X^v,B,M_t) &= \sigma(t)^{\top} \big( - \int_t^T M_t(s) \nabla_x f(X^v_s,s) \, \mathrm{d}s - M_t(T) \nabla g(X^v_T) 
        \\ &\quad + \int_t^T (M_t(s) \nabla_x b(X^v_s, s) -\partial_s M_t(s)) (\sigma^{-1})^{\top}(s) v(X^v_s,s) \, \mathrm{d}s
        \\ &\quad + \lambda^{1/2} \int_t^T (M_t(s) \nabla_x b(X^v_s, s) - \partial_s M_t(s)) (\sigma^{-1})^{\top}(s) \mathrm{d}B_s \big), 
        \end{split} \\
        \begin{split} \label{eq:importance_weight_def}
        \alpha(v,X^v,B) &= \exp \big( - \lambda^{-1} \int_0^T f(X^v_t,t) \, \mathrm{d}s - \lambda^{-1} g(X^v_T) \\ &\qquad\qquad - \lambda^{-1/2} \int_0^T \langle v(X^v_t,t), \mathrm{d}B_t \rangle - \frac{\lambda^{-1}}{2} \int_0^T \|v(X^v_t,t)\|^2 \, \mathrm{d}t \big).
        \end{split}
    \end{talign}
    $\mathcal{L}_{\mathrm{SOCM}}$ has a unique optimum $(u^*, M^*)$, where $u^*$ is the optimal control. 
\end{theorem}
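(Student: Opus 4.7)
The plan is to exhibit a bias-variance decomposition of $\mathcal{L}_{\mathrm{SOCM}}$ and show that its ``bias'' part is uniquely minimized at $u = u^*$ for \emph{every} admissible reparameterization family $M$. Conditioning on $X^v_t$ and completing the square with respect to $u(X^v_t,t)$ gives
\begin{talign*}
    \mathcal{L}_{\mathrm{SOCM}}(u,M)
    &= \frac{1}{T}\int_0^T \mathbb{E}\big[ \|u(X^v_t,t) - \bar{w}_M(X^v_t,t)\|^2\, \alpha\big]\, \mathrm{d}t \\
    &\quad + \frac{1}{T}\int_0^T \mathbb{E}\big[ \|w(t,v,X^v,B,M_t) - \bar{w}_M(X^v_t,t)\|^2\, \alpha\big]\, \mathrm{d}t,
\end{talign*}
where $\bar{w}_M(x,t) := \mathbb{E}[w(t,v,X^v,B,M_t)\,\alpha \mid X^v_t = x] / \mathbb{E}[\alpha \mid X^v_t = x]$; the cross term vanishes by the tower property, and only the first (``bias'') term depends on $u$. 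Since $\alpha > 0$ almost surely, for each fixed $M$ the bias term is minimized in the weighted $L^2$ sense precisely by the pointwise choice $u(x,t) = \bar{w}_M(x,t)$.

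The heart of the proof is the identity $\bar{w}_M(x,t) = u^*(x,t)$ for every admissible $M$, so that the regression target is in fact $M$-independent and equals the optimal control. I would establish this in two steps. First, by the Girsanov identity \eqref{eq:P_u_P_u_star} and the observation that $\alpha = \exp(-\lambda^{-1}\mathcal{W}(X^v,0)) \cdot \frac{\mathrm{d}\mathbb{P}}{\mathrm{d}\mathbb{P}^v}(X^v)$, change measure from $\mathbb{P}^v$ to the uncontrolled law $\mathbb{P}$. Under this change, $B$ is replaced by a $\mathbb{P}$-Brownian motion $W$ via $\mathrm{d}B = \mathrm{d}W - \lambda^{-1/2}v\,\mathrm{d}t$, and the resulting Itô correction in $w$ exactly cancels the $\int_t^T (M_t \nabla_x b - \partial_s M_t)(\sigma^{-1})^\top v\,\mathrm{d}s$ term already present. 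One therefore obtains
\begin{talign*}
    \bar{w}_M(x,t) = \frac{\mathbb{E}\big[w_0(t,X,W,M_t)\, e^{-\lambda^{-1}\mathcal{W}(X,t)} \,\big|\, X_t = x\big]}{\mathbb{E}\big[e^{-\lambda^{-1}\mathcal{W}(X,t)} \,\big|\, X_t = x\big]},
\end{talign*}
where $X$ is the uncontrolled process of \eqref{eq:pair_SDEs_1} and $w_0$ is the $v \equiv 0$ instance of the matching field. Second, by \autoref{lemma:path_integral}, $u^*(x,t) = \lambda\, \sigma(t)^\top \nabla_x \log \mathbb{E}[e^{-\lambda^{-1}\mathcal{W}(X,t)} \mid X_t = x]$, and the \emph{path-wise reparameterization trick} (\autoref{prop:cond_exp_rewritten}) rewrites this gradient of a log conditional expectation as precisely the ratio on the right-hand side above. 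Intuitively, the differentiable matrix $M_t$ plays the role of a (non-canonical) tangent flow anchored at $M_t(t) = \mathrm{Id}$, and the corrections $M_t \nabla_x b - \partial_s M_t$ appearing in $w_0$ encode the deviation of $M_t$ from the canonical tangent ODE $\partial_s M_t = M_t \nabla_x b$; conditional averaging forces them to cancel, which is the content of \autoref{prop:cond_exp_rewritten}.

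Combining the two steps gives $\bar{w}_M = u^*$ independently of $M$, so the bias term vanishes if and only if $u = u^*$ in the weighted $L^2$ sense induced by the law of $X^v$ and $\alpha$, establishing the uniqueness of $u^*$ as the control component of any optimum. The remaining (variance) term depends only on $M$, hence any optimum is of the form $(u^*, M^*)$ with $M^*$ a minimizer of the variance of $w$; uniqueness of $M^*$ follows separately from the strict convexity of that variance functional in $M$ (when it holds) and does not affect the characterization of $u^*$. The main technical obstacle is \autoref{prop:cond_exp_rewritten} itself: differentiating a conditional expectation with respect to the initial condition of an SDE is a Bismut--Elworthy--Li / Malliavin-type calculation, and allowing the tangent matrix $M_t$ to be arbitrary (rather than the canonical flow) is what produces the stochastic integral $\int_t^T (M_t \nabla_x b - \partial_s M_t)(\sigma^{-1})^\top\, \mathrm{d}B_s$, whose unbiasedness for the gradient has to be verified via a careful Itô integration-by-parts argument using $M_t(t) = \mathrm{Id}$ as the boundary condition.
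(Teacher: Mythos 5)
Your proposal is correct, and it uses the same two key ingredients as the paper (the Girsanov change of measure via \autoref{cor:girsanov_sdes} and the path-wise reparameterization trick \autoref{prop:cond_exp_rewritten}), but organized in the dual direction. The paper starts from the auxiliary loss $\tilde{\mathcal{L}}(u) = \mathbb{E}\big[\frac{1}{T}\int_0^T \|u - u^*\|^2\,\mathrm{d}t\, e^{-\lambda^{-1}\mathcal{W}(X,0)}\big]$ (whose unique minimizer is obviously $u^*$), expands the cross term, applies \autoref{prop:cond_exp_rewritten} to express $u^*$ via the matching field, completes the square algebraically, and only then applies Girsanov to produce $\mathcal{L}_{\mathrm{SOCM}}$; the identity $\bar{w}_M = u^*$ appears near the end as the step that re-expresses the constant $K$ as $\mathrm{CondVar}(w;M)$. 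You instead begin with $\mathcal{L}_{\mathrm{SOCM}}$ itself, do a conditional (tower-property) completing of the square to isolate the regression target $\bar{w}_M$, and then prove $\bar{w}_M = u^*$ directly — Girsanov removes the $v$-dependence of $w$ (the It\^o correction from $\mathrm{d}B = \mathrm{d}B^v - \lambda^{-1/2}v\,\mathrm{d}t$ cancels the deterministic $v$-term, as you note), and \autoref{prop:cond_exp_rewritten} identifies the resulting weighted conditional average with $u^*$. Your organization makes the bias--variance decomposition of \autoref{prop:bias_variance} the front door rather than an afterthought, which is cleaner and makes the analogy with flow-matching regression targets immediate; the paper's organization makes the derivation of the unusual form of $w$ feel less like it was pulled from a hat. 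Both routes share the same minor gap: uniqueness of the $M^*$ component of the optimum is asserted rather than fully proved (you flag this yourself; the paper defers to a Fredholm-equation characterization in \autoref{thm:optimal_M} and likewise does not prove invertibility of the associated operator). The central step — verifying \autoref{prop:cond_exp_rewritten} — is, as you say, the real technical content, and the paper proves it via a shift-of-process-plus-Girsanov argument rather than a Malliavin/Bismut--Elworthy--Li computation, though the flavor is similar.
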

We refer to $M = {(M_t)}_{t \in [0,T]}$ as the family of \textit{reparametrization matrices}, to the random vector field $w$ as the \textit{matching vector field}, and to $\alpha$ as the \textit{importance weight}. We present a proof sketch of \autoref{thm:main}; the full proofs for all the results in this section are in \autoref{sec:proofs_SOCM}.

\paragraph{Proof sketch of \autoref{thm:main}}
Let $X$ be the uncontrolled process \eqref{eq:pair_SDEs_1}. Consider the loss
\begin{talign} 
\begin{split} \label{eq:loss_tilde_L_sketch}
    \tilde{\mathcal{L}}(u) &= \mathbb{E} \big[ \frac{1}{T} \int_0^{T} \big\| u(X_t,t) 
    - u^*(X_t,t) \big\|^2 \, \mathrm{d}t \, \exp \big( - \lambda^{-1} \int_0^T f(X_t,t) \, \mathrm{d}t - \lambda^{-1} g(X_T) \big) \big] \\
    &= \mathbb{E} \big[ \frac{1}{T} \int_0^{T} \big( \big\| u(X_t,t) \big\|^2 
    - 2\langle u(X_t,t), u^*(X_t,t) \rangle + \| u^*(X_t,t) \big\|^2 \big) \, \mathrm{d}t \\ &\qquad\qquad \times \exp \big( - \lambda^{-1} \int_0^T f(X_t,t) \, \mathrm{d}t - \lambda^{-1} g(X_T) \big) \big].
\end{split}
\end{talign}
Clearly, the only optimum of this loss is the optimal control $u^*$.
Using the analytic expression of $u^*$ in
\autoref{lemma:path_integral}, the cross-term can be rewritten as (see \autoref{lem:cross_term_loss} in \autoref{sec:proofs_SOCM}):
\begin{talign} 
\begin{split} \label{eq:cross_term_loss_sketch}
    &\mathbb{E} \big[ \frac{1}{T} \int_0^{T} \langle u(X_t,t), u^*(X_t,t) \rangle \, \mathrm{d}t \, \exp \big( - \lambda^{-1} \int_0^T f(X_t,t) \, \mathrm{d}t - \lambda^{-1} g(X_T) \big) \big] 
    \\ &= \lambda \mathbb{E} \big[\frac{1}{T} \int_0^{T} \big\langle u(X_t,t), \sigma(t)^{\top} \nabla_{x} \mathbb{E}\big[ \exp \big( - \lambda^{-1} \int_t^T f(X_s,s) \, \mathrm{d}s - \lambda^{-1} g(X_T) \big) \big| X_t = x \big] \big\rangle \\ &\qquad\qquad \times \exp \big( - \lambda^{-1} \int_0^t f(X_s,s) \, \mathrm{d}s \big) \, \mathrm{d}t \big].
\end{split}
\end{talign}
It remains to evaluate the conditional expectation $\nabla_{x} \mathbb{E}\big[ \exp \big( - \lambda^{-1} \int_t^T f(X_s,s) \, \mathrm{d}s - \lambda^{-1} g(X_T) \big) \big| X_t = x \big]$, which we do by a ``reparameterization trick'' that shifts the dependence on the initial value $x$ into the stochastic processes---here we introduce a free variable $M_t$---and then applying Girsanov theorem. We coin this the \emph{path-wise reparameterization trick}: 
\begin{restatable}[Path-wise reparameterization trick for stochastic optimal control]{proposition}{girsanovrt} \label{prop:cond_exp_rewritten}
    For each $t \in [0,T]$, let $M_t : [t,T] \to \R^{d\times d}$ be an arbitrary continuously differentiable function matrix-valued function such that $M_t(t) = \mathrm{Id}$. We have that
    \begin{talign} 
    \begin{split} \label{eq:cond_exp_rewritten}
        &\nabla_{x} \mathbb{E}\big[ \exp \big( - \lambda^{-1} \int_t^T f(X_s,s) \, \mathrm{d}s - \lambda^{-1} g(X_T) \big) \big| X_t = x \big] \\ &= \mathbb{E}\big[ \big( - \lambda^{-1} \int_t^T M_t(s) \nabla_x f(X_s,s) \, \mathrm{d}s - \lambda^{-1} M_t(T) \nabla g(X_T) \\ &\qquad\qquad + \lambda^{-1/2} \int_t^T (M_t(s) \nabla_x b(X_s,s) - \partial_s M_t(s)) (\sigma^{-1})^{\top}(s) \mathrm{d}B_s \big) \\ &\qquad\qquad \times \exp \big( - \lambda^{-1} \int_t^T f(X_s,s) \, \mathrm{d}s - \lambda^{-1} g(X_T) \big) 
    \big| X_t = x \big].
    \end{split}
    \end{talign}
\end{restatable}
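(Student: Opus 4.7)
The plan is to prove the identity by testing both sides against an arbitrary direction $v \in \mathbb{R}^d$ and computing the directional derivative $\nabla_x F(x) \cdot v$, where $F(x) := \mathbb{E}[h(X) \mid X_t = x]$ with $h(X) := \exp(-\lambda^{-1}\int_t^T f(X_s,s)\,ds - \lambda^{-1} g(X_T))$. The central idea, which justifies calling this a ``path-wise reparameterization trick'', is to transfer an $\epsilon v$ perturbation of the initial condition into a perturbation of the whole path, parameterized by the matrix family $M_t(\cdot)$. The freedom in choosing $M_t$ is precisely what the proposition exploits: the initial constraint $M_t(t) = \mathrm{Id}$ is all that is needed for the path shift to match at the starting time, while the rest of the path deformation gets absorbed by a Girsanov change of measure, leaving behind different but equivalent expressions of the same gradient.

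Concretely, for small $\epsilon$, let $X^\epsilon$ be the uncontrolled diffusion \eqref{eq:pair_SDEs_1} started from $x + \epsilon v$ at time $t$, so that $F(x + \epsilon v) = \mathbb{E}[h(X^\epsilon)]$. I would introduce the auxiliary path $\tilde{X}^\epsilon_s := X_s + \epsilon M_t(s) v$ for $s \in [t,T]$; since $M_t(t) = \mathrm{Id}$, this has the matching initial condition $\tilde{X}^\epsilon_t = x + \epsilon v$. Differentiating and expanding the drift to first order in $\epsilon$ yields $d\tilde{X}^\epsilon_s = b(\tilde{X}^\epsilon_s,s)\,ds + \epsilon[\partial_s M_t(s) - \nabla_x b(X_s,s) M_t(s)]v\,ds + \sqrt{\lambda}\sigma(s)\,dB_s + O(\epsilon^2)$. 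Girsanov's theorem then produces a measure $Q^\epsilon$ under which $\tilde{X}^\epsilon$ solves exactly the uncontrolled SDE, with Radon-Nikodym derivative of the form $\exp(-\lambda^{-1/2}\epsilon\int_t^T \sigma^{-1}(s)[\partial_s M_t(s) - \nabla_x b\, M_t(s)]v \cdot dB_s - \tfrac{1}{2}\lambda^{-1}\epsilon^2\int_t^T\|\cdots\|^2\,ds)$. Uniqueness in law gives $F(x+\epsilon v) = \mathbb{E}_P[h(X + \epsilon M_t v) \cdot dQ^\epsilon/dP]$.

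The final step is to differentiate this identity at $\epsilon = 0$ and swap derivative with expectation. Since $dQ^0/dP = 1$, one gets $\nabla_x F(x) \cdot v = \mathbb{E}\bigl[\partial_\epsilon|_0 h(X + \epsilon M_t v)\bigr] + \mathbb{E}\bigl[h(X) \partial_\epsilon|_0 dQ^\epsilon/dP\bigr]$. A chain-rule computation on $h$ contributes $-\lambda^{-1}[\int_t^T \nabla_x f(X_s,s) \cdot M_t(s) v \, ds + \nabla g(X_T)\cdot M_t(T) v]\, h(X)$, while the logarithmic derivative of the Radon-Nikodym term collapses to the single stochastic integral $\lambda^{-1/2}\int_t^T (M_t(s) \nabla_x b(X_s,s) - \partial_s M_t(s))(\sigma^{-1})^\top(s)\,dB_s \cdot v$ times $h(X)$, since the quadratic piece is $O(\epsilon^2)$. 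Reading off the coefficient of $v$ in each contraction and then conditioning on $X_t = x$ recovers exactly the right-hand side of equation~\eqref{eq:cond_exp_rewritten}.

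I expect the main technical obstacles to be analytic rather than conceptual. First, one needs a Novikov-type integrability estimate so that $dQ^\epsilon/dP$ is a true density on $[t,T]$ for all sufficiently small $\epsilon$; this should follow from the regularity assumptions on $b$, $\sigma^{-1}$, and $M_t$, but must be stated carefully. Second, interchanging the $\epsilon$-derivative with the expectation requires uniform integrability of the derivands, which in turn relies on mild growth bounds on $f$, $g$, $\nabla_x f$, $\nabla g$, and the Jacobian of $b$. A subtler point is that $\tilde X^\epsilon$ was only defined through a first-order expansion; making this rigorous either requires bounding the $O(\epsilon^2)$ remainder so it disappears after differentiation, or constructing $\tilde X^\epsilon$ exactly via the stochastic flow and differentiating that. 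Finally, careful bookkeeping of the Jacobian/gradient transpose conventions used by the paper is needed to match the precise form of the statement.
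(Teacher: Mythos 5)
Your proposal follows essentially the same route as the paper's proof: transfer the perturbation of the initial condition to a deformation of the whole path with profile $M_t(\cdot)$, compensate the resulting drift mismatch with a Girsanov change of measure, and then differentiate the resulting identity at the origin. You have also correctly flagged the main technical obstacle, namely that $\tilde X^\epsilon_s := X_s + \epsilon M_t(s) v$ is not an exact solution of a clean SDE, so your change of measure is only accurate up to $O(\epsilon^2)$. The paper sidesteps this entirely by a small but important change in the construction: rather than shifting the original process $X^{(x)}$, it introduces an \emph{auxiliary} process $\tilde X^{(x)}$ that exactly solves the modified SDE $\mathrm{d}\tilde X^{(x)}_s = \big(b(\tilde X^{(x)}_s + \psi(z,s),s) - \partial_s\psi(z,s)\big)\,\mathrm{d}s + \sqrt{\lambda}\sigma(s)\,\mathrm{d}B_s$ with $\tilde X^{(x)}_0 = x$, so that $X^{(x+z)} = \tilde X^{(x)} + \psi(z,\cdot)$ holds path-wise and exactly for every fixed $z$. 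Girsanov is then applied \emph{exactly} between $\tilde X^{(x)}$ and $X^{(x)}$ (both started at $x$, with different drifts), giving an exact formula for $\mathbb{E}[\exp(-F(X)) \mid X_0 = x+z]$; only afterward does one differentiate in $z$ at $z=0$, using the Leibniz rule (including its stochastic-integral version) to interchange derivative and expectation/integral. This is the rigorous realization of the second alternative you yourself suggest, and it eliminates any need to bound the quadratic remainder. One final caveat you anticipated: the paper's $\nabla_z$ and $\nabla_x b$ conventions are the transposed-Jacobian ones (e.g.\ $\psi_t(z,s) = M_t(s)^{\top} z$ gives $\nabla_z\psi_t = M_t(s)$), which is what reconciles your $\nabla_x b\,M_t$ ordering with the $M_t\nabla_x b$ ordering in the statement.
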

We prove a more general form of this result (\autoref{lem:cond_exp_rewritten}) in \autoref{sec:proof_cond_exp_rewritten} and also provide an intuitive derivation in \autoref{sec:derivation_cond_exp_rewritten}.
In the proof of \autoref{lem:cond_exp_rewritten}, the reparameterization matrices $M_t$ arise as the gradients of a perturbation to the process $X_t$. Similar ideas can potentially be applied to derive losses for generative modeling. 
If we plug \eqref{eq:cond_exp_rewritten} into the right-hand side of \eqref{eq:cross_term_loss_sketch}, and then this back into \eqref{eq:loss_tilde_L_sketch}, and we complete the square, we obtain that for some constant $K$ independent of $u$,
\begin{talign}
\begin{split}
    \tilde{\mathcal{L}}(u) &= \mathbb{E} \big[ \frac{1}{T} \int_0^{T} \big\| u(X_t,t) + \sigma(t) \big( \int_t^T M_t(s) \nabla_x f(X_s,s) \, \mathrm{d}s + M_t(T) \nabla g(X_T) \\ &\qquad\qquad\quad - \lambda^{1/2} \int_t^T (M_t(s) \nabla_x b(X_s,s) - \partial_s M_t(s)) (\sigma^{-1})^{\top} (s) \mathrm{d}B_s \big) \big\|^2 \, \mathrm{d}t \\ &\qquad\qquad \times \exp \big( - \lambda^{-1} \int_0^T f(X_t,t) \, \mathrm{d}t - \lambda^{-1} g(X_T) \big) \big] + K.
\end{split}
\end{talign}
If we perform a change of process from $X$ to $X^v$ applying the Girsanov theorem (\autoref{cor:girsanov_sdes} in \autoref{sec:proofs_SOCM}), we obtain the loss $\mathcal{L}_{\mathrm{SOCM}}(u,M)$.
\qed

The following proposition sheds some light onto the role of reparameterization matrices and connects the SOCM loss to the cross-entropy loss.
\begin{proposition}[Bias-variance decomposition of the SOCM loss] \label{prop:bias_variance}
    The SOCM loss decomposes into a bias term that only depends on $u$ and a variance term that only depends on $M$:
    \begin{talign} 
    \begin{split} \label{eq:L_u_decomposition}
        \mathcal{L}_{\mathrm{SOCM}}(u,M) \! &= \! 
        \underbrace{\mathbb{E} \big[ \frac{1}{T} \int_0^{T} \big\| u(X^{u^*}_t,t) - u^*(X^{u^*}_t,t) \big\|^2 \, \mathrm{d}t \, \exp(-\lambda^{-1} V(X^{u^*}_0,0)) \big]}_{\text{Bias of $u$}}
        \! + \! \underbrace{\mathrm{CondVar}(w;M)}_{\text{Variance of $w$}},
    \end{split}
    \end{talign}
    where
    \begin{talign} 
    \begin{split} \label{eq:var_w_M}
        &\mathrm{CondVar}(w;M) 
        \\ &= \mathbb{E} \big[ \frac{1}{T} \int_0^{T} \big\| \tilde{w}(t,X,B,M_t) - \frac{\mathbb{E} [\tilde{w}(t,X,B,M_t) \exp(-\lambda^{-1} \mathcal{W}(X,0)) | X_t ]}{\mathbb{E} [ \exp(-\lambda^{-1} \mathcal{W}(X,0)) | X_t ]} \big\|^2 \, \mathrm{d}t \, \exp(-\lambda^{-1} \mathcal{W}(X,0)) \big] \\
        &= \mathbb{E} \big[ \frac{1}{T} \int_0^{T} \big\| w(t,v,X^v,B,M_t) - \frac{\mathbb{E} [w(t,v,X^v,B,M_t) \alpha(v,X^v,B) | X_t^v ]}{\mathbb{E} [\alpha(v,X^v,B) | X_t^v ]} \big\|^2 \, \mathrm{d}t \, \alpha(v,X^v,B) \big],
    \end{split}
    \end{talign}
    and
    \begin{talign}
    \begin{split} \label{eq:tilde_w_def}
        \tilde{w}(t,X,B,M_t) &= \sigma(t)^{\top} \big( - \int_t^T M_t(s) \nabla_x f(X_s,s) \, \mathrm{d}s - M_t(T) \nabla g(X_T) \\ &\qquad\qquad + \lambda^{1/2} \int_t^T (M_t(s) \nabla_x b(X_s,s) - \partial_s M_t(s)) (\sigma^{-1})^{\top} (s) \mathrm{d}B_s \big).
    \end{split}
    \end{talign}
\end{proposition}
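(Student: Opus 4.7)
The plan is to reduce $\mathcal{L}_{\mathrm{SOCM}}(u,M)$ to an equivalent expression over the uncontrolled process $X$, perform a weighted conditional bias–variance decomposition, and then identify the conditional mean of the matching vector field with $u^*$.

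First I would verify that the two expressions for $\mathrm{CondVar}(w;M)$ in \eqref{eq:var_w_M} are equivalent by rewriting $\mathcal{L}_{\mathrm{SOCM}}(u,M)$ in terms of $X$, $B$ and $\tilde{w}$. The key observation is that $w(t,v,X^v,B,M_t)$ is in fact a path functional of $X^v$ alone: substituting $\sqrt{\lambda}\sigma(t)\,\mathrm{d}B_t = \mathrm{d}X^v_t - (b+\sigma v)(X^v_t,t)\,\mathrm{d}t$ into the $\lambda^{1/2}\int_t^T(\cdots)\,\mathrm{d}B_s$ term of \eqref{eq:matching_vector_field_def} and combining with the explicit $\int_t^T(\cdots)v\,\mathrm{d}s$ term cancels the $v$-dependent drift exactly, producing the same deterministic path functional as $\tilde{w}(t,X,B,M_t)$ evaluated on the uncontrolled path (where $\sqrt{\lambda}\sigma\,\mathrm{d}B = \mathrm{d}X - b\,\mathrm{d}t$). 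Coupled with the identity $\alpha(v,X^v,B) = \exp(-\lambda^{-1}\mathcal{W}(X^v,0))\cdot(\mathrm{d}\mathbb{P}/\mathrm{d}\mathbb{P}^v)(X^v)$, which follows from \eqref{eq:P_u_P_u_star} with $u=0$, a Girsanov change of measure on pathspace yields
\begin{talign*}
\mathcal{L}_{\mathrm{SOCM}}(u,M) = \mathbb{E}\bigl[\tfrac{1}{T}\textstyle\int_0^T \|u(X_t,t) - \tilde{w}(t,X,B,M_t)\|^2 \,\mathrm{d}t\cdot\exp(-\lambda^{-1}\mathcal{W}(X,0))\bigr].
\end{talign*}
In particular, this shows the two forms of $\mathrm{CondVar}(w;M)$ in \eqref{eq:var_w_M} are equal, so it suffices to prove the decomposition in the first (uncontrolled) form.

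Next I apply the standard weighted conditional bias–variance decomposition. For each $t$, since $u(X_t,t)$ is $\sigma(X_t)$-measurable and $\exp(-\lambda^{-1}\mathcal{W}(X,0))>0$, expanding $\|u-\tilde{w}\|^2 = \|u-\bar{w}\|^2 + 2\langle u-\bar{w},\bar{w}-\tilde{w}\rangle + \|\tilde{w}-\bar{w}\|^2$ with $\bar{w}(x,t):=\mathbb{E}[\tilde{w}\,e^{-\lambda^{-1}\mathcal{W}}\mid X_t=x]/\mathbb{E}[e^{-\lambda^{-1}\mathcal{W}}\mid X_t=x]$ and taking the weighted conditional expectation makes the cross term vanish, giving
\begin{talign*}
\mathbb{E}[\|u-\tilde{w}\|^2 e^{-\lambda^{-1}\mathcal{W}}\mid X_t] = \|u(X_t,t)-\bar{w}(X_t,t)\|^2\,\mathbb{E}[e^{-\lambda^{-1}\mathcal{W}}\mid X_t] + \mathbb{E}[\|\tilde{w}-\bar{w}(X_t,t)\|^2 e^{-\lambda^{-1}\mathcal{W}}\mid X_t].
\end{talign*}
Integrating over $t$ and taking the outer expectation, the second term is exactly the first form of $\mathrm{CondVar}(w;M)$, and it remains to identify the first term with the bias in \eqref{eq:L_u_decomposition}, which reduces to the key claim $\bar{w}(x,t) = u^*(x,t)$.

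For the key claim I split $\mathcal{W}(X,0) = \int_0^t f(X_s,s)\,\mathrm{d}s + \bigl(\int_t^T f(X_s,s)\,\mathrm{d}s + g(X_T)\bigr)$. Since $\tilde{w}(t,X,B,M_t)$ depends only on the future $(X_s,B_s-B_t)_{s\in[t,T]}$, by the Markov property of $X$ the $\mathcal{F}_t$-measurable factor $\exp(-\lambda^{-1}\int_0^t f\,\mathrm{d}s)$ factors out of both numerator and denominator of $\bar{w}(x,t)$, leaving
\begin{talign*}
\bar{w}(x,t) = \frac{\mathbb{E}\bigl[\tilde{w}(t,X,B,M_t)\exp(-\lambda^{-1}\int_t^T f\,\mathrm{d}s - \lambda^{-1}g(X_T))\bigm|X_t=x\bigr]}{\mathbb{E}\bigl[\exp(-\lambda^{-1}\int_t^T f\,\mathrm{d}s - \lambda^{-1}g(X_T))\bigm|X_t=x\bigr]}.
\end{talign*}
Pulling $\sigma(t)^{\top}$ and an overall $\lambda^{-1}$ out of $\tilde{w}$ in the numerator gives exactly $\lambda\sigma(t)^{\top}$ times the right-hand side of \eqref{eq:cond_exp_rewritten}, so by the path-wise reparameterization trick \autoref{prop:cond_exp_rewritten} the numerator equals $\lambda\sigma(t)^{\top}\nabla_x\mathbb{E}[\exp(-\lambda^{-1}\int_t^T f\,\mathrm{d}s - \lambda^{-1}g(X_T))\mid X_t=x]$. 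Dividing by the denominator produces $\lambda\sigma(t)^{\top}\nabla_x\log\mathbb{E}[\cdots\mid X_t=x]$, which by \autoref{lemma:path_integral} is $u^*(x,t)$. Finally, the bias $\mathbb{E}[\tfrac{1}{T}\int_0^T \|u(X_t,t)-u^*(X_t,t)\|^2 e^{-\lambda^{-1}\mathcal{W}(X,0)}\,\mathrm{d}t]$ is converted into the form of \eqref{eq:L_u_decomposition} via $\exp(-\lambda^{-1}\mathcal{W}(X,0)) = \exp(-\lambda^{-1}V(X_0,0))\cdot(\mathrm{d}\mathbb{P}^{u^*}/\mathrm{d}\mathbb{P})(X)$ (from \autoref{lem:radon_nikodym}), after which the change of measure to $\mathbb{P}^{u^*}$ replaces $X$ by $X^{u^*}$. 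The main subtlety is the Markov/conditional-independence step — ensuring that the time-$t$ splitting is clean because $\tilde{w}$ is future-measurable — while the algebraic heart of the argument, expressing the gradient of a conditional expectation without spatial differentiation, is supplied directly by \autoref{prop:cond_exp_rewritten}.
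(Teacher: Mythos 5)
Your proof is correct and takes essentially the same route as the paper: reduce to the uncontrolled process via Girsanov, perform the weighted conditional bias--variance decomposition, identify the conditional weighted mean $\bar{w}(x,t)$ with $u^*(x,t)$ via the Markov property, the path-wise reparameterization trick (\autoref{prop:cond_exp_rewritten}) and the path-integral representation (\autoref{lemma:path_integral}), and finally change measure to $\mathbb{P}^{u^*}$ for the bias term. The only cosmetic difference is that the paper derives $\mathcal{L}_{\mathrm{SOCM}}$ starting from $\tilde{\mathcal{L}}(u)=\mathbb{E}[\|u-u^*\|^2 e^{-\lambda^{-1}\mathcal{W}}]$ by expanding the cross term and completing the square (and then shows the residual $K$ equals $\mathrm{CondVar}(w;M)$), whereas you expand $\|u-\tilde{w}\|^2$ around $\bar{w}$ directly; the key identity $\bar{w}=u^*$ plays exactly the same role in both.
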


Remark that the bias term in equation \eqref{eq:L_u_decomposition} is equal to the characterization of the cross-entropy loss in \autoref{lem:CE_characterization}. In other words, the landscape of $\mathcal{L}_{\mathrm{SOCM}}(u,M)$ with respect to $u$ is the landscape of the cross-entropy loss $\mathcal{L}_{\mathrm{CE}}(u)$. Thus, the SOCM loss can be seen as some form of 
variance reduction method for the cross-entropy loss, and performs substantially better experimentally (\autoref{sec:exp}). Yet, the expressions of the SOCM loss and the cross-entropy loss are very different; the former is a least squares loss and is expressed in terms of the gradients of the costs.   

\begin{algorithm}[H]
\setcounter{AlgoLine}{0}
\SetAlgoLined
\small{
\KwIn{State cost $f(x,t)$, terminal cost $g(x)$, diffusion coeff. $\sigma(t)$, base drift $b(x,t)$, noise level $\lambda$, number of iterations $N$, batch size $m$, number of time steps $K$, initial control parameters $\theta_0$, initial matrix parameters $\omega_0$, loss $\mathcal{L}_{\mathrm{SOCM}}$ in \eqref{eq:mathcal_L_loss}
}
  \For{$n \in \{0,\dots,N-1\}$}{
    Simulate $m$ trajectories of the process $X^{v}$ controlled by $v = u_{\theta_n}$, e.g., using Euler-Maruyama updates

    Detach the $m$ trajectories from the computational graph, so that gradients do not backpropagate 

    Using the $m$ trajectories, compute an $m$-sample Monte-Carlo approximation $\hat{\mathcal{L}}_{\mathrm{SOCM}}(u_{\theta_n}, M_{\omega_n})$ of the loss $\mathcal{L}_{\mathrm{SOCM}}(u_{\theta_n},M_{\omega_n})$ in \eqref{eq:mathcal_L_loss}

    Compute the gradients $\nabla_{(\theta,\omega)} \hat{\mathcal{L}}_{\mathrm{SOCM}}(u_{\theta_n},M_{\omega_n})$ of $\hat{\mathcal{L}}_{\mathrm{SOCM}} (u_{\theta_n},M_{\omega_n})$ at $(\theta_n, \omega_n)$

    Obtain $\theta_{n+1}$, $\omega_{n+1}$ with via an Adam update on $\theta_n$, $\omega_n$, resp.
  }
\KwOut{Learned control $u_{\theta_N}$}}
\caption{Stochastic Optimal Control Matching (SOCM)}
\label{alg:SOCM}
\end{algorithm}

For good training performance, it is critical that the gradients have high signal-to-noise ratio. Looking at the SOCM loss, a good proxy for low gradient variance is to have low variance for $\frac{1}{T} \int_0^{T} \big\| u(X^v_t,t) - w(t,v,X^v,B,M_t) \big\|^2 \, \mathrm{d}t \times \alpha(v, X^v, B)$, and this holds when both $\alpha(v, X^v, B)$ and $w(t,v,X^v,B,M_t)$ have low variance. Next, we present strategies to lower the variance of these two objects.

\paragraph{Minimizing the variance of the importance weight $\alpha$} We want to use a vector field $v$ such that $\mathrm{Var}[\alpha(v,X^v,B)]$ is as low as possible.
As shown by the following lemma, which is well-known in the literature, setting $v$ to be the optimal control $u^*$ actually achieves variance zero when we condition on the starting point of the controlled process $X^v$. The proof of this result can be found in \citet{hartmann2017variational}, but we include it in \autoref{sec:proof_alpha_zero_variance} for completeness.
\begin{lemma} \label{eq:alpha_zero_variance}
    When we set $v = u^*$, the conditional variance $\mathrm{Var}[\alpha(v,X^v,B) | X^v_0 = x_{\mathrm{init}}]$ is zero for any $x_{\mathrm{init}} \in \R^d$. 
\end{lemma}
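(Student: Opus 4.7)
The plan is to recognize $\alpha(v,X^v,B)$ as essentially a Radon--Nikodym derivative times the work functional, substitute $v=u^*$, and apply the explicit formula for $d\mathbb{P}^{u^*}/d\mathbb{P}$ stated earlier in the excerpt. This reduces $\alpha$ to something that depends only on $X^v_0$, at which point conditioning on $X^v_0=x_{\mathrm{init}}$ kills all randomness.

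First I would observe that, by Girsanov's theorem applied to the drift change from $b+\sigma v$ (the drift of $X^v$ under $\mathbb{P}^v$) to $b$ (the drift of $X^v$ viewed as a diffusion under the measure corresponding to the uncontrolled SDE), one has
\begin{talign*}
\frac{d\mathbb{P}}{d\mathbb{P}^v}(X^v) = \exp\Bigl( -\lambda^{-1/2}\!\int_0^T \langle v(X^v_t,t), \mathrm{d}B_t\rangle - \tfrac{\lambda^{-1}}{2}\!\int_0^T \|v(X^v_t,t)\|^2\,\mathrm{d}t\Bigr).
\end{talign*}
Comparing with \eqref{eq:importance_weight_def}, this yields the clean identity $\alpha(v,X^v,B) = \frac{d\mathbb{P}}{d\mathbb{P}^v}(X^v)\,\exp(-\lambda^{-1}\mathcal{W}(X^v,0))$, valid for every admissible $v$.

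Next, specializing to $v=u^*$, I would invoke the formula from the excerpt,
\begin{talign*}
\frac{d\mathbb{P}^{u^*}}{d\mathbb{P}}(X) = \exp\bigl(\lambda^{-1}(V(X_0,0) - \mathcal{W}(X,0))\bigr),
\end{talign*}
take its reciprocal, and evaluate on the path $X^{u^*}$ to get $\frac{d\mathbb{P}}{d\mathbb{P}^{u^*}}(X^{u^*}) = \exp\bigl(-\lambda^{-1}V(X^{u^*}_0,0)+\lambda^{-1}\mathcal{W}(X^{u^*},0)\bigr)$. Plugging this into the identity for $\alpha$ from the previous step, the two $\mathcal{W}$ terms cancel, leaving
\begin{talign*}
\alpha(u^*, X^{u^*}, B) = \exp\bigl(-\lambda^{-1} V(X^{u^*}_0, 0)\bigr).
\end{talign*}
This is a deterministic function of $X^{u^*}_0$ alone, so conditional on $X^{u^*}_0 = x_{\mathrm{init}}$ it equals the constant $\exp(-\lambda^{-1}V(x_{\mathrm{init}},0))$, whose conditional variance is zero.

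The only subtlety I anticipate is making sure the Girsanov change of measure in the first step is applied in the correct direction and that the stochastic integrals are taken with respect to the Brownian motion $B$ that drives $X^v$ under $\mathbb{P}^v$ (as is the convention in \eqref{eq:controlled_SDE}); everything else is algebraic cancellation. No additional regularity beyond what is implicit in the admissibility of $u^* \in \mathcal{U}$ (ensuring Novikov-type conditions and thus the validity of Girsanov) is needed.
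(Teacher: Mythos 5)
Your proof is correct and uses essentially the same mechanism as the paper's: both reduce to showing $\alpha(u^*,X^{u^*},B)=\exp(-\lambda^{-1}V(X^{u^*}_0,0))$ by cancelling the stochastic-integral and $\mathcal{W}$ terms, and then noting that conditioning on $X^{u^*}_0=x_{\mathrm{init}}$ makes this a constant. The only cosmetic difference is that you invoke the Radon--Nikodym identities of \autoref{lem:radon_nikodym} directly, while the paper re-derives the same cancellation inline by expanding $\mathcal{W}(X^v,0)$ via \eqref{eq:mathcal_W_X_t} in the $B^v$-coordinates.
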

Of course, we do not have access to the optimal control $u^*$, but it is still a good idea to set $v$ as the closest vector field to $u^*$ that we have access to, which is typically the currently learned control. In some instances, one may benefit from using a warm-started control parameterized as $u_{\mathrm{WS}}(x,t) + u_{\theta}(x,t)$, where the warm-start $u_{\mathrm{WS}}$ is a reasonably good control obtained via a different strategy (see \autoref{sec:warm_start}). 

\paragraph{Minimizing the variance of the matching vector field $w$} We are interested in finding the family $M = {(M_t)}_{t \in [0,T]}$ that minimizes the variance of $w(t,v,X^v,B,M_t)$ conditioned on $t$ and $X_t$. Note that this is exactly the term $\mathrm{CondVar}(w;M)$ in the right-hand side of equation \eqref{eq:L_u_decomposition}. Since $\mathrm{CondVar}(w;M)$ does not depend on the specific $v$, the optimal $M$ does not depend on $v$ either. And since the first term in the right-hand side of equation \eqref{eq:L_u_decomposition} does not depend on $M = {(M_t)}_{t \in [0,T]}$, minimizing $\mathrm{CondVar}(w;M)$ is equivalent to minimizing $\mathcal{L}(u)$ with respect to $M$.
In practice, we parameterize $M$ using a neural network with a two-dimensional input $(t,s)$ and a $d^2$-dimensional output.

Furthermore, the following theorem shows that the optimal family $M^* = {(M^*_t)}_{t \in [0,T]}$ can be characterized as the solution of a linear equation in infinite dimensions. The proof is in \autoref{sec:proof_optimal_M}.

\begin{theorem}[Optimal reparameterization matrices] \label{thm:optimal_M}
    Let $v$ be an arbitrary control in $\mathcal{U}$. Define the integral operator $\mathcal{T}_t : L^2([t,T];\R^{d \times d}) \to L^2([t,T];\R^{d \times d})$ as
    \begin{talign}
        [\mathcal{T}_t(\dot{M}_t)](s) = \int_t^T \dot{M}_t(s') \mathbb{E} \big[ \chi(s',X^v,B) \chi(s,X^v,B)^{\top} \times \alpha(v,X^v,B) \big] \, \mathrm{d}s',
    \end{talign}
    where
    \begin{talign}
    \begin{split}
        \chi(t,X^v,B) &:= \int_t^T \nabla_x f(X^v_{s},s) \, \mathrm{d}s + \nabla g(X^v_T) + (\sigma_t^{-1})^{\top}(t) v(X^v_t,t) \\ &\qquad\qquad - \int_t^T \nabla_x b(X^v_{s}, s) (\sigma_{s}^{-1})^{\top}(s) v(X^v_t,t) \, \mathrm{d}s - \int_t^T \nabla_x b(X^v_{s}, s) (\sigma_{s}^{-1})^{\top}(s) \, \mathrm{d}B_{s}.
    \end{split}
    \end{talign}
    If we define $N_t(s) = - \mathbb{E} \big[ \big( \nabla g(X^v_T) + \int_{t}^T \nabla_x f(X^v_{s'},s') \, \mathrm{d}s' \big) \chi(t,X^v,B)^{\top} \times \alpha(v,X^v,B) \big]$, the optimal $M^*= (M^*_t)_{t\in [0,T]}$ is of the form $M^*_t(s) = I + \int_t^s \dot{M}^*_t(s') \, \mathrm{d}s'$, where $\dot{M}^*_t$ is the unique solution of the following Fredholm equation of the first kind:
    \begin{talign} \label{eq:fredholm_1st}
        \mathcal{T}_t(\dot{M}_t) = N_t.
    \end{talign}
\end{theorem}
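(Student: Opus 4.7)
The plan is as follows. By \autoref{prop:bias_variance}, the bias term of $\mathcal{L}_{\mathrm{SOCM}}(u,M)$ does not depend on $M$, so optimizing $M$ reduces to minimizing the conditional-variance term $\mathrm{CondVar}(w;M)$. Since this quantity integrates over $t$ a functional that only involves $M_t$, the minimization decouples across $t$ and I may work at a single time $t$. The boundary constraint $M_t(t) = \mathrm{Id}$ allows the reparameterization $M_t(s) = \mathrm{Id} + \int_t^s \dot{M}_t(s')\,\mathrm{d}s'$ with $\dot{M}_t$ free, and makes $w$ an affine function of $\dot{M}_t$. Substituting this parameterization into \eqref{eq:matching_vector_field_def}, using $\partial_s M_t(s) = \dot{M}_t(s)$, and applying ordinary and stochastic Fubini to exchange orders of integration, I would write $w(t,v,X^v,B,M_t) = w^0(t,v,X^v,B) - \sigma(t)^{\top} \int_t^T \dot{M}_t(s')\,\chi(s',X^v,B)\,\mathrm{d}s'$, where $w^0$ is the value of $w$ at $M_t \equiv \mathrm{Id}$ and $\chi$ is exactly the random process defined in the theorem statement.

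Next I would derive the first-order optimality condition. The functional $Q(\dot{M}_t) := \mathbb{E}[\|w-\bar{w}\|^2 \alpha]$ is a convex quadratic in $\dot{M}_t$, so the optimal $\dot{M}_t^*$ is characterized by the vanishing of the Fréchet derivative in every direction $h \in L^2([t,T];\R^{d\times d})$. The key simplification is that the $\alpha$-weighted conditional mean $\bar{w}$ satisfies $\mathbb{E}[(w-\bar{w})\alpha \mid X_t^v] = 0$ by construction, so the variation of $\bar{w}$ with respect to $\dot{M}_t$ is orthogonal to $w-\bar{w}$ under the $\alpha$-weighted inner product and drops out of the derivative. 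What remains is the condition $\mathbb{E}\bigl[(w-\bar{w})^{\top} \int_t^T h(s)\,\chi(s,X^v,B)\,\mathrm{d}s \cdot \alpha\bigr] = 0$ for every $h$. Substituting the linearized expression for $w$, transposing to move $\dot{M}_t$ to the outside of the inner product, and using the same orthogonality to discard conditional means arising in the cross terms, this reduces to $\int_t^T \dot{M}_t(s')\, \mathbb{E}[\chi(s')\chi(s)^{\top} \alpha]\,\mathrm{d}s' = N_t(s)$ for a.e.\ $s \in [t,T]$, which is precisely $\mathcal{T}_t(\dot{M}_t) = N_t$: the quadratic part of $Q$ produces the kernel of $\mathcal{T}_t$, while the contribution involving $w^0$ furnishes the forcing $N_t$.

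For uniqueness, I would observe that $\mathcal{T}_t$ is a positive-semidefinite Gram-type operator: for any $\dot{M}_t$ one has $\langle \dot{M}_t, \mathcal{T}_t(\dot{M}_t)\rangle = \mathbb{E}\bigl[\bigl\|\int_t^T \dot{M}_t(s)\,\chi(s,X^v,B)\,\mathrm{d}s\bigr\|^2\, \alpha\bigr] \geq 0$, and this vanishes only when the inner integral is zero $\alpha$-almost surely. Under a mild non-degeneracy condition on the random process $\chi$, this forces $\dot{M}_t = 0$, so $\mathcal{T}_t$ is injective and the Fredholm equation \eqref{eq:fredholm_1st} has a unique solution. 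The main obstacle will be the bookkeeping in the middle paragraph: one must carefully track how the conditional centering interacts with the affine representation of $w$ so that all cross terms collapse to the clean kernel $\mathbb{E}[\chi(s')\chi(s)^{\top}\alpha]$ stated in the theorem, with no residual centered quantities. A secondary technical point is the stochastic Fubini needed in the linearization of $w$, which I would justify via the Itô isometry and standard integrability assumptions on the process coefficients.
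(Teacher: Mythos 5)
Your proposal follows essentially the same route as the paper: reparameterize $M_t(s) = I + \int_t^s \dot{M}_t$, apply Fubini to exhibit $w$ as affine in $\dot{M}_t$ with ``slope'' $\chi$, and then read off the Fredholm equation from the first-order condition. The one genuine difference is at the starting point. The paper observes that $\mathrm{CondVar}(w;M) = \mathbb{E}[\|w\|^2\alpha] - \mathbb{E}[\|\bar{w}\|^2\alpha]$ with $\bar{w} = u^*$ independent of $M$ (a fact established in the proof of \autoref{prop:bias_variance}), and so immediately reduces to minimizing the second moment $\mathbb{E}\big[\tfrac1T\int_0^T\|w\|^2\,\mathrm{d}t\,\alpha\big]$; from there the first variation computation is clean. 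You instead keep the centered quantity $w-\bar{w}$ and argue that the $\bar{w}$ contributions drop out by orthogonality, which is a slightly longer path to the same endpoint.

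Here is the one place where your reasoning as written is imprecise and would need to be tightened. You correctly drop $\partial_h \bar{w}$ from the derivative using $\mathbb{E}[(w-\bar{w})\alpha \mid X_t^v]=0$. But when you then substitute the affine representation of $w-\bar{w}$ into $\mathbb{E}[\langle w-\bar{w},\,\partial_h w\rangle\alpha]=0$ and try to discard the remaining $\mathbb{E}[\langle \bar{w},\,\partial_h w\rangle\alpha]$ cross-term, ``the same orthogonality'' is not what does the job: $\partial_h w$ is not $X_t$-measurable, so $\mathbb{E}[(w-\bar{w})\alpha\mid X_t]=0$ says nothing about it. What you actually need is $\mathbb{E}[\chi(s)\alpha\mid X^v_t]=0$ for a.e.\ $s$, which holds because $\bar{w}$ is independent of $M$ (so $\partial_h\mathbb{E}[w\alpha\mid X^v_t]=0$ for every direction $h$, which is precisely that statement). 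You invoke \autoref{prop:bias_variance} in your first sentence but never explicitly draw out this consequence, and the conclusion does not follow from the orthogonality you cite. This is fixable in a line, but as written the step is not justified. A simpler repair, and what the paper does, is to replace the objective by the second moment at the outset so that $\bar{w}$ never appears.

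Two further small points. First, your uniqueness argument via the Gram/positive-semidefinite structure of $\mathcal{T}_t$ is correct and is a genuine addition: the paper asserts uniqueness in the theorem statement but does not prove it. Second, you should be aware that the affine representation $w = w^0 - \sigma(t)^\top\int_t^T\dot{M}_t(s')\chi(s')\,\mathrm{d}s'$ has a loose end shared with the paper's own Fubini expansion: the piece $\lambda^{1/2}\int_t^T \dot{M}_t(s)(\sigma^{-1})^\top(s)\,\mathrm{d}B_s$ coming from $-\partial_s M_t(s)$ in the stochastic-integral term is an It\^o integral of $\dot{M}_t$, not a Lebesgue integral, and does not obviously fit the form $\int_t^T\dot{M}_t(s')\chi(s')\,\mathrm{d}s'$. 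Since you are following the theorem's $\chi$ as given, this is inherited rather than introduced, but a fully rigorous write-up would need to address it rather than wave at ``stochastic Fubini.''
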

Solving the Fredholm equation \eqref{eq:fredholm_1st} numerically is expensive, as the discretized linear system has $d^2 K$ equations and variables, $K$ being the number of discretization time points. However, since the optimal $M^*$ does not depend on $v$, this is a computation that must be done only once and that may be affordable in some settings. 

\paragraph{Parameterizing the matrices $M_t$} In practice, we parameterize the matrices ${(M_t)}_{t \in [0,T]}$ using a common function $M_{\omega}$ with two arguments $(t,s)$. A simple way to enforce that $M_{\omega}(t,t) = \mathrm{Id}$ is to set $M_{\omega}(t,s) = e^{-\gamma (s-t)} \mathrm{Id} + (1-e^{-\gamma (s-t)}) \tilde{M}_{\tilde{\omega}}(t,s)$, where $\omega = (\gamma,\tilde{\omega})$, and $\tilde{M}_{\tilde{\omega}} : \R \times \R \to \R^{d \times d}$ is an unconstrained neural network.   

\section{Experiments}\label{sec:exp}
We consider four experimental settings that we adapt from \citet{nüsken2023solving}: \textsc{Quadratic Ornstein Uhlenbeck (easy)}, \textsc{Quadratic Ornstein Uhlenbeck (hard)}, \textsc{Linear Ornstein Uhlenbeck} and \textsc{Double Well}. We describe them in detail in \autoref{sec:additional_experiments}. For all of them, we have access to the ground-truth optimal control, which means that we are able to estimate the $L^2$ error incurred by the learned control $u$. Code can be found at \url{https://github.com/facebookresearch/SOC-matching}.

\begin{figure}[t]
    \centering
    \includegraphics[width=0.49\textwidth]{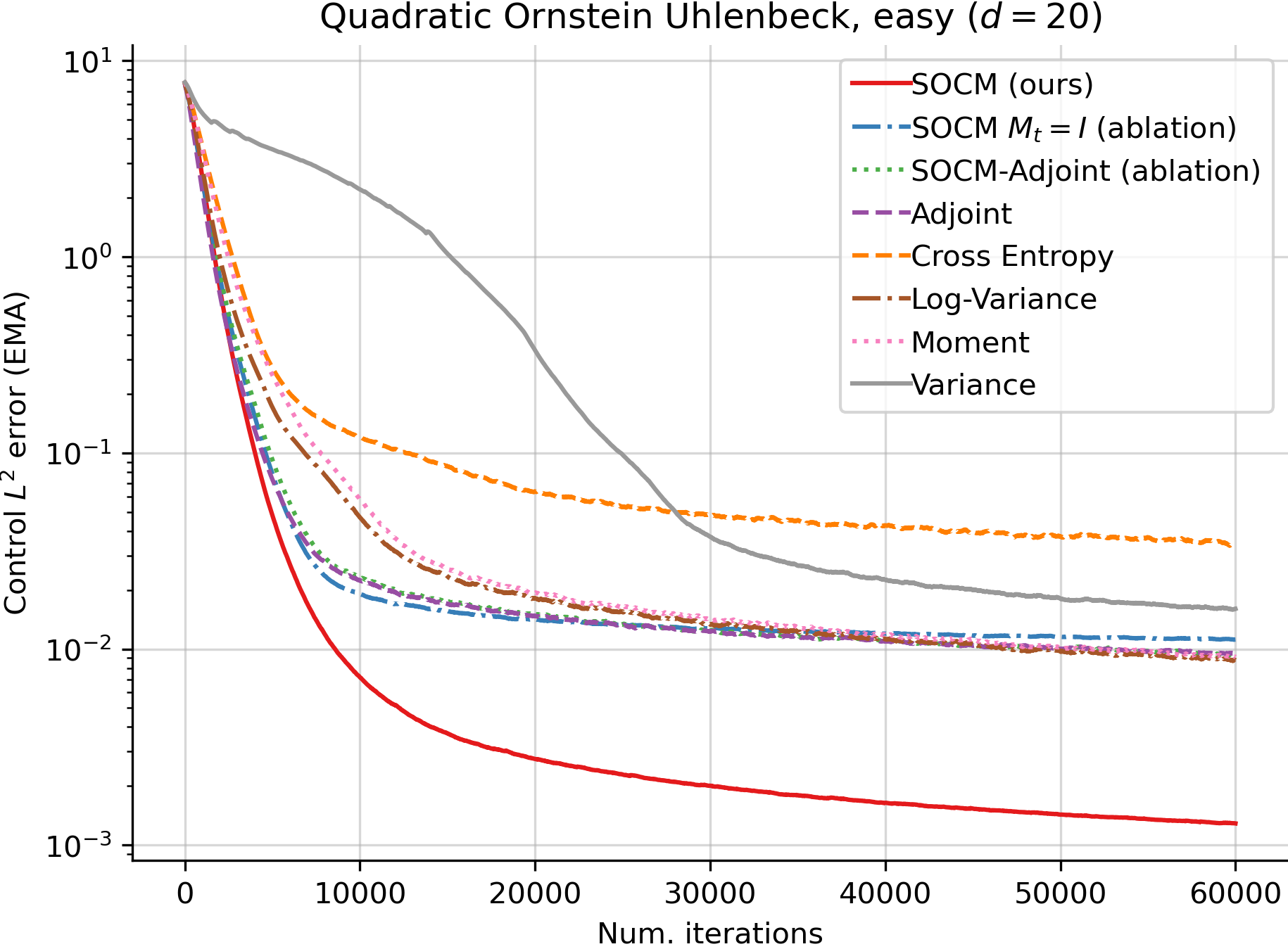}
    \includegraphics[width=0.49\textwidth]{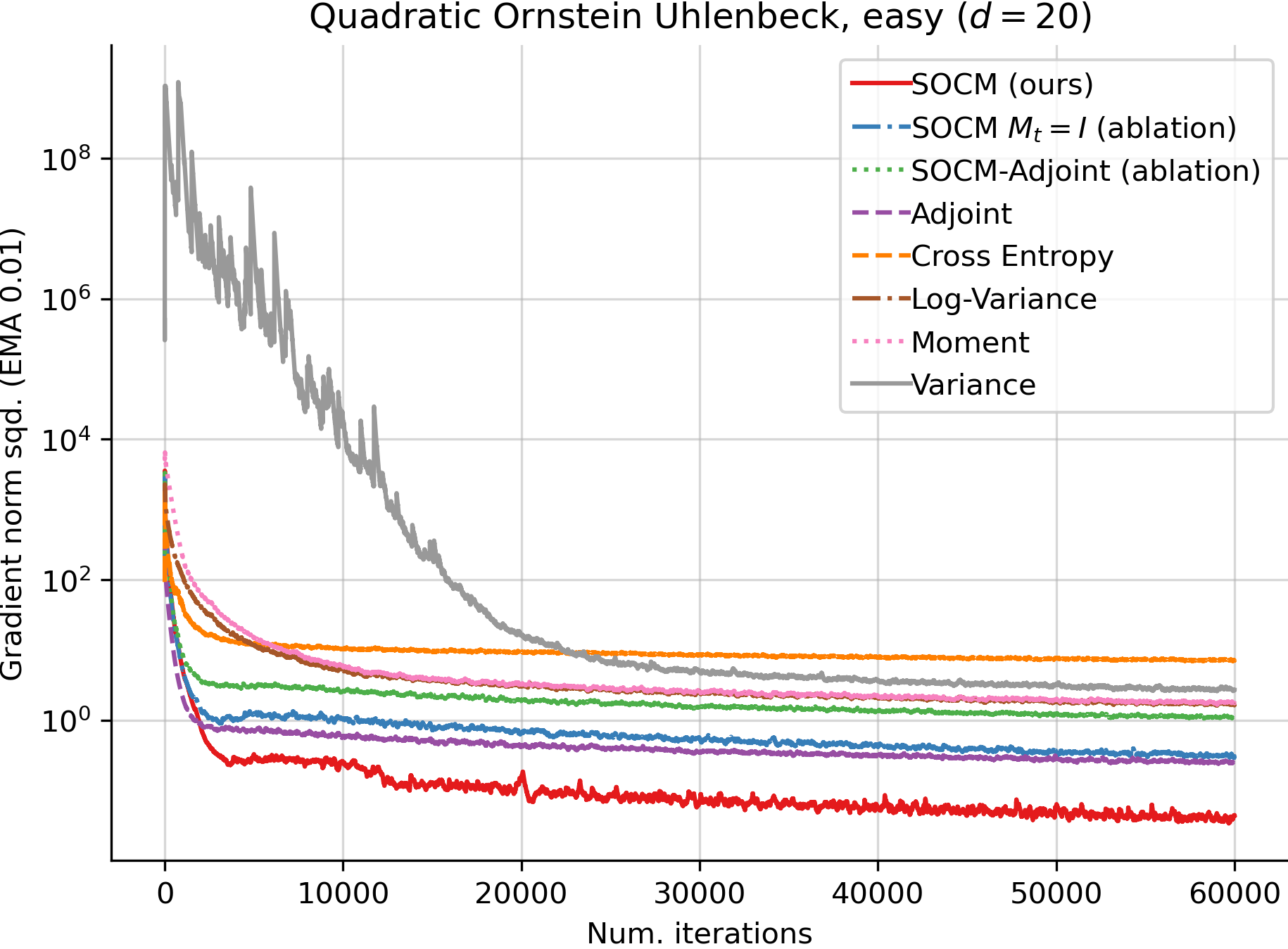}
    \caption{Plots of the $L^2$ error incurred by the learned control (\emph{top}), and the norm squared of the gradient with respect to the parameters $\theta$ of the control (\emph{bottom}), for the \textsc{Quadratic Ornstein Uhlenbeck (easy)} setting and for each IDO loss. Both plots show exponential moving averages computed from the trajectories used during training.}
    \label{fig:quadratic_OU_easy}
\end{figure}

\begin{figure}[t]
    \centering
    \includegraphics[width=0.49\textwidth]{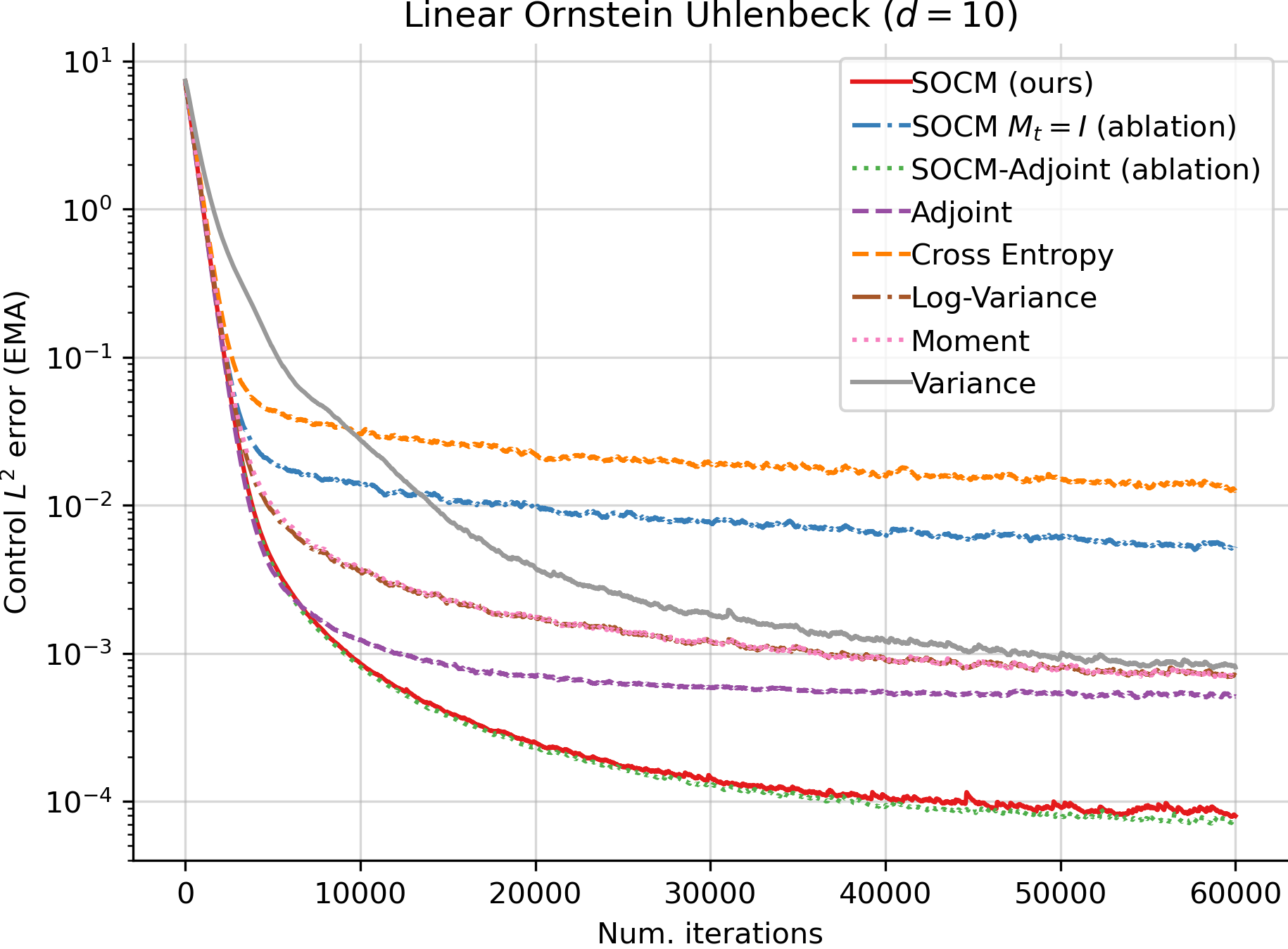}
    \includegraphics[width=0.49\textwidth]{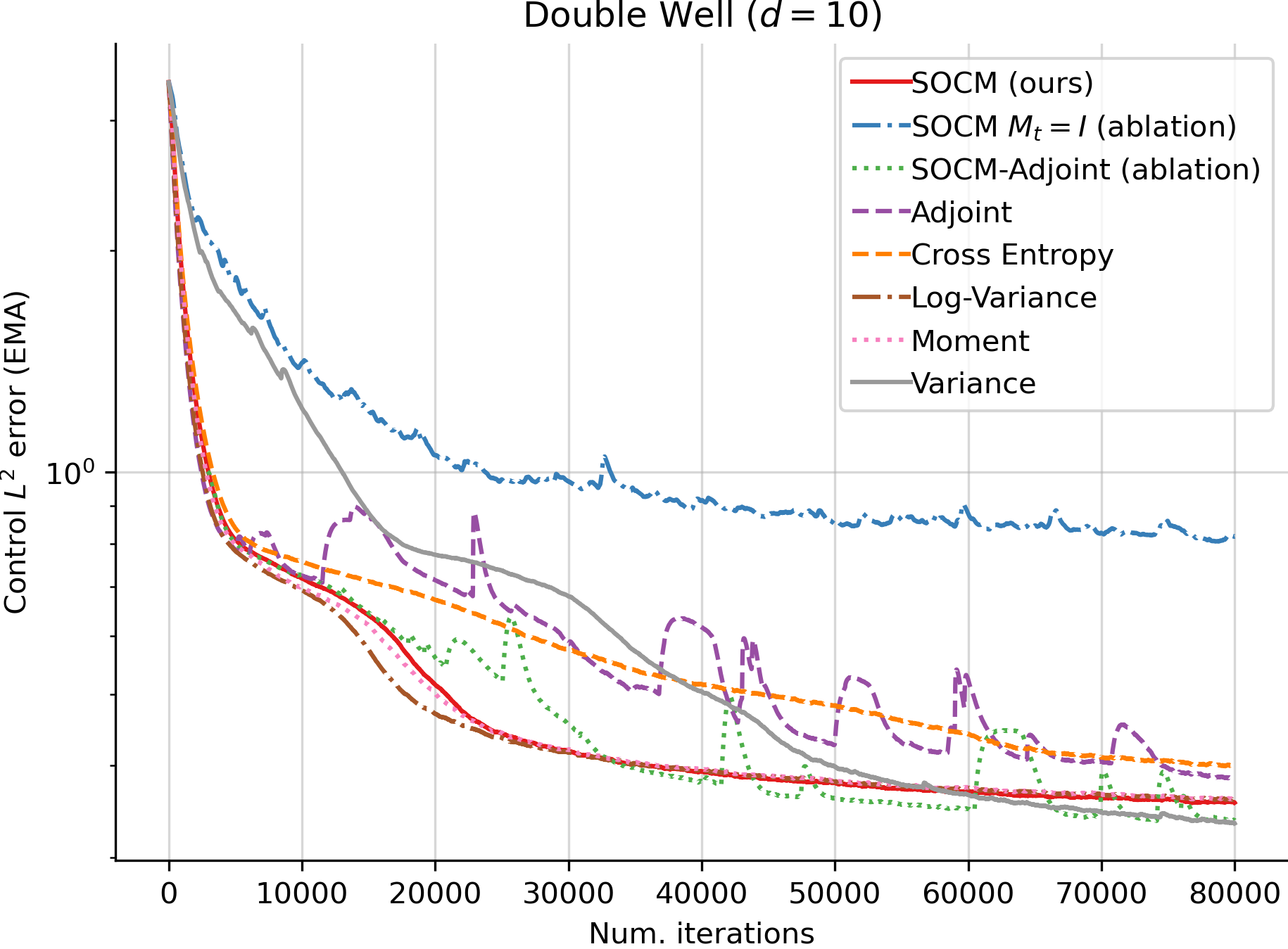}
    \caption{Plots of the $L^2$ error of the learned control for the \textsc{Linear Ornstein Uhlenbeck} and \textsc{Double Well} settings.}
    \label{fig:control_l2_error_linear_OU_double_well}
\end{figure}

\begin{figure}[t]
    \centering
    \includegraphics[width=0.49\textwidth]{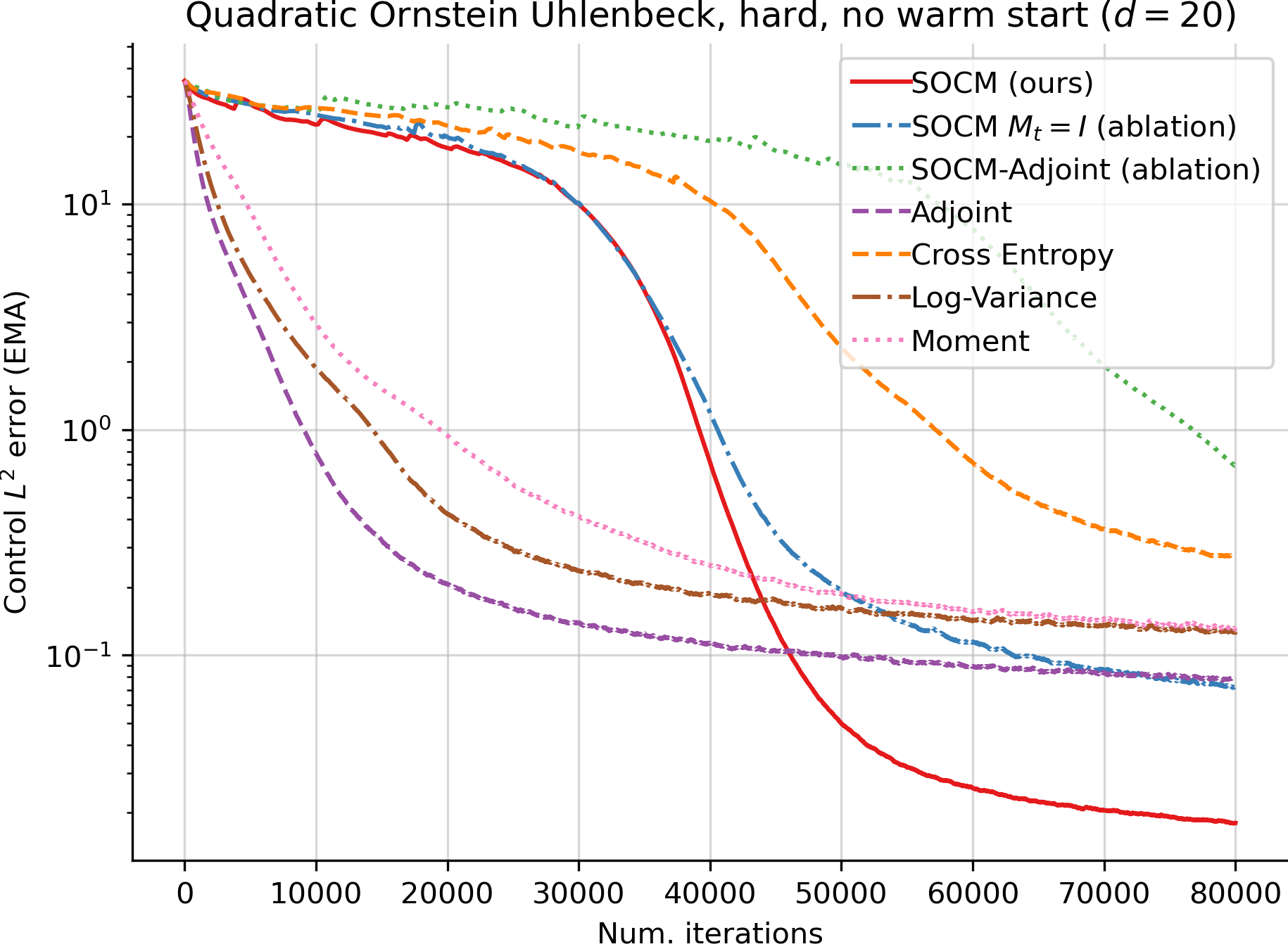}
    \includegraphics[width=0.49\textwidth]{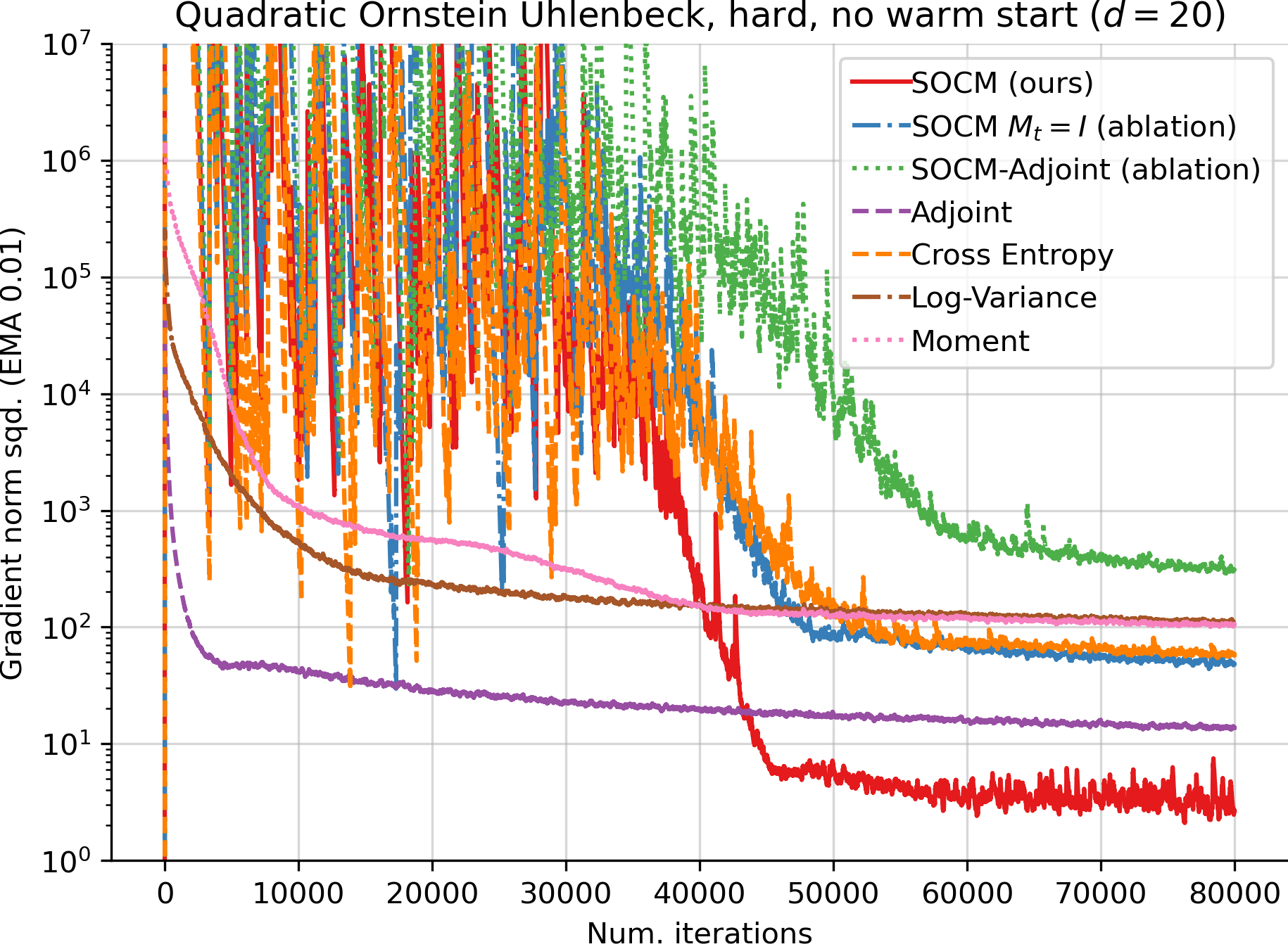}
    \caption{Plots of the $L^2$ error incurred by the learned control (\emph{top}), and the norm squared of the gradient with respect to the parameters $\theta$ of the control (\emph{bottom}), for the \textsc{Quadratic Ornstein Uhlenbeck (hard)} setting and for each IDO loss. All the algorithms use a warm-started control (see \autoref{sec:warm_start}).}
    \label{fig:quadratic_OU_hard}
\end{figure}
\vspace{-3pt}
\begin{table}[!h]
\centering
\small
\begin{tabular}{|c|c|c|c|}
\hline
SOCM & SOCM $M_t=I$ & SOCM adjoint & Adjoint \\ \hline
0.222 & 0.090 & 0.099 & 0.169 \\ \hline
 Cross entropy & Log-variance & Moment & Variance \\ \hline
 0.086 & 0.117 & 0.087 & 0.086 \\ \hline
\end{tabular}
\vspace{-3pt}
\caption{Time per iteration (exponential moving average) for various algorithms in seconds per iteration, for the \textsc{Quadratic OU (easy)} experiments (\autoref{fig:quadratic_OU_easy}).}
\label{tab:algorithm_times}
\end{table}
In \autoref{fig:quadratic_OU_easy} (\emph{top}) we plot the control $L^2$ error for each IDO algorithm described in \autoref{sec:existing}, and for the SOCM algorithm (Algorithm \ref{alg:SOCM}), for the \textsc{Quadratic OU (easy)} setting. We also include two ablations of SOCM: \emph{(i)} a version of SOCM where the reparameterization matrices $M_t$ are set fixed to the identity $I$, \emph{(ii)} SOCM-Adjoint, where we estimate the conditional expectation in equation \eqref{eq:cond_exp_rewritten} using the adjoint method for SDEs instead of the path-wise reparameterization trick (see \autoref{subsec:socm-adjoint}). 

At the end of training, SOCM obtains the lowest $L^2$ error, improving over all existing methods by a factor of around ten. The two SOCM ablations come in second and third by a substantial difference, which underlines the importance of the path-wise reparameterization trick. The best among existing methods is the adjoint method (the relative entropy loss). In \autoref{fig:quadratic_OU_easy} (\emph{bottom}) we show the squared norm of the gradient of each loss with respect to the parameters $\theta$ of the control: algorithms with small noise variance tend to have low error values. \autoref{tab:algorithm_times} shows the average times per iteration for each algorithm.

In \autoref{fig:control_l2_error_linear_OU_double_well}, we plot the control $L^2$ error for \textsc{Linear Ornstein Uhlenbeck} and \textsc{Double Well}. For \textsc{Linear OU}, the error is around five times smaller for SOCM than for any existing method. For \textsc{Double Well}, 
the SOCM algorithm achieves the second smallest error, slightly behind the adjoint method, but the latter shows instabilities. 
As we show in \autoref{fig:double_well_l2_error} in \autoref{sec:additional_experiments}, these instabilities are inherent to the adjoint method and they do not disappear for small learning rates. Both in \autoref{fig:quadratic_OU_easy} and \autoref{fig:double_well_l2_error}, we observe that learning the reparameterization matrices is critical to obtain gradient estimates with high signal-to-noise ratio. \textsc{Double Well} is a particularly interesting and challenging setting because its solution is highly multimodal: $g$ has 1024 modes. Multimodality is a feature observed in realistic settings, and is hard to handle because it involves learning the control correctly in each mode.

The costs $f$ and $g$ and the base drift $b$ for \textsc{Quadratic OU (hard)} are five times those of \textsc{Quadratic OU (easy)}. 
Consequently, the factor $\alpha(v,X^v,B)$ initially has a much larger variance for the SOCM methods, and for cross-entropy. As training progresses, $u_{\theta_n}$ gets closer to $u^*$, and consequently the variance of $\alpha(v,X^v,B)$ decreases, which in turn makes learning easier. This explains the initial slow decrease in the control error, followed by a fast drop that places SOCM well below existing algorithms. In \autoref{sec:warm_start} and \autoref{fig:quadratic_OU_hard}, we showcase a control warm-start strategy that can help and speed up convergence. 

We also present experimental results on two-mode Gaussian mixture sampling in increasing dimension, using the Path Integral Sampler \citep{zhang2022path}. We take Gaussians with means that are 2 units apart, and identity variance.
\autoref{fig:PIS} shows control objective estimates obtained after running the Adjoint, SOCM, and Cross-entropy algorithms for 40000 iterations, at dimensions $d=2,8,16,32,64$, and error bars show standard errors. By Theorem 4 of \cite{zhang2022path}, we know that the optimal value of the control objective is zero; \autoref{fig:PIS} shows the suboptimality gaps incurred by each algorithm. 
\begin{figure}[htbp]
    \centering
    \begin{minipage}{0.65\textwidth}
        \centering
        \includegraphics[width=\linewidth]{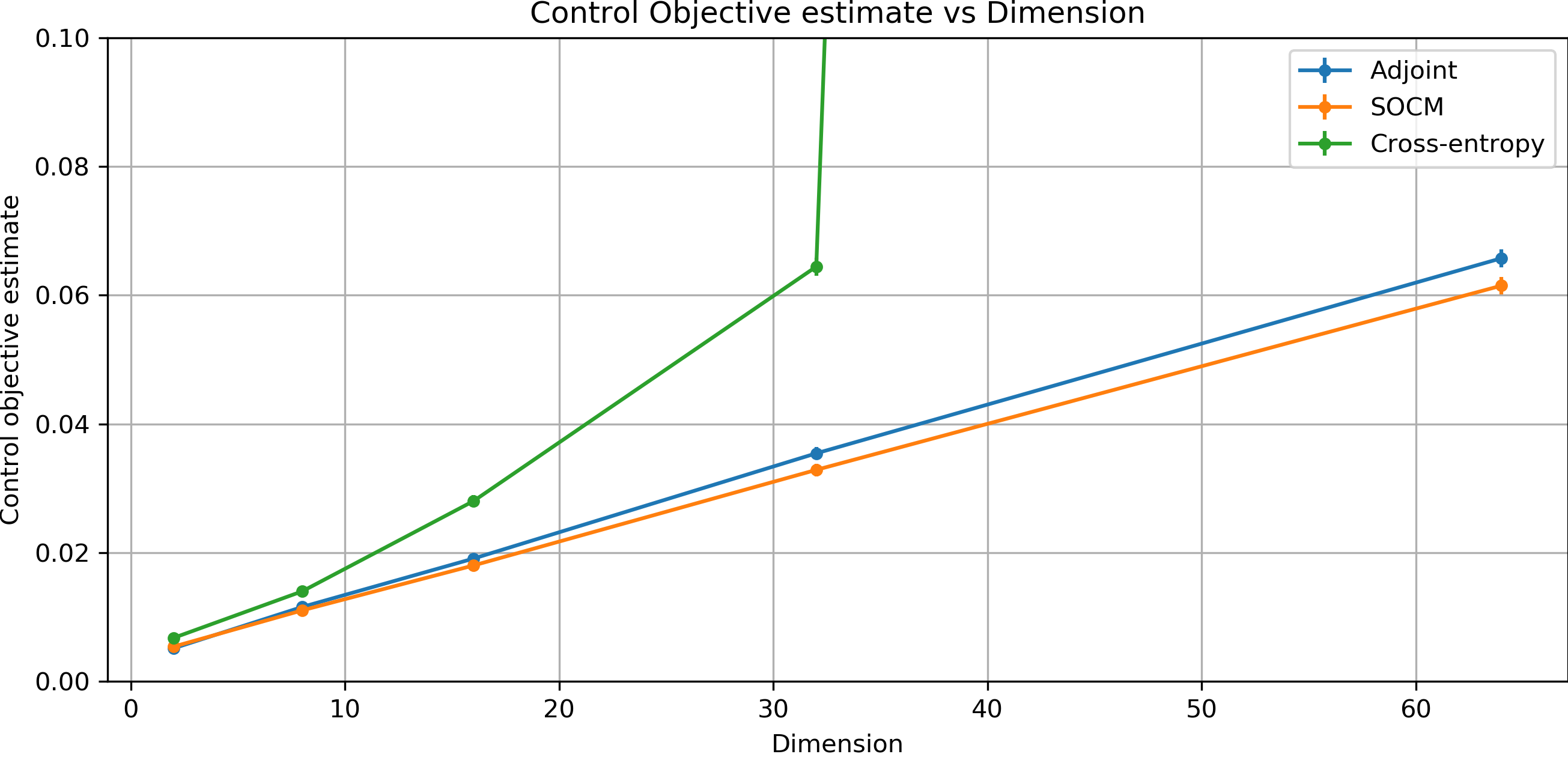}
    \end{minipage}%
    \hspace{6pt}
    \begin{minipage}{0.32\textwidth}
        \caption{This plot shows the control objective values for different algorithms (Adjoint, SOCM, and Cross-entropy) across multiple dimensions, with error bars indicating the standard deviations. The y-axis is restricted to [0, 0.1] for better visibility of the lower range values; cross-entropy takes value $2.915 \pm 0.008$ at $d=64$.}
        \label{fig:PIS}
    \end{minipage}
\end{figure}
Cross-entropy, which uses the same importance weight  as SOCM, performs worse than the other two losses for all dimensions, and its results are particularly poor for dimension 64, because the variance of $\alpha$ is too large for learning to happen. In this case, we see that SOCM has better variance reduction than cross-entropy, despite both using importance weighted objectives for training. We observe that the values for SOCM are slightly below that of Adjoint for most dimensions, which confirms that our method is better for this range of dimensions. If we keep increasing the dimension, SOCM also fails due to higher variance of $\alpha$: for $n=128$, the control objective estimates for the Adjoint, SOCM, and Cross-Entropy losses are $0.146 \pm 0.001$, $7.49 \pm 0.01$, and $12.61 \pm 0.02$, respectively.

\section{Conclusion}\label{sec:conc}
Our work introduces Stochastic Optimal Control Matching, a novel Iterative Diffusion Optimization technique for stochastic optimal control that stems from the same philosophy as the conditional score matching loss for diffusion models. That is, the control is learned via a least-squares problem by trying to fit a matching vector field. The training loss is optimized with respect to both the control function and a family of reparameterization matrices which appear in the matching vector field. The optimization with respect to the reparameterization matrices aims at minimizing the variance of the matching vector field. Experimentally, our algorithm achieves lower error than all the existing IDO techniques for stochastic optimal control for four different control settings.

One of the key ideas for deriving the SOCM algorithm is the path-wise reparameterization trick, a novel technique to obtain low-variance estimates of the gradient of the conditional expectation of a functional of a random process with respect to its initial value. An interesting future direction is to use the path-wise reparameterization trick to decrease the variance of the matching vector field for diffusion models.

The main roadblock when we try to apply SOCM to more challenging problems is that the variance of the factor $\alpha(v,X^v,B)$ explodes when $f$ and/or $g$ are large, or when the dimension $d$ is high. 
The control $L^2$ error for the SOCM and cross-entropy losses remains high and fluctuates heavily due to the large variance of $\alpha$. The large variance of $\alpha$ is due to the mismatch between the probability measures induced by the learned control and the optimal control. Similar problems are encountered in out-of-distribution generalization for reinforcement learning, and some approaches may be carried over from that area \citep{munos2020safe}. 

\clearpage
\newpage
\bibliographystyle{assets/plainnat}
\bibliography{biblio}

\clearpage
\newpage
\beginappendix

\tableofcontents

\starttocentries

\section{Technical assumptions} \label{sec:assumptions}
Throughout our work, we make the same assumptions as \cite{nüsken2023solving}, which are needed for all the objects considered to be well-defined. Namely, we assume that:
\begin{enumerate}[label=(\roman*)]
    \item The set $\mathcal{U}$ of \textit{admissible controls} is given by
    \begin{talign}
        \mathcal{U} = \{ u \in C^1(\R^d \times [0,T] ; \R^d) \, | \, \exists C > 0, \, \forall (x,s) \in \R^d \times [0,T], \, u(x,s) \leq C(1+\|x\|) \}. 
    \end{talign}
    \item The coefficients $b$ and $\sigma$ are continuously differentiable, $\sigma$ has bounded first-order spatial derivatives, and $(\sigma \sigma^{\top})(x, s)$ is positive definite for all $(x, s) \in \R^d × [0, T ]$. Furthermore, there exist constants $C, c_1, c_2 > 0$ such that
    \begin{talign}
    \begin{split}
    \|b(x, s)\| \leq C (1 + \|x\|), \qquad &\text{(linear growth)} \\
    c_1\|\xi\|^2 \leq \xi^{\top} (\sigma \sigma^{\top})(x, s) \xi \leq c_2\|\xi\|^2, \qquad &\text{(ellipticity)}
    \end{split}
    \end{talign}
    for all $(x, s) \in \R^d \times [0, T ]$ and $\xi \in \R^d$.
\end{enumerate}

\section{Proofs of \autoref{sec:framework}} \label{app:proofs_framework}

\paragraph{Proof of \eqref{eq:V_J}} By Itô's lemma, we have that
\begin{talign} 
\begin{split} \label{eq:verification_1}
    V(X^u_T,T) - V(X^u_t,t) &= \int_t^T \big( \partial_s V(X^u_s,s) + \langle b(X^u_s,s) + \sigma(X^u_s,s) u(X^u_s,s), \nabla V(X^u_s,s) \rangle \\ &\qquad + \frac{\lambda}{2} \sum_{i,j=1}^{d}  (\sigma \sigma^{\top})_{ij} (X^u_s,s) \partial_{x_i} \partial_{x_j} V(X^u_s,s) \big) \, \mathrm{d}s + S^u_t,
\end{split}
\end{talign}
where $S^u_t = \sqrt{\lambda} \int_t^T \nabla V(X^u_s,s)^{\top} \sigma (X^u_s,s) \, \mathrm{d}B_s$. Note that by \eqref{eq:HJB_setup},
\begin{talign} 
\begin{split} \label{eq:verification_2}
    &\partial_s V(X^u_s,s) + \langle b(X^u_s,s) +  \sigma(X^u_s,s) u(X^u_s,s), \nabla V(X^u_s,s) \rangle \\ &\qquad + \frac{\lambda}{2} \sum_{i,j=1}^{d}  (\sigma \sigma^{\top})_{ij} (X^u_s,s) \partial_{x_i} \partial_{x_j} V(X^u_s,s) \\ &= \frac{1}{2} \| (\sigma^{\top} \nabla V)(X^u_s,s) \|^2 - f(X^u_s,s) + \langle \sigma(X^u_s,s) u(X^u_s,s), \nabla V(X^u_s,s) \rangle \\ &= \frac{1}{2}\| (\sigma^{\top} \nabla V)(X^u_s,s) + u(X^u_s,s) \|^2 - \frac{1}{2}\|u(X^u_s,s) \|^2 - f(X^u_s,s),
\end{split}
\end{talign}
and this implies that
\begin{talign} \label{eq:no_exp}
    g(X^u_T) - V(X^u_t,t) = \int_t^T \big( \frac{1}{2}\| (\sigma^{\top} \nabla V)(X^u_s,s) + u(X^u_s,s) \|^2 - \frac{1}{2}\|u(X^u_s,s) \|^2 - f(X^u_s,s) \big) \, \mathrm{d}s + S^u_t
\end{talign}
Since $\mathbb{E} [ S^u_t \, | \, X^u_t = x ] = 0$, rearranging \eqref{eq:no_exp} and taking the conditional expectation with respect to $X^u_t$ yields the final result.

\paragraph{Proof of \eqref{eq:pair_SDEs_1}-\eqref{eq:pair_SDEs_2}}
By Itô's lemma, we have that
\begin{talign} 
\begin{split} \label{eq:verification_3}
    \mathrm{d}V(X_s,s) &= \big( \partial_s V(X_s,s) + \langle b(X_s,s), \nabla V(X_s,s) \rangle \\ &\qquad + \frac{\lambda}{2} \sum_{i,j=1}^{d}  (\sigma \sigma^{\top})_{ij} (X_s,s) \partial_{x_i} \partial_{x_j} V(X_s,s) \big) \, \mathrm{d}s + \sqrt{\lambda} \nabla V(X^u_s,s)^{\top} \sigma (X^u_s,s) \, \mathrm{d}B_s,
\end{split}
\end{talign}
Note that by \eqref{eq:HJB_setup}, 
\begin{talign}
\begin{split}
    &\partial_s V(X_s,s) + \langle b(X_s,s), \nabla V(X_s,s) \rangle + \frac{\lambda}{2} \sum_{i,j=1}^{d}  (\sigma \sigma^{\top})_{ij} (X_s,s) \partial_{x_i} \partial_{x_j} V(X_s,s) \\ &= \frac{1}{2} \| (\sigma^{\top} \nabla V)(X_s,s) \|^2 - f(X_s,s).
\end{split}
\end{talign}
Plugging this into \eqref{eq:verification_3} concludes the proof.

\paragraph{Proof of \eqref{eq:V}}
Since $Y_s = V(X_s,s)$ and $Z_s = \sigma^{\top} (s) \nabla V(X_s,s) = - u^*(X_s,s)$ satisfy \eqref{eq:pair_SDEs_2}, we have that
\begin{talign}
    g(X_T) = Y_T = Y_t - \int_t^T (f(X_s,s) - \frac{1}{2}\|u^*(X_s,s)\|^2) \, \mathrm{d}s - \sqrt{\lambda} \int_t^T  \langle u^*(X_s,s), \mathrm{d}B_s \rangle.
\end{talign}
Hence, recalling the definition of the \text{work functional} in \eqref{eq:W_def}, we have that 
\begin{talign} \label{eq:mathcal_W_X_t}
\mathcal{W}(X,t) = Y_t + \frac{1}{2} \int_t^T \|u^*(X_s,s)\|^2 \, \mathrm{d}s - \sqrt{\lambda} \int_t^T  \langle u^*(X_s,s), \mathrm{d}B_s \rangle.
\end{talign}
By Novikov's theorem (Thm.~\ref{thm:Novikov}), we have that
\begin{talign}
\begin{split}
    &\mathbb{E}[\exp(-\lambda^{-1} \mathcal{W}(X,t)) | X_t] \\ &= e^{-\lambda^{-1} Y_t}  \mathbb{E}\big[\exp\big(\lambda^{-1/2}\int_t^T  \langle u^*(X_s,s), \mathrm{d}B_s \rangle - \frac{\lambda^{-1}}{2} \int_t^T \|u^*(X_s,s)\|^2 \, \mathrm{d}s \big) \big| X_t \big] = e^{- \lambda^{-1} Y_t},
\end{split}
\end{talign}
which concludes the proof of \eqref{eq:V}.

\begin{theorem}[Novikov's theorem] \label{thm:Novikov}
    Let ${\theta_s}$ be 
    a locally-$\mathcal{H}_2$ process which is adapted to the natural filtration of the Brownian motion $(B_t)_{t \geq 0}$.
    Define 
    \begin{talign} \label{eq:Z_t_def}
        Z(t) = \exp \big( \int_0^t \theta_s \, \mathrm{d}B_s - \frac{1}{2} \int_0^t \|\theta_s\|^2 \, \mathrm{d}s \big).
    \end{talign}
    If for each $t \geq 0$, 
    \begin{talign}
        \mathbb{E} \big[ \exp \big( \int_0^t \|\theta_s\|^2 \, \mathrm{d}s \big) \big] < +\infty,
    \end{talign}
    then for each $t \geq 0$,
    \begin{talign} \label{eq:exp_Z_t}
    \mathbb{E}[Z(t)] = 1.
    \end{talign}
    Moreover, the process $Z(t)$ is a positive martingale, i.e. if ${(\mathcal{F}_t)}_{t \geq 0}$ is the filtration associated to the Brownian motion ${(B_t)}_{t \geq 0}$, then for $t \geq s$, $\mathbb{E}[Z_t | \mathcal{F}_s] = Z_s$.
\end{theorem}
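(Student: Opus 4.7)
The plan is to prove Novikov's theorem in three standard steps: show $Z(t)$ is a continuous local martingale via Itô's formula, use nonnegativity to conclude it is a supermartingale, and then upgrade to a true martingale by proving $\mathbb{E}[Z(t)] = 1$ with a uniform-integrability argument that invokes the hypothesis. Once $\mathbb{E}[Z(t)] = 1$ is known, the martingale identity $\mathbb{E}[Z(t) \mid \mathcal{F}_s] = Z(s)$ follows automatically, because a nonnegative supermartingale with constant expectation is a martingale.

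\textbf{Step 1 (local martingale).} Apply Itô's formula to $F(x,y) = e^{x - y/2}$ composed with $X_t = \int_0^t \theta_s \, dB_s$ and $Y_t = \int_0^t \|\theta_s\|^2 \, ds$. The second-order term from $dX_t \, dX_t = \|\theta_t\|^2 \, dt$ cancels the contribution of $-\tfrac{1}{2} dY_t$, leaving $dZ(t) = Z(t) \theta_t \, dB_t$. Localizing by $\tau_n = \inf\{s : \int_0^s \|\theta_r\|^2 \, dr \geq n\}$ makes the stopped integrand square-integrable, so $Z$ is a continuous local martingale. From nonnegativity and Fatou's lemma along the localizing sequence, $Z$ is a supermartingale with $\mathbb{E}[Z(t)] \leq 1$.

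\textbf{Step 2 (upgrading to a true martingale).} The stopped processes satisfy $\mathbb{E}[Z(t \wedge \tau_n)] = 1$, so it suffices to show that $\{Z(t \wedge \tau_n)\}_n$ is uniformly integrable, which itself follows from any bound $\sup_n \mathbb{E}[Z(t \wedge \tau_n)^p] < \infty$ for some $p > 1$. The key algebraic identity is
\begin{align*}
Z(t)^p = \mathcal{E}_t(p\theta) \cdot \exp\!\Bigl(\tfrac{p(p-1)}{2} \int_0^t \|\theta_s\|^2 \, ds\Bigr),
\end{align*}
where $\mathcal{E}_t(p\theta) = \exp\!\bigl(p \int_0^t \theta_s \, dB_s - \tfrac{p^2}{2}\int_0^t \|\theta_s\|^2 \, ds\bigr)$ is itself the stochastic exponential of $p\theta$, hence a nonnegative local martingale and a supermartingale with expectation at most $1$. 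Hölder's inequality with conjugate exponents $r,r'$ yields
\begin{align*}
\mathbb{E}[Z(t \wedge \tau_n)^p] \leq \mathbb{E}[\mathcal{E}_{t \wedge \tau_n}(p\theta)^r]^{1/r} \cdot \mathbb{E}\!\Bigl[\exp\!\Bigl(\tfrac{r' p(p-1)}{2} \int_0^{t \wedge \tau_n} \|\theta_s\|^2 \, ds\Bigr)\Bigr]^{1/r'}.
\end{align*}
Choosing $p$ close to $1$ and $r'$ so that $r' p(p-1)/2$ is no larger than the exponent appearing in the hypothesis bounds the second factor uniformly in $n$ by the assumption $\mathbb{E}[\exp(\int_0^t \|\theta_s\|^2 \, ds)] < \infty$. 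The first factor is controlled by iterating the same decomposition on $\mathcal{E}(p\theta)$ (or by observing that, thanks to the stopping at $\tau_n$, the quadratic variation is bounded, making $\mathcal{E}^{\tau_n}(p\theta)$ a genuine $L^r$-martingale whose norm is estimated via the same Novikov-type hypothesis).

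\textbf{Main obstacle.} The delicate bookkeeping is the exponent balance in Step 2: the two factors in the Hölder decomposition pull in opposite directions. Pushing $p \downarrow 1$ weakens the requirement on the $\exp(\tfrac{r' p(p-1)}{2} \int \|\theta\|^2)$ factor but yields no automatic $L^r$ control on $\mathcal{E}(p\theta)$, whereas inflating $r$ to exploit recursion magnifies $r' p(p-1)/2$ past the threshold allowed by the hypothesis. This tension is the classical heart of Novikov's theorem and is the only nontrivial computation in the proof. Once the $L^p$ bound is secured, uniform integrability gives $\mathbb{E}[Z(t)] = \lim_n \mathbb{E}[Z(t \wedge \tau_n)] = 1$, and combining this with the supermartingale inequality from Step 1 forces $\mathbb{E}[Z(t) \mid \mathcal{F}_s] = Z(s)$ almost surely, completing the proof.
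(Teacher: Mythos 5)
First, note that the paper does not prove this statement at all: Novikov's theorem is quoted there as a classical result (used as input to the Girsanov theorem), so there is no paper proof to compare against; your proposal has to be judged on its own. Your Step 1 (Itô's formula giving $dZ = Z\theta\,dB$, localization at $\tau_n$, Fatou, hence a nonnegative supermartingale with $\mathbb{E}[Z(t)]\le 1$) is correct and standard, as is the final observation that a nonnegative supermartingale with constant expectation is a martingale.

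The genuine gap is in Step 2, and you essentially name it yourself without closing it. Your decomposition $Z^p=\mathcal{E}(p\theta)\exp\bigl(\tfrac{p(p-1)}{2}\int_0^t\|\theta_s\|^2ds\bigr)$ followed by H\"older leaves you needing $\sup_n\mathbb{E}[\mathcal{E}_{t\wedge\tau_n}(p\theta)^r]<\infty$ for some $r>1$, and neither of your two suggested escapes works: stopping at $\tau_n$ does make $\mathcal{E}^{\tau_n}(p\theta)$ a true martingale, but that only gives $\mathbb{E}[\mathcal{E}_{t\wedge\tau_n}(p\theta)]=1$ and provides no uniform-in-$n$ $L^r$ bound, while ``iterating the same decomposition'' reproduces the same uncontrolled factor with a larger parameter, so the recursion never closes under the stated hypothesis. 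The missing idea is to put the H\"older exponent \emph{inside} the stochastic exponential before applying H\"older: write $Z^p=\mathcal{E}(pq\theta)^{1/q}\exp\bigl(\tfrac{p^2q-p}{2}\int_0^t\|\theta_s\|^2ds\bigr)$ and apply H\"older with exponents $(q,q')$, which gives $\mathbb{E}[Z(t\wedge\tau_n)^p]\le\mathbb{E}[\mathcal{E}_{t\wedge\tau_n}(pq\theta)]^{1/q}\,\mathbb{E}\bigl[\exp\bigl(\tfrac{q'(p^2q-p)}{2}\int_0^{t}\|\theta_s\|^2ds\bigr)\bigr]^{1/q'}$. The first factor is at most $1$ by the supermartingale property alone — no higher moments of any stochastic exponential are ever needed — and the second is finite uniformly in $n$ provided $\tfrac{q'(p^2q-p)}{2}\le 1$, which is achievable because the hypothesis as stated here carries the constant $1$ rather than the sharp Novikov constant $\tfrac12$: e.g.\ $q=\tfrac32$, $q'=3$ requires $\tfrac{3p(3p-2)}{4}\le 1$, which holds for all $p\in(1,\tfrac{1+\sqrt5}{3})$. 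With that $L^p$ bound, uniform integrability gives $\mathbb{E}[Z(t)]=\lim_n\mathbb{E}[Z(t\wedge\tau_n)]=1$ and the proof finishes as you say. (Under the classical condition with constant $\tfrac12$ this simple H\"older argument genuinely fails, and the harder arguments you allude to become unavoidable; but for the statement as given, the fix above suffices and your Step 2, as written, does not.)
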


\begin{theorem}[Girsanov theorem] \label{thm:Girsanov}
    Let $W = {(W_t)}_{t \in [0,T]}$ be a standard Wiener process, and let $\mathbb{P}$ be its induced probability measure over $C([0,T];\mathbb{R}^d)$, known as the Wiener measure. Let $Z(t)$ be as defined in \eqref{eq:Z_t_def} and suppose that the assumptions of Theorem \ref{thm:Novikov} hold. Let $(\Omega,\mathcal{F})$ be the $\sigma$-algebra associated to $B_T$. For any $F \in \mathcal{F}$, define the measure
    \begin{talign}
        \mathbb{Q}(F) = \mathbb{E}_{\mathbb{P}}[Z(T)\mathbf{1}_F]~.
    \end{talign}
    $\mathbb{Q}$ is a probability measure because of \eqref{eq:exp_Z_t}. Under the probability measure $\mathbb{Q}$, the stochastic process $\{\tilde{W}(t)\}_{0 \leq t \leq T}$ defined as
    \begin{talign}
        \tilde{W}(t) = W(t) - \int_0^t \theta_s \, \mathrm{d}s
    \end{talign}
    is a standard Wiener process. That is, for any $n \geq 0$ and any $0 = t_0 < t_1 < \dots < t_n$, the increments $\{\tilde{W}(t_{i+1}) - \tilde{W}(t_i)\}_{i=0}^{n-1}$ are independent and $\mathbb{Q}$-Gaussian distributed with mean zero and covariance $(t_{i+1}-t_i)\mathrm{I}$, which means that for any $\alpha \in \R^d$, the moment generating function of $\tilde{W}(t_{i+1}) - \tilde{W}(t_i)$ with respect to $\mathbb{Q}$ is as follows:
    \begin{talign}
    \begin{split}
        &\mathbb{E}_{\mathbb{Q}}[\exp(\langle \alpha, \tilde{W}(t_{i+1}) - \tilde{W}(t_i) \rangle)] \\ &:= \mathbb{E}_{\mathbb{P}}\big[\exp \big(\big\langle \alpha, W(t_{i+1}) - \int_0^{t_{i+1}} \theta_s \, \mathrm{d}s - W(t_i) + \int_0^{t_{i}} \theta_s \, \mathrm{d}s \big\rangle \big) Z(T) \big] = \exp \big( \frac{(t_{i+1}-t_i)\|\alpha\|^2}{2} \big).
    \end{split}
    \end{talign}
\end{theorem}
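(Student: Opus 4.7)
The plan is to compute the joint moment generating function of the increments of $\tilde{W}$ under $\mathbb{Q}$ directly, via a clever completion-of-the-square that re-expresses the MGF as another Novikov-type exponential. Fix a partition $0=t_0<t_1<\dots<t_n$ and fix vectors $\alpha_0,\dots,\alpha_{n-1}\in\R^d$. Introduce the bounded piecewise-constant process $\phi(s) := \sum_{i=0}^{n-1} \alpha_i \,\mathbf{1}_{(t_i,t_{i+1}]}(s)$, so that
\begin{talign*}
\sum_{i=0}^{n-1} \langle \alpha_i,\tilde W(t_{i+1})-\tilde W(t_i)\rangle
\;=\; \int_0^T \langle \phi(s),\mathrm{d}W_s\rangle - \int_0^T \langle \phi(s),\theta_s\rangle\,\mathrm{d}s.
\end{talign*}
Since $\mathbb{Q}$ has density $Z(T)$ with respect to $\mathbb{P}$, the joint MGF under $\mathbb{Q}$ equals the $\mathbb{P}$-expectation of $Z(T)$ times the exponential of the above quantity. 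This reduces everything to a single computation under the original Wiener measure.

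Next, I would algebraically combine the exponents. Using $\|\theta_s\|^2 + 2\langle \theta_s,\phi(s)\rangle + \|\phi(s)\|^2 = \|\theta_s+\phi(s)\|^2$, the combined exponent becomes
\begin{talign*}
\int_0^T \langle \theta_s+\phi(s),\mathrm{d}W_s\rangle - \tfrac{1}{2}\int_0^T \|\theta_s+\phi(s)\|^2\,\mathrm{d}s \;+\; \tfrac{1}{2}\int_0^T \|\phi(s)\|^2\,\mathrm{d}s .
\end{talign*}
The last summand is deterministic and factors out as $\exp\bigl(\tfrac12\sum_{i=0}^{n-1}\|\alpha_i\|^2(t_{i+1}-t_i)\bigr)$, while the first two summands are exactly the exponent of a Novikov-type density $\tilde Z(T)$ associated with the shifted integrand $\theta+\phi$. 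If I can invoke Theorem~\ref{thm:Novikov} for $\theta+\phi$, then $\mathbb{E}_{\mathbb{P}}[\tilde Z(T)]=1$, which immediately yields
\begin{talign*}
\mathbb{E}_{\mathbb{Q}}\Bigl[\exp\Bigl(\sum_{i=0}^{n-1}\langle \alpha_i,\tilde W(t_{i+1})-\tilde W(t_i)\rangle\Bigr)\Bigr]
= \prod_{i=0}^{n-1} \exp\bigl(\tfrac{1}{2}\|\alpha_i\|^2 (t_{i+1}-t_i)\bigr).
\end{talign*}
This factorization simultaneously identifies each increment as Gaussian with covariance $(t_{i+1}-t_i)I$ and shows that the increments are mutually independent. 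Setting all $\alpha_j=0$ except one recovers the single-increment formula stated in the theorem.

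The main obstacle is verifying that Novikov's condition carries over from $\theta$ to the perturbed integrand $\theta+\phi$, since Theorem~\ref{thm:Novikov} is the only tool available to conclude $\mathbb{E}_{\mathbb{P}}[\tilde Z(T)]=1$. Because $\phi$ is bounded (taking only finitely many values), the elementary inequality $\|\theta_s+\phi(s)\|^2 \le 2\|\theta_s\|^2 + 2\|\phi(s)\|^2$ reduces the question to integrability of $\exp(2\int_0^T\|\theta_s\|^2\mathrm{d}s)$. I would address this by either strengthening the hypothesis implicitly (the standard Novikov condition is typically stated so that it tolerates such bounded perturbations) or by a localization argument: truncate $\theta$ by a stopping time $\tau_R$, apply the clean computation above on $[0,\tau_R\wedge T]$ where boundedness trivially yields Novikov for $\theta+\phi$, and then pass to the limit $R\to\infty$ using uniform integrability of the martingale $Z$ (from Theorem~\ref{thm:Novikov}) and dominated convergence. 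Once the MGF identity is established, independence and the Gaussian marginals follow, completing the identification of $\tilde W$ as a standard Wiener process under $\mathbb{Q}$.
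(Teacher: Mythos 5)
The paper does not actually prove Theorem~\ref{thm:Girsanov}: it is imported as a classical result (alongside Novikov's theorem) and used as a black box, so there is no internal proof to compare against. Your argument is a correct, standard self-contained proof, and it is a legitimate alternative to the textbook routes (L\'evy's characterization of Brownian motion, or an induction over the partition via conditional characteristic functions): the reduction of the joint increment functional to $\int_0^T\langle\phi,\mathrm{d}W\rangle-\int_0^T\langle\phi,\theta_s\rangle\,\mathrm{d}s$ with $\phi$ piecewise constant, the completion of the square giving the exponent $\int_0^T\langle\theta_s+\phi_s,\mathrm{d}W_s\rangle-\tfrac12\int_0^T\|\theta_s+\phi_s\|^2\,\mathrm{d}s+\tfrac12\int_0^T\|\phi_s\|^2\,\mathrm{d}s$, and the identification of the first two terms as a new exponential martingale $\tilde Z(T)$ are all exactly right, and since the resulting joint moment generating function is finite on all of $\R^{nd}$ and factorizes into the Gaussian product, it does determine the joint law, so independence and the Gaussian marginals follow as you claim.

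The one step you flag as delicate — justifying $\mathbb{E}_{\mathbb{P}}[\tilde Z(T)]=1$ for the shifted integrand $\theta+\phi$ — is genuinely the crux, but it resolves more cleanly than your fallback suggests. Note that the hypothesis in Theorem~\ref{thm:Novikov} as stated in the paper is $\mathbb{E}[\exp(\int_0^T\|\theta_s\|^2\,\mathrm{d}s)]<\infty$, i.e.\ it carries the factor $1$ rather than the classical factor $\tfrac12$. Since $\tfrac12\|\theta_s+\phi_s\|^2\le\|\theta_s\|^2+\|\phi_s\|^2$ and $\phi$ is bounded and deterministic, this hypothesis immediately gives $\mathbb{E}[\exp(\tfrac12\int_0^T\|\theta_s+\phi_s\|^2\,\mathrm{d}s)]<\infty$, i.e.\ the classical Novikov criterion for $\theta+\phi$, so $\mathbb{E}_{\mathbb{P}}[\tilde Z(T)]=1$ follows from the standard form of Novikov's theorem with no localization or stopping-time argument needed; your factor-$2$ requirement $\mathbb{E}[\exp(2\int_0^T\|\theta_s\|^2\,\mathrm{d}s)]<\infty$ only arises if you insist on applying the paper's factor-$1$ variant verbatim to $\theta+\phi$. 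With that observation substituted for the truncation argument, your proof is complete and arguably simpler than the usual proof via L\'evy's characterization, at the mild cost of invoking the slightly stronger integrability hypothesis that the paper already assumes.
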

\begin{corollary}[Girsanov theorem for SDEs] 
\label{cor:girsanov_sdes}
    If the two SDEs
    \begin{talign}
    \begin{split}
    \mathrm{d}X_{t} &= b_1 (X_{t},t) \, \mathrm{d}t + \sigma (X_{t},t) \, \mathrm{d}B_{t}, \qquad X_0 = x_{\mathrm{init}}  \\
    dY_{t} &= (b_1 (Y_{t},t) + b_2 (Y_{t},t)) \, \mathrm{d}t + \sigma (Y_{t},t) \, \mathrm{d}B_{t}, \qquad Y_0 = x_{\mathrm{init}}
    \end{split}
    \end{talign}
    admit unique strong solutions on $[0,T]$, then for any bounded continuous functional $\Phi$ on $C([0,T])$, we have that
    \begin{talign} \label{eq:X_to_Y}
        \mathbb{E}[\Phi(X)] &= \mathbb{E}\big[ \Phi(Y) \exp \big( - \int_0^T \sigma(Y_{t},t)^{-1} b_2 (Y_{t},t) \, \mathrm{d}B_t - \frac{1}{2} \int_0^T \|\sigma(Y_{t},t)^{-1} b_2 (Y_{t},t)\|^2 \, \mathrm{d}t \big) \big] \\ &= \mathbb{E}\big[ \Phi(Y) \exp \big( - \int_0^T \sigma(Y_{t},t)^{-1} b_2 (Y_{t},t) \, d\tilde{B}_t + \frac{1}{2} \int_0^T \|\sigma(Y_{t},t)^{-1} b_2 (Y_{t},t)\|^2 \, \mathrm{d}t \big) \big], \label{eq:X_to_Y_2}
    \end{talign}
    where $\tilde{B}_t = B_t + \int_0^t \sigma(Y_{s},s)^{-1} b_2 (Y_{s},s) \, \mathrm{d}s$. More generally, $b_1$ and $b_2$ can be random processes that are adapted to filtration of $B$.
\end{corollary}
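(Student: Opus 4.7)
The plan is a direct application of the Novikov--Girsanov machinery already stated as \autoref{thm:Novikov} and \autoref{thm:Girsanov}, combined with strong uniqueness of the two SDEs to identify the law of the transformed process.

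First, I would set $\theta_s = -\sigma(Y_s,s)^{-1} b_2(Y_s,s)$. Under the linear-growth and ellipticity assumptions on $\sigma$ and $b_2$ implicit in the hypothesis that both SDEs admit unique strong solutions on $[0,T]$, $\theta$ is a progressively measurable process satisfying Novikov's integrability condition $\mathbb{E}[\exp(\tfrac{1}{2}\int_0^T \|\theta_s\|^2 \mathrm{d}s)] < \infty$ (this is the routine verification step; in the paper's setting the coefficients are smooth enough that it holds). By \autoref{thm:Novikov}, the exponential
\begin{talign*}
Z(T) = \exp \bigl( - \int_0^T \sigma(Y_t,t)^{-1} b_2(Y_t,t) \, \mathrm{d}B_t - \tfrac{1}{2} \int_0^T \|\sigma(Y_t,t)^{-1} b_2(Y_t,t)\|^2 \, \mathrm{d}t \bigr)
\end{talign*}
is a positive martingale with $\mathbb{E}[Z(T)]=1$, so the measure $\mathbb{Q}$ defined by $\mathrm{d}\mathbb{Q}/\mathrm{d}\mathbb{P} = Z(T)$ on $\mathcal{F}_T$ is a probability measure.

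Next I would apply \autoref{thm:Girsanov}: under $\mathbb{Q}$, the process $\tilde{B}_t = B_t - \int_0^t \theta_s \, \mathrm{d}s = B_t + \int_0^t \sigma(Y_s,s)^{-1} b_2(Y_s,s) \, \mathrm{d}s$ is a standard Wiener process. Substituting $\mathrm{d}B_t = \mathrm{d}\tilde{B}_t - \sigma(Y_t,t)^{-1} b_2(Y_t,t) \, \mathrm{d}t$ into the SDE for $Y$, the $b_2$ drift cancels and
\begin{talign*}
\mathrm{d}Y_t = b_1(Y_t,t) \, \mathrm{d}t + \sigma(Y_t,t) \, \mathrm{d}\tilde{B}_t, \qquad Y_0 = x_{\mathrm{init}}.
\end{talign*}
Thus, on the filtered space $(\Omega, \mathcal{F}, \mathbb{Q})$ equipped with the Brownian motion $\tilde{B}$, the process $Y$ is a strong solution of exactly the SDE defining $X$. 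By the assumed strong uniqueness (which implies uniqueness in law), the $\mathbb{Q}$-law of $Y$ equals the $\mathbb{P}$-law of $X$ as measures on $C([0,T];\R^d)$. Consequently, for any bounded continuous functional $\Phi$,
\begin{talign*}
\mathbb{E}_{\mathbb{P}}[\Phi(X)] = \mathbb{E}_{\mathbb{Q}}[\Phi(Y)] = \mathbb{E}_{\mathbb{P}}[\Phi(Y) Z(T)],
\end{talign*}
which is \eqref{eq:X_to_Y}. For the alternative form \eqref{eq:X_to_Y_2}, I would simply rewrite the stochastic integral in $Z(T)$ in terms of $\tilde{B}$: using $\mathrm{d}B_t = \mathrm{d}\tilde{B}_t - \sigma(Y_t,t)^{-1} b_2(Y_t,t) \, \mathrm{d}t$, the Itô integral $-\int_0^T \sigma^{-1} b_2 \, \mathrm{d}B_t$ picks up an extra $+\int_0^T \|\sigma^{-1} b_2\|^2 \, \mathrm{d}t$ term, which combines with the $-\tfrac{1}{2}\int_0^T \|\sigma^{-1} b_2\|^2 \, \mathrm{d}t$ in the exponent to give $+\tfrac{1}{2}\int_0^T \|\sigma^{-1} b_2\|^2 \, \mathrm{d}t$, as required.

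The only subtle point is the identification step: $\mathbb{Q}$-law of $Y$ equals $\mathbb{P}$-law of $X$. This is not an issue of regularity but of what ``unique strong solution'' is assumed to mean; since the conclusion of the Yamada--Watanabe theorem gives strong existence plus pathwise uniqueness $\Rightarrow$ uniqueness in law, this step is standard. The rest is essentially bookkeeping of signs and an application of the exponential martingale property. The generalization to random adapted $b_1, b_2$ is immediate because nothing in the argument above uses that $b_1, b_2$ are deterministic functions of $(Y_t,t)$; we only need progressive measurability and Novikov's condition for $\sigma^{-1} b_2$.
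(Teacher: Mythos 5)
Your proposal is correct and is the canonical derivation: define the exponential martingale $Z(T)$ via Novikov's theorem, change measure to $\mathbb{Q}$ with $\mathrm{d}\mathbb{Q}/\mathrm{d}\mathbb{P}=Z(T)$, invoke Girsanov to show $\tilde{B}$ is a $\mathbb{Q}$-Brownian motion, rewrite the $Y$-SDE in terms of $\tilde{B}$ so that $Y$ solves the $X$-SDE under $\mathbb{Q}$, and then use uniqueness in law (via Yamada--Watanabe from pathwise uniqueness) to identify $\mathrm{Law}_{\mathbb{Q}}(Y)=\mathrm{Law}_{\mathbb{P}}(X)$; \eqref{eq:X_to_Y_2} is then pure bookkeeping from $\mathrm{d}B_t = \mathrm{d}\tilde{B}_t - \sigma^{-1}b_2\,\mathrm{d}t$. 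The paper itself states \autoref{cor:girsanov_sdes} as a standard consequence of \autoref{thm:Girsanov} without writing out a proof, so there is no alternative route to compare against; your argument fills that gap correctly. The one point you rightly flag as a ``routine verification'' — that the hypothesis of unique strong solutions does not by itself imply Novikov's condition for $\sigma^{-1}b_2$ — is genuinely an extra assumption, but under the regularity conditions the paper imposes in \autoref{sec:assumptions} (linear growth and uniform ellipticity), together with the admissibility class $\mathcal{U}$, it does hold, so the proof is sound in context.
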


\begin{lemma} \label{lem:radon_nikodym}
    For an arbitrary $v \in \mathcal{U}$, let $\mathbb{P}^v$ and $\mathbb{P}$ be respectively the laws of the SDEs
    \begin{talign}
    \begin{split}
        \mathrm{d}X^v_t &= (b(X^v_t,t) + \sigma(t) v(X^v_t,t)) \, \mathrm{d}t + \sqrt{\lambda} \sigma(t) \mathrm{d}B_t, \qquad X^v_0 \sim p_0, \\
        \mathrm{d}X_t &= b(X_t,t) \, \mathrm{d}t + \sqrt{\lambda} \sigma(t) \mathrm{d}B_t, \qquad X_0 \sim p_0.
    \end{split}
    \end{talign}
    We have that
    \begin{talign} 
    \begin{split} \label{eq:dP_dPv}
        \frac{d\mathbb{P}}{d\mathbb{P}^v}(X^v) &= \exp \big( - \lambda^{-1/2} \int_0^T \langle v(X^v_t,t), \mathrm{d}B^v_t \rangle + \frac{\lambda^{-1}}{2} \int_0^T \|v(X^v_t,t)\|^2 \, \mathrm{d}t \big) \\ &= \exp \big( - \lambda^{-1/2} \int_0^T \langle v(X^v_t,t), \mathrm{d}B_t \rangle - \frac{\lambda^{-1}}{2} \int_0^T \|v(X^v_t,t)\|^2 \, \mathrm{d}t \big), 
    \end{split}
    \\
        \frac{d\mathbb{P}^v}{d\mathbb{P}}(X) &= \exp \big( \lambda^{-1/2} \int_0^T \langle v(X_t,t), \mathrm{d}B_t \rangle - \frac{\lambda^{-1}}{2} \int_0^T \|v(X_t,t)\|^2 \, \mathrm{d}t \big). 
        \label{eq:dPv_dP}
    \end{talign}
    where $B^v_t := B_t + \lambda^{-1/2} \int_0^t v(X^v_s,s) \, \mathrm{d}s$. For the optimal control $u^*$, we have that
    \begin{talign} \label{eq:dP_dPustar}
        \frac{d\mathbb{P}}{d\mathbb{P}^{u^*}}(X^{u^*}) &= \exp \big( \lambda^{-1} \big( - V(X_0^{u^*},0) + \mathcal{W}(X^{u^*},0) \big) \big), \\
        \frac{d\mathbb{P}^{u^*}}{d\mathbb{P}}(X) &= \exp \big( \lambda^{-1} \big( V(X_0,0) - \mathcal{W}(X,0) \big) \big), \label{eq:dPustar_dP}
    \end{talign}
    where the functional $\mathcal{W}$ is defined in \eqref{eq:W_def}. 
\end{lemma}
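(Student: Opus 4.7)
The plan is to split the argument into two parts matching the two displayed blocks of identities. For the first part, with arbitrary $v \in \mathcal{U}$, I would apply the Girsanov theorem for SDEs (\autoref{cor:girsanov_sdes}) with $b_1 = b$, $b_2(\cdot,t) = \sigma(t) v(\cdot,t)$, and shared diffusion coefficient $\sqrt{\lambda}\,\sigma(t)$. Because the diffusion is $\sqrt{\lambda}\,\sigma(t)$ rather than $\sigma(t)$, the inner drift that enters Girsanov's formula is $(\sqrt{\lambda}\,\sigma(t))^{-1} \sigma(t) v(\cdot,t) = \lambda^{-1/2} v(\cdot,t)$. Substituting this into \eqref{eq:X_to_Y_2} (taking $X = X$ uncontrolled, $Y = X^v$) gives \eqref{eq:dPv_dP} directly, written against the original Brownian motion $B_t$. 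Taking reciprocals yields the first equality of \eqref{eq:dP_dPv}, written against $B^v_t = B_t + \lambda^{-1/2} \int_0^t v(X^v_s, s)\,\mathrm{d}s$; the second equality of \eqref{eq:dP_dPv} then follows by substituting $\mathrm{d}B^v_t = \mathrm{d}B_t + \lambda^{-1/2} v(X^v_t,t)\,\mathrm{d}t$ into the Itô integral and absorbing the extra $-\lambda^{-1}\int_0^T \|v\|^2$ term into the existing $+\frac{\lambda^{-1}}{2}\int_0^T \|v\|^2$, flipping its sign.

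For the second block, I would specialize the first block to $v = u^*$ and then convert the result into the form \eqref{eq:dPustar_dP} using a relation between $\mathcal{W}$ and $V$ that already appears in the proof of \eqref{eq:V}. Specifically, \eqref{eq:mathcal_W_X_t} (at $t=0$) reads
\begin{talign*}
    \mathcal{W}(X,0) = V(X_0,0) + \tfrac{1}{2} \int_0^T \|u^*(X_s,s)\|^2 \, \mathrm{d}s - \sqrt{\lambda} \int_0^T \langle u^*(X_s,s), \mathrm{d}B_s \rangle.
\end{talign*}
Multiplying by $-\lambda^{-1}$ and exponentiating gives exactly the right-hand side of \eqref{eq:dPv_dP} at $v = u^*$, hence $\frac{d\mathbb{P}^{u^*}}{d\mathbb{P}}(X) = \exp(\lambda^{-1}(V(X_0,0) - \mathcal{W}(X,0)))$, which is \eqref{eq:dPustar_dP}. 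The companion identity \eqref{eq:dP_dPustar} follows by taking reciprocals and evaluating along a sample of $X^{u^*}$.

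The main obstacle is mostly careful bookkeeping: getting the $\lambda$-scaling correct when the diffusion is $\sqrt{\lambda}\,\sigma$, reconciling the two forms of \eqref{eq:dP_dPv} through the change of Brownian motion, and verifying the Novikov-type integrability needed to apply \autoref{thm:Novikov} and \autoref{cor:girsanov_sdes}. The latter follows from the linear-growth and ellipticity assumptions of \autoref{sec:assumptions} together with the definition of admissible controls, which guarantees $\mathbb{E}[\exp(\int_0^T \|\lambda^{-1/2} v(X^v_t,t)\|^2 \,\mathrm{d}t)] < \infty$ after the standard localization argument; the same reasoning applies to $u^*$ under the regularity inherited from the verification theorem.
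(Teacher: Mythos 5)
Your proof is structurally correct and follows the same route as the paper: apply Girsanov (\autoref{cor:girsanov_sdes}) for \eqref{eq:dP_dPv}--\eqref{eq:dPv_dP}, then use the identity \eqref{eq:mathcal_W_X_t_2} relating $\mathcal{W}(X,0)$ to $V(X_0,0)$ for \eqref{eq:dP_dPustar}--\eqref{eq:dPustar_dP}. However, there is a bookkeeping slip in your first paragraph: with $Y = X^v$ and $X$ the uncontrolled process, \eqref{eq:X_to_Y_2} yields $\tfrac{d\mathbb{P}}{d\mathbb{P}^v}(X^v)$ written against $\tilde{B}_t = B^v_t$ --- that is, the first form of \eqref{eq:dP_dPv} --- not $\tfrac{d\mathbb{P}^v}{d\mathbb{P}}(X)$ written against $B_t$, as you assert; \eqref{eq:X_to_Y} directly gives the second form of \eqref{eq:dP_dPv}. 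Passing from \eqref{eq:dP_dPv} to \eqref{eq:dPv_dP} then requires the observation (which you do make implicitly later, when ``evaluating along a sample of $X^{u^*}$'') that the Radon--Nikodym derivative is a path functional: the reconstructed driving noise is $B^v$ when evaluated at $X^v$ and $B$ when evaluated at $X$. Modulo this re-labeling, your conversion chain --- reciprocal followed by substituting $\mathrm{d}B^v_t = \mathrm{d}B_t + \lambda^{-1/2} v\,\mathrm{d}t$ --- is sound, and your handling of the $u^*$ case via \eqref{eq:mathcal_W_X_t_2} matches the paper's argument exactly.
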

\begin{proof}
    The proof of \eqref{eq:dP_dPv}-\eqref{eq:dPv_dP} follows directly from \autoref{cor:girsanov_sdes}. To prove \eqref{eq:dPustar_dP}, we use that by \eqref{eq:mathcal_W_X_t}, 
        \begin{talign} \label{eq:mathcal_W_X_t_2}
    \mathcal{W}(X,0) = V(X_0,0) + \frac{1}{2} \int_0^T \|u^*(X_s,s)\|^2 \, \mathrm{d}s - \sqrt{\lambda} \int_0^T  \langle u^*(X_s,s), \mathrm{d}B_s \rangle,
    \end{talign}
    which implies that 
    \begin{talign}
    \begin{split}
        \frac{d\mathbb{P}^{u^*}}{d\mathbb{P}}(X) &= \exp \big( \lambda^{-1/2} \int_0^T \langle u^*(X_t,t), \mathrm{d}B_t \rangle - \frac{\lambda^{-1}}{2} \int_0^T \|u^*(X_t,t)\|^2 \, \mathrm{d}t \big) \\ &= \exp \big( \lambda^{-1} \big( V(X_0,0) - \mathcal{W}(X,0) \big) \big).
    \end{split}
    \end{talign}
    To prove \eqref{eq:dP_dPustar}, we use that since $\mathrm{d}X^{u^*}_t = b(X^{u^*}_t,t) \, \mathrm{d}t + \sqrt{\lambda} \sigma(t) \mathrm{d}B_t^{u^*}$, 
    equation \eqref{eq:mathcal_W_X_t_2} holds if we replace $X$ and $B$ by $X^{u^*}$ and $B^{u^*}$, which reads
    \begin{talign}
        \mathcal{W}(X^{u^*},0) = V(X^{u^*}_0,0) + \frac{1}{2} \int_t^T \|u^*(X^{u^*}_s,s)\|^2 \, \mathrm{d}s - \sqrt{\lambda} \int_t^T  \langle u^*(X^{u^*}_s,s), \mathrm{d}B^v_s \rangle.
    \end{talign}
    Hence,
    \begin{talign}
    \begin{split}
        \frac{d\mathbb{P}}{d\mathbb{P}^{u^*}}(X^{u^*}) &= \exp \big( - \lambda^{-1/2} \int_0^T \langle u^*(X^{u^*}_t,t), \mathrm{d}B^{u^*}_t \rangle + \frac{\lambda^{-1}}{2} \int_0^T \|u^*(X^{u^*}_t,t)\|^2 \, \mathrm{d}t \big) \\ &= \exp \big( \lambda^{-1} \big( - V(X_0^{u^*},0) + \mathcal{W}(X^{u^*},0) \big) \big).
    \end{split}
    \end{talign}
\end{proof}

\begin{lemma} \label{lem:loss_RE}
The following expression holds:
\begin{talign} \label{eq:rel_entropy_1}
    \mathbb{E}_{\mathbb{P}^u} \big[ \log \frac{d\mathbb{P}^u}{d\mathbb{P}^{u^*}} \big] &= \lambda^{-1} \mathbb{E} \big[ \int_0^T \big(\frac{1}{2} \|u(X^u_t,t)\|^2 + f(X^u_t,t) \big) \, \mathrm{d}t + g(X^u_T)  - V(X^u_0, 0) \big], 
\end{talign}
\end{lemma}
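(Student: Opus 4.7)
The plan is to use the chain rule
\begin{talign*}
\log \frac{d\mathbb{P}^u}{d\mathbb{P}^{u^*}}(X^u) = \log \frac{d\mathbb{P}^u}{d\mathbb{P}}(X^u) - \log \frac{d\mathbb{P}^{u^*}}{d\mathbb{P}}(X^u),
\end{talign*}
evaluate each factor along the path $X^u$ using the formulas of \autoref{lem:radon_nikodym}, and then take expectation under the fundamental measure. The crucial observation is that equation \eqref{eq:dPustar_dP} already expresses $d\mathbb{P}^{u^*}/d\mathbb{P}$ as a purely deterministic functional of the path---namely $\exp(\lambda^{-1}(V(X_0,0) - \mathcal{W}(X,0)))$---which exposes exactly the value function and the work functional appearing on the right-hand side of \eqref{eq:rel_entropy_1}.

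Concretely, the first step applies \eqref{eq:dP_dPv} with $v = u$ and takes reciprocals to get
\begin{talign*}
\log \frac{d\mathbb{P}^u}{d\mathbb{P}}(X^u) = \lambda^{-1/2} \int_0^T \langle u(X^u_t,t), \mathrm{d}B_t \rangle + \frac{\lambda^{-1}}{2} \int_0^T \|u(X^u_t,t)\|^2 \, \mathrm{d}t,
\end{talign*}
and the second step substitutes the path $X^u$ into the path-space functional of \eqref{eq:dPustar_dP} to obtain $\log(d\mathbb{P}^{u^*}/d\mathbb{P})(X^u) = \lambda^{-1}(V(X^u_0,0) - \mathcal{W}(X^u,0))$. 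Subtracting these two identities and taking $\mathbb{E}$, the It\^o integral vanishes (it is a true martingale by the linear-growth bound on admissible $u$ together with the standard moment bounds on $X^u$), and expanding $\mathcal{W}(X^u,0) = \int_0^T f(X^u_t,t)\,\mathrm{d}t + g(X^u_T)$ regroups the remaining terms into exactly the right-hand side of \eqref{eq:rel_entropy_1}.

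The only slightly delicate point is the legitimacy of substituting $X^u$ into the Girsanov functional that \eqref{eq:dPv_dP} writes along the uncontrolled path $X$, and analogously for \eqref{eq:dPustar_dP}. This is licit because, under the assumptions in \autoref{sec:assumptions}, the measures $\mathbb{P}$, $\mathbb{P}^u$, $\mathbb{P}^{u^*}$ are mutually absolutely continuous, so each relevant Radon--Nikodym derivative is an almost-surely-defined measurable map on path space that can be evaluated at $X^u$. Apart from this routine check, the argument is pure bookkeeping: the substantive content---converting the Brownian integral $\int \langle u^*, \mathrm{d}B\rangle$ inside the Girsanov factor into the deterministic functional $V(X_0,0) - \mathcal{W}(X,0)$---has already been carried out in the proof of \eqref{eq:dPustar_dP} via the HJB equation.
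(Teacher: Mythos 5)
Your proof is correct and follows essentially the same route as the paper: both decompose $\log \tfrac{d\mathbb{P}^u}{d\mathbb{P}^{u^*}}$ via $\mathbb{P}$ using the chain rule (you write $\tfrac{d\mathbb{P}^u}{d\mathbb{P}}\cdot\tfrac{d\mathbb{P}}{d\mathbb{P}^{u^*}}$, the paper equivalently works with $\tfrac{d\mathbb{P}^{u^*}}{d\mathbb{P}^{u}}$ and negates at the end), substitute the Girsanov and value-function expressions from \autoref{lem:radon_nikodym}, and drop the mean-zero It\^{o} integral. Your remark about the legitimacy of evaluating \eqref{eq:dPustar_dP} along the path $X^u$---because it is written as a measurable functional of the path alone---is the same implicit step the paper takes when it writes $\log \tfrac{d\mathbb{P}^{u^*}}{d\mathbb{P}}(X^u)$.
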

\begin{proof}
To prove \eqref{eq:rel_entropy_1}, we write
\begin{talign}
\begin{split}
    \log \frac{d\mathbb{P}^{u^*}}{d\mathbb{P}^{u}}(X^{u}) &= 
    \log \big( \frac{d\mathbb{P}^{u^*}}{d\mathbb{P}}(X^{u}) \frac{d\mathbb{P}}{d\mathbb{P}^{u}}(X^{u}) \big) = \log \frac{d\mathbb{P}^{u^*}}{d\mathbb{P}}(X^{u}) + \log \frac{d\mathbb{P}}{d\mathbb{P}^{u}}(X^{u}) \\ &= 
    \lambda^{-1} \big( V(X_0^{u},0) - \int_0^T f(X^u_t,t) \, \mathrm{d}t - g(X^u_T) \big) \\ &\qquad -
    \lambda^{-1/2} \int_0^T \langle u(X^u_t,t), \mathrm{d}B_t \rangle - \frac{\lambda^{-1}}{2} \int_0^T \|u(X^u_t,t)\|^2 \, \mathrm{d}t~.
\end{split}
\end{talign}
Since $\mathbb{E}_{\mathbb{P}^u} \big[ \log \frac{d\mathbb{P}^u}{d\mathbb{P}^{u^*}} \big] = - \mathbb{E}_{\mathbb{P}^u} \big[ \log \frac{d\mathbb{P}^{u^*}}{d\mathbb{P}^u} \big]$, and $ \mathbb{E}_{\mathbb{P}^u} \big[\int_0^T \langle u(X^u_t,t), \mathrm{d}B_t \rangle] = 0$, the result follows.
\end{proof}

\begin{proposition} \label{prop:CE_form}
    (i) The following two expressions hold for arbitrary controls $u, v$ in the class $\mathcal{U}$ of admissible controls:
    \begin{talign} 
    \begin{split} \label{eq:cross_entropy_1}
    \tilde{\mathcal{L}}_{\mathrm{CE}}(u) = \mathbb{E}_{\mathbb{P}^{u^*}} \big[ \log \frac{d\mathbb{P}^{u^*}}{d\mathbb{P}^u} \big] &= \mathbb{E} 
    \big[ \big( - \lambda^{-1/2} \int_0^T \langle u(X^{v}_t,t), \mathrm{d}B_t \rangle - \lambda^{-1} \int_0^T \langle u(X^{v}_t,t), v(X^{v}_t,t) \rangle \, \mathrm{d}t \\ &\qquad\qquad\quad + \frac{\lambda^{-1}}{2} \int_0^T \|u(X^{v}_t,t)\|^2 \, \mathrm{d}t + \lambda^{-1} \big( V(X_0^{v},0) - \mathcal{W}(X^{v},0) \big) \big) \\ &\qquad\qquad \times \exp\big( \lambda^{-1} \big( V(X_0^v,0) - \mathcal{W}(X^v,0) \big) \\ &\qquad\qquad\qquad\quad - \lambda^{-1/2} \int_0^T \langle v(X^v_t,t), \mathrm{d}B_t \rangle - \frac{\lambda^{-1}}{2} \int_0^T \|v(X^v_t,t)\|^2 \, \mathrm{d}t \big) \big],
    \end{split} \\
    \tilde{\mathcal{L}}_{\mathrm{CE}}(u) &= \frac{\lambda^{-1}}{2} \mathbb{E} 
    \big[ \int_0^T \|u^*(X^{u^*}_t,t) - u(X^{u^*}_t,t)\|^2 \, \mathrm{d}t \big].
    \label{eq:cross_entropy_2}
    \end{talign}
    When $p_0$ is concentrated at a single point $x_{\mathrm{init}}$, the terms $V(x_{\mathrm{init}},0)$ are constant and can be removed without modifying the landscape. In other words, $\tilde{\mathcal{L}}_{\mathrm{CE}}$ and $\mathcal{L}_{\mathrm{CE}}$ are equal up to constant terms and constant factors.
    
    (ii) When $p_0$ is a generic probability measure, $\tilde{\mathcal{L}}_{\mathrm{CE}}$ and $\mathcal{L}_{\mathrm{CE}}$ have different landscapes, 
    and $\mathcal{L}_{\mathrm{CE}}(u) = \mathbb{E}_{\mathbb{P}^{u^*}} \big[ \log \frac{d\mathbb{P}^{u^*}}{d\mathbb{P}^u} \exp \big( - \lambda^{-1} V(X^{u^*}_0,0) \big) \big]$. $u^*$ is still the only minimizer of the loss $\mathcal{L}_{\mathrm{CE}}$, and for some constant $K$, we have that
    \begin{talign} \label{eq:cross_entropy_L2_error}
        \mathcal{L}_{\mathrm{CE}}(u,0) = \frac{\lambda^{-1}}{2} \mathbb{E} \big[ \int_0^T \|u^*(X^{u^*}_t,t) - u(X^{u^*}_t,t)\|^2 \, \mathrm{d}t \exp \big( - \lambda^{-1} V(X^{u^*}_0, 0) \big) \big] + K.
    \end{talign}
\end{proposition}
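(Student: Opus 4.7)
The argument rests almost entirely on Girsanov calculus, with Lemma~\ref{lem:radon_nikodym} as the main toolkit. I would establish \eqref{eq:cross_entropy_1} and \eqref{eq:cross_entropy_2} separately for part~(i), then discuss the reduction when $p_0$ is a Dirac, and finally obtain part~(ii) by redoing the first derivation with a weighting factor.

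For \eqref{eq:cross_entropy_1}, the plan is to change measure from $\mathbb{P}^{u^*}$ to $\mathbb{P}^{v}$ via $\mathbb{E}_{\mathbb{P}^{u^*}}[F] = \mathbb{E}[F(X^v)\,\tfrac{d\mathbb{P}^{u^*}}{d\mathbb{P}^{v}}(X^v)]$. Chaining $\tfrac{d\mathbb{P}^{u^*}}{d\mathbb{P}^{v}}(X^v) = \tfrac{d\mathbb{P}^{u^*}}{d\mathbb{P}}(X^v)\,\tfrac{d\mathbb{P}}{d\mathbb{P}^{v}}(X^v)$ with Lemma~\ref{lem:radon_nikodym} and \eqref{eq:dP_dPv} immediately produces the exponential importance weight appearing in \eqref{eq:cross_entropy_1}. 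For the log integrand, I split $\log\tfrac{d\mathbb{P}^{u^*}}{d\mathbb{P}^{u}}(X^v) = \log\tfrac{d\mathbb{P}^{u^*}}{d\mathbb{P}}(X^v) - \log\tfrac{d\mathbb{P}^{u}}{d\mathbb{P}}(X^v)$; the first term equals $\lambda^{-1}(V(X^v_0,0) - \mathcal{W}(X^v,0))$ by Lemma~\ref{lem:radon_nikodym}, while the second requires evaluating the path functional $\tfrac{d\mathbb{P}^{u}}{d\mathbb{P}}$, which is naturally written through the Brownian motion driving the uncontrolled SDE, at a sample of $X^v$, whose drift is $b+\sigma v$. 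This bookkeeping is the delicate step: the Brownian motion recovered from $X^v$ is $B_t + \lambda^{-1/2}\int_0^t v(X^v_s,s)\,ds$, and substituting it yields the cross term $-\lambda^{-1}\int_0^T\langle u, v\rangle\,dt$ alongside the Girsanov contributions $-\lambda^{-1/2}\int_0^T\langle u, dB_t\rangle + \tfrac{\lambda^{-1}}{2}\int_0^T\|u\|^2\,dt$. Collecting everything gives \eqref{eq:cross_entropy_1}.

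Equation \eqref{eq:cross_entropy_2} is cleaner to obtain directly rather than by specializing \eqref{eq:cross_entropy_1}: taking the logarithm of \eqref{eq:P_u_P_u_star} gives $\log\tfrac{d\mathbb{P}^{u^*}}{d\mathbb{P}^{u}}(X^{u^*}) = \lambda^{-1/2}\int_0^T\langle u^*-u, dB_t\rangle + \tfrac{\lambda^{-1}}{2}\int_0^T\|u^*-u\|^2\,dt$, and since the It\^o integral is a $\mathbb{P}$-martingale with zero mean, taking expectations yields \eqref{eq:cross_entropy_2}. For the landscape-equivalence claim of (i), in the Dirac case $V(X^v_0,0)$ is the deterministic scalar $V(x_{\mathrm{init}},0)$, so the factor $\exp(\lambda^{-1}V(x_{\mathrm{init}},0))$ can be pulled out of the expectation in \eqref{eq:cross_entropy_1}; the remaining $u$-independent contribution from $\lambda^{-1}(V(x_{\mathrm{init}},0) - \mathcal{W}(X^v,0))$ inside the first factor integrates to a constant in $u$. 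Hence $\tilde{\mathcal{L}}_{\mathrm{CE}}(u) = e^{\lambda^{-1}V(x_{\mathrm{init}},0)}\,\mathcal{L}_{\mathrm{CE}}(u) + \mathrm{const}$, so the two losses have the same minimizer.

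For part~(ii), I would compare \eqref{eq:cross_entropy_1} with the definition \eqref{eq:L_CE} and write $\mathcal{L}_{\mathrm{CE}}(u) = \mathbb{E}[(\log\tfrac{d\mathbb{P}^{u^*}}{d\mathbb{P}^{u}}(X^v) - \lambda^{-1}V(X^v_0,0))\exp(-\lambda^{-1}V(X^v_0,0))\,\tfrac{d\mathbb{P}^{u^*}}{d\mathbb{P}^{v}}(X^v)]$. Reversing the change of measure to $\mathbb{P}^{u^*}$ yields $\mathcal{L}_{\mathrm{CE}}(u) = \mathbb{E}_{\mathbb{P}^{u^*}}[\log\tfrac{d\mathbb{P}^{u^*}}{d\mathbb{P}^{u}}\exp(-\lambda^{-1}V(X^{u^*}_0,0))] + K$, with $K := -\lambda^{-1}\mathbb{E}[V(X^{u^*}_0,0)\exp(-\lambda^{-1}V(X^{u^*}_0,0))]$ independent of $u$. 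Substituting the explicit log-ratio from the proof of \eqref{eq:cross_entropy_2} and conditioning on $X^{u^*}_0$ annihilates the stochastic-integral term, leaving \eqref{eq:cross_entropy_L2_error}. The weight $\exp(-\lambda^{-1}V(X^{u^*}_0,0))$ is strictly positive, and under the ellipticity assumption on $\sigma$ the marginal law of $X^{u^*}_t$ has full support for each $t>0$, so by continuity of admissible controls $u^*$ is the unique minimizer. The main obstacle throughout is the path-extraction step in \eqref{eq:cross_entropy_1}; once the Brownian-motion bookkeeping is correct, the rest is a routine assembly of Lemma~\ref{lem:radon_nikodym}, Girsanov, and the martingale property of It\^o integrals.
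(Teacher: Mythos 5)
Your treatment of part (i) matches the paper's route exactly: both chain the Radon--Nikodym derivatives through $\mathbb{P}$, change measure from $\mathbb{P}^{u^*}$ to $\mathbb{P}^v$ via the product $\tfrac{d\mathbb{P}^{u^*}}{d\mathbb{P}}\tfrac{d\mathbb{P}}{d\mathbb{P}^v}$, handle the Brownian bookkeeping through $B^v_t = B_t + \lambda^{-1/2}\int_0^t v\,\mathrm{d}s$ to extract the $\langle u,v\rangle$ cross term, and obtain \eqref{eq:cross_entropy_2} directly from \eqref{eq:P_u_P_u_star} using the zero-mean martingale property of the It\^o integral.

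For part (ii) you take a genuinely different route. The paper first strips the affine $V(X_0^v,0)$ term from \eqref{eq:cross_entropy_1}, expresses the result as $\bar{\mathcal{L}}_{\mathrm{CE}}(u) = \mathbb{E}[g(u;X_0^v)\exp(\lambda^{-1}V(X_0^v,0))]$ where $g(u;x)$ is exactly the Dirac-initial-condition loss from \eqref{eq:first_out}, and argues that since $u^*$ is independent of $p_0$, it minimizes $g(\cdot;x)$ for every fixed $x$ (by part (i) with $p_0=\delta_x$), and hence is the unique minimizer of $\mathcal{L}_{\mathrm{CE}}(u)=\mathbb{E}[g(u;X_0^v)]$; \eqref{eq:cross_entropy_L2_error} then follows because $\mathcal{L}_{\mathrm{CE}}$ differs from $\bar{\mathcal{L}}_{\mathrm{CE}}$ only by the factor $\exp(-\lambda^{-1}V(X_0^v,0))$. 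You instead compute directly: push the change of measure back to $\mathbb{P}^{u^*}$, identify the remaining weight as $\exp(-\lambda^{-1}V(X^{u^*}_0,0))$, then substitute the explicit log-ratio and condition on $X^{u^*}_0$ to annihilate the stochastic integral. This works and makes \eqref{eq:cross_entropy_L2_error} arguably more transparent. One small slip: the integrand of $\mathcal{L}_{\mathrm{CE}}$ equals $\log\tfrac{d\mathbb{P}^{u^*}}{d\mathbb{P}^u}(X^v) - \lambda^{-1}V(X^v_0,0) + \lambda^{-1}\mathcal{W}(X^v,0)$, not merely the first two terms, so your explicit formula $K = -\lambda^{-1}\mathbb{E}[V(X^{u^*}_0,0)e^{-\lambda^{-1}V(X^{u^*}_0,0)}]$ omits the $u$-independent contribution $\lambda^{-1}\mathbb{E}_{\mathbb{P}^{u^*}}[\mathcal{W}(X^{u^*},0)e^{-\lambda^{-1}V(X^{u^*}_0,0)}]$; since the statement only requires that $K$ be $u$-independent, the conclusion is unaffected. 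Your density-support argument for uniqueness of the minimizer is also valid, though the paper's conditional argument (minimizing $g(\cdot;x)$ pointwise) is more self-contained and avoids invoking full-support of the marginal laws.
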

\begin{proof}
    We begin with the proof of (i), and prove \eqref{eq:cross_entropy_1} first. Note that by the Girsanov theorem (\autoref{thm:Girsanov}),
    \begin{talign} 
    \begin{split} \label{eq:cross_entropy_3}
    \mathbb{E}_{\mathbb{P}^{u^*}} \big[ \log \frac{d\mathbb{P}^{u^*}}{d\mathbb{P}^u} (X^{u^*}) \big] &= - \mathbb{E}_{\mathbb{P}^{u^*}} \big[ \log \frac{d\mathbb{P}^{u}}{d\mathbb{P}^{u^*}} (X^{u^*}) \big] = - \mathbb{E}_{\mathbb{P}^{u^*}} \big[ \log \frac{d\mathbb{P}^{u}}{d\mathbb{P}} (X^{u^*}) + \log \frac{d\mathbb{P}}{d\mathbb{P}^{u^*}} (X^{u^*}) \big] \\ &= - \mathbb{E}_{\mathbb{P}^{v}} \big[ \big( \log \frac{d\mathbb{P}^{u}}{d\mathbb{P}} (X^{v}) + \log \frac{d\mathbb{P}}{d\mathbb{P}^{u^*}} (X^{v}) \big) \frac{d\mathbb{P}^{u^*}}{d\mathbb{P}}(X^v) \frac{d\mathbb{P}}{d\mathbb{P}^{v}}(X^v) \big]
    \end{split}
    \end{talign}
    Note that by equations \eqref{eq:dPv_dP} and \eqref{eq:dPustar_dP},
    \begin{talign}
    \begin{split} \label{eq:log_densities}
        \log \frac{d\mathbb{P}^{u}}{d\mathbb{P}} (X^{v}) &= \lambda^{-1/2} \int_0^T \langle u(X^{v}_t,t), \mathrm{d}B^{v}_t \rangle - \frac{\lambda^{-1}}{2} \int_0^T \|u(X^{v}_t,t)\|^2 \, \mathrm{d}t, \\ 
        &= \lambda^{-1/2} \int_0^T \langle u(X^{v}_t,t), \mathrm{d}B_t \rangle + \lambda^{-1} \int_0^T \langle u(X^{v}_t,t), v(X^{v}_t,t) \rangle \, \mathrm{d}t - \frac{\lambda^{-1}}{2} \int_0^T \|u(X^{v}_t,t)\|^2 \, \mathrm{d}t, \\
        \log \frac{d\mathbb{P}}{d\mathbb{P}^{u^*}} (X^{v}) &= \lambda^{-1} \big( - V(X_0^{v},0) + \mathcal{W}(X^{v},0) \big).
    \end{split}
    \end{talign}
    where $B^{v}_t := B_t + \lambda^{-1/2} \int_0^t v(X^{v}_s,s) \, \mathrm{d}s$. Also,
    \begin{talign} 
    \begin{split} \label{eq:imp_weights}
        \frac{d\mathbb{P}^{u^*}}{d\mathbb{P}}(X^v) &= \exp \big( \lambda^{-1} \big( V(X_0^v,0) - \mathcal{W}(X^v,0) \big) \big), \\
        \frac{d\mathbb{P}}{d\mathbb{P}^{v}}(X^v) &= \exp \big( - \lambda^{-1/2} \int_0^T \langle v(X^v_t,t), \mathrm{d}B_t \rangle - \frac{\lambda^{-1}}{2} \int_0^T \|v(X^v_t,t)\|^2 \, \mathrm{d}t \big).
    \end{split}
    \end{talign}
    If we plug \eqref{eq:log_densities} and \eqref{eq:imp_weights} into the right-hand side of \eqref{eq:cross_entropy_3}, we obtain
    \begin{talign}
    \begin{split}
        &\mathbb{E}_{\mathbb{P}^{u^*}} \big[ \log \frac{d\mathbb{P}^{u^*}}{d\mathbb{P}^u} (X^{u^*}) \big] = - \mathbb{E}_{\mathbb{P}^{u^*}} \big[ (\log \frac{d\mathbb{P}^{u}}{d\mathbb{P}} (X^{v}) + \log \frac{d\mathbb{P}}{d\mathbb{P}^{u^*}} (X^{v})) \frac{d\mathbb{P}^{u^*}}{d\mathbb{P}}(X^v) \frac{d\mathbb{P}}{d\mathbb{P}^u}(X^v) \big] \\ &= - \mathbb{E} 
        \big[ \big( \lambda^{-1/2} \int_0^T \langle u(X^{v}_t,t), \mathrm{d}B_t \rangle + \lambda^{-1} \int_0^T \langle u(X^{v}_t,t), v(X^{v}_t,t) \rangle \, \mathrm{d}t \\ &\qquad\qquad\quad - \frac{\lambda^{-1}}{2} \int_0^T \|u(X^{v}_t,t)\|^2 \, \mathrm{d}t + \lambda^{-1} \big( - V(X_0^{v},0) + \mathcal{W}(X^{v},0) \big) \big) \\ &\qquad\quad \times \exp\big( \lambda^{-1} \big( V(X_0^v,0) - \mathcal{W}(X^v,0) \big) - \lambda^{-1/2} \int_0^T \langle v(X^v_t,t), \mathrm{d}B_t \rangle - \frac{\lambda^{-1}}{2} \int_0^T \|v(X^v_t,t)\|^2 \, \mathrm{d}t \big) \big],
    \end{split}
    \end{talign}
    which concludes the proof.
    
    To show \eqref{eq:cross_entropy_2}, we use that by \autoref{cor:girsanov_sdes}, 
\begin{talign}
    \frac{d\mathbb{P}^u}{d\mathbb{P}^{u^*}}(X^{u^*}) \! = \! \exp \big( \! - \! \lambda^{-1/2} \int_0^T \langle u^*(X^{u^*}_t,t) \! - \! u(X^{u^*}_t,t), \mathrm{d}B_t \rangle \! - \! \frac{\lambda^{-1}}{2} \int_0^T \|u^*(X^{u^*}_t,t) \! - \! u(X^{u^*}_t,t)\|^2 \, \mathrm{d}t \big).
\end{talign}
Hence, 
\begin{talign}
    \mathbb{E}_{\mathbb{P}^{u^*}} \big[ \log \frac{d\mathbb{P}^{u^*}}{d\mathbb{P}^u} \big] = - \mathbb{E}_{\mathbb{P}^{u^*}} \big[ \log \frac{d\mathbb{P}^{u}}{d\mathbb{P}^{u^*}} \big] = \frac{\lambda^{-1}}{2} \mathbb{E} 
    \big[ \int_0^T \|u^*(X^{u^*}_t,t) - u(X^{u^*}_t,t)\|^2 \, \mathrm{d}t \big].
\end{talign}
Next, we prove (ii). The first instance of $V(X_0^v,0)$ in \eqref{eq:cross_entropy_1} can be removed without modifying the landscape of the loss. Hence, we are left with 
\begin{talign} 
\begin{split} \label{eq:first_out}
    \bar{\mathcal{L}}_{\mathrm{CE}}(u) &= \mathbb{E} \big[ \big( - \lambda^{-1/2} \int_0^T \langle u(X^{v}_t,t), \mathrm{d}B^{v}_t \rangle - \lambda^{-1} \int_0^T \langle u(X^{v}_t,t), v(X^{v}_t,t) \rangle \, \mathrm{d}t \\ &\qquad\qquad\quad + \frac{\lambda^{-1}}{2} \int_0^T \|u(X^{v}_t,t)\|^2 \, \mathrm{d}t - \lambda^{-1} \mathcal{W}(X^{v},0) \big) 
    \\ &\qquad \times \exp\big( \lambda^{-1} \big( V(X_0^v,0) \! - \! \mathcal{W}(X^v,0) \big) \! - \! \lambda^{-1/2} \int_0^T \langle v(X^v_t,t), \mathrm{d}B_t \rangle \! - \! \frac{\lambda^{-1}}{2} \int_0^T \|v(X^v_t,t)\|^2 \, \mathrm{d}t \big) \big]
\end{split}
\end{talign}
And this can be expressed as
\begin{talign}
    \bar{\mathcal{L}}_{\mathrm{CE}}(u) &= \mathbb{E} \big[ g(u;X_0^v) \exp\big( \lambda^{-1} V(X_0^v,0) \big) \big],
\end{talign}
where
\begin{talign}
\begin{split}
    g(u;x) &= \mathbb{E} \big[ \big( - \lambda^{-1/2} \int_0^T \langle u(X^{v}_t,t), \mathrm{d}B^{v}_t \rangle - \lambda^{-1} \int_0^T \langle u(X^{v}_t,t), v(X^{v}_t,t) \rangle \, \mathrm{d}t \\ &\qquad\qquad\quad + \frac{\lambda^{-1}}{2} \int_0^T \|u(X^{v}_t,t)\|^2 \, \mathrm{d}t - \lambda^{-1} \mathcal{W}(X^{v},0) \big) 
    \\ &\qquad \times \exp\big( - \lambda^{-1} \mathcal{W}(X^v,0) \! - \! \lambda^{-1/2} \int_0^T \langle v(X^v_t,t), \mathrm{d}B_t \rangle \! - \! \frac{\lambda^{-1}}{2} \int_0^T \|v(X^v_t,t)\|^2 \, \mathrm{d}t \big) | X_0^v = x \big].
\end{split}
\end{talign}
If we consider $g(u;x)$ as a loss function for $u$, note that it is equivalent to the loss $\bar{L}_{\mathrm{CE}}(u)$ equation in \eqref{eq:first_out} for the choice $p_0 = \delta_x$, i.e., $p_0$ concentrated at $x$. Since the optimal control $u^*$ is independent of the starting distribution $p_0$, we deduce that $u^*$ is the unique minimizer of $g(u;x)$, for all $x \in \R^d$. In consequence, $u^*$ is the unique minimizer of $\mathcal{L}_{\mathrm{CE}}(u) = \mathbb{E} [ g(u;X_0^v) ]$. 

To prove \eqref{eq:cross_entropy_L2_error}, note that up to a constant term, the only difference between $\bar{\mathcal{L}}_{\mathrm{CE}}(u)$ and $\mathcal{L}_{\mathrm{CE}}(u)$ is the expectation is reweighted importance weight $\exp\big( - \lambda^{-1} V(X_0^v,0) \big)$.
\end{proof}

\begin{lemma} \label{lem:variance_log_variance}
    (i) We can rewrite
    \begin{align}
        \label{eq:variance_loss_app}
        \tilde{\mathcal{L}}_{\mathrm{Var}_v}(u) &= 
        \mathrm{Var} \big( \exp \big( \tilde{Y}^{u,v}_T - \lambda^{-1} g(X^v_T) + \lambda^{-1} V(X^v_0,0) \big) \big), \\
        \tilde{\mathcal{L}}^{\mathrm{log}}_{\mathrm{Var}_v}(u) &= \mathrm{Var}\big( \tilde{Y}^{u,v}_T - \lambda^{-1} g(X^v_T) + \lambda^{-1} V(X^v_0,0) \big). \label{eq:log_variance_loss_app}
    \end{align}
    When $p_0$ is concentrated at $x_{\mathrm{init}}$, the terms $V(x_{\mathrm{init}},0)$ are constants and can be removed without modifying the landscape. In other words, $\tilde{\mathcal{L}}_{\mathrm{Var}_v}$ and $\tilde{\mathcal{L}}_{\mathrm{Var}_v}^{\mathrm{log}}$ are equal to $\mathcal{L}_{\mathrm{Var}_v}$ and $\mathcal{L}_{\mathrm{Var}_v}^{\mathrm{log}}$ up to a constant term and a constant factor, respectively.
    
    (ii) When $p_0$ is general, $\tilde{\mathcal{L}}_{\mathrm{Var}_v}$ and  $\mathcal{L}_{\mathrm{Var}_v}$ have a different landscape, and the optimum of $\mathcal{L}_{\mathrm{Var}_v}$ may be different from $u^*$. A related loss that does preserve the optimum is:
    \begin{talign}
    \begin{split}
        \bar{\mathcal{L}}_{\mathrm{Var}_v}(u) &= \mathbb{E} [\mathrm{Var}_{\mathbb{P}^v} ( \frac{d\mathbb{P}^{u^*}}{d\mathbb{P}^u}(X^v) | X^v_0) \exp (-\lambda^{-1} V(X^v_0,0) )] \\ &= \mathbb{E} [\mathrm{Var}\big( \exp( \tilde{Y}^{u,v}_T - \lambda^{-1} g(X^v_T) ) | X^v_0 \big) ].
    \end{split}
    \end{talign}
    In practice, this is implemented by sampling the $m$ trajectories in one batch starting at the same point $X^v_0$.
    
    (iii) Also, $\tilde{\mathcal{L}}_{\mathrm{Var}_v}^{\mathrm{log}}$ and $\mathcal{L}_{\mathrm{Var}_v}^{\mathrm{log}}$ have a different landscape, and the optimum of $\mathcal{L}_{\mathrm{Var}_v}^{\mathrm{log}}$ may be different from $u^*$. In particular, $\mathcal{L}_{\mathrm{Var}_v}^{\mathrm{log}}(u) = \mathrm{Var}_{\mathbb{P}^v} ( \log \frac{d\mathbb{P}^{u^*}}{d\mathbb{P}^u}(X^v) \exp (-\lambda^{-1} V(X^v_0,0) ) )$. A loss that does preserve the optimum $u^*$ is
    \begin{talign}
    \begin{split}    \bar{\mathcal{L}}_{\mathrm{Var}_v}^{\mathrm{log}}(u) &= \mathbb{E} [\mathrm{Var}_{\mathbb{P}^v} ( \log \frac{d\mathbb{P}^{u^*}}{d\mathbb{P}^u}(X^v) | X^v_0) \exp (-\lambda^{-1} V(X^v_0,0) )] \\ &= \mathbb{E} [\mathrm{Var}\big( \tilde{Y}^{u,v}_T - \lambda^{-1} g(X^v_T) | X^v_0 \big) ].
    \end{split}
    \end{talign}
\end{lemma}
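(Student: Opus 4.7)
The plan is to prove all three parts by first obtaining a single closed-form expression for the Radon-Nikodym derivative $\frac{d\mathbb{P}^{u^*}}{d\mathbb{P}^u}(X^v)$ and then reading off the identities and landscape claims. The central identity I expect to derive is
\begin{talign*}
    \frac{d\mathbb{P}^{u^*}}{d\mathbb{P}^u}(X^v) = \exp\bigl(\tilde Y^{u,v}_T - \lambda^{-1} g(X^v_T) + \lambda^{-1} V(X^v_0, 0)\bigr),
\end{talign*}
with $\tilde Y^{u,v}_T$ as in \eqref{eq:tilde_Y_def}. To obtain it, I would write $\frac{d\mathbb{P}^{u^*}}{d\mathbb{P}^u}(X^v) = \frac{d\mathbb{P}^{u^*}}{d\mathbb{P}}(X^v) \big/ \frac{d\mathbb{P}^u}{d\mathbb{P}}(X^v)$. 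The numerator equals $\exp(\lambda^{-1}(V(X^v_0,0) - \mathcal{W}(X^v,0)))$ by \eqref{eq:dPustar_dP}. For the denominator I express $\frac{d\mathbb{P}^u}{d\mathbb{P}}$ as a functional on path space and evaluate it on $X^v$; using $\sigma^{-1}(\mathrm{d}X^v_t - b(X^v_t,t)\mathrm{d}t) = v(X^v_t,t)\mathrm{d}t + \sqrt{\lambda}\mathrm{d}B_t$ produces $\exp(\lambda^{-1}\int_0^T \langle u,v\rangle \mathrm{d}t + \lambda^{-1/2}\int_0^T \langle u,\mathrm{d}B_t\rangle - \tfrac{\lambda^{-1}}{2}\int_0^T \|u\|^2 \mathrm{d}t)$. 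Expanding $\mathcal{W}(X^v,0)$ and collecting terms recovers exactly the claimed exponent. Taking $\mathrm{Var}_{\mathbb{P}^v}$ of this identity and of its logarithm immediately yields \eqref{eq:variance_loss_app} and \eqref{eq:log_variance_loss_app}. For the ``in other words'' statement at the end of Part (i), when $p_0 = \delta_{x_{\mathrm{init}}}$ the factor $\exp(\lambda^{-1}V(x_{\mathrm{init}},0))$ is deterministic: it squares out to a positive multiplicative constant under the variance (equivalence up to a scalar factor for $\mathcal{L}_{\mathrm{Var}_v}$) and it is an additive constant under the log-variance (vanishing from the landscape entirely for $\mathcal{L}^{\log}_{\mathrm{Var}_v}$).

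For Parts (ii) and (iii), when $p_0$ is non-degenerate the random variable $V(X^v_0,0)$ cannot be absorbed into a constant, so removing it (as is done in passing from $\tilde{\mathcal L}_{\mathrm{Var}_v}$ to $\mathcal L_{\mathrm{Var}_v}$ and from $\tilde{\mathcal L}^{\log}_{\mathrm{Var}_v}$ to $\mathcal L^{\log}_{\mathrm{Var}_v}$) genuinely alters the loss. To prove that $u^*$ may not minimize $\mathcal L_{\mathrm{Var}_v}$ I would compute the first-order optimality condition at $u=u^*$ and exhibit a non-vanishing contribution coming from the $X^v_0$-dependent reweighting, or simply construct a one-dimensional Ornstein--Uhlenbeck counterexample. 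The modified loss $\bar{\mathcal L}_{\mathrm{Var}_v}$ is designed to recover $u^*$ as optimum by conditioning on $X^v_0$ before taking the variance: for each fixed $X^v_0=x$ we are back in the degenerate $p_0 = \delta_x$ setting of Part (i), so the inner conditional variance is pointwise minimized (and equals zero) at $u = u^*$, and since this is true for every $x$ in the support of $p_0$, $u^*$ minimizes the outer expectation as well. The second equality inside the $\bar{\mathcal L}_{\mathrm{Var}_v}$ display follows from pulling the factor $\exp(\lambda^{-1}V(X^v_0,0))$ (deterministic given $X^v_0$) out of the conditional variance as a square and combining with the outer weighting factor; the log-variance case in Part (iii) is analogous and cleaner because the additive constant $\lambda^{-1}V(X^v_0,0)$ drops out of the conditional variance directly, with the remaining weighting factor serving only to rebalance across different starting points.

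The main obstacle I anticipate is the Girsanov bookkeeping underlying the central identity: the exponents coming from numerator and denominator of $\frac{d\mathbb{P}^{u^*}}{d\mathbb{P}^u}(X^v)$ each produce several terms (purely in $v$, purely in $u$, and mixed), and one must track signs and the reference Brownian motion carefully to see the $v$-only contributions cancel and the cross term $\lambda^{-1}\int \langle u,v\rangle \mathrm{d}t$ appear with the correct sign to match $\tilde Y^{u,v}_T$. Once this identity is in hand, all remaining statements reduce to elementary manipulations of conditional expectations and variances together with the uniqueness of $u^*$ as the control whose induced measure equals $\mathbb{P}^{u^*}$.
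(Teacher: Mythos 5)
Your proposal follows the paper's proof exactly: the central identity $\log\frac{d\mathbb{P}^{u^*}}{d\mathbb{P}^u}(X^v) = \tilde Y^{u,v}_T - \lambda^{-1} g(X^v_T) + \lambda^{-1} V(X^v_0,0)$ is derived by the same chain rule through $\mathbb{P}$ using \eqref{eq:dPustar_dP} and \eqref{eq:dP_dPv}, and parts (i)--(iii) then reduce to the same elementary manipulations of (conditional) variance and the observation that $V(X^v_0,0)$ is constant once one conditions on $X^v_0$. One small caution for the writeup: your phrasing that the squared factor $\exp(2\lambda^{-1}V(X^v_0,0))$ ``combines with'' the outer weight $\exp(-\lambda^{-1}V(X^v_0,0))$ suggests an exact cancellation, but it leaves a residual $\exp(\lambda^{-1}V(X^v_0,0))$ (and similarly $\exp(-\lambda^{-1}V(X^v_0,0))$ in the log case), so the second displayed equalities for $\bar{\mathcal L}_{\mathrm{Var}_v}$ and $\bar{\mathcal L}^{\log}_{\mathrm{Var}_v}$ only hold up to a positive, $u$-independent reweighting of the outer expectation---which still preserves the optimizer $u^*$, as you note for (iii).
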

\begin{proof}
    Using \eqref{eq:dPustar_dP} and \eqref{eq:dP_dPv}, we have that 
    \begin{talign}
        \frac{d\mathbb{P}^{u^*}}{d\mathbb{P}}(X^v) &= \exp \big( \lambda^{-1} \big( V(X^v_0,0) - \mathcal{W}(X^v,0) \big) \big), \\
        \begin{split}
        \frac{d\mathbb{P}}{d\mathbb{P}^u}(X^v) &= \exp \big( - \lambda^{-1/2} \int_0^T \langle u(X^v_t,t), \mathrm{d}B^v_t \rangle + \frac{\lambda^{-1}}{2} \int_0^T \|u(X^v_t,t)\|^2 \, \mathrm{d}t \big) \\ &= \exp \big( - \lambda^{-1/2} \int_0^T \langle u(X^v_t,t), \mathrm{d}B_t \rangle  -  \lambda^{-1} \int_0^T \langle u(X^{v}_t,t), v(X^{v}_t,t) \rangle \, \mathrm{d}t \\ &\qquad\qquad + \frac{\lambda^{-1}}{2} \int_0^T \|u(X^v_t,t)\|^2 \, \mathrm{d}t \big).
        \end{split}
    \end{talign}
    Hence,
    \begin{talign}
        \log \frac{d\mathbb{P}^{u^*}}{d\mathbb{P}^{u}}(X^v) = \log \frac{d\mathbb{P}^{u^*}}{d\mathbb{P}}(X^v) + \log \frac{d\mathbb{P}}{d\mathbb{P}^{u}}(X^v) = \tilde{Y}^{u,v}_T - \lambda^{-1} g(X^v_T) + \lambda^{-1} V(X^v_0,0).
    \end{talign}
    Since $\tilde{\mathcal{L}}_{\mathrm{Var}_v}(u) = \mathrm{Var}_{\mathbb{P}^v} ( \frac{d\mathbb{P}^{u^*}}{d\mathbb{P}^u} )$ and $\tilde{\mathcal{L}}_{\mathrm{Var}_v}^{\mathrm{log}}(u) = \mathrm{Var}_{\mathbb{P}^v} ( \log \frac{d\mathbb{P}^{u^*}}{d\mathbb{P}^u} )$, this concludes the proof of (i). 

    To prove (ii), note that for general $p_0$, $V(X^v_0,0)$ is no longer a constant, but it is if we condition on $X^v_0$. The proof of (iii) is analogous.
\end{proof}

\section{Proofs of \autoref{sec:SOCM}} \label{sec:proofs_SOCM}

\subsection{Proof of \autoref{thm:main} and \autoref{prop:bias_variance}}
We prove \autoref{thm:main} and \autoref{prop:bias_variance} at the same time. Recall that by \eqref{eq:u_optimal}, the optimal control is of the form $u^*(x,t) = -\sigma(t)^{\top} \nabla V(x,t)$. Consider the loss
\begin{talign} 
    \tilde{\mathcal{L}}(u) = \mathbb{E} \big[ \frac{1}{T} \int_0^{T} \big\| u(X_t,t) + \sigma(t)^{\top} \nabla V(X_t,t) \big\|^2 \, \mathrm{d}t \, \exp \big( - \lambda^{-1} \int_0^T f(X_t,t) \, \mathrm{d}t - \lambda^{-1} g(X_T) \big) \big].
\end{talign}
Clearly, the unique optimum of $\tilde{\mathcal{L}}$ is $-\sigma(t)^\top \nabla V$. We can rewrite $\tilde{\mathcal{L}}$ as
\begin{talign} 
    \begin{split} \label{eq:tilde_L_rewritten}
    \tilde{\mathcal{L}}(u) &= \mathbb{E} \big[ \frac{1}{T} \int_0^{T} \big( \big\| u(X_t,t) \big\|^2 + 2\langle u(X_t,t), \sigma(t)^{\top} \nabla V(X_t,t) \rangle + \| \sigma(t)^{\top} \nabla V(X_t,t) \big\|^2 \big) \, \mathrm{d}t \\ &\qquad\qquad \times \exp \big( - \lambda^{-1} \int_0^T f(X_t,t) \, \mathrm{d}t - \lambda^{-1} g(X_T) \big) \big]~.
    \end{split}
\end{talign}
Hence, we can express $\tilde{\mathcal{L}}$ as a sum of three terms: one involving $\| u(X_t,t) \|^2$, another involving $\langle u(X_t,t), \sigma(t)^{\top} V(X_t,t) \rangle$, and a third one, which is constant with respect to $u$, involving $\| \nabla V(X_t,t) \|^2$. The following lemma provides an alternative expression for the cross term:
\begin{lemma} \label{lem:cross_term_loss}
    The following equality holds:
    \begin{talign} 
    \begin{split} \label{eq:cross_term_loss}
        &\mathbb{E} \big[ \frac{1}{T} \int_0^{T} \langle u(X_t,t), \sigma(t)^{\top} \nabla V(X_t,t) \rangle \, \mathrm{d}t \, \exp \big( - \lambda^{-1} \int_0^T f(X_t,t) \, \mathrm{d}t - \lambda^{-1} g(X_T) \big) \big] \\ &= - \lambda \mathbb{E} \big[\frac{1}{T} \int_0^{T} \big\langle u(X_t,t), \sigma(t)^{\top} \nabla_{x} \mathbb{E}\big[ \exp \big( - \lambda^{-1} \int_t^T f(X_s,s) \, \mathrm{d}s - \lambda^{-1} g(X_T) \big) \big| X_t = x \big] \big\rangle \\ &\qquad\qquad \times \exp \big( - \lambda^{-1} \int_0^t f(X_s,s) \, \mathrm{d}s \big) \, \mathrm{d}t \big].
    \end{split}
    \end{talign}
\end{lemma}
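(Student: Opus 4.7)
The plan is to reduce the left-hand side of \eqref{eq:cross_term_loss} to the right-hand side using only the closed-form expression \eqref{eq:V} for the value function together with the Markov property of the uncontrolled process $X$. Set
$h(x,t) := \mathbb{E}\bigl[\exp(-\lambda^{-1}\int_t^T f(X_s,s)\,\mathrm{d}s - \lambda^{-1} g(X_T)) \,\big|\, X_t = x\bigr]$,
so that \eqref{eq:V} reads $V(x,t) = -\lambda \log h(x,t)$, and therefore $\nabla V(x,t)\, h(x,t) = -\lambda\, \nabla_x h(x,t)$ pointwise in $(x,t)$. This identity is what will eventually match the factor of $-\lambda$ appearing in the target expression.

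Next, for each fixed $t \in [0,T]$ I would split the global exponential weight as the product of the $\mathcal{F}_t$-measurable piece $\exp(-\lambda^{-1} \int_0^t f(X_s,s)\,\mathrm{d}s)$ and the tail $\exp(-\lambda^{-1} \int_t^T f(X_s,s)\,\mathrm{d}s - \lambda^{-1} g(X_T))$, condition on $\mathcal{F}_t$, and apply the Markov property of the uncontrolled SDE \eqref{eq:pair_SDEs_1} to replace the conditional expectation of the tail by $h(X_t,t)$. Since $\langle u(X_t,t),\sigma(t)^\top \nabla V(X_t,t)\rangle\, \exp(-\lambda^{-1} \int_0^t f(X_s,s)\,\mathrm{d}s)$ is $\mathcal{F}_t$-measurable, the tower property collapses the integrand on the left-hand side of \eqref{eq:cross_term_loss} to $\langle u(X_t,t),\sigma(t)^\top \nabla V(X_t,t)\rangle\, h(X_t,t)\, \exp(-\lambda^{-1} \int_0^t f(X_s,s)\,\mathrm{d}s)$.

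Now I would substitute $\nabla V(X_t,t)\, h(X_t,t) = -\lambda\, \nabla_x h(X_t,t)$ from the first step, pull out $\sigma(t)^\top$, and exchange the time integral $\frac{1}{T}\int_0^T$ with the expectation by Fubini. Because $\nabla_x h(x,t)$ is by definition the gradient in $x$ of the inner conditional expectation on the right-hand side of \eqref{eq:cross_term_loss}, this yields exactly the claimed expression, with the overall sign $-\lambda$ coming from the $V = -\lambda \log h$ substitution.

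The only substantive technical obstacle is justifying the interchange of $\nabla_x$ with the conditional expectation defining $h$, together with the Fubini swap. Both are controlled by the regularity and growth hypotheses on $b,\sigma,f,g$ in \autoref{sec:assumptions}: the Feynman--Kac representation makes $h$ a classical solution of the associated linear parabolic PDE, so $\nabla_x h$ exists and admits the probabilistic form, and the exponential weights are uniformly bounded in $L^1$ on $[0,T]$, so dominated convergence and Fubini apply. Everything else is algebraic.
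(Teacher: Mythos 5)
Your proof is correct and follows essentially the same route as the paper's argument in Lemma~\ref{lem:cross_term_loss}: both exploit the path-integral representation $V(x,t)=-\lambda\log h(x,t)$ from \eqref{eq:V}, the decomposition $\mathcal{W}(X,0)=\mathcal{W}(X,t)+\int_0^t f(X_s,s)\,\mathrm{d}s$, the tower property, and the Markov property of the uncontrolled diffusion. The only cosmetic difference is the order of operations — the paper first rewrites $\sigma^\top\nabla V$ as $-\lambda\,\sigma^\top\nabla_x h/h$ (their \eqref{eq:nabla_V_2}) and then conditions to cancel the denominator, whereas you condition first to produce $h(X_t,t)$ and then invoke $\nabla V\cdot h=-\lambda\nabla_x h$ — but the ideas and justifications are identical.
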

\begin{proof}
Recall the definition of $\mathcal{W}(X,t)$ in \eqref{eq:mathcal_W_X_t_2}, which means that 
\begin{talign} \label{eq:mathcal_W_property}
\mathcal{W}(X,0) = \mathcal{W}(X,t) + \int_0^t f(X_s,s) \, \mathrm{d}s.
\end{talign}
Let $\{\mathcal{F}_t\}_{t \in [0,T]}$ be the filtration generated by the Brownian motion $B$. Then, equation \eqref{eq:u_optimal} implies that
\begin{talign} \label{eq:nabla_V_2}
    \sigma(t)^{\top} \nabla V(X_t,t) = - \frac{\lambda \sigma(t)^{\top} \nabla_x \mathbb{E} \big[ \exp \big( - \lambda^{-1} \mathcal{W}(X,t) \big) \big| \mathcal{F}_t \big]}{\mathbb{E} \big[ \exp \big( - \lambda^{-1} \mathcal{W}(X,t) \big) \big| \mathcal{F}_t \big]}
\end{talign}
We proceed as follows:
\begin{talign}
\begin{split} \label{eq:big_development}
    &\mathbb{E} \big[ \frac{1}{T} \int_0^{T} \langle u(X_t,t), \sigma(t)^{\top} \nabla V(X_t,t) \rangle \, \mathrm{d}t \, \exp \big( -\lambda^{-1} \mathcal{W}(X,0) \big) \big] 
    \\ &\stackrel{\text{(i)}}{=} - \lambda \mathbb{E} \big[ \frac{1}{T} \int_0^{T} \big\langle u(X_t,t), \frac{\sigma(t)^{\top} \nabla_x \mathbb{E} \big[ \exp \big( - \lambda^{-1} \mathcal{W}(X,t) \big) \big| \mathcal{F}_t \big]}{\mathbb{E} \big[ \exp \big( - \lambda^{-1} \mathcal{W}(X,t) \big) \big| \mathcal{F}_t \big]} \big\rangle \\ &\qquad\qquad \times \mathbb{E} \big[ \exp \big( - \lambda^{-1} \mathcal{W}(X,t) \big) \big| \mathcal{F}_t \big] \exp \big( - \lambda^{-1} \int_0^t f(X_s,s) \, \mathrm{d}s \big) \, \mathrm{d}t \big]
    \\ &= - \lambda \mathbb{E} \big[ \frac{1}{T} \int_0^{T} \big\langle u(X_t,t), \sigma(t)^{\top} \nabla_x \mathbb{E} \big[ \exp \big( - \lambda^{-1} \mathcal{W}(X,t) \big) \big| \mathcal{F}_t \big] \big\rangle \exp \big( - \lambda^{-1} \int_0^t f(X_s,s) \, \mathrm{d}s \big) \, \mathrm{d}t \big]
    \\ &\stackrel{\text{(ii)}}{=} - \lambda \mathbb{E} \big[ \frac{1}{T} \int_0^{T} \big\langle u(X_t,t), \sigma(t)^{\top} \nabla_x \mathbb{E} \big[ \exp \big( - \lambda^{-1} \mathcal{W}(X,t) \big) \big| X_t = x \big] \big\rangle \exp \big( - \lambda^{-1} \int_0^t f(X_s,s) \, \mathrm{d}s \big) \, \mathrm{d}t \big].
\end{split}
\end{talign}
Here, (i) holds by equation \eqref{eq:nabla_V_2}, the law of total expectation and equation \eqref{eq:mathcal_W_property}, and (ii) holds by the Markov property of the solution of an SDE.
\end{proof}

The following proposition, which we prove in \autoref{sec:proof_cond_exp_rewritten}, provides an alternative expression for $\nabla_{x} \mathbb{E}\big[ \exp \big( - \lambda^{-1} \int_t^T f(X_s,s) \, \mathrm{d}s - \lambda^{-1} g(X_T) \big) \big| X_t = x \big]$. The technique, which is novel and we denote by \textit{path-wise reparamaterization trick}, is of independent interest and may be applied in other settings, as we discuss in \autoref{sec:conc}.

\girsanovrt*

Plugging \eqref{eq:cond_exp_rewritten} into the right-hand side of \eqref{eq:cross_term_loss}, we obtain that
\begin{talign}
\begin{split}
    &\mathbb{E} \big[ \frac{1}{T} \int_0^{T} \langle u(X_t,t), \sigma(t)^{\top} \nabla V(X_t,t) \rangle \, \mathrm{d}t \, \exp \big( - \lambda^{-1} \int_0^T f(X_t,t) \, \mathrm{d}t - \lambda^{-1} g(X_T) \big) \big] \\ &= \mathbb{E} \big[\frac{1}{T} \int_0^{T} \big\langle u(X_t,t), \sigma(t)^{\top} \big( \int_t^T M_t(s) \nabla_x f(X_s,s) \, \mathrm{d}s + M_t(T) \nabla g(X_T) \\ &\qquad\qquad - \lambda^{1/2} \int_t^T (M_t(s) \nabla_x b(X_s,s) - \partial_s M_t(s)) (\sigma^{-1})^{\top} (s) \mathrm{d}B_s \big) \big\rangle \, \mathrm{d}t \\ &\qquad\qquad \times \exp \big( - \lambda^{-1} \int_0^T f(X_t,t) \, \mathrm{d}t - \lambda^{-1} g(X_T) \big) \big].
\end{split}
\end{talign}
If we plug this into the right-hand side of \eqref{eq:tilde_L_rewritten} and complete the squared norm, we get that
\begin{talign}
    \tilde{\mathcal{L}}(u) &= \mathbb{E} \big[ 
    \frac{1}{T} \int_0^{T} (\big\| u(X_t,t) - \tilde{w}(t,X,B,M_t) \big\|^2 
    - \big\| \tilde{w}(t,X,B,M_t) \big\|^2 + \big\| u^*(X_t,t) \big\|^2) \, \mathrm{d}t \, \exp \big( - \lambda^{-1} \mathcal{W}(X,0) \big) \big] 
\end{talign}
where $\tilde{w}$ is defined in equation \eqref{eq:tilde_w_def}.
We also define $\Phi(u;X,B)$ as
\begin{talign}
    \Phi(u;X,B) = \frac{1}{T} \int_0^{T} (\big\| u(X_t,t) - \tilde{w}(t,X,B,M_t) \big\|^2 ) \, \mathrm{d}t.
\end{talign}
Now, by the Girsanov theorem (\autoref{thm:Girsanov}), we have that for an arbitrary control $v \in \mathcal{U}$,
\begin{talign}
\begin{split} \label{eq:L_u_girsanov}
    &\mathbb{E}[\Phi(u;X,B) \exp \big( - \lambda^{-1} \mathcal{W}(X,0) \big)] \\ &= \! \mathbb{E} \big[\Phi(u;X^v,B^v) \exp \big( - \lambda^{-1} \mathcal{W}(X^v,0) - \lambda^{-1/2} \int_0^T \langle v(X^v_t,t), \mathrm{d}B^v_t \rangle + \frac{\lambda^{-1}}{2} \int_0^T \|v(X^v_t,t)\|^2 \, \mathrm{d}t \big) \big]
    \\ &= \! \mathbb{E} \big[\Phi(u;X^v,B^v) \exp \big( - \lambda^{-1} \mathcal{W}(X^v,0) - \lambda^{-1/2} \int_0^T \langle v(X^v_t,t), \mathrm{d}B_t \rangle - \frac{\lambda^{-1}}{2} \int_0^T \|v(X^v_t,t)\|^2 \, \mathrm{d}t \big) \big],
\end{split}
\end{talign}
where $B^v_t := B_t + \lambda^{-1/2} \int_0^t v(X^v_s,s) \, \mathrm{d}s$.
Reexpressing $B^v$ in terms of $B$, we can rewrite $\Phi(u;X^v,B^v)$ and $\tilde{w}(t,X^v,B^v,M_t)$ as follows:
\begin{talign}
\begin{split}
    \Phi(u;X^v,B^v) &= \frac{1}{T} \int_0^{T} \big\| u(X^v_t,t) - \tilde{w}(t,X^v,B^v,M_t) \big\|^2 
     \, \mathrm{d}t, \\
    \tilde{w}(t,X^v,B^v,M_t) &= \sigma(t)^{\top} \big( - \int_t^T M_t(s) \nabla_x f(X^v_s,s) \, \mathrm{d}s - M_t(T) \nabla g(X^v_T) \\ &\qquad\qquad + \lambda^{1/2} \int_t^T (M_t(s) \nabla_x b(X^v_s,s) - \partial_s M_t(s)) (\sigma^{-1})^{\top} (X^v_s,s) \mathrm{d}B_s \\ &\qquad\qquad + \int_t^T (M_t(s) \nabla_x b(X^v_s,s) - \partial_s M_t(s)) (\sigma^{-1})^{\top} (X^v_s,s) v(X^v_s,s) \mathrm{d}s \big).
\end{split}
\end{talign}
Putting everything together, we obtain that
\begin{talign}
    \tilde{\mathcal{L}}(u) = \mathcal{L}_{\mathrm{SOCM}}(u,M) - K,
\end{talign}
where $\mathcal{L}(u,M)$ is the loss defined in \eqref{eq:mathcal_L_loss} (note that $w(t,v,X^v,B,M_t) := \tilde{w}(t,X^v,B^v,M_t)$), and
\begin{talign}
    K = \mathbb{E} \big[
    \frac{1}{T} \int_0^{T} (\big\| \tilde{w}(t,X,B,M_t) \big\|^2 - \big\| u^*(X_t,t) \big\|^2) \, \mathrm{d}t \, \exp \big( - \lambda^{-1} \mathcal{W}(X,0) \big) \big] 
\end{talign}
To complete the proof of equation \eqref{eq:L_u_decomposition}, remark that $\tilde{\mathcal{L}}(u)$ can be rewritten as 
\begin{talign}
\begin{split}
    \tilde{\mathcal{L}}(u) &= \mathbb{E} \big[ \frac{1}{T} \int_0^{T} \big\| u(X_t,t) - u^*(X_t,t) \big\|^2 \, \mathrm{d}t \, \exp \big( - \lambda^{-1} \mathcal{W}(X,0) \big) \big] \\ &= \mathbb{E} \big[ \frac{1}{T} \int_0^{T} \big\| u(X_t,t) - u^*(X_t,t) \big\|^2 \, \mathrm{d}t \, \frac{d\mathbb{P}^{u^*}}{d\mathbb{P}}(X) \exp(-\lambda^{-1} V(X_0,0)) \big] \\
    &= \mathbb{E} \big[ \frac{1}{T} \int_0^{T} \big\| u(X^{u^*}_t,t) - u^*(X^{u^*}_t,t) \big\|^2 \, \mathrm{d}t \, \exp(-\lambda^{-1} V(X^{u^*}_0,0)) \big].
\end{split}
\end{talign}
It only remains to reexpress $K$. Note that by \autoref{prop:cond_exp_rewritten}, we have that 
\begin{talign}
\begin{split}
    u^*(X_t,t) &= \frac{\mathbb{E} \big[\tilde{w}(t,X,B,M_t) \exp\big( - \lambda^{-1} \mathcal{W}(X,0) \big) | \mathcal{F}_t \big]}{\mathbb{E} \big[\exp\big( - \lambda^{-1} \mathcal{W}(X,0) \big) | \mathcal{F}_t \big]} \\ &= \frac{\mathbb{E} \big[\tilde{w}(t,X,B,M_t) \frac{d\mathbb{P}^{u^*}}{d\mathbb{P}}(X) | \mathcal{F}_t \big] \exp(-\lambda^{-1} V(X_0,0))}{\mathbb{E} \big[\frac{d\mathbb{P}^{u^*}}{d\mathbb{P}}(X) | \mathcal{F}_t \big] \exp(-\lambda^{-1} V(X_0,0))} = \frac{\mathbb{E} \big[\tilde{w}(t,X,B,M_t) \frac{d\mathbb{P}^{u^*}}{d\mathbb{P}}(X) | \mathcal{F}_t \big]}{\mathbb{E} \big[\frac{d\mathbb{P}^{u^*}}{d\mathbb{P}}(X) | \mathcal{F}_t \big]} \\ &= \mathbb{E} \big[\tilde{w}(t,X^{u^*},B^{u^*},M_t) | X^{u^*}_t = X_t \big]
\end{split}
\end{talign}
Hence, using the Girsanov theorem (\autoref{thm:Girsanov}) several times, we have that
\begin{talign}
\begin{split}
    K &= \mathbb{E} \big[ \frac{1}{T} \int_0^{T} \big\| \tilde{w}(t,X^{u^*},B^{u^*},M_t)\|^2 - \| \mathbb{E} \big[\tilde{w}(t,X^{u^*},B^{u^*},M_t) | X^{u^*}_t \big] \big\|^2 \, \mathrm{d}t \, \exp(-\lambda^{-1} V(X^{u^*}_0,0)) \big] \\
    &= \mathbb{E} \big[ \frac{1}{T} \int_0^{T} \big\| \tilde{w}(t,X^{u^*},B^{u^*},M_t) - \mathbb{E} \big[\tilde{w}(t,X^{u^*},B^{u^*},M_t) | X^{u^*}_t \big] \big\|^2 \, \mathrm{d}t \, \exp(-\lambda^{-1} V(X^{u^*}_0,0)) \big] \\
    &= \mathbb{E} \big[ \frac{1}{T} \int_0^{T} \big\| \tilde{w}(t,X,B,M_t) - \frac{\mathbb{E} [\tilde{w}(t,X,B,M_t) \exp(-\lambda^{-1} \mathcal{W}(X,0)) | X_t ]}{\mathbb{E} [ \exp(-\lambda^{-1} \mathcal{W}(X,0)) | X_t ]} \big\|^2 \, \mathrm{d}t \, \exp(-\lambda^{-1} \mathcal{W}(X,0)) \big] \\
    &= \mathbb{E} \big[ \frac{1}{T} \int_0^{T} \big\| w(t,v,X^v,B,M_t) - \frac{\mathbb{E} [w(t,v,X^v,B,M_t) \alpha(v,X^v,B) | X_t^v ]}{\mathbb{E} [\alpha(v,X^v,B) | X_t^v ]} \big\|^2 \, \mathrm{d}t \, \alpha(v,X^v,B) \big],
\end{split}
\end{talign}
which concludes the proof, noticing that $K = \mathrm{CondVar}(w;M)$.

\subsection{Proof of the path-wise reparameterization trick (\autoref{prop:cond_exp_rewritten})} \label{sec:proof_cond_exp_rewritten}

We prove a more general statement (\autoref{lem:cond_exp_rewritten}), and show that \autoref{prop:cond_exp_rewritten} is a particular case of it.

\begin{proposition}[Path-wise reparameterization trick] \label{lem:cond_exp_rewritten}
Let $(\Omega, \mathcal{F}, \mathbb{P})$ be a probability space, and $B : \Omega \times [0,T] \to \R^d$ be a Brownian motion. Let $X : \Omega \times [0,T] \to \R^d$ be the uncontrolled process given by \eqref{eq:pair_SDEs_1}, and let $\psi : \Omega \times \R^d \times [0,T] \to \R^d$ be an arbitrary random process such that:
\begin{itemize}
    \item For all $z \in \R^d$, the process $\psi(\cdot,z,\cdot) : \Omega \times [0,T] \to \R^d$ is adapted to the filtration $({\mathcal{F}}_{s})_{s \in [0,T]}$ of the Brownian motion $B$.
    \item For all $\omega \in \Omega$, $\psi(\omega,\cdot,\cdot) : \R^d \times [0,T] \to \R^{d}$ is a twice-continuously differentiable function such that $\psi(\omega,z,0) = z$ for all $z \in \R^d$, and $\psi(\omega,0,s) = 0$ for all $s \in [0,T]$. 
\end{itemize}
Let $F : C([0,T];\R^d) \to \R$ be a Fréchet-differentiable functional. We use the notation $X + \psi(z,\cdot) = (X_s(\omega) + \psi(\omega,z,s))_{s \in [0,T]}$ to denote the shifted process, and we will omit the dependency of $\psi$ on $\omega$ in the proof. Then, 
    \begin{talign} 
    \begin{split} \label{eq:pathwise_rt}
        &\nabla_{x} \mathbb{E}\big[ \exp \big( 
        - F(X)
        \big) \big| X_0 = x \big] \\ &= \! \mathbb{E}\big[ \big( 
        \! - \! \nabla_z F(X \! + \! \psi(z,\cdot)) \rvert_{z=0}
        \! + \! \lambda^{-1/2} \int_0^T (\nabla_z \psi(0,s) \nabla_x b(X_s,s) \! - \! \nabla_z \partial_s \psi(0,s)) (\sigma^{-1})^{\top} (s) \mathrm{d}B_s \big) \\ &\qquad\qquad \times \exp \big( 
        - F(X) \big) 
    \big| X_0 = x \big]
    \end{split}
    \end{talign}
\end{proposition}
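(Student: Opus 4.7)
The plan is to turn the spatial derivative $\nabla_x$ at $X_0 = x$ into a derivative in a dummy parameter $z$ by introducing the reparameterized process $X^{(z)}_t := X_t + \psi(z,t)$, which---thanks to $\psi(\omega,z,0) = z$---starts at $x + z$ when $X$ starts at $x$. Then \autoref{cor:girsanov_sdes} absorbs the induced drift perturbation into a Girsanov density, and differentiating the resulting identity at $z = 0$ isolates the two terms on the right-hand side of \eqref{eq:pathwise_rt} because $\psi(\omega,0,s) \equiv 0$ makes the Girsanov factor equal to $1$ at $z=0$ and kills the quadratic part of its derivative.

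Concretely, I would first compute the dynamics of $X^{(z)}$: since $\psi(\omega,z,\cdot) \in C^2$ pathwise has zero quadratic variation, Itô's formula gives
\begin{talign*}
dX^{(z)}_t = \bigl(b(X^{(z)}_t - \psi(z,t),\, t) + \partial_s \psi(z,t)\bigr)\,dt + \sqrt{\lambda}\,\sigma(t)\,dB_t, \qquad X^{(z)}_0 = x + z.
\end{talign*}
Comparing this to the uncontrolled SDE \eqref{eq:pair_SDEs_1} started at $x + z$, the drifts differ by the adapted random field $b_2(y,t) := b(y - \psi(z,t),t) + \partial_s\psi(z,t) - b(y,t)$, and the diffusion is unchanged. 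The random-drift version of \autoref{cor:girsanov_sdes} then yields, with $h_t(z) := b(X_t + \psi(z,t),t) - b(X_t,t) - \partial_s\psi(z,t)$,
\begin{talign*}
\mathbb{E}\bigl[e^{-F(Y)} \,\big|\, Y_0 = x+z\bigr] = \mathbb{E}\bigl[e^{-F(X + \psi(z,\cdot))}\, R(z,X) \,\big|\, X_0 = x\bigr],
\end{talign*}
where $R(z,X) = \exp\bigl(\tfrac{1}{\sqrt\lambda}\int_0^T \langle h_t(z), (\sigma^{-1})^\top(t)\,dB_t\rangle - \tfrac{1}{2\lambda}\int_0^T \|\sigma^{-1}(t)h_t(z)\|^2\,dt\bigr)$.

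I would then differentiate both sides in $z$ at $z = 0$. A change of variables turns the left-hand side into $\nabla_x \mathbb{E}[e^{-F(X)} \,|\, X_0 = x]$. On the right, $\psi(0,\cdot) \equiv 0$ gives $h_t(0) = 0$, hence $R(0,X) = 1$; moreover the derivative of the quadratic part of the Girsanov exponent vanishes at $z = 0$ since $\nabla_z \|A(z)\|^2\big|_{z=0} = 2\langle A(0), \nabla_z A(0)\rangle = 0$ whenever $A(0) = 0$. The product rule then leaves two surviving pieces: the contribution $-\,e^{-F(X)}\,\nabla_z F(X + \psi(z,\cdot))\big|_{z=0}$ from the functional (via Fréchet differentiability and the chain rule), and the stochastic-integral term
\begin{talign*}
e^{-F(X)} \cdot \nabla_z R(z,X)\big|_{z=0} = e^{-F(X)} \cdot \tfrac{1}{\sqrt\lambda}\int_0^T \bigl(\nabla_z h_t(0)\bigr)^\top (\sigma^{-1})^\top(t)\,dB_t,
\end{talign*}
with $\nabla_z h_t(0) = \nabla_x b(X_t, t)\,\nabla_z\psi(0,t) - \nabla_z\partial_s\psi(0,t)$. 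Taking conditional expectations and rearranging (absorbing outer transposes into the paper's convention for $\nabla_x b$ and $\nabla_z\psi$) produces exactly \eqref{eq:pathwise_rt}.

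The main obstacles I foresee are bookkeeping the Jacobian-versus-transpose conventions so that the matrix products appear in the order shown in \eqref{eq:pathwise_rt}; verifying that \autoref{cor:girsanov_sdes} applies with a drift $b_2$ that depends on $\omega$ via the adapted process $\psi$ (which is the extension stated at the end of that corollary, and requires a standard Novikov-type check on the exponential martingale); and justifying the exchange of $\nabla_z$ with the conditional expectation on both sides, which should follow from dominated convergence using the $C^2$ smoothness of $\psi$ in $(z,s)$, the Fréchet differentiability of $F$, and the linear-growth/ellipticity assumptions on $b$ and $\sigma$ from \autoref{sec:assumptions}.
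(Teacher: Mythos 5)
Your proposal is correct and follows essentially the same route as the paper's proof of \autoref{lem:cond_exp_rewritten}. The paper introduces the auxiliary process $\tilde{X}^{(x)}$ (solving the $\psi$-perturbed SDE started at $x$) and uses that $X^{(x+z)} = \tilde{X}^{(x)} + \psi(z,\cdot)$, then applies \autoref{cor:girsanov_sdes} between $\tilde{X}^{(x)}$ and $X^{(x)}$; you instead add $\psi(z,\cdot)$ directly to $X$ and apply Girsanov between the resulting process and the uncontrolled process started at $x+z$. After substituting $X^{(z)} = X + \psi(z,\cdot)$ back in, your $h_t(z)$ and $R(z,X)$ coincide with the paper's Girsanov exponent (eq.~\eqref{eq:Phi_tildeX_X}--\eqref{eq:cond_exp_z}), and the differentiation at $z=0$ — using $\psi(0,\cdot)\equiv 0$ so that $h_t(0)=0$, the quadratic term has zero $z$-derivative, and the surviving pieces are $-\nabla_z F(X+\psi(z,\cdot))\rvert_{z=0}$ and the stochastic-integral term — is identical. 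The technical caveats you flag are exactly the ones the paper handles: the adapted-drift extension of \autoref{cor:girsanov_sdes}, the Jacobian ordering (the paper's convention gives $\nabla_z\psi(0,s)\nabla_x b(X_s,s)$ rather than your tentative $\nabla_x b\,\nabla_z\psi$), and the interchange of $\nabla_z$ with the stochastic integral, for which the paper cites Hutton's Leibniz rule for stochastic integrals rather than an explicit dominated-convergence argument.
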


\textit{Proof of \autoref{prop:cond_exp_rewritten}. } Given a family of functions $(M_t)_{t \in [0,T]}$ satisfying the conditions in \autoref{prop:cond_exp_rewritten}, we can define a family $(\psi_t)_{t \in [0,T]}$ of functions $\psi_t : \R^d \times [t,T] \to \R^d$ as $\psi_t(z,s) = M_t(s)^{\top} z$. Note that $\psi_t(z,t) = z$ for all $z \in \R^d$ and $\psi_t(0,s) = 0$ for all $s \in [t,T]$, and that $\nabla_z \psi_t(z,s) = M_t(s)$. Hence, $\psi_t$ can be seen as a random process which is constant with respect to $\omega \in \Omega$, and which fulfills the conditions in \autoref{lem:cond_exp_rewritten} up to a trivial time change of variable from $[t,T]$ to $[0,T]$.

We also define the family $(F_t)_{t \in [0,T]}$  of functionals $F_t : C([t,T];\R^d) \to \R$ as 
$F_t(X) = \lambda^{-1} \int_t^T f(X_s,s) \, \mathrm{d}s + \lambda^{-1} g(X_T)$.
We have that
\begin{talign}
\begin{split}
    &\nabla_z F_t(X \! + \! \psi_t(z,\cdot)) \\
    &= \nabla_z \big( \lambda^{-1} \int_t^T f(X_s+\psi_t(z,s),s) \, \mathrm{d}s + \lambda^{-1} g(X_T+\psi_t(z,T)) \big) \\ &\stackrel{\text{(i)}}{=} \! \lambda^{-1} \int_t^T \nabla_z \psi_t(z,s) \nabla f(X_s \! + \! \psi_t(z,s), s) \, \mathrm{d}s \! + \! \lambda^{-1} \nabla_z \psi_t(z,T) \nabla g(X_T \! + \! \psi_t(z,T))
    \\ &= \! \lambda^{-1} \int_t^T M_t(s) \nabla f(X_s \! + \! \psi_t(z,s), s) \, \mathrm{d}s \! + \! \lambda^{-1} M_t(T) \nabla g(X_T \! + \! \psi_t(z,T)),
\end{split}
\end{talign}
where equality (i) holds by the Leibniz rule. Using that $\psi_t(0,s) = 0$, we obtain that:
\begin{talign}
    \nabla_z F_t(X + \psi_t(z,\cdot)) \big\rvert_{z = 0} = \lambda^{-1} \int_t^T \nabla_z \psi_t(0,s) \nabla f(X_s, s) \, \mathrm{d}s + \lambda^{-1} \nabla_z \psi_t(T,0) \nabla g(X_T),
\end{talign}
Up to a trivial time change of variable from $[t,T]$ to $[0,T]$, \autoref{prop:cond_exp_rewritten} follows from plugging these choices into equation \eqref{eq:pathwise_rt}.
\begin{remark}
    We can use matrices $M_t(s)$ that depend on the process $X$ up to time $s$, since the resulting processes $\psi_t(\cdot,z,\cdot)$ are adapted to the filtration of the Brownian motion $B$.
    More specifically, if we let $M_t : \R^d \times [t,T] \to \R^{d\times d}$ be an arbitrary continuously differentiable function matrix-valued function such that $M_t(x,t) = \mathrm{Id}$ for all $x \in \R^d$, and we define the exponential moving average of $X$ as the process $X^{(\upsilon)}$ given by
    \begin{talign}
        X^{(\upsilon)}_t = \upsilon \int_0^{t} e^{-\upsilon (t-s)} X_s \, \mathrm{d}s,
    \end{talign}
    we have that
    \begin{talign}
    \begin{split}
        \frac{d}{ds} M_t(X^{(\upsilon)}_s,s) &= \langle \nabla M_t(X^{(\upsilon)}_s,s), \frac{dX^{(\upsilon)}_s}{ds} \rangle + \partial_s M_t(X^{(\upsilon)}_s,s) \\ &= \nu \langle \nabla_x M_t(X^{(\upsilon)}_s,s), X_s -  X^{(\upsilon)}_s \rangle + \partial_s M_t(X^{(\upsilon)}_s,s),
    \end{split}
    \end{talign}
    and we can write
    \begin{talign} 
    \begin{split} \label{eq:cond_exp_rewritten_general}
        &\nabla_{x} \mathbb{E}\big[ \exp \big( - \lambda^{-1} \int_t^T f(X_s,s) \, \mathrm{d}s - \lambda^{-1} g(X_T) \big) \big| X_t = x \big] \\ &= \mathbb{E}\big[ \big( - \lambda^{-1} \int_t^T M_t(X^{(\upsilon)}_s,s) \nabla_x f(X_s,s) \, \mathrm{d}s - \lambda^{-1} M_t(X^{(\upsilon)}_T,T) \nabla g(X_T) \\ &\qquad\qquad + \lambda^{-1/2} \int_t^T (M_t(X^{(\upsilon)}_s,s) \nabla_x b(X_s,s) - \frac{d}{ds} M_t(X^{(\upsilon)}_s,s)) (\sigma^{-1})^{\top} (s) \mathrm{d}B_s \big) \\ &\qquad\qquad \times \exp \big( - \lambda^{-1} \int_t^T f(X_s,s) \, \mathrm{d}s - \lambda^{-1} g(X_T) \big) 
    \big| X_t = x \big].
    \end{split}
    \end{talign}
    Plugging this into the proof of \autoref{thm:main}, we would obtain a variant of SOCM (Alg. \ref{alg:SOCM}) where the matrix-valued neural network $M_{\omega}$ takes inputs $(t,s,x)$ instead of $(t,s)$. Since the optimization class is larger, from the bias-variance in \autoref{prop:bias_variance} we deduce that this variant would yield a lower variance of the vector field $w$, and likely an algorithm with lower error. This is at the expense of an increased number of function evaluations (NFE) of $M_{\omega}$; one would need $\frac{K(K+1) m}{2}$ NFE per batch instead of only $\frac{K(K+1)}{2}$, which may be too expensive if the architecture of $M_{\omega}$ is large. A way to speed up the computation per batch is to parameterize $M_t$ using cubic splines.
\end{remark}
\qed
\vspace{3pt}

\textit{Proof of \autoref{lem:cond_exp_rewritten}. }
Recall that 
\begin{talign}
\mathrm{d}X_s = b(X_s,s) \, \mathrm{d}s + \sqrt{\lambda} \sigma(s) \, \mathrm{d}B_s, \qquad X_0 \sim p_0,
\end{talign}
is the SDE for the uncontrolled process.
For arbitrary $x, z \in \R^d$, we consider the following SDEs conditioned on the initial points:
\begin{talign} \label{eq:base_SDE_z}
   \mathrm{d}X^{(x+z)}_s &= b(X^{(x+z)}_s,s) \, \mathrm{d}s + \sqrt{\lambda} \sigma(s) \, \mathrm{d}B_s, \qquad X^{(x+z)}_0 = x + z, \\
   \mathrm{d}X^{(x)}_s &= b(X^{(x)}_s,s) \, \mathrm{d}s + \sqrt{\lambda} \sigma(s) \, \mathrm{d}B_s, \qquad X^{(x)}_0 = x. \label{eq:base_SDE_0}
\end{talign}
Suppose that $\psi : \R^d \times [0,T] \to \R^d$ satisfies the properties in the statement of \autoref{lem:cond_exp_rewritten}. 
If $\tilde{X}^{(x)}$ is a solution of 
\begin{talign}
    \mathrm{d}\tilde{X}_s^{(x)} &= (b(\tilde{X}_s^{(x)} + \psi(z,s),s) - \partial_s \psi(z,s)) \, \mathrm{d}s + \sqrt{\lambda} \sigma(s) \, \mathrm{d}B_s, \qquad \tilde{X}^{(x)}_0 = x, 
\end{talign}
then 
$X^{(x+z)} = \tilde{X}^{(x)} + \psi(z,\cdot)$ is a solution of \eqref{eq:base_SDE_z}. This is because $X^{(x+z)}_0 = \tilde{X}^{(x)}_0 + \psi(z,0) = \tilde{X}^{(x)}_0 + z = x + z$, and
\begin{talign} 
\begin{split}
    \mathrm{d}X^{(x+z)}_s &= \mathrm{d}\tilde{X}^{(x)}_s + \partial_s \psi(z,s) \, \mathrm{d}s \\ &= (b(\tilde{X}^{(x)}_s + \psi(z,s),s) - \partial_s \psi(z,s)) \, \mathrm{d}s + \sqrt{\lambda} \sigma(s) \, \mathrm{d}B_s + \partial_s \psi(z,s) \, \mathrm{d}s \\ &= b(X^{(x+z)}_s,s) \, \mathrm{d}s + \sqrt{\lambda} \sigma(s) \, \mathrm{d}B_s,
\end{split}
\end{talign}
Note that we may rewrite \eqref{eq:base_SDE_0} as
\begin{talign}
\begin{split}
    \mathrm{d}X^{(x)}_s &= (b(X^{(x)}_s + \psi(z,s), s) - \partial_s \psi(z,s)) \, \mathrm{d}s \\ &\qquad + (b(X^{(x)}_s,s) - b(X^{(x)}_s + \psi(z,s), s) + \partial_s \psi(z,s)) \, \mathrm{d}s + \sqrt{\lambda} \sigma(s) \, \mathrm{d}B_s, \qquad X^{(x)}_t \sim p_0.
\end{split}
\end{talign}
Hence, since $\psi(z,s)$ is a random process adapted to the filtration of $B$, we can apply the Girsanov theorem for SDEs (Corollary \ref{cor:girsanov_sdes}) on $\tilde{X}^{(x)}$ and $X^{(x)}$, and we have that for any bounded continuous functional $\Phi$,
\begin{talign} 
\begin{split} \label{eq:Phi_tildeX_X}
    &\mathbb{E}[\Phi(\tilde{X}^{(x)})] \\ &= \mathbb{E}\big[\Phi(X^{(x)}) \exp \big( \int_0^T \lambda^{-1/2} \sigma(s)^{-1} (b(X^{(x)}_s + \psi(z,s), s) - b(X^{(x)}_s,s) - \partial_s \psi(z,s)) \, \mathrm{d}B_s \\ &\qquad\qquad\qquad\qquad - \frac{1}{2} \int_0^T \|\lambda^{-1/2} \sigma(s)^{-1} (b(X^{(x)}_s + \psi(z,s), s) - b(X^{(x)}_s,s) - \partial_s \psi(z,s)) \|^2 \, \mathrm{d}s \big) \big]. 
\end{split}
\end{talign}
We can write
\begin{talign} 
\begin{split} \label{eq:cond_exp_z}
    &\mathbb{E}\big[ \exp \big( - F(X)
    \big) \big| X_0 = x + z \big] \stackrel{\text{(i)}}{=} \mathbb{E}\big[ \exp \big( - F(X^{(x + z)})
    \big) \big] \stackrel{\text{(ii)}}{=} \mathbb{E}\big[ \exp \big( - F(\tilde{X}^{(x)}+\psi(z,\cdot))
    \big) \big] \\ &\stackrel{\text{(iii)}}{=} 
    \mathbb{E}\big[ \exp \big( 
    - F(X^{(x)}+\psi(z,\cdot))
    \big) \\ &\qquad \times \exp \big( \int_0^T \lambda^{-1/2} \sigma(s)^{-1} (b(X^{(x)}_s + \psi(z,s), s) - b(X^{(x)}_s,s) - \partial_s \psi(z,s)) \, \mathrm{d}B_s \\ &\qquad\qquad\qquad - \frac{1}{2} \int_0^T \|\lambda^{-1/2} \sigma(s)^{-1} (b(X^{(x)}_s + \psi(z,s), s) - b(X^{(x)}_s,s) - \partial_s \psi(z,s)) \|^2 \, \mathrm{d}s \big) \big]
    \\ &\stackrel{\text{(iv)}}{=} 
    \mathbb{E}\big[ 
    \exp \big( \! - \! F(X \! + \! \psi(z,\cdot)) \! + \! \int_0^T \lambda^{-1/2} \sigma(s)^{-1} (b(X_s \! + \! \psi(z,s), s) \! - \! b(X_s,s) \! - \! \partial_s \psi(z,s)) \, \mathrm{d}B_s \\ &\qquad\qquad\qquad - \frac{1}{2} \int_0^T \|\lambda^{-1/2} \sigma(s)^{-1} (b(X_s + \psi(z,s), s) - b(X_s,s) - \partial_s \psi(z,s)) \|^2 \, \mathrm{d}s \big) | X_0 = x \big]
\end{split}
\end{talign}
Equality (i) holds by the definition of $X^{(x+z)}$, equality (ii) holds by the fact $X^{(x+z)}_s = \tilde{X}^{(x)}_s + \psi(z,s)$, equality (iii) holds by equation \eqref{eq:Phi_tildeX_X}, and equality (iv) holds by the definition of $X^{(x)}_s$.
We conclude the proof by differentiating the right-hand side of \eqref{eq:cond_exp_z} with respect to $z$. Namely,
\begin{talign} 
\begin{split} \label{eq:grad_pwrt}
    &\nabla_{x} \mathbb{E}\big[ \exp \big( - F(X)
    \big) \big| X_0 = x \big] = \nabla_{z} \mathbb{E}\big[ \exp \big( - F(X)
    \big) \big| X_0 = x + z \big] \big\rvert_{z=0} 
    \\ &\stackrel{\text{(i)}}{=} \mathbb{E}\big[ \big( 
    - \nabla_z F(X + \psi(z,\cdot)) + \lambda^{-1/2} \int_0^T (\nabla_z \psi(0,s) \nabla_x b(X_s,s) - \nabla_z \partial_s \psi(0,s)) (\sigma^{-1})^{\top} (s) \mathrm{d}B_s \big) \\ &\qquad\qquad \times \exp \big( - F(X) \big) \big| X_0 = x \big]
\end{split}
\end{talign}
In equality (i) we used \eqref{eq:cond_exp_z}, and that:
\begin{itemize}
\item by the Leibniz rule,
\begin{talign}
\begin{split}
    &\nabla_z \int_0^T \|\sigma(s)^{-1} (b(X_s + \psi(z,s), s) - b(X_s,s) - \partial_s \psi(z,s)) \|^2 \, \mathrm{d}s \big\rvert_{z=0} \\ &= \int_0^T \nabla_z \|\sigma(s)^{-1} (b(X_s + \psi(z,s), s) - b(X_s,s) - \partial_s \psi(z,s)) \|^2 \big\rvert_{z=0} \, \mathrm{d}s = 0.
\end{split}
\end{talign}
\item and by the Leibniz rule for stochastic integrals (see \cite{hutton1984interchanging}),
\begin{talign}
\begin{split}
    &\nabla_z \big( \int_0^T \sigma(s)^{-1} (b(X_s + \psi(z,s), s) - b(X_s,s) - \partial_s \psi(z,s)) \, \mathrm{d}B_s \big) \big\rvert_{z=0} \\ &= \int_0^T (\nabla_z \psi(0,s) \nabla_x b(X_s,s) - \nabla_z \partial_s \psi(0,s)) (\sigma^{-1})^{\top} (s) \, \mathrm{d}B_s.
\end{split}
\end{talign}
\end{itemize}
\qed

\subsection{Informal derivation of the path-wise reparameterization trick} \label{sec:derivation_cond_exp_rewritten}
In this subsection, we provide an informal, intuitive derivation of the path-wise reparameterization trick as stated in \autoref{lem:cond_exp_rewritten}. For simplicity, we particularize the functional $F$ to $F(X) = \lambda^{-1} \int_0^T f(X_s,s) \, \mathrm{d}s + \lambda^{-1} g(X_T)$.
Consider the Euler-Maruyama discretization of the uncontrolled process 
$X$ defined in \eqref{eq:pair_SDEs_1},
with $K+1$ time steps (let $\delta = T/K$ be the step size). This is a family of random variables $\hat{X} = {(\hat{X}_k)}_{k=0:K}$ defined as
\begin{align}
    \hat{X}_0 \sim p_0, \qquad \hat{X}_{k+1} = \hat{X}_k + \delta b(\hat{X}_k,k\delta) + \sqrt{\delta \lambda} \sigma(k\delta) \varepsilon_k, \qquad \varepsilon_k \sim N(0,I). 
\end{align}
Note that we can approximate 
\begin{talign}
\begin{split}
&\mathbb{E}\big[ \exp \big( - \lambda^{-1} \int_0^T f(X_s,s) \, \mathrm{d}s - \lambda^{-1} g(X_T) \big) \big| X_0 = x \big]
\\ &\approx \mathbb{E}\big[ \exp \big( - \lambda^{-1} \delta \sum_{k=0}^{K-1} f(\hat{X}_k,s) - \lambda^{-1} g(\hat{X}_K) \big) \big| \hat{X}_0 = x \big],
\end{split}
\end{talign}
and that this is an equality in the limit $K \rightarrow \infty$, as the interpolation of the Euler-Maruyama discretization $\hat{X}^{(x)}$ converges to the process $X^{(x)}$. Now, remark that for $k \in \{0, \dots, K-1\}$, 
$\hat{X}_{k+1} | \hat{X}_k \sim N(\hat{X}_k + \delta b(\hat{X}_k,k\delta), \delta \lambda (\sigma \sigma^{\top})(k\delta))$. Hence, 
\begin{talign}
\begin{split}
    &\mathbb{E}\big[ \exp \big( - \lambda^{-1} \delta \sum_{k=0}^{K-1} f(\hat{X}_k,s) - \lambda^{-1} g(\hat{X}_K) \big) \big| \hat{X}_0 = x \big] \\ &= C^{-1} \iint_{(\R^d)^{K}} \exp \big( - \lambda^{-1} \delta \sum_{k=0}^{K-1} f(\hat{x}_k,s) - \lambda^{-1} g(\hat{x}_K) \\ &\qquad\qquad\qquad\quad 
    - \! \frac{1}{2\delta \lambda} \sum_{k=1}^{K-1} 
    \|\sigma^{-1}(k\delta) (\hat{x}_{k+1} \! - \! \hat{x}_{k} \! - \! \delta b(\hat{x}_k,k\delta))\|^2
    \\ &\qquad\qquad\qquad\quad - \! \frac{1}{2\delta \lambda}
    \|\sigma^{-1}(0) (\hat{x}_{1} \! - \! x \! - \! \delta b(x,0))\|^2 \big) \, \mathrm{d}\hat{x}_1 \cdots \mathrm{d}\hat{x}_K,
\end{split}
\end{talign}
where $C = \sqrt{(2\pi \delta \lambda)^K \prod_{k=0}^{K-1} \mathrm{det}((\sigma \sigma^{\top})(k\delta))}$. 
Now, let $\psi : \R^d \times [0,T] \to \R^d$ be an arbitrary twice differentiable function such that $\psi(z,0) = z$ for all $z \in \R^d$, and $\psi(0,s) = 0$ for all $s \in [0,T]$. 
We can write
\begin{talign} 
\begin{split} \label{eq:nabla_exp_big}
    &\nabla_x \mathbb{E}\big[ \exp \big( - \lambda^{-1} \delta \sum_{k=0}^{K-1} f(\hat{X}_k,s) - \lambda^{-1} g(\hat{X}_K) \big) | \hat{X}_0 = x \big] \\ &= 
    \nabla_z \mathbb{E}\big[ \exp \big( - \lambda^{-1} \delta \sum_{k=0}^{K-1} f(\hat{X}_k,s) - \lambda^{-1} g(\hat{X}_K) \big) | \hat{X}_0 = x + z \big] \rvert_{z = 0}
    \\ &= C^{-1} \nabla_z \big( \iint_{(\R^d)^{K}} \exp \big( - \lambda^{-1} \delta \sum_{k=0}^{K-1} f(\hat{x}_k,s) - \lambda^{-1} g(\hat{x}_K) \\ &\qquad\qquad\qquad\quad
    - \! \frac{1}{2\delta \lambda} \sum_{k=1}^{K-1} 
    \|\sigma^{-1}(k\delta) (\hat{x}_{k+1} \! - \! \hat{x}_{k} \! - \! \delta b(\hat{x}_k,k\delta))\|^2
    \\ &\qquad\qquad\qquad\quad
    - \! \frac{1}{2\delta \lambda}
    \|\sigma^{-1}(0) (\hat{x}_{1} \! - \! (x \! + \! z) \! - \! \delta b(x \! + \! z,0))\|^2
    \big) 
    \, \mathrm{d}\hat{x}_1 \cdots \mathrm{d}\hat{x}_K \big) \rvert_{z = 0}
    \\ &= C^{-1} \nabla_z \big( \iint_{(\R^d)^{K}} \exp \big( - \lambda^{-1} \delta \sum_{k=0}^{K-1} f(\hat{x}_k + \psi(z,k\delta),s) - \lambda^{-1} g(\hat{x}_K + \psi(z,K\delta)) \\ &\qquad\qquad\quad
    - \! \frac{1}{2\delta \lambda} \! \sum_{k=1}^{K-1} \!
    \|\sigma^{-1}(k\delta) (\hat{x}_{k+1} + \psi(z,(k+1)\delta) \! - \! \hat{x}_{k} \! - \! \psi(z,k\delta) \! - \! \delta b(\hat{x}_k \! + \! \psi(z,k\delta),k\delta))\|^2
    \\ &\qquad\qquad\quad
    - \! \frac{1}{2\delta \lambda}
    \|\sigma^{-1}(0) (\hat{x}_{1} \! + \! \psi(z,\delta) \! - \! (x \! + \! \psi(z,0)) \! - \! \delta b(x \! + \! \psi(z,0),0))\|^2
    \big) 
    \, \mathrm{d}\hat{x}_1 \cdots \mathrm{d}\hat{x}_K \big) \rvert_{z = 0},
\end{split}
\end{talign}
In the last equality, we used that for $k \in \{1, \dots, K\}$, the variables $\hat{x}_k$ are integrated over $\R^d$, which means that adding an offset $\psi(z,k\delta)$ does not change the value of the integral. We also used that $\psi(z,0) = z$.
Now, for fixed values of $\hat{x} = (\hat{x}_1, \dots, \hat{x}_K)$, and letting $\hat{x}_0 = x$, we define 
\begin{talign}
\begin{split}
G_{\hat{x}}(z) &= \lambda^{-1} \delta \sum_{k=0}^{K-1} f(\hat{x}_k + \psi(z,k\delta),s) + \lambda^{-1} g(\hat{x}_K + \psi(z,K\delta)) \\
    &\quad + \! \frac{1}{2\delta \lambda} \sum_{k=0}^{K-1} 
    \|\sigma^{-1}(k\delta)(\hat{x}_{k+1} \! + \! \psi(z,(k+1)\delta) \! - \! \hat{x}_k \! -\! \psi(z,k\delta) \! - \! \delta b(\hat{x}_k \! + \! \psi(z,k\delta), k\delta))\|^2.
\end{split}
\end{talign}
Using that $\psi(0,s) = 0$ for all $s \in [0,T]$, we have that:
\begin{talign}
\begin{split}
    G_{\hat{x}}(0) &= \lambda^{-1} \delta \sum_{k=0}^{K-1} f(\hat{x}_k,s) + \lambda^{-1} g(\hat{x}_K) + \! \frac{1}{2\delta \lambda} \sum_{k=0}^{K-1} 
    \|\sigma^{-1}(k\delta)(\hat{x}_{k+1} \! - \! \hat{x}_k \! - \! \delta b(\hat{x}_k, k\delta))\|^2. \\
    \nabla G_{\hat{x}}(z) \rvert_{z=0} &= \lambda^{-1} \delta \sum_{k=0}^{K-1} \nabla \psi(0,k\delta) \nabla f(\hat{x}_k,s) + \lambda^{-1} \nabla \psi(0,K\delta) \nabla g(\hat{x}_K) \\ &\quad + \! \frac{1}{\delta \lambda} \sum_{k=0}^{K-1} 
    (\nabla_z \psi(0,(k+1)\delta) - \nabla_z \psi(0,k\delta) \! 
    - \! \delta \nabla \psi(0,k\delta) \nabla b(\hat{x}_k, k\delta)) \\ &\qquad\qquad\qquad \times ((\sigma^{-1})^{\top} \sigma^{-1})(k\delta)(\hat{x}_{k+1} \! - \! \hat{x}_k \!
    - \! \delta b(\hat{x}_k, k\delta)).
\end{split}
\end{talign}
And we can express the right-hand side of \eqref{eq:nabla_exp_big} in terms of $G_{\hat{x}}(0)$ and $\nabla G_{\hat{x}}(z) \rvert_{z=0}$:
\begin{talign}
    &\nabla_z \big( C^{-1} \iint_{(\R^d)^{K}} \exp \big( - G_{\hat{x}}(z)
    \big) \, \mathrm{d}y_1 \cdots \mathrm{d}y_K \big) 
    = - C^{-1} \iint_{(\R^d)^{K}} \nabla G_{\hat{x}}(z)\rvert_{z=0} \exp \big( - G_{\hat{x}}(0)
    \big) \, \mathrm{d}y_1 \cdots \mathrm{d}y_K.
\end{talign}
We define $\epsilon_k = \frac{1}{\sqrt{\delta \lambda}} \sigma^{-1}(k\delta)(\hat{x}_{k+1} \! - \! \hat{x}_k \! - \! \delta b(\hat{x}_k, k\delta))$, and then, we are able to write
\begin{talign} \label{eq:y_k_euler_maruyama_updates}
    \hat{x}_{k+1} &= \hat{x}_k + \delta b(\hat{x}_k, k\delta) + \sqrt{\delta \lambda} \sigma(k\delta) \epsilon_k, \qquad \hat{x}_0 = x \\
    G_{\hat{x}}(0) &= \lambda^{-1} \delta \sum_{k=0}^{K-1} f(\hat{x}_k,s) + \lambda^{-1} g(\hat{x}_K) + \! \frac{1}{2} \sum_{k=0}^{K-1} 
    \|\epsilon_k\|^2, \\
    \begin{split}
    \nabla G_{\hat{x}}(z) \rvert_{z=0} &= 
    \lambda^{-1} \delta \sum_{k=0}^{K-1} \nabla \psi(0,k\delta) \nabla f(\hat{x}_k,s) + \lambda^{-1} \nabla \psi(0,K\delta) \nabla g(\hat{x}_K) \\ &+ \! \sqrt{\delta \lambda^{-1}} \sum_{k=0}^{K-1} 
    (\partial_s \nabla_z \psi(0,k\delta) + O(\delta) \! - \! \nabla \psi(0,k\delta) \nabla b(\hat{x}_k, k\delta)) (\sigma^{-1})^{\top}(k\delta)\epsilon_k. \label{eq:nabla_G_y_z}
    \end{split}
\end{talign}
Then, taking the limit $K \to \infty$ (i.e. $\delta \to 0$), we recognize \eqref{eq:y_k_euler_maruyama_updates} as Euler-Maruyama discretization of the uncontrolled process $X$ in equation \eqref{eq:pair_SDEs_1} conditioned on $X_0 = x$, and the last term in \eqref{eq:nabla_G_y_z} as the Euler-Maruyama discretization of the stochastic integral $\lambda^{-1/2} \int_0^T (\partial_s \nabla_z \psi(0,s) - \nabla \psi(0,s) \nabla b(X^{(x)}_s, s)) (\sigma^{-1})^{\top}(s) \, \mathrm{d}B_s$. Thus,
\begin{talign}
\begin{split}
     &\lim_{K \to \infty} \nabla_x \mathbb{E}\big[ \exp \big( - \lambda^{-1} \delta \sum_{k=0}^{K-1} f(\hat{X}_k,s) - \lambda^{-1} g(\hat{X}_K) \big) \big] \\ &= \mathbb{E}\big[ \big( - \lambda^{-1} \int_0^T \nabla \psi(0,s) \nabla_x f(X_s,s) \, \mathrm{d}s - \lambda^{-1} \nabla \psi(0,T) \nabla g(X_T) \\ &\qquad\qquad + \lambda^{-1/2} \int_0^T (\nabla \psi(0,s) \nabla_x b(X_s,s) - \partial_s \nabla \psi(0,s)) (\sigma^{-1})^{\top} (s) \, \mathrm{d}B_s \big) \\ &\qquad\qquad \times \exp \big( - \lambda^{-1} \int_0^T f(X_s,s) \, \mathrm{d}s - \lambda^{-1} g(X_T) \big) \big| X_0 = x \big],
\end{split}
\end{talign}
which concludes the derivation.

\subsection{SOCM-Adjoint: replacing the path-wise reparameterization trick with the adjoint method}
\label{subsec:socm-adjoint}
\begin{proposition}
Let $\mathcal{L}_{\mathrm{SOCM-Adj}} : L^2(\R^d \times [0,T]; \R^d) \to \R$ be the loss function defined as 
    \begin{talign}
    \begin{split} \label{eq:mathcal_L_loss}
        \mathcal{L}_{\mathrm{SOCM-Adj}}(u) &:= \mathbb{E} \big[\frac{1}{T} \int_0^{T} \big\| u(X^v_t,t)
        + \sigma(t)^{\top} a(t,X^v) \big\|^2 \, \mathrm{d}t 
        \times \alpha(v, X^v, B) \big]~,
    \end{split}
    \end{talign}
    where $X^v$ is the process controlled by $v$ (i.e., $dX_t = (b(X_t,t) + \sigma(t) v(X_t,t)) \, \mathrm{d}t + \sqrt{\lambda} \sigma(X_t,t) \, \mathrm{d}B_t$ and $X_0 \sim p_0$), $\alpha(v,X^v,B)$ is the importance weight defined in \autoref{eq:importance_weight_def}, and $a(t,X^v)$ is the solution of the ODE
    \begin{talign}
    \begin{split}
        \frac{da(t)}{dt} &= - \nabla_x b (X^v_t,t) a(t) - \nabla_x f(X^v_t,t), \\ a(T) &= \nabla g(X^v_T),
    \end{split}
    \end{talign}
    $\mathcal{L}_{\mathrm{SOCM-Adj}}$ has a unique optimum, which is the optimal control $u^*$.     
\end{proposition}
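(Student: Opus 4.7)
The plan is to mirror the proof of \autoref{thm:main}, substituting the application of the path-wise reparameterization trick (\autoref{prop:cond_exp_rewritten}) with the classical adjoint sensitivity identity. The starting point is the auxiliary loss $\tilde{\mathcal{L}}(u) = \mathbb{E}[\frac{1}{T}\int_0^T \|u(X_t,t) - u^*(X_t,t)\|^2 \, dt \, \exp(-\lambda^{-1}\mathcal{W}(X,0))]$, whose unique optimum is manifestly $u^*$ since the exponential weight is strictly positive almost surely under the uncontrolled measure. Expanding the squared norm and invoking \autoref{lem:cross_term_loss}, the cross-term is rewritten in terms of $\nabla_x \mathbb{E}[\exp(-\lambda^{-1}\mathcal{W}(X,t)) \mid X_t = x]$, which is the exact same object targeted by the path-wise reparameterization trick in the SOCM proof.

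The key replacement step is to express this gradient via the adjoint process. Let $\Phi(t,s) := \partial X_s/\partial X_t$ denote the derivative flow along the uncontrolled process, which satisfies the variational equation $\partial_s \Phi(t,s) = \nabla_x b(X_s,s)\Phi(t,s)$ with $\Phi(t,t) = I$ (no Brownian driver enters because $\sigma$ is state-independent). A direct calculation shows that the pathwise quantity $a(t,X) = \int_t^T \Phi(t,s)^\top \nabla_x f(X_s,s)\, ds + \Phi(t,T)^\top \nabla g(X_T)$ solves the backward ODE stated in the proposition. Differentiating the integrand under the conditional expectation by the chain rule yields $\nabla_x \mathbb{E}[\exp(-\lambda^{-1}\mathcal{W}(X,t))\mid X_t = x] = -\lambda^{-1} \mathbb{E}[a(t,X)\exp(-\lambda^{-1}\mathcal{W}(X,t)) \mid X_t = x]$. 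Substituting this back into the cross-term, using the tower property to absorb the inner conditional expectation, and completing the square, one obtains $\tilde{\mathcal{L}}(u) = \mathbb{E}[\frac{1}{T}\int_0^T \|u(X_t,t) + \sigma(t)^\top a(t,X)\|^2 \, dt \, \exp(-\lambda^{-1}\mathcal{W}(X,0))] + K$ for some constant $K$ independent of $u$.

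The final step is a Girsanov change of process from the uncontrolled $X$ to the $v$-controlled $X^v$ via \autoref{cor:girsanov_sdes}, which converts the weight $\exp(-\lambda^{-1}\mathcal{W}(X,0))$ into precisely the importance weight $\alpha(v,X^v,B)$ defined in \eqref{eq:importance_weight_def}. This produces $\mathcal{L}_{\mathrm{SOCM-Adj}}(u) = \tilde{\mathcal{L}}(u) - K$. Since both losses share the same $u$-landscape up to the additive constant $K$ and $\tilde{\mathcal{L}}$ has the unique minimizer $u^*$, so does $\mathcal{L}_{\mathrm{SOCM-Adj}}$.

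The main obstacle is justifying the adjoint identity rigorously. One must verify that the gradient and conditional expectation can be interchanged (via a domination argument exploiting the regularity assumed in \autoref{sec:assumptions}), derive the backward ODE for $a$ from the variational equation for $\Phi$ while being careful about transpose conventions on $\nabla_x b$, and check that $\Phi(t,s)$ is well-defined as a pathwise Jacobian of the flow. Once this identity is in hand, the remainder of the argument is a structural analog of the SOCM proof, with $\sigma(t)^\top a(t,X)$ playing the role formerly occupied by the path-wise reparameterized vector field $\tilde w(t,X,B,M_t)$.
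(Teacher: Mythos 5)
Your proposal is correct and structurally identical to the paper's: replace the path-wise reparameterization trick in the proof of \autoref{thm:main} by the adjoint identity, then follow the same expand-cross-term / complete-the-square / Girsanov pipeline. The one genuine difference is \emph{how} the adjoint identity is obtained. You derive it from the derivative flow $\Phi(t,s) = \partial X_s / \partial X_t$ solving the variational equation, writing $a(t,X) = \int_t^T \Phi(t,s)^{\top}\nabla_x f(X_s,s)\,\mathrm{d}s + \Phi(t,T)^{\top}\nabla g(X_T)$ and checking it solves the backward ODE via $\partial_t \Phi(t,s) = -\Phi(t,s)\nabla_x b(X_t,t)$. The paper instead establishes the identity in \autoref{lem:adjoint_method_sdes} by a Lagrange-multiplier argument: it subtracts $\int_0^T \langle a_t, \mathrm{d}X_t - b\,\mathrm{d}t - \sigma\,\mathrm{d}B_t\rangle$ (which is zero), integrates by parts via \autoref{lem:stochastic_int_by_parts}, and chooses $a$ to kill the $\nabla_x X_t$ and $\nabla_x X_T$ coefficients. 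Both derivations are classical and equivalent; the flow-based one you use is conceptually closer to the ``pathwise method'' the paper cites in \autoref{rem:sensitivity} and makes the identity $a(t,X) = \nabla_x \mathcal{W}(X,t)$ transparent, whereas the Lagrange-multiplier route avoids introducing the Jacobian flow and directly delivers the ODE in the form needed. Your caveats about transpose conventions and differentiating under the conditional expectation are exactly the technical points that need care in both routes. One small thing worth flagging: \autoref{lem:adjoint_method_sdes} is stated with conditioning at time $0$, while \autoref{lem:cross_term_loss} gives a gradient conditioned at time $t$, so a trivial time-shift is required; you tacitly absorb this, as does the paper.
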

\begin{proof}
    The proof follows the same structure as that of \autoref{thm:main}. Instead of plugging the path-wise reparameterization trick (\autoref{prop:cond_exp_rewritten}) in the right-hand side of \eqref{eq:cross_term_loss_sketch}, we make use of \autoref{lem:adjoint_method_sdes} to evaluate $\nabla_{x} \mathbb{E} \big[ \exp \big( - \lambda^{-1} \int_0^T f(X_t,t) \, dt - \lambda^{-1} g(X_T) \big) | X_0 = x \big]$. Particular cases of the result in \autoref{lem:adjoint_method_sdes} have been used in previous works such as \cite{li2020scalable,kidger2021neural}. We present a more general form that covers state costs $f$, as well as stochastic integrals. We also present a simpler proof of the result based on Lagrange multipliers.
\end{proof}

\begin{lemma} [Adjoint method for SDEs]
\cite{li2020scalable,kidger2021neural}
\label{lem:adjoint_method_sdes}
    Let $X : \Omega \times [0,T] \to \R^d$ be the uncontrolled process defined in \eqref{eq:pair_SDEs_1}, with initial condition $X_0 = x$.
    We define the random process $a : \Omega \times [0,T] \to \R^{d}$ such that for all $\omega \in \Omega$, using the short-hand $a(t) := a(\omega,t)$,
    \begin{talign}
        da_t(\omega) &= \big( - \nabla_{x} b(X_t(\omega),t) a_t(\omega) - \nabla_x f(X_t(\omega),t) \big) \, \mathrm{d}t  - \nabla_x h(X_t(\omega),t) \, \mathrm{d}B_t, \\ a_T(\omega) &= \nabla_x g(X_T(\omega)),
    \end{talign}
    we have that 
    \begin{talign}
    \begin{split}
        &\nabla_{x} \mathbb{E} \big[ \int_0^T f(X_t(\omega),t) \, \mathrm{d}t + \int_0^T \langle h(X_t(\omega),t), \, \mathrm{d}B_t \rangle + g(X_T(\omega)) | X_0(\omega) = x \big] = \mathbb{E} \big[ a_0(\omega) \big], \\
        &\nabla_{x} \mathbb{E} \big[ \exp \big( - \int_0^T f(X_t(\omega),t) \, \mathrm{d}t - \int_0^T \langle h(X_t(\omega),t), \, \mathrm{d}B_t \rangle - g(X_T(\omega)) \big) | X_0(\omega) = x \big] \\ &= - \mathbb{E} \big[ a_0(\omega) \exp \big( - \int_0^T f(X_t(\omega),t) \, \mathrm{d}t - \int_0^T \langle h(X_t(\omega),t), \, \mathrm{d}B_t \rangle - g(X_T(\omega)) \big) | X_0(\omega) = x \big].
    \end{split}
    \end{talign}
\end{lemma}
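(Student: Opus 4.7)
The plan is to establish a \emph{pathwise} identity $a_0 = \nabla_x F(X^{(x)})$, where $F(X) := \int_0^T f(X_t,t) \, \mathrm{d}t + \int_0^T \langle h(X_t,t), \mathrm{d}B_t \rangle + g(X_T)$ and $X^{(x)}$ denotes the uncontrolled process started at $x$. Once this is in hand, both claims follow painlessly: the first by taking expectations and commuting $\nabla_x$ with $\mathbb{E}$, and the second by the pathwise chain rule $\nabla_x e^{-F(X^{(x)})} = -a_0 \, e^{-F(X^{(x)})}$ followed by an expectation. Conceptually, this is exactly the Lagrange-multiplier picture advertised in the paper: the process $a_t$ is the multiplier for the SDE constraint, and its backward equation is precisely the stationarity condition of the Lagrangian in $X$.

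I would first introduce the variational (Jacobian) process $J_t := \partial X_t / \partial x$. Because the diffusion coefficient $\sigma(t)$ is state-independent, formally differentiating the uncontrolled SDE~\eqref{eq:pair_SDEs_1} in $x$ yields the \emph{noise-free} linear ODE $\mathrm{d} J_t = \nabla_x b(X_t,t) J_t \, \mathrm{d}t$ with $J_0 = \mathrm{Id}$. Existence and bounded moments of $J_t$ are standard under the assumptions in \autoref{sec:assumptions}; the noise-free structure is the reason the argument will not require a second-order Itô correction.

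The core computation is Itô's product rule applied to the scalar $\langle a_t, J_t v \rangle$ for an arbitrary fixed direction $v \in \R^d$. Since $J_t v$ has bounded variation, its quadratic covariation with $a_t$ vanishes, and the drift contributions $-\langle \nabla_x b(X_t,t) a_t, J_t v \rangle$ (from $\mathrm{d} a_t$) and $\langle a_t, \nabla_x b(X_t,t) J_t v \rangle$ (from $\mathrm{d} J_t$) cancel by transposition, leaving only $-\langle \nabla_x f(X_t,t), J_t v \rangle \, \mathrm{d}t$ in the drift and $-\langle \nabla_x h(X_t,t) J_t v, \mathrm{d} B_t \rangle$ in the martingale part. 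Integrating over $[0,T]$, using $a_T = \nabla g(X_T)$ and $J_0 v = v$, and rearranging gives
\[
\langle a_0, v \rangle = \langle \nabla g(X_T), J_T v \rangle + \int_0^T \langle \nabla_x f(X_t,t), J_t v \rangle \, \mathrm{d}t + \int_0^T \langle \nabla_x h(X_t,t) J_t v, \mathrm{d} B_t \rangle.
\]
Recognizing that $J_t v = D_v X_t$ is the directional derivative of the trajectory in direction $v$, and invoking the Leibniz rule for stochastic integrals to pull $D_v$ past $\int \cdot \, \mathrm{d} B_t$, the right-hand side is exactly $D_v F(X^{(x)})$. Since $v$ was arbitrary, the pathwise identity $a_0 = \nabla_x F(X^{(x)})$ is established.

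The main obstacle I anticipate is making the two interchanges (derivative with expectation, and derivative with the stochastic integral in $F$) fully rigorous. Both require moment bounds on $J_t$ together with integrability of the spatial gradients of $b, f, g, h$ along the trajectory, which should follow from the linear-growth and ellipticity assumptions of \autoref{sec:assumptions} via standard dominated-convergence arguments; this is where most of the effort of a fully detailed proof would sit, the algebraic adjoint computation itself being a short Itô-telescoping identity.
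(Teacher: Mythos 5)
Your proof is correct and establishes the same pathwise identity $a_0 = \nabla_x F(X^{(x)})$ that underlies the paper's argument, but by a different route. The paper uses a Lagrange-multiplier device: it subtracts the identically-zero quantity $\int_0^T \langle a_t, \mathrm{d}X_t - b\,\mathrm{d}t - \sqrt{\lambda}\sigma\,\mathrm{d}B_t\rangle$ from $F(X)$, applies stochastic integration by parts to $\int_0^T \langle a_t, \mathrm{d}X_t\rangle$, differentiates the resulting expression in $x$, and finally chooses the backward equation for $a$ so as to annihilate the coefficients of $\nabla_x X_t$. You instead introduce the Jacobian flow $J_t$ from the start (a deterministic linear ODE, since $\sigma$ does not depend on the state) and apply Itô's product rule to $\langle a_t, J_t v\rangle$, obtaining the cancellation directly from the transpose relation between the variational ODE and the adjoint drift. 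These are the same duality principle organized in a different order, and your organization buys a genuine simplification: because $J_t v$ has bounded variation, the quadratic covariation $[a, J v]$ vanishes identically, whereas the paper pairs $a$ with $X$ itself --- two genuine semimartingales --- and its written integration-by-parts step omits the covariation $[a,X]_T = \int_0^T \langle -\nabla_x h(X_t,t)\,\mathrm{d}B_t, \sqrt{\lambda}\sigma(t)\,\mathrm{d}B_t\rangle$ and also writes $\tfrac{\mathrm{d}a_t}{\mathrm{d}t}\,\mathrm{d}t$ for a differential that carries a $\mathrm{d}B_t$ part (indeed the proof opens by declaring $a(\omega,\cdot)$ differentiable, then later prescribes a backward SDE with a martingale term). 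These slips are harmless in the paper's own application, where $h\equiv 0$, but your Jacobian route handles the general $h$ cleanly and is the more careful derivation. Two small points to settle when you write this up: fix the transpose convention so the drift cancellation is literal (the $\nabla_x b$ acting on $a_t$ in the adjoint equation must be the transpose of the $\nabla_x b$ appearing in $\mathrm{d}J_t$), and justify the interchange $D_v \leftrightarrow \int \cdot\, \mathrm{d}B$, for which the paper cites \citet{hutton1984interchanging} in the parallel argument for \autoref{prop:cond_exp_rewritten}.
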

\begin{proof}
    We will use an approach based on Lagrange multipliers. Define a process $a : \Omega \times [0,T] \to \R^d$ such that for any $\omega \in \Omega$, $a(\omega,\cdot)$ is differentiable. For a given $\omega \in \Omega$, we can write
    \begin{talign}
    \begin{split}
        &\int_0^T f(X_t(\omega),t) \, \mathrm{d}t + \int_0^T \langle h(X_t(\omega),t), \, \mathrm{d}B_t \rangle + g(X_T(\omega)) 
        \\ &= 
        \int_0^T f(X_t(\omega),t) \, \mathrm{d}t + \int_0^T \langle h(X_t(\omega),t), \, \mathrm{d}B_t \rangle + g(X_T(\omega)) \\ &\qquad - \int_0^T \langle a_t(\omega), (dX_t(\omega) - b_{\theta}(X_t(\omega),t) \, \mathrm{d}t - \sigma(t) \, \mathrm{d}B_t) \rangle. 
    \end{split}
    \end{talign}
    By \autoref{lem:stochastic_int_by_parts}, we have that
    \begin{talign}
    \begin{split}
        \int_0^T \langle a_t(\omega), dX_t(\omega) \rangle = \langle a_T(\omega), X_T(\omega) \rangle - \langle a_0(\omega), X_0(\omega) \rangle - \int_0^T \langle X_t(\omega), \frac{da_t}{dt}(\omega) \rangle \, \mathrm{d}t.
    \end{split}
    \end{talign}
    Hence,
    \begin{talign}
    \begin{split}
        &\nabla_{x} \big( \int_0^T f(X_t(\omega),t) \, \mathrm{d}t + \int_0^T \langle h(X_t(\omega),t), \, \mathrm{d}B_t \rangle + g(X_T(\omega)) \big) 
        \\ &= \nabla_{x} \big( 
        \int_0^T f(X_t(\omega),t) \, \mathrm{d}t + \int_0^T \langle h(X_t(\omega),t), \, \mathrm{d}B_t \rangle + g(X_T(\omega)) \\ &\qquad\quad - \langle a_T(\omega), X_T(\omega) \rangle + \langle a_0(\omega), X_0(\omega) \rangle + \int_0^T \big( \langle a_t(\omega), b_{\theta}(X_t(\omega),t) \rangle + \langle \frac{da_t}{dt}(\omega), X_t(\omega) \rangle \big) \, \mathrm{d}t \\ &\qquad\quad + \int_0^T \langle a_t(\omega), \sigma(t) \, \mathrm{d}B_t \rangle \big)
        \\ &=  
        \int_0^T \nabla_{x} X_t(\omega) \nabla_x f(X_t(\omega),t) \, \mathrm{d}t + \int_0^T \nabla_{x} X_t(\omega) \nabla_x h(X_t(\omega),t) \, \mathrm{d}B_t + \nabla_{x} X_T(\omega) \nabla_x g(X_T(\omega)) \\ &\qquad\quad 
        - \nabla_{x} X_T(\omega) a_T(\omega) + \nabla_{x} X_0(\omega) a_0(\omega) \\ &\qquad\quad + \int_0^T \big( \nabla_{x} X_t(\omega) \nabla_{x} b_{\theta}(X_t(\omega),t) a_t(\omega) + \nabla_x X_t(\omega) \frac{da_t}{dt}(\omega) \big) \, \mathrm{d}t 
        \\ &= \int_0^T \nabla_{x} X_t(\omega) \big( \nabla_x f(X_t(\omega),t) + \nabla_{x} b_{\theta}(X_t(\omega),t) a_t(\omega) + \frac{da_t}{dt}(\omega) \big) \, \mathrm{d}t \\ &\qquad\quad + \nabla_{x} X_T(\omega) \big( \nabla_x g(X_T(\omega)) - a_T(\omega) \big) + a_0(\omega) + \int_0^T \nabla_{x} X_t(\omega) \nabla_x h(X_t(\omega),t)) \, \mathrm{d}B_t.
    \end{split}
    \end{talign}
    In the last line we used that $\nabla_{x} X_0(\omega) = \nabla_{x} x = \mathrm{I}$.
    If choose $a$ such that
    \begin{talign}
    \begin{split}
        da_t(\omega) &= \big( - \nabla_{x} b_{\theta}(X_t(\omega),t) a_t(\omega) - \nabla_x f(X_t(\omega),t) \big) \, \mathrm{d}t - \nabla_x h(X_t(\omega),t) \, \mathrm{d}B_t, \\ a_T(\omega) &= \nabla_x g(X_T(\omega)),
    \end{split}
    \end{talign}
    then we obtain that
    \begin{talign}
    \begin{split}
        \nabla_{x} \big( \int_0^T f(X_t(\omega),t) \, \mathrm{d}t + g(X_T(\omega)) \big) = a_0(\omega), 
    \end{split}
    \end{talign}
    and by the Leibniz rule,
    \begin{talign}
    \begin{split}
        &\nabla_{x} \mathbb{E} \big[ \int_0^T f(X_t(\omega),t) \, \mathrm{d}t + \int_0^T \langle h(X_t(\omega),t), \, \mathrm{d}B_t \rangle + g(X_T(\omega)) \big] \\ &= \mathbb{E} \big[ \nabla_{x} \big( \int_0^T f(X_t(\omega),t) \, \mathrm{d}t + \int_0^T \langle h(X_t(\omega),t), \, \mathrm{d}B_t \rangle + g(X_T(\omega)) \big) \big] = \mathbb{E} \big[ a_0(\omega) \big],
    \end{split}
    \end{talign}
    and 
    \begin{talign}
    \begin{split}
        &\nabla_{x} \mathbb{E} \big[ \exp \big( - \int_0^T f(X_t(\omega),t) \, \mathrm{d}t - \int_0^T \langle h(X_t(\omega),t), \, \mathrm{d}B_t \rangle - g(X_T(\omega)) \big) \big] \\ &= - \mathbb{E} \big[ \nabla_{x} \big( \int_0^T f(X_t(\omega),t) \, \mathrm{d}t + \int_0^T \langle h(X_t(\omega),t), \, \mathrm{d}B_t \rangle + g(X_T(\omega)) \big) \\ &\qquad\qquad \times \exp \big( - \int_0^T f(X_t(\omega),t) \, \mathrm{d}t - \int_0^T \langle h(X_t(\omega),t), \, \mathrm{d}B_t \rangle - g(X_T(\omega)) \big) \big] \\ &= - \mathbb{E} \big[ a_0(\omega) \exp \big( - \int_0^T f(X_t(\omega),t) \, \mathrm{d}t - \int_0^T \langle h(X_t(\omega),t), \, \mathrm{d}B_t \rangle - g(X_T(\omega)) \big) \big].
    \end{split}
    \end{talign}
\end{proof}

\begin{lemma}[Stochastic integration by parts, \cite{oksendal2013stochastic}] \label{lem:stochastic_int_by_parts}
    Let 
    \begin{talign}
    \begin{split}
        \mathrm{d}X_t = a_t \, \mathrm{d}t + b_t \, \mathrm{d}W^1_t, \\
        \mathrm{d}Y_t = f_t \, \mathrm{d}t + g_t \, \mathrm{d}W^2_t.
    \end{split}
    \end{talign}
    where $a_t$, $b_t$, $f_t$, $g_t$ are continuous square integrable processes adapted to a filtration ${(\mathcal{F}_t)}_{t \in [0,T]}$, and $W^1$, $W^2$ are Brownian motions adapted to the same filtration. Then, 
    \begin{talign}
        X_t Y_t - X_0 Y_0 = \int_0^t X_s \, \mathrm{d}Y_s + \int_0^t Y_s \, \mathrm{d}X_s + \int_0^t 
        \langle \mathrm{d}X_s, \mathrm{d}Y_s \rangle =  \int_0^t X_s \, \mathrm{d}Y_s + \int_0^t Y_s \, \mathrm{d}X_s + \int_0^t 
        \langle b_s \, \mathrm{d}W^1_s, g_s \, \mathrm{d}W^2_s \rangle.
    \end{talign}
\end{lemma}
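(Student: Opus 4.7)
The plan is to derive this identity as a direct consequence of the multi-dimensional It\^o formula applied to the smooth function $\phi(x,y) = xy$ (or $\phi(x,y) = \langle x, y \rangle$ in the vector-valued reading of the statement) evaluated along the joint It\^o process $(X_t, Y_t)$. Since $\phi$ is a bilinear form, the only non-vanishing partial derivatives are the first-order ones $\partial_x \phi = y$, $\partial_y \phi = x$ and the mixed second-order cross term, which is the identity; all pure second-order derivatives $\partial_{xx}^2 \phi$ and $\partial_{yy}^2 \phi$ vanish. This makes the It\^o correction unusually clean and reduces the proof to a bookkeeping exercise.

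The steps I would carry out, in order, are the following. First, I would write down the joint semimartingale $(X_t, Y_t)$ by stacking the two SDEs and identifying the drift vector $(a_t, f_t)$ and the diffusion coefficient, where the martingale part is driven by the pair $(W^1, W^2)$. Second, I would apply the standard It\^o formula to $\phi(X_t, Y_t)$, which yields
\begin{talign*}
\mathrm{d}(X_t Y_t) = Y_t \, \mathrm{d}X_t + X_t \, \mathrm{d}Y_t + \mathrm{d}\langle X, Y\rangle_t,
\end{talign*}
where $\langle X, Y\rangle_t$ denotes the quadratic covariation of $X$ and $Y$. Third, I would compute $\mathrm{d}\langle X, Y\rangle_t$ by recalling that the finite-variation drift components $a_t\,\mathrm{d}t$ and $f_t\,\mathrm{d}t$ contribute nothing to the quadratic covariation, so only the martingale parts matter; by the defining property of quadratic covariation (or, equivalently, by It\^o's multiplication table $\mathrm{d}W^1_s\,\mathrm{d}W^2_s = \mathrm{d}\langle W^1, W^2\rangle_s$), this equals $\langle b_s\,\mathrm{d}W^1_s,\, g_s\,\mathrm{d}W^2_s\rangle$ in the shorthand used in the lemma. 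Fourth, I would integrate the resulting differential identity over $[0, t]$ and rearrange to obtain the stated equality, using $X_0 Y_0$ as the initial value.

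The only non-routine part will be pinning down the cross-variation $\mathrm{d}\langle W^1, W^2\rangle_s$ in full generality: if the two Brownian motions are independent, this term is zero; if they coincide or if $W^1, W^2$ are multi-dimensional with a prescribed correlation structure (as the inner-product notation $\langle \cdot, \cdot\rangle$ in the statement suggests), one must use the polarization identity and L\'evy's characterization to justify that the co-variation is exactly what It\^o's multiplication table predicts. To handle this cleanly, I would first establish the identity for simple (piecewise constant) integrands $b, g$ by elementary manipulation of the Brownian increments and take conditional expectations, then extend to general square-integrable progressively measurable integrands by the standard $L^2$-isometry approximation argument. Once this quadratic covariation identity is in hand, plugging it into the It\^o expansion above and integrating yields the claim directly, so no further technical step is needed.
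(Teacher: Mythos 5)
Your proposal is correct and matches the intended argument: the paper itself offers no proof of this lemma, deferring to the cited textbook, and your route---It\^o's formula applied to the product $X_t Y_t$, with the quadratic covariation receiving contributions only from the martingale parts via the It\^o multiplication table---is exactly the standard derivation being referenced. Nothing further is needed; the final paragraph's simple-integrand approximation is more detail than required but does no harm.
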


\begin{remark}[Related work to the path-wise reparameterization trick: sensitivity analysis] \label{rem:sensitivity}
    As shown above, the adjoint method for SDEs is an alternative to the path-wise reparameterization trick. Prior to \cite{li2020scalable}, an array of works developed methods to compute derivatives of functionals of stochastic processes with respect to generic parameters $\alpha$ that appear either in the drift or diffusion coefficients \cite{kushner2013numerical}. This area is known as sensitivity analysis, and has been developed largely with financial applications in mind (more specifically, to compute the "Greeks"). In low dimensions, dynamic programming \cite{baxter2001infinite} or finite differences \cite{glasserman1992some,lecuyer1994onthe} work well, but they scale poorly to high dimensions. In high dimensions, several approaches have been proposed (see the section 1 of \cite{gobet2005sensitivity} for a comprehensive although dated overview):
    \vspace{-5.5pt}
    \begin{itemize}[leftmargin=5.5mm]
    \item The \emph{path-wise method} (which we refer to as adjoint method) involves taking the gradient $\nabla_{\alpha} \mathbb{E}[f(X_t)]$ inside of the expectation as $\mathbb{E}[\nabla_{\alpha} f(X_t)]$ and was first described by \cite{yang1991amonte}.

    \vspace{-5.5pt}
    \item The \emph{likelihood method} or \emph{score method} \cite{glynn1986stochastic,reiman1989sensitivity} consists in rewriting $\nabla_{\alpha} \mathbb{E}[f(X_T)]$ as $\mathbb{E}[f(X_T) H]$, where $H$ is a random variable which is equal to $\nabla_{\alpha} \log p(\alpha,X_T)$, $p(\alpha, \cdot)$ being the density of the law of $X_T$ with respect to the Lebesgue measure. \cite{yang1991amonte} provide explicit weights $H$, under the restrictions that $\alpha$ appears only in the drift of the SDE (and not in the diffusion coefficient) and that the diffusion coefficient is elliptic, using the Girsanov theorem. \cite{gobet2005sensitivity} provide an expression for $H$ in the case where $H$ also appears in the diffusion coefficient, using Malliavin calculus.
    \end{itemize}

    \vspace{-5.5pt}
    The estimator of the path-wise reparameterization trick is formally similar to the likelihood method estimator, but it is different in that $\alpha$ is the initial condition of the process, and does not appear either in the drift nor the diffusion coefficient.
\end{remark}

\subsection{Proof of \autoref{eq:alpha_zero_variance}} \label{sec:proof_alpha_zero_variance}

\begin{proof}
    Since the equality \eqref{eq:mathcal_W_X_t} holds almost surely for the pair $(X,B)$, it must also hold almost surely for $(X^v,B^v)$, which satisfy the same SDE. That is
    \begin{talign}
        \mathcal{W}(X^{v},0) = V(X^v_0,0) + \frac{1}{2} \int_0^T \|u^*(X_s^v,s)\|^2 \, \mathrm{d}s - \sqrt{\lambda} \int_0^T  \langle u^*(X_s^v,s), \mathrm{d}B^v_s \rangle,
    \end{talign}
    Thus, we obtain that
    \begin{talign}
    \begin{split}
        \alpha(v,X^v,B) &= \exp \big( - \lambda^{-1} \mathcal{W}(X^v,0) - \lambda^{-1/2} \int_0^T \langle v(X^v_t,t), \mathrm{d}B^v_t \rangle + \frac{\lambda^{-1}}{2} \int_0^T \|v(X^v_t,t)\|^2 \, \mathrm{d}t \big) \\ &= \exp \big( - \lambda^{-1} V(X^v_0,0) - \frac{\lambda^{-1}}{2} \int_0^T \|u^*(X_s^v,s)\|^2 \, \mathrm{d}s + \lambda^{-1/2} \int_0^T  \langle u^*(X_s^v,s), \mathrm{d}B^v_s \rangle \\ &\qquad\qquad - \lambda^{-1/2} \int_0^T \langle v(X^v_t,t), \mathrm{d}B^v_t \rangle + \frac{\lambda^{-1}}{2} \int_0^T \|v(X^v_t,t)\|^2 \, \mathrm{d}t \big),
    \end{split}
    \end{talign}
    and this is equal to $\exp \big( - V(X^v_0,0) \big)$ when $v = u^*$. Since we condition on $X^v_0 = x_{\mathrm{init}}$, we have obtained that the random variable takes constant value $\exp \big( - V(x_{\mathrm{init}},0) \big)$ almost surely, which means that its variance is zero.
\end{proof}

\subsection{Proof of \autoref{thm:optimal_M}}
\label{sec:proof_optimal_M}

The proof of \eqref{eq:var_w_M} shows that minimizing $\mathrm{Var}(w;M)$ is equivalent to minimizing
\begin{talign} \label{eq:2nd_moment_w}
    \mathbb{E} \big[ \frac{1}{T} \int_0^{T} \big\| w(t,v,X^v,B,M_t) 
    \big\|^2 \, \mathrm{d}t \, \alpha(v,X^v,B) \big].
\end{talign}
To optimize with respect to $M$, it is convenient to reexpress it in terms of $\dot{M} = (\dot{M}_t)_{t \in [0,T]}$ as $M_t(s) = I + \int_t^s \dot{M}_t(s') \, \mathrm{d}s'$. By Fubini's theorem, we have that
\begin{talign}
\begin{split}
    \int_t^T M_t(s) \nabla_x f(X^v_s,s) \, \mathrm{d}s &= \int_t^T \big( I + \int_t^s \dot{M}_t(s') \, \mathrm{d}s' \big) \nabla_x f(X^v_s,s) \, \mathrm{d}s \\ &= \int_t^T \nabla_x f(X^v_s,s) \, \mathrm{d}s + \int_t^T \dot{M}_t(s) \int_s^T \nabla_x f(X^v_{s'},s') \, \mathrm{d}s' \, \mathrm{d}s,
\end{split}
\end{talign}
\begin{talign}
\begin{split}
    &- \int_t^T (M_t(s) \nabla_x b(X^v_s, s) - \dot{M}_t(s)) (\sigma^{-1})^{\top}(s) v(X^v_s,s) \, \mathrm{d}s \\ &= \int_t^T \dot{M}_t(s) (\sigma^{-1})^{\top}(s) v(X^v_s,s) \, \mathrm{d}s - \int_t^T \dot{M}_t(s) \int_s^T \nabla_x b(X^v_{s'}, s') (\sigma_{s'}^{-1})^{\top}(s') v(X^v_s,s) \, \mathrm{d}s' \, \mathrm{d}s,
\end{split}
\end{talign}
\begin{talign}
\begin{split}
    &- \lambda^{1/2} \int_t^T (M_t(s) \nabla_x b(X^v_s, s) - \dot{M}_t(s)) (\sigma^{-1})^{\top}(s) \, \mathrm{d}B_s \\ &= \lambda^{1/2} \big( \int_t^T \dot{M}_t(s) (\sigma^{-1})^{\top}(s) v(X^v_s,s) \, \mathrm{d}s - \int_t^T \dot{M}_t(s) \int_s^T \nabla_x b(X^v_{s'}, s') (\sigma^{-1})^{\top}(s') \, \mathrm{d}B_{s'} \, \mathrm{d}s \big).
\end{split}
\end{talign}
Hence, we can rewrite \eqref{eq:2nd_moment_w} as
\begin{talign} 
\begin{split} \label{eq:min_variance_2}
     \mathcal{G}(\dot{M}) &= \mathbb{E} \big[\frac{1}{T} \int_0^{T} \big\| \sigma(t)^{\top} \big( \int_t^T \nabla_x f(X^v_s,s) \, \mathrm{d}s + \nabla g(X^v_T) \\ &\qquad + \int_t^T \dot{M}_t(s) \big( \int_s^T \nabla_x f(X^v_{s'},s') \, \mathrm{d}s' + \nabla g(X^v_T) + (\sigma^{-1})^{\top}(s) v(X^v_s,s) \\ &\qquad - \int_s^T \nabla_x b(X^v_{s'}, s') (\sigma_{s'}^{-1})^{\top}(s') v(X^v_s,s) \, \mathrm{d}s' - \int_s^T \nabla_x b(X^v_{s'}, s') (\sigma_{s'}^{-1})^{\top}(s') \, \mathrm{d}B_{s'} \big) \, \mathrm{d}s \big) \big\|^2 \, \mathrm{d}t \\ &\qquad\qquad\qquad \times 
     \alpha(v,X^v,B) \big]
\end{split}
\end{talign}

The first variation $\frac{\delta \mathcal{G}}{\delta \dot{M}} (\dot{M})$ of $\mathcal{G}$ at $\dot{M}$ is defined as the family $Q = (Q_t)_{t \in [0,T]}$ of matrix-valued functions such that for any collection of matrix-valued functions $P = (P_t)_{t \in [0,T]}$,
\begin{talign}
    \partial_\epsilon \mathcal{V}(\dot{M} + \epsilon P) \rvert_{\epsilon = 0} = \lim_{\epsilon \to 0} \frac{\mathcal{V}(\dot{M} + \epsilon P) - \mathcal{V}(M)}{\epsilon} = \langle P, Q \rangle := \int_0^T \int_t^T \langle P_t(s), Q_t(s) \rangle_{F} \, \mathrm{d}s \, \mathrm{d}t,
\end{talign}
where $\dot{M} + \epsilon P := (\dot{M}_t + \epsilon P_t)_{t \in [0,T]}$.
Now, note that
\begin{talign} 
\begin{split} \label{eq:1st_variation_expanded}
    &\partial_\epsilon \mathcal{V}(\dot{M} + \epsilon P) \rvert_{\epsilon = 0} = \partial_\epsilon \mathbb{E} \big[\frac{1}{T} \int_0^{T} \big\| \sigma(t)^{\top} \big( \int_t^T \nabla_x f(X^v_s,s) \, \mathrm{d}s + \nabla g(X^v_T) \\ &\qquad + \int_t^T (\dot{M}_t(s) + \epsilon P_t(s)) \big( \int_s^T \nabla_x f(X^v_{s'},s') \, \mathrm{d}s' + \nabla g(X^v_T) + (\sigma^{-1})^{\top}(s) v(X^v_s,s) \\ &\qquad\qquad - \int_s^T \nabla_x b(X^v_{s'}, s') (\sigma_{s'}^{-1})^{\top}(s') v(X^v_s,s) \, \mathrm{d}s' - \int_s^T \nabla_x b(X^v_{s'}, s') (\sigma_{s'}^{-1})^{\top}(s') \, \mathrm{d}B_{s'} \big) \, \mathrm{d}s \big) \big\|^2 \, \mathrm{d}t \\ &\quad \times
    \alpha(v,X^v,B) \big] \big\rvert_{\epsilon = 0} \\ &= \mathbb{E} \big[\frac{2}{T} \int_0^{T} \big\langle \sigma(t) \sigma(t)^{\top} \big( \int_t^T \nabla_x f(X^v_s,s) \, \mathrm{d}s + \nabla g(X^v_T) \\ &\qquad + \int_t^T \dot{M}_t(s) \big( \int_s^T \nabla_x f(X^v_{s'},s') \, \mathrm{d}s' + \nabla g(X^v_T) + (\sigma^{-1})^{\top}(s) v(X^v_s,s) \\ &\qquad\qquad - \int_s^T \nabla_x b(X^v_{s'}, s') (\sigma_{s'}^{-1})^{\top}(s') v(X^v_s,s) \, \mathrm{d}s' - \int_s^T \nabla_x b(X^v_{s'}, s') (\sigma_{s'}^{-1})^{\top}(s') \, \mathrm{d}B_{s'} \big) \, \mathrm{d}s \big), \\ &\qquad\qquad \int_t^T  P_t(s) \big( \int_s^T \nabla_x f(X^v_{s'},s') \, \mathrm{d}s' + \nabla g(X^v_T) + (\sigma^{-1})^{\top}(s) v(X^v_s,s) \\ &\qquad\qquad - \int_s^T \nabla_x b(X^v_{s'}, s') (\sigma_{s'}^{-1})^{\top}(s') v(X^v_s,s) \, \mathrm{d}s' - \int_s^T \nabla_x b(X^v_{s'}, s') (\sigma_{s'}^{-1})^{\top}(s') \, \mathrm{d}B_{s'} \big) \, \mathrm{d}s \big\rangle \, \mathrm{d}t \\ &\quad \times 
    \alpha(v,X^v,B) \big].
\end{split}
\end{talign}
If we define 
\begin{talign}
\begin{split}
    \chi(s,X^v,B) &:= \int_s^T \nabla_x f(X^v_{s'},s') \, \mathrm{d}s' + \nabla g(X^v_T) + (\sigma^{-1})^{\top}(s) v(X^v_s,s) \\ &\qquad\qquad - \int_s^T \nabla_x b(X^v_{s'}, s') (\sigma_{s'}^{-1})^{\top}(s') v(X^v_s,s) \, \mathrm{d}s' - \int_s^T \nabla_x b(X^v_{s'}, s') (\sigma_{s'}^{-1})^{\top}(s') \, \mathrm{d}B_{s'},
\end{split}
\end{talign}
we can rewrite \eqref{eq:1st_variation_expanded} as
\begin{talign}
\begin{split} \label{eq:1st_variation_rewritten}
    \partial_\epsilon \mathcal{V}(\dot{M} \! + \! \epsilon P) \rvert_{\epsilon = 0} \! &= \! \mathbb{E} \big[\frac{1}{T} \! \int_0^{T} \! \big\langle \sigma(t) \sigma(t)^{\top} \! \big( \! \int_t^T \nabla_x f(X^v_s,s) \mathrm{d}s \! + \! \nabla g(X^v_T) \! + \! \int_t^T \! M_t(s) \chi(s,X^v,B) \, \mathrm{d}s \big), \\ &\qquad\qquad \int_t^T  P_t(s) \chi(s,X^v,B) \mathrm{d}s \big\rangle \, \mathrm{d}s \times \alpha(v,X^v,B) \big]
\end{split}
\end{talign}
Now let us reexpress equation \eqref{eq:1st_variation_rewritten} as:
\begin{talign} 
\begin{split} \label{eq:first_variation_rewritten2}
    &\mathbb{E} \big[\frac{1}{T} \int_0^{T} \big\langle \sigma \sigma^{\top}(t) \big(\nabla g(X^v_T) + \int_t^T \big( \nabla_x f(X^v_s,s) + \dot{M}_t(s) \chi(s,X^v,B) \big) \, \mathrm{d}s \big), \\ &\qquad\qquad \int_t^T  P_t(s) \chi(s,X^v,B) \, \mathrm{d}s \big\rangle \, \mathrm{d}t \times \alpha(v,X^v,B) \big] \\ &\stackrel{\text{(i)}}{=} \mathbb{E} \big[\frac{1}{T} \int_0^T \int_0^s \big\langle P_t(s) \chi(s,X^v,B), \\ &\qquad\qquad \sigma \sigma^{\top}(t) \big( \nabla g(X^v_T) + \int_{t}^T (\nabla_x f(X^v_{s'},s') \! + \! \dot{M}_t(s') \chi(s',X^v,B)) \, \mathrm{d}s' \big) \big\rangle \, \mathrm{d}t \, \mathrm{d}s \times \alpha(v,X^v,B) \big]
    \\ &\stackrel{\text{(ii)}}{=} \mathbb{E} \big[\frac{1}{T} \int_0^T \int_0^s \big\langle \sigma \sigma^{\top}(t) \big( \nabla g(X^v_T) + \int_{t}^T (\nabla_x f(X^v_{s'},s') + \dot{M}_t(s') \chi(s',X^v,B)) \, \mathrm{d}s' \big) \chi(X^v,s,B)^{\top}, \\ &\qquad\qquad\qquad\quad P_t(s) \big\rangle_{F} \, \mathrm{d}t \, \mathrm{d}s \times \alpha(v,X^v,B) \big]
    \\ &= \int_0^T \int_0^s \big\langle \frac{1}{T} \sigma \sigma^{\top}(t) \mathbb{E} \big[ \big( \nabla g(X^v_T) \! + \! \int_{t}^T (\nabla_x f(X^v_{s'},s') \! + \! \dot{M}_t(s') \chi(X^v,s',B)) \, \mathrm{d}s' \big) \chi(X^v,s,B)^{\top} \alpha(v,X^v,B) \big], \\ &\qquad\qquad\qquad\quad P_t(s) \big\rangle_{F} \, \mathrm{d}t \, \mathrm{d}s.
\end{split}
\end{talign}
Here, equality (i) holds by Lemma \ref{lem:Fubini_inner_product} with the choices $\alpha(t,s) = P_t(s) \chi(X^v,s,B)$, $\gamma(t) = \sigma \sigma^{\top}(t) \big(\nabla g(X^v_T) + \int_t^T \big( \nabla_x f(X^v_s,s) + \dot{M}_t(s) \chi(X^v,s,B) \big) \, \mathrm{d}s$. Equality (ii) follows from the fact that for any matrix $A$ and vectors $b,c$, $\langle A b, c \rangle = c^{\top} A b = \mathrm{Tr}(c^{\top} A b) = \mathrm{Tr}(A b c^{\top}) = \langle B, c b^{\top} \rangle_{F}$, where $\langle \cdot, \cdot \rangle_{F}$ denotes the Frobenius inner product. 
The first-order necessary condition for optimality states that at the optimal $\dot{M}^*$, the first variation $\frac{\delta \mathcal{G}}{\delta \dot{M}} (\dot{M}^*)$ is zero. In other words, $\partial_\epsilon \mathcal{V}(\dot{M} + \epsilon P) \rvert_{\epsilon = 0}$ is zero for any $P$. Hence, the right-hand side of \eqref{eq:first_variation_rewritten2} must be zero for any $P$, which implies that almost everywhere with respect to $t \in [0,T]$, $s \in [s,T]$,
\begin{talign}
    \mathbb{E} \big[ \big( \nabla g(X^v_T) \! + \! \int_{t}^T (\nabla_x f(X^v_{s'},s') \! + \! \dot{M}_t(s') \chi(X^v,s',B)) \, \mathrm{d}s' \big) \chi(X^v,s,B)^{\top} \alpha(v,X^v,B) \big] = 0.
\end{talign}
To derive this, we also used that $\sigma(t)$ is invertible by assumption.

Define the integral operator $\mathcal{T}_t : 
L^2([t,T];\R^{d \times d}) \to 
L^2([t,T];\R^{d \times d})$ as
\begin{talign}
    [\mathcal{T}_t(\dot{M}_t)](s) = \int_t^T \dot{M}_t(s') \mathbb{E} \big[ \chi(X^v,s',B) \chi(X^v,s,B)^{\top} \times \alpha(v,X^v,B) \big] \, \mathrm{d}s'
\end{talign}
If we define $N_t(s) = - \mathbb{E} \big[ \big( \nabla g(X^v_T) + \int_{t}^T \nabla_x f(X^v_{s'},s') \, \mathrm{d}s' \big) \chi(X^v,s,B)^{\top} \times \alpha(v,X^v,B) \big]$, the problem that we need to solve to find the optimal $\dot{M}_t$ is
\begin{talign}
    \mathcal{T}_t(\dot{M}_t) = N_t.
\end{talign}
This is a Fredholm equation of the first kind.
\begin{lemma} \label{lem:Fubini_inner_product}
    If $\alpha, \beta : [0,T] \times [0,T] \to \R^{d}$, $\gamma : [0,T] \to \R^d$, $\delta: [0,T] \to \R^{d\times d}$ are arbitrary integrable functions, we have that
    \begin{talign} 
        \int_0^T \big\langle \int_t^T \alpha(t,s) \, \mathrm{d}s, \gamma(t) \big\rangle \, \mathrm{d}t &= \int_0^T \int_{0}^{s}  \big\langle \alpha(t,s), \gamma(t) \big\rangle \, \mathrm{d}t \, \mathrm{d}s, 
    \end{talign}
\end{lemma}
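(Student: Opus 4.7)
The plan is to reduce the claim to a standard application of Fubini's theorem, after first moving the inner product through the inner integral by linearity. Concretely, since $\gamma(t)$ does not depend on $s$, the inner product $\langle \int_t^T \alpha(t,s)\,\mathrm{d}s,\gamma(t)\rangle$ equals $\int_t^T \langle \alpha(t,s), \gamma(t)\rangle\,\mathrm{d}s$, so the left-hand side becomes the double integral $\int_0^T \int_t^T \langle \alpha(t,s), \gamma(t) \rangle \,\mathrm{d}s\,\mathrm{d}t$ over the triangular region $\Delta = \{(t,s) : 0 \le t \le s \le T\}$.

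Next I would invoke Fubini's theorem to exchange the order of integration on $\Delta$. The region can equivalently be described as $\{(t,s) : 0 \le s \le T,\ 0 \le t \le s\}$, which yields $\int_0^T \int_0^s \langle \alpha(t,s), \gamma(t) \rangle \,\mathrm{d}t\,\mathrm{d}s$, i.e.\ the right-hand side. The interchange is justified because $\alpha$ and $\gamma$ are assumed integrable and the integrand $\langle \alpha(t,s), \gamma(t) \rangle$ is a bounded bilinear form of integrable quantities, so the double integral is absolutely convergent on the bounded region $\Delta$.

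There is essentially no obstacle here: the only point to verify is the integrability hypothesis needed for Fubini, which is built into the assumptions of the lemma. The variables $\beta$ and $\delta$ in the hypothesis do not appear in the displayed equality and can be ignored in the proof.
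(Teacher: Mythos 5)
Your proof is correct and rests on the same underlying idea as the paper's, namely Fubini's theorem applied to the triangular region $\{(t,s): 0 \le t \le s \le T\}$ after moving the inner product through the inner integral by linearity. The paper's proof merely wraps its single Fubini step inside a sequence of changes of variable of the form $t \mapsto T-t$ and $s \mapsto T-s$ that cancel out, so your version is the more direct presentation of the same argument.
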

\begin{proof}
We have that:
\begin{talign}
\begin{split}
    &\int_0^T \int_t^T  \big\langle \alpha(t,s), \gamma(t) \big\rangle \, \mathrm{d}s \, \mathrm{d}t \stackrel{\text{(i)}}{=} \int_0^T \int_0^{T-t}  \big\langle \alpha(t,T-s), \gamma(t) \big\rangle \, \mathrm{d}s \, \mathrm{d}t \\ &\stackrel{\text{(ii)}}{=} \int_0^T \int_0^{t}  \big\langle \alpha(T-t,T-s), \gamma(T-t) \big\rangle \, \mathrm{d}s \, \mathrm{d}t \stackrel{\text{(iii)}}{=} \int_0^T \int_{s}^{T}  \big\langle \alpha(T-t,T-s), \gamma(T-t) \big\rangle \, \mathrm{d}t \, \mathrm{d}s \\ &\stackrel{\text{(iv)}}{=} \int_0^T \int_{T-s}^{T}  \big\langle \alpha(T-t,s), \gamma(T-t) \big\rangle \, \mathrm{d}t \, \mathrm{d}s \stackrel{\text{(v)}}{=} \int_0^T \int_{0}^{s}  \big\langle \alpha(t,s), \gamma(t) \big\rangle \, \mathrm{d}t \, \mathrm{d}s
\end{split}
\end{talign}
Here, in equalities (i), (ii), (iv) and (v) we make changes of variables of the form $t \mapsto T-t$, $s \mapsto T-s$, $s' \mapsto T-s'$. In equality (iii) we use Fubini's theorem.
\end{proof}

\section{Control warm-starting} \label{sec:warm_start}
We introduce the \textit{Gaussian warm-start}, a control warm-start strategy that we adapt from \cite{liu2023generalized}, and that we use in our experiments in \autoref{fig:quadratic_OU_hard}. Their work tackles generalized Schrödinger bridge problems, which are different from the control setting in that the final distribution is known and there is no terminal cost. The following proposition, that provides an analytic expression of the control needed for the density of the process to be Gaussian at all times, is the foundation of our method.

\begin{proposition} \label{prop:gaussian_paths}
Given $Z \sim N(0,I)$ define the random process $Y$ as
\begin{talign} \label{eq:Y_t_def}
Y_t = \mu(t) + \tilde{\Gamma}(t) Z, \qquad \text{where } \mu(t) \in \R^d, \ \tilde{\Gamma}(t) = \sqrt{t} \Gamma(t) \in \R^{d \times d}.
\end{talign}
Define the control $u : \R^d \times [0,T] \to \R^d$ as
\begin{talign} \label{eq:u_warm_start}
    u(x,t) = \sigma(t)^{-1} \big( \partial_t \mu(t) + \big( \big(\partial_t \Gamma(t) \big) \Gamma(t)^{-1} + \frac{I - (\sigma \sigma^{\top})(t) (\Sigma \Sigma^{\top})^{-1}(t)}{2t} \big)(x - \mu(t)) - b(x,t) \big).
\end{talign}
Then, if $\Gamma_0 = \sigma(0)$, the controlled process $X^u$ defined in equation \eqref{eq:controlled_SDE} has the same marginals as $Y$. That is, for all $t \in [0,T]$, $\mathrm{Law}(Y_t) = \mathrm{Law}(X^u_t)$.
\end{proposition}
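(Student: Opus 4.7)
The plan is to exploit the fact that with the specific control $u$ given in \eqref{eq:u_warm_start}, the drift $b + \sigma u$ in \eqref{eq:controlled_SDE} collapses exactly to the \emph{affine} expression $\partial_t\mu(t) + A(t)(x - \mu(t))$, where
\begin{talign*}
A(t) := (\partial_t\Gamma(t))\Gamma(t)^{-1} + \frac{I - (\sigma\sigma^\top)(t)(\Sigma\Sigma^\top)^{-1}(t)}{2t}.
\end{talign*}
Thus $X^u$ satisfies a linear SDE with deterministic coefficients driven by $\sqrt{\lambda}\sigma(t)\,\mathrm{d}B_t$. Since $\tilde{\Gamma}(0)=0$ forces $Y_0 = \mu(0)$ almost surely, the proposition implicitly takes $p_0 = \delta_{\mu(0)}$, and the solution of a linear SDE from Dirac initial data is Gaussian at every positive time. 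Hence it suffices to check that the mean and covariance of $X^u_t$ coincide with those of $Y_t$, namely $\mu(t)$ and $\tilde{\Gamma}(t)\tilde{\Gamma}(t)^\top = t\,\Gamma(t)\Gamma(t)^\top$.

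First I would handle the mean. Setting $m(t) := \mathbb{E}[X^u_t]$, the ODE $\dot m = \partial_t\mu + A(m - \mu)$ with $m(0)=\mu(0)$ is uniquely solved by $m\equiv\mu$. For the covariance $P(t) := \mathrm{Cov}(X^u_t)$, the standard Lyapunov equation for affine SDEs gives
\begin{talign*}
\dot P(t) = A(t)P(t) + P(t)A(t)^\top + \lambda\,\sigma(t)\sigma(t)^\top, \qquad P(0) = 0.
\end{talign*}
I would then substitute the candidate $P(t)=t\Gamma(t)\Gamma(t)^\top$, differentiate, and verify the identity by direct algebra: the $(\partial_t\Gamma)\Gamma^{-1}$ part of $A$ generates the cross terms $t(\partial_t\Gamma)\Gamma^\top + t\Gamma(\partial_t\Gamma)^\top$ that appear in $\dot P$, while the $\frac{I - \sigma\sigma^\top(\Sigma\Sigma^\top)^{-1}}{2t}$ part, together with the diffusion contribution, must reproduce the remaining $\Gamma(t)\Gamma(t)^\top$.

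The main obstacle is the apparent singularity of $A$ at $t=0$, where the $1/(2t)$ factor naively blows up. This is precisely where the normalization $\Gamma(0)=\sigma(0)$ comes in: it forces the numerator $I - (\sigma\sigma^\top)(0)(\Sigma\Sigma^\top)^{-1}(0)$ to vanish, so that the combination $A(t)P(t)$, with $P(t)$ vanishing linearly at $0$, stays bounded and the candidate satisfies both the Lyapunov equation on $(0,T]$ and the initial condition $P(0)=0$. A cleaner alternative that I would keep in reserve, which sidesteps the singularity entirely, is to verify directly that the Gaussian density $p_t^Y(x)$ with mean $\mu(t)$ and covariance $t\Gamma(t)\Gamma(t)^\top$ solves the Fokker--Planck equation associated with the affine SDE for $X^u$; because both sides of the FPE are quadratic polynomials in $x-\mu(t)$ multiplying $p_t^Y$, matching coefficients reduces the proposition to the same algebraic identities in $A(t)$, $\Gamma(t)$, and $\sigma(t)$, but without any separate argument at $t=0$.
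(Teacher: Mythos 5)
Your proposal is correct but takes a genuinely different route from the paper. The paper derives the required drift by writing the continuity equation for the Gaussian density $p_t$ of $Y_t$ (using $\nabla\log p_t(x) = -\tilde\Sigma(t)^{-1}(x-\mu(t))$), rewriting it as a Fokker--Planck equation with diffusion $D(t)=\tfrac12\sigma\sigma^\top$, solving for the Fokker--Planck drift $v(x,t)$, and then matching $v = b + \sigma u$; the $\sqrt t$ reparameterization of $\tilde\Gamma$ is introduced at the end to extract the $\tfrac{1}{2t}$ form. You instead observe that with the given $u$ the controlled SDE becomes affine with deterministic coefficients and Dirac initial condition (so the marginal is automatically Gaussian), and you reduce the claim to checking the mean ODE $\dot m = \partial_t\mu + A(m-\mu)$ and the Lyapunov equation $\dot P = AP + PA^\top + \lambda\sigma\sigma^\top$ against the candidates $m=\mu$, $P = t\Gamma\Gamma^\top$, then handle the $1/(2t)$ factor exactly as the paper does, by noting $\Gamma(0)=\sigma(0)$ makes the numerator vanish to order $t$. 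Both arguments carry essentially the same algebraic content (your ``alternative kept in reserve'' is essentially the paper's approach), but the paper's version is constructive---it \emph{derives} $u$---while yours is a cleaner direct verification and avoids the need to appeal to uniqueness of the Fokker--Planck solution. One nuance your version makes visible: with $\lambda$ written explicitly in the Lyapunov equation, carrying out the algebra $AP+PA^\top+\lambda\sigma\sigma^\top = t\dot\Sigma + \Sigma - (1-\lambda)\sigma\sigma^\top$ shows the stated control is correct only when $\lambda=1$ (the value used in all the paper's experiments); the paper's proof silently sets $D(t)=\tfrac12\sigma\sigma^\top$ rather than $\tfrac{\lambda}{2}\sigma\sigma^\top$, so it has the same implicit restriction but hides it.
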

\begin{proof}
Following \cite{liu2023generalized}, we have that 
\begin{align}
\begin{split}
    \partial_t X_t &= \partial_t \mu_t + \partial_t \tilde{\Gamma}(t) Z = \partial_t \mu(t) + (\partial_t \tilde{\Gamma}(t)) \tilde{\Gamma}(t)^{-1} (X_t - \mu(t)), \\
    \nabla \log p_t(x) &= - \tilde{\Sigma}(t)^{-1} (x - \mu(t)), \qquad \tilde{\Sigma}(t) = \tilde{\Gamma}(t) \tilde{\Gamma}(t)^{\top}. 
\end{split}
\end{align}
Now, $p_t$ satisfies the continuity equation equation
\begin{talign} \label{eq:p_t_continuity}
    \partial_t p_t = - \nabla \cdot ( (\partial_t \mu(t) + (\partial_t \tilde{\Gamma}(t)) \tilde{\Gamma}(t)^{-1} (x - \mu(t))) p_t )
\end{talign}
Let $D(t) = \frac{1}{2} \sigma(t) \sigma(t)^{\top}$.
We want to reexpress \eqref{eq:p_t_continuity} as a Fokker-Planck equation of the form
\begin{talign}
\begin{split}
    \partial_t p_t &= \! - \! \nabla \cdot ( v(x,t) p_t ) \! + \! \sum_{i=1}^{d} \sum_{j=1}^{d} \partial_i \partial_j (D_{ij}(t) p_t) \! = \! - \nabla \cdot ( v(x,t) p_t ) \! + \! \sum_{i=1}^{d} \partial_i \sum_{j=1}^{d} (D_{ij}(t) \partial_j p_t) \\ &= - \nabla \cdot ( v(x,t) p_t ) +  \nabla \cdot (D(t) \nabla p_t)  =  -  \nabla \cdot ( v(x,t) p_t ) + \nabla \cdot (D(t) \nabla \log p_t(x) p_t) \\ &= - \nabla \cdot ( (v(x,t) \! - \! D(t) \nabla \log p_t(x)) p_t ).
\end{split}
\end{talign}
Hence, we need that
\begin{talign}
\begin{split}
    v(x,t) - D(t) \nabla \log p_t &= \partial_t \mu(t) + (\partial_t \tilde{\Gamma}(t)) \tilde{\Gamma}(t)^{-1} (x - \mu(t)), \\
    \implies v_t(x) &= \partial_t \mu(t) + ((\partial_t \tilde{\Gamma}(t)) \tilde{\Gamma}(t)^{-1} (x - \mu(t)) + \frac{(\sigma \sigma^{\top})(t)}{2} \nabla \log p_t(x) \\ &= \partial_t \mu(t) + (\partial_t \tilde{\Gamma}(t)) \tilde{\Gamma}(t)^{-1} (x - \mu(t)) - \frac{(\sigma \sigma^{\top})(t)}{2} \Sigma(t)^{-1} (x - \mu(t)).
\end{split}
\end{talign}
If we let $\tilde{\Gamma}(t) = \Gamma(t) \sqrt{t}$, then $\tilde{\Sigma}(t) = t \Gamma(t) \Gamma(t)^{\top} = t \Sigma(t)$ and $\partial_t \tilde{\Gamma}(t) = \partial_t \Gamma(t) \sqrt{t} + \frac{\Gamma(t)}{2 \sqrt{t}}$. That is,
\begin{talign}
\begin{split}
    v(x,t) &= \partial_t \mu(t) + \big(\partial_t \Gamma(t) \sqrt{t} + \frac{\Gamma(t)}{2 \sqrt{t}} \big) \frac{\Gamma(t)^{-1}}{\sqrt{t}} (x - \mu(t)) - \frac{(\sigma \sigma^{\top})(t)}{2} \frac{\Sigma(t)^{-1}}{t} (x - \mu(t)) \\
    &= \partial_t \mu(t) + \big(\partial_t \Gamma(t) \big) \Gamma(t)^{-1} (x - \mu(t)) + \frac{1}{2 t} (x - \mu(t)) - \frac{(\sigma \sigma^{\top})(t) \Sigma(t)^{-1}}{2t} (x - \mu(t))
\end{split}
\end{talign}
For $v$ to be finite at $t = 0$, we need that $(\sigma \sigma^{\top})(0) \Sigma(0)^{-1} = I$, which holds, for example, if $\Gamma(0) = \sigma(0)$. Also, to match the form of \eqref{eq:controlled_SDE}, we need that
\begin{talign}
\begin{split}
    v(x,t) &= b(x,t) + \sigma(t) u(x,t), \\
    \implies u(x,t) &= \sigma(t)^{-1} \big( \partial_t \mu_t + \big( \big(\partial_t \Gamma(t) \big) \Gamma(t)^{-1} + \frac{I - (\sigma \sigma^{\top})(t) \Sigma(t)^{-1}}{2t} \big)(x - \mu_t) - b(x,t) \big).
\end{split}
\end{talign}
\end{proof}

The warm-start control is computed as the solution of a \textit{Restricted Gaussian Stochastic Optimal Control} problem, where we constrain the space of controls to those that induce Gaussian paths as described in \autoref{prop:gaussian_paths}. In practice, we learn a linear spline $\mu = (\mu^{(b)})_{b=0}^{\mathcal{B}}$, where $\mu^{(b)} \in \R^d$, and a linear spline $\Gamma = (\Gamma^{(b)})_{b=0}^{\mathcal{B}}$, where $\Gamma^{(b)} \in \R^{d \times d}$. These linear splines take the role of $\mu(t)$ and $\Sigma(t)$ in \eqref{eq:Y_t_def}. Given splines $\mu$ and $\Gamma$, we obtain the warm-start control using \eqref{eq:u_warm_start}; for a given $t \in [0,T)$, if we let $b_{-} = \lfloor \mathcal{B}t/T \rfloor$, $b_{+} = b_{-} + 1$, $\Delta = T/\mathcal{B}$, we have that
\begin{talign} \label{eq:tilde_mu}
    \widehat{\mu}(t) &= \frac{(t -b_{-} \Delta) \mu^{(b_{+})} +  (b_{+} \Delta - t) \mu^{(b_{-})} }{\Delta}, \qquad
    \widehat{\partial_t \mu}(t) = \frac{\mu^{(b_{+})} - \mu^{(b_{-})}}{\Delta}, \\ \label{eq:tilde_gamma}
    \widehat{\Gamma}(t) &= \frac{(t -b_{-} \Delta) \Gamma^{(b_{+})} +  (b_{+} \Delta - t) \Gamma^{(b_{-})} }{\Delta}, \qquad
    \widehat{\partial_t \Gamma}(t) = \frac{\Gamma^{(b_{+})} - \Gamma^{(b_{-})}}{\Delta}, \\
    \hat{u}(x,t) &= \sigma(t)^{-1} \big( \widehat{\partial_t \mu}(t) + \big( \widehat{\partial_t \Gamma}(t)  \widehat{\Gamma}(t)^{-1} + \frac{I - (\sigma \sigma^{\top})(t) (\widehat{\Sigma} \widehat{\Sigma}^{\top})^{-1}(t)}{2t} \big)(x - \widehat{\mu}(t)) - b(x,t) \big). \label{eq:tilde_u}
\end{talign}
Algorithm \ref{alg:RGSOC} provides a method to learn the splines $\mu$, $\Gamma$. It is a stochastic optimization algorithms in which the spline parameters are updated by sampling $Y_t$ in \eqref{eq:Y_t_def} at different times, computing the control cost relying on \eqref{eq:tilde_u}, and taking its gradient. 

\begin{algorithm}[H]
\setcounter{AlgoLine}{0}
\SetAlgoLined
\small{
\KwIn{State cost $f(x,t)$, terminal cost $g(x)$, diffusion coeff. $\sigma(t)$, base drift $b(x,t)$, noise level $\lambda$, number of iterations $N$, batch size $m$, number of time steps $K$, number of spline knots $\mathcal{B}$, initial mean spline knots $\mu_0 = (\mu_0^{(b)})_{b=0}^{\mathcal{B}}$, initial noise spline knots $\Gamma_0 = (\Gamma_0^{(b)})_{b=0}^{\mathcal{B}}$.
}
  \For{$n = 0:(N-1)\}$}{
    Sample $m$ i.i.d. variables ${(Z_i)}_{i=1}^n \sim N(0,I)$ and $m$ times ${(t_i)}_{i=1}^{n} \sim \mathrm{Unif}([0,T])$.

    \For{$j=0:K$}{
    Set $t_j = jT/K$, and compute $\widehat{\mu}_n(t_j)$, $\widehat{\partial_t \mu}_n(t_j)$, $\widehat{\Gamma}_n(t_j)$, $\widehat{\partial_t \Gamma}_n(t_j)$ according to \eqref{eq:tilde_mu}, \eqref{eq:tilde_gamma} using $\mu_n$, $\Gamma_n$
    
    \lFor{$i=1:m$}{compute $Y_{ij} = \hat{\mu}(t_j) + \sqrt{t_j} \widehat{\Gamma}(t_j) Z_i$ and $\hat{u}_n(Y_{ij},t_j)$ using \eqref{eq:tilde_u}}
    }

    Compute $\hat{\mathcal{L}}_{\mathrm{RGSOC}}(\mu_n,\Gamma_n) = \frac{1}{m} \sum_{i=1}^{m} \big( \frac{T}{K} \sum_{j=0}^{K-1} \big( \frac{1}{2} \|\hat{u}(Y_{ij},t_j)\|^2 + f(Y_{ij},t_j) \big) + 
    g(Y_{iK}) \big)$
    
    Compute the gradient of $\hat{\mathcal{L}}_{\mathrm{RGSOC}}(\mu_n,\Gamma_n)$ with respect to the spline parameters $(\mu_n,\Gamma_n)$.

    Obtain $\mu_{n+1}$, $\Gamma_{n+1}$ with via an Adam update on $\mu_n$, $\Gamma_n$ resp. (or another stochastic algorithm)
  }
\KwOut{Learned splines $\mu_{N}$, $\Gamma_{N}$, control $\hat{u}_N$}}
\caption{Restricted Gaussian Stochastic Optimal Control}
\label{alg:RGSOC}
\end{algorithm}

Once we have access to the restricted control $\hat{u}_N$, we can warm-start the control in Algorithms \ref{alg:IDO} and \ref{alg:SOCM} by introducing $\hat{u}_N$ as an offset. That is, we parameterize the control as $u_{\theta} = \hat{u}_N + \tilde{u}_{\theta}$.

\section{Experimental details and additional plots} \label{sec:additional_experiments}

\subsection{Experimental details}
The control $L^2$ error curves show the following quantity:
\begin{talign}
&\mathbb{E}_{t,\mathbb{P}^{u^*}}[\|u^*(X^{u^*}_t,t) - u(X^{u^*}_t,t)\|^2 e^{-\lambda^{-1} V(X^{u^*}_0,0)}]/\mathbb{E}_{t,\mathbb{P}^{u^*}}[e^{-\lambda^{-1} V(X^{u^*}_0,0)}] 
\end{talign}
That is, we sample trajectories using the optimal control, and compute the error using a Monte Carlo estimate. 
In all our experiments, the distribution $X^{u^*}_0$ is a delta, which means that we do not need to compute $V(X^{u^*}_0,0)$.
We keep an exponential moving average (EMA) estimate of the control $L^2$ error, which we show in the plots. To compute it, we sample ten batches of optimally controlled trajectories every 10 training iterations, and we update the quantity with the average of the ten batches, using EMA coefficient 0.02.

For all losses and all settings, we train the control using Adam with learning rate \num{1e-4}. For SOCM, we train the reparametrization matrices using Adam with learning rate \num{1e-2}. We use batch size $m=128$ unless otherwise specified. When used, we run the warm-start algorithm (Algorithm \ref{alg:RGSOC}) with $\mathcal{B} = 20$ knots, $K=200$ time steps, and batch size $m=512$, and we use Adam with learning rate \num{3e-4} for $N=60000$ iterations. 

\paragraph{\textsc{Quadratic Ornstein-Uhlenbeck}} The choices for the functions of the control problem are:
\begin{align}
    b(x,t) = A x, \quad f(x,t) = x^{\top} P x, \quad 
    g(x) = x^{\top} Q x, \quad \sigma(t) = \sigma_0.
\end{align}
where $Q$ is a positive definite matrix. Control problems of this form are better known as linear quadratic regulator (LQR) and they admit a closed form solution \citep[Thm.~6.5.1]{vanhandel2007stochastic}. The optimal control is given by:
\begin{align}
    u^*_t(x) = - 2 \sigma_0^{\top} F_t x,
\end{align}
where $F_t$ is the solution of the Ricatti equation
\begin{align}
    \frac{dF_t}{dt} + A^{\top} F_t + F_t A - 2 F_t \sigma_0 \sigma_0^{\top} F_t + P = 0
\end{align}
with the final condition $F_T = Q$.
Within the \textsc{Quadratic OU} class, we consider two settings:
\begin{itemize}
    \item Easy: We set $d=20$, $A = 0.2 I$, $P = 0.2 I$, $Q = 0.1 I$, $\sigma_0 = I$, $\lambda = 1$, $T=1$, $x_{\mathrm{init}} = 0.5 N(0,I)$. We do not use warm-start for any algorithm. We take $K=50$ time discretization steps, and we use random seed 0.
    \item Hard: We set $d=20$, $A = I$, $P = I$, $Q = 0.5 I$, $\sigma_0 = I$, $\lambda = 1$, $T=1$, $x_{\mathrm{init}} = 0.5 N(0,I)$. We use the \textit{Gaussian warm-start} (\autoref{sec:warm_start}). We take batch size $m=64$ and $K=150$ time discretization steps, and we use random seed 0.
\end{itemize}

\paragraph{\textsc{Linear Ornstein-Uhlenbeck}} The functions of the control problem are chosen as follows:
\begin{align}
    b(x,t) = A x, \quad f(x,t) = 0, \quad 
    g(x) = \langle \gamma, x \rangle, \quad \sigma(t) = \sigma_0.
\end{align}
The optimal control for this class of problems is given by \citep[Sec.~A.4]{nüsken2023solving}:
\begin{align}
    u^*_t(x) = - \sigma_0^{\top} e^{A^{\top} (T-t)} \gamma.
\end{align}
We use exactly the same functions as \cite{nüsken2023solving}: we sample ${(\xi_{ij})}_{1 \leq i,j \leq d}$ once at the beginning of the simulation, and set:
\begin{align}
\begin{split}
    d &=10, \quad A = -I + {(\xi_{ij})}_{1 \leq i,j \leq d}, \quad \gamma = \mathds{1}, \quad \sigma_0 = I + {(\xi_{ij})}_{1 \leq i,j \leq d}, \\ T &=1, \quad \lambda = 1, \quad x_{\mathrm{init}} = 0.5 N(0,I).
\end{split}
\end{align}
We take $K=100$ time discretization steps, and we use random seed 0.

\paragraph{\textsc{Double Well}} We also use exactly the same functions as \cite{nüsken2023solving}, which are the following:
\begin{align}
    b(x,t) = - \nabla \Psi(x), \quad
    \Psi(x) = \sum_{i=1}^d \kappa_i (x_i^2 - 1)^2, \quad g(x) = \sum_{i=1}^{d} \nu_i (x_i^2 - 1)^2, \quad f(x) = 0, \quad \sigma_0 = \mathrm{I}, 
\end{align}
where $d=10$, and $\kappa_i = 5$, $\nu_i = 3$ for $i \in \{1, 2, 3\}$ and $\kappa_i = 1$, $\nu_i = 1$ for $i \in \{4, \dots, 10\}$. We set $T=1$, $\lambda=1$ and $x_{\mathrm{init}} = 0$. We take $K=200$ time discretization steps, and we use random seed 0. The Double Well problem is actually highly non-trivial, and is multimodal. The only reason we can produce a "ground truth" control to compare to in this setting is that we use significant knowledge of the problem; we analytically reduce it to 1D problems by decoupling each dimension and apply numerical methods to solve the Hamilton-Jacobi-Bellman equation for these 1D problems. It is not a problem where we actually have the ground truth control in closed form.

\paragraph{\textsc{Path Integral Sampler on Mixture of Gaussians}} We set 
\begin{talign}
b(x,t) = 0, \qquad f(x,t) = 0, \qquad g(x) = \log (\mu^0(x)/\mu(x)) = - \frac{\|x\|^2}{2} - \frac{d}{2} \log (2\pi) - \log \mu(x),
\end{talign}
where $T=1$, and $\mu$ is the density of a mixture of two Gaussians with means $\pm e_1$, where $e_1 = (1,0,\dots,0)$, and variance $\mathrm{Id}$. Note that we take $\mu$ to be normalized, i.e. $\int \mu(x) \, \mathrm{d}x = 1$, or equivalently, $\log Z = \log \big( \int \mu(x) \, \mathrm{d}x \big) = 0$. In \autoref{fig:PIS}, we use the following Monte Carlo estimator of the control objective at the control $u$:
\begin{talign}
    \hat{S}^u(X) = \int_0^T 
    \big(\frac{1}{2} \|u(X^u_t,t)\|^2 + f(X^u_t,t) \big) \, \mathrm{d}t + 
    g(X^u_T) + \int \langle u(X^u_t,t), \mathrm{d}B_t \rangle.
\end{talign}
Note that this estimator is unbiased because $\mathbb{E}[\int \langle u(X^u_t,t), \mathrm{d}B_t \rangle] = 0$.
This is known as the Sticking the Landing estimator, as it has zero variance when $u$ is the optimal control \citep{roeder2017sticking}. 
The fact that $\mathbb{E}[-\hat{S}^u(X)] \leq \log Z = 0$ with equality when $u = u^*$ is stated as \cite[Thm.~4]{zhang2022path}.

\subsection{Model architectures}
As a general guideline, the control function can be thought of as the analog of the score function in diffusion models; hence, a natural choice for the architecture can be U-Nets or diffusion transformers if the control task is on images, audio or video. Other domains may require different architectures. In the experiments we report, we used the architecture implemented in the class \texttt{FullyConnectedUNet} within the file \texttt{SOC\_matching/models.py}. It is a simplified version of the U-Net architecture where both the down-sampling and up-sampling layers are fully connected with ReLU activations, and the horizontal layers are linear transformations. We use three down-sampling and up-sampling steps, with widths 256, 128 and 64 (hence, the first down-sampling step is actually an up-sampling, because the data dimensions in our experiments range from 10 to 20).

The reparameterization matrices have an unusual trait, which is that their input dimension is small (two) while their output dimension is large ($d^2$). Hence, the kind of functions that they need to learn are low dimensional and hence easy.
In our case, we used the architecture implemented in the class \texttt{SigmoidMLP} within the file \texttt{SOC\_matching/models.py}, 
which is essentially a three layer multilayer perceptron with ReLU activations and output dimension $d^2$, 
whose output is averaged with the identity matrix using sigmoid weights, in order to enforce that $M_t(t)$ be the identity matrix.

\subsection{Additional plots}

\autoref{fig:control_obj_all} shows the control objective \eqref{eq:control_problem_def} for the four settings. The error bars for the control objective plots show the confidence intervals for $\pm$ one standard deviation. As expected, SOCM also obtains the lowest values for the control objective, up to the estimation error. 

\autoref{fig:variance_alpha_all} shows the normalized standard deviation of the importance weight for the learned control $u$: $\sqrt{\mathrm{Var}[\alpha(u,X^u,B)]}/\mathbb{E}[\alpha(u,X^u,B)]$. By \autoref{eq:alpha_zero_variance}, when $X^u_0 = x_{\mathrm{init}}$ for an arbitrary $x_{\mathrm{init}}$ (which is the case for all our experiments), this quantity is zero for the optimal control $u^*$. Hence, the normalized standard deviation of $\alpha$ is an alternative metric to measure the optimality of the learned control.

\autoref{fig:grad_norm_squared_ou_linear_double_well} shows an exponential moving average of the norm squared of the gradient for \textsc{Linear OU} and \textsc{Double Well}. For \textsc{Linear OU}, the minimum gradient norm is achieved by the adjoint method, while for \textsc{Double Well} it is achieved by the cross entropy loss. The training instabilities of the adjoint method become apparent as well. Interestingly, in both settings the algorithms with smallest gradients are not SOCM, which is the algorithm with smallest error as shown in \autoref{fig:control_l2_error_linear_OU_double_well}. Understanding this phenomenon is outside of the scope of this paper. 

\autoref{fig:double_well_l2_error} shows that the instabilities of the adjoint method are inherent to the loss, because they also appear at small learning rates: \num{3e-5} is smaller than the learning rates typically used for Adam, which hover from \num{1e-4} to \num{1e-3}.

\autoref{fig:hard_no_ws} shows plots of the control $L^2$ error, the norm squared of the gradient, and the control objective for the \textsc{Quadratic OU (hard)} setting, using a warm-start strategy detailed in \autoref{sec:warm_start}. \autoref{fig:quadratic_OU_hard} shows that SOCM is once again the algorithm that achieves the lowest error and the smallest gradients. Remark that the warm-start control is a reasonable approximation of the optimal control, as the initial control $L^2$ error is much lower than in the other figures.

\autoref{fig:training_loss_SOCM} shows the value of the training loss for SOCM and its two ablations: SOCM with constant $M_t = I$, and SOCM-Adjoint. For all such algorithms, the training loss is the sum of the $L^2$ error of the learned control $u$, and the expected conditional variance of the matching vector field $w$. Thus, the difference between the training loss plots and the $L^2$ error plots is the expected conditional variance of $w$. We observe that the expected conditional variance in the \textsc{Quadratic OU} setting is orders of magnitude smaller for SOCM than for its two ablations. For \textsc{Linear OU}, SOCM and SOCM-adjoint have similar expected conditional variance, and a possible explanation is that the \textsc{Linear OU} setting is very simple. In the \textsc{Double Well} setting, the SOCM-adjoint training loss curve has spikes that are probably caused by instabilities of the adjoint method. These spikes can be attributed mostly to the expected conditional variance term, since the corresponding $L^2$ error curve in \autoref{fig:control_l2_error_linear_OU_double_well} does not present them.

\begin{figure}[h]
    \centering
    \includegraphics[width=0.48\textwidth]{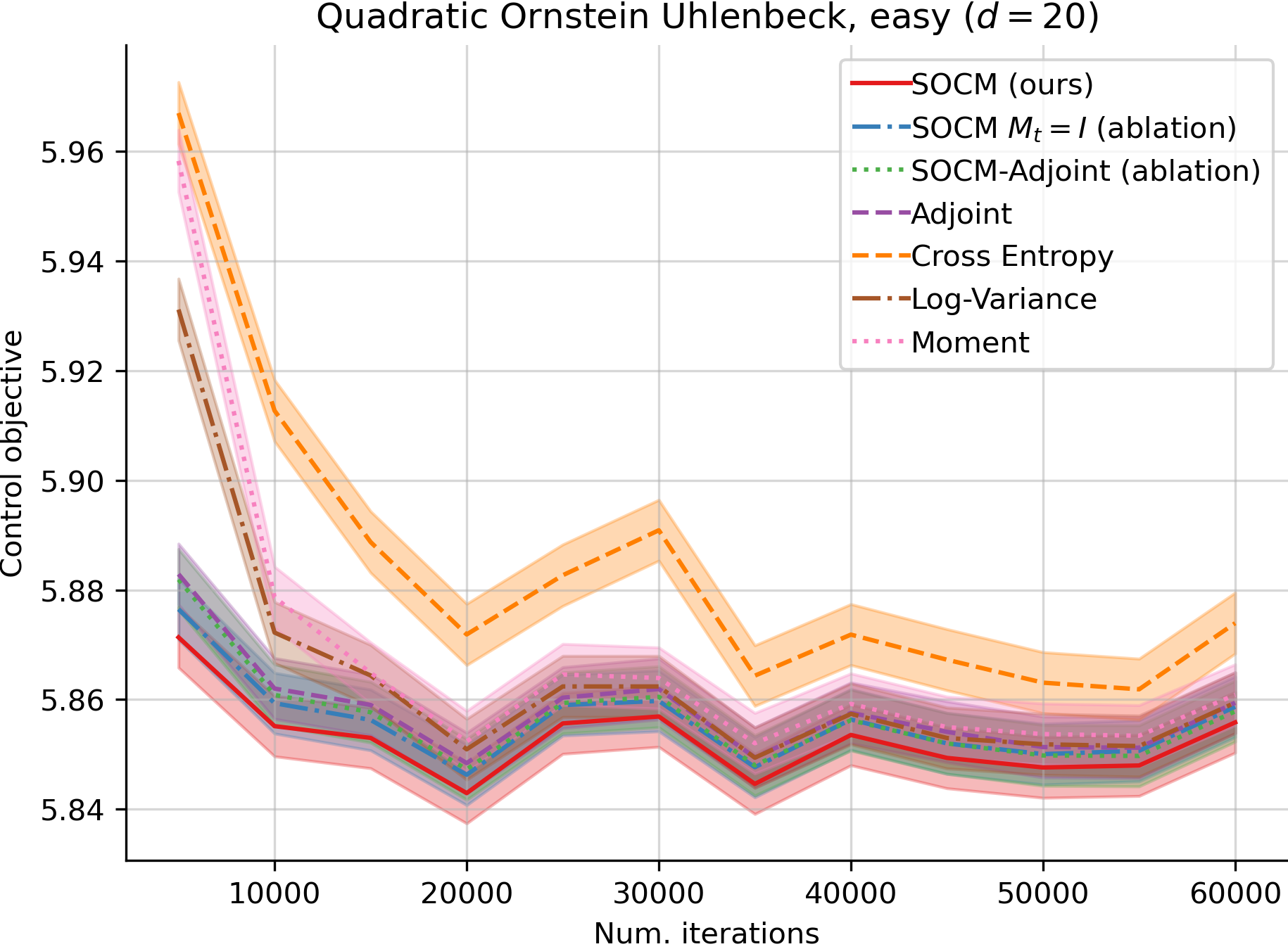}
    \includegraphics[width=0.49\textwidth]{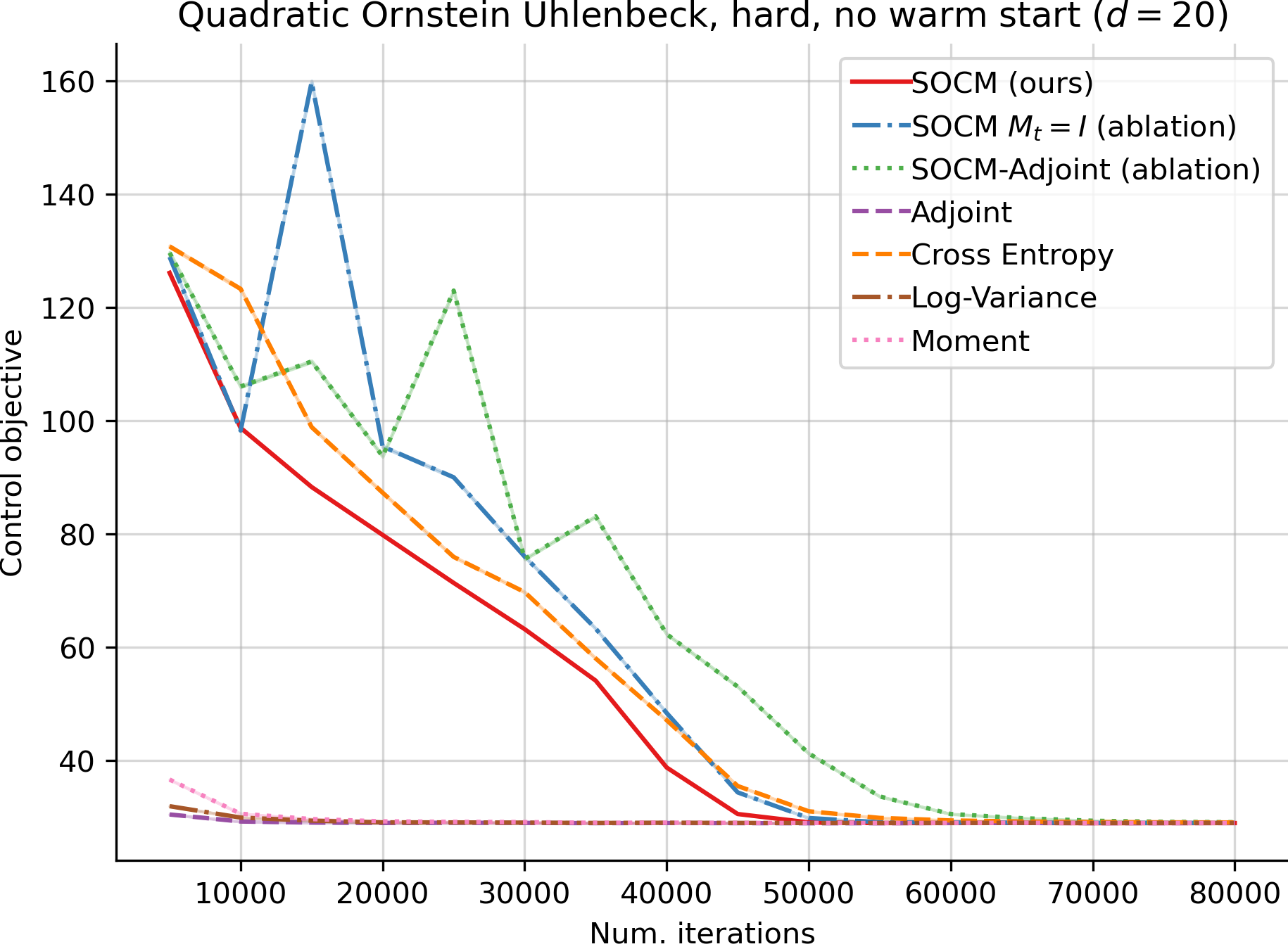}
    \\
    \includegraphics[width=0.48\textwidth]{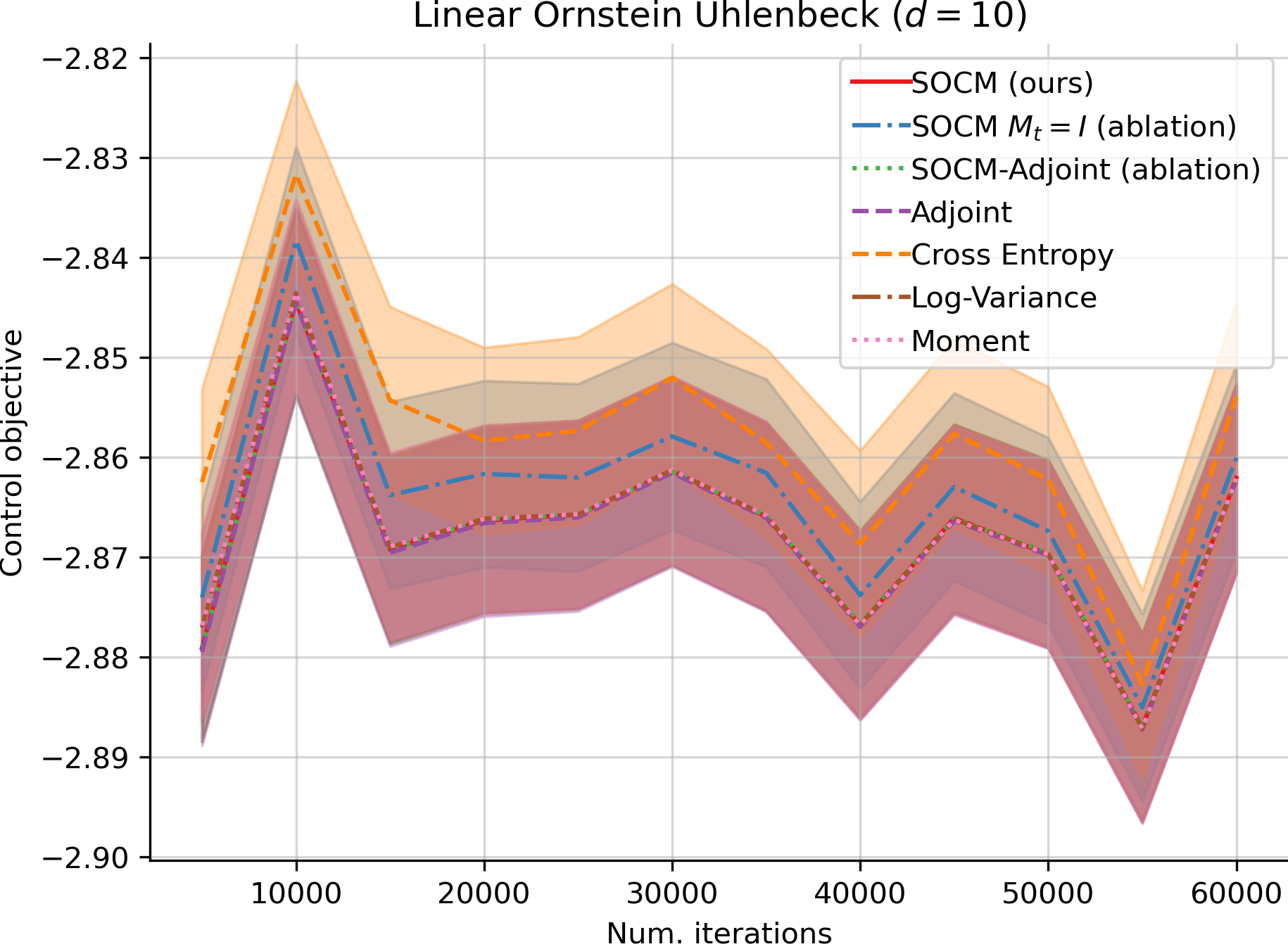}
    \includegraphics[width=0.48\textwidth]{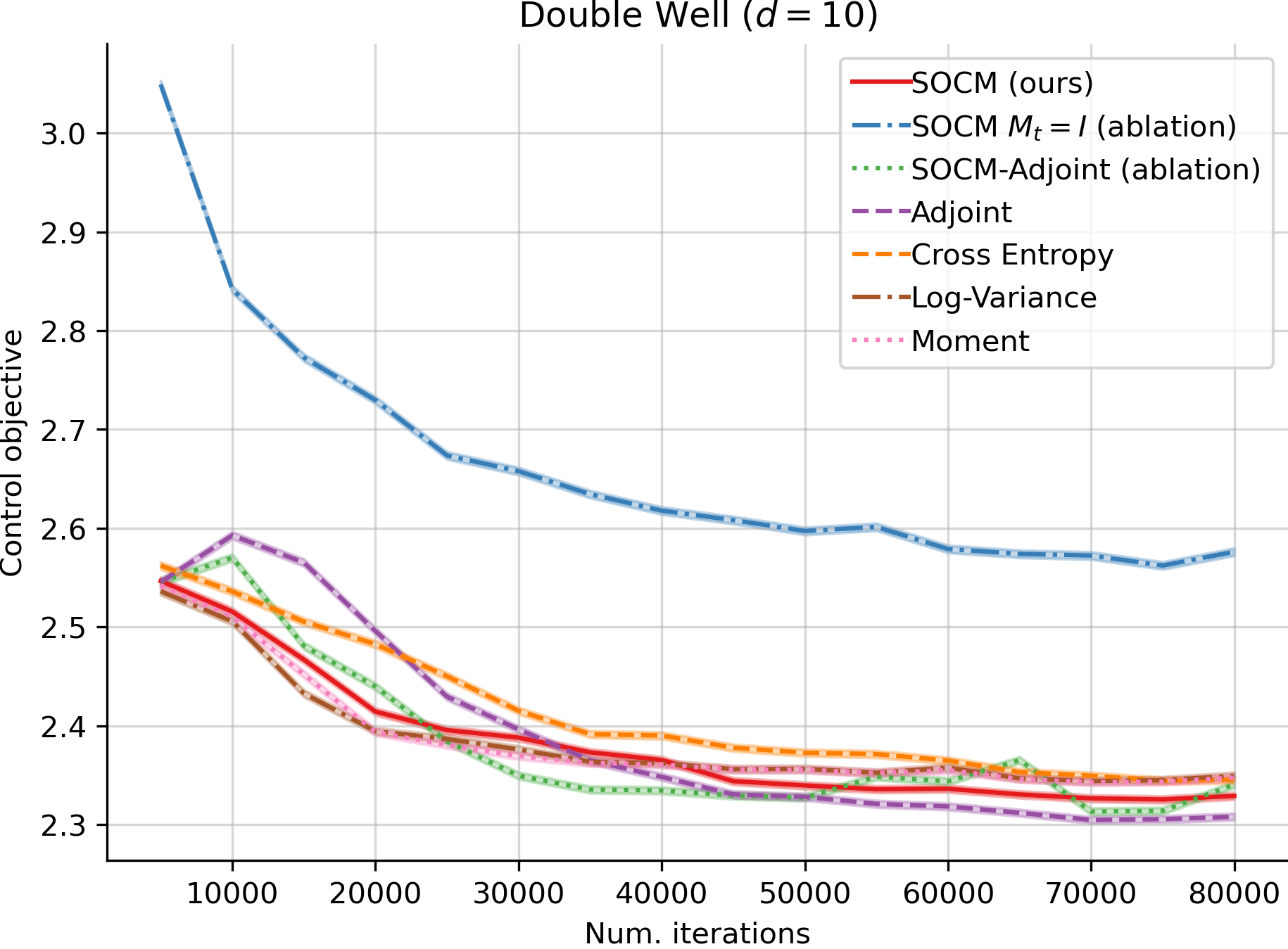}
    \caption{Plots of the control objective for the four settings.}
    \label{fig:control_obj_all}
\end{figure}

\begin{figure}[h]
    \centering
    \includegraphics[width=0.48\textwidth]{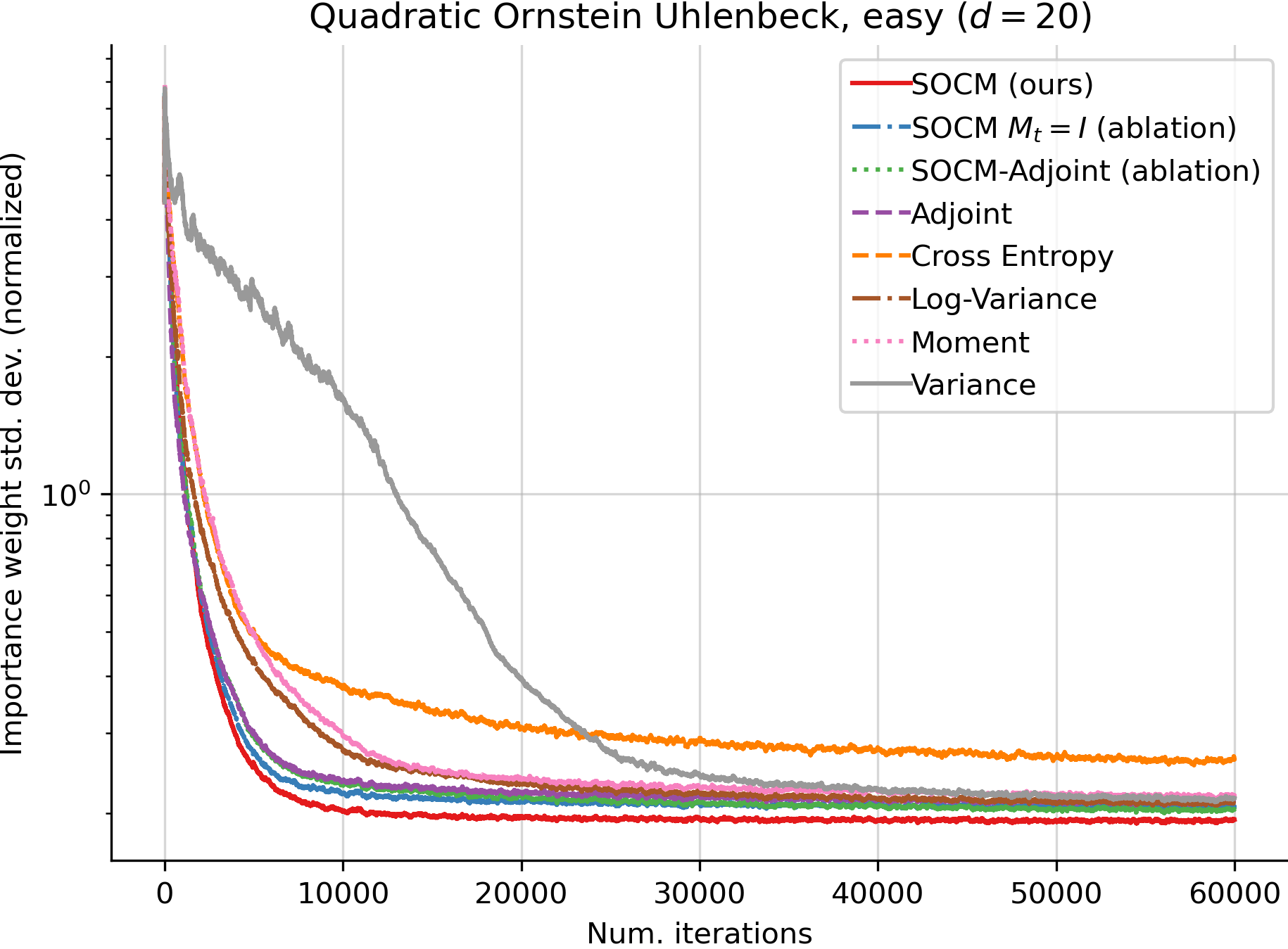}
    \includegraphics[width=0.49\textwidth]{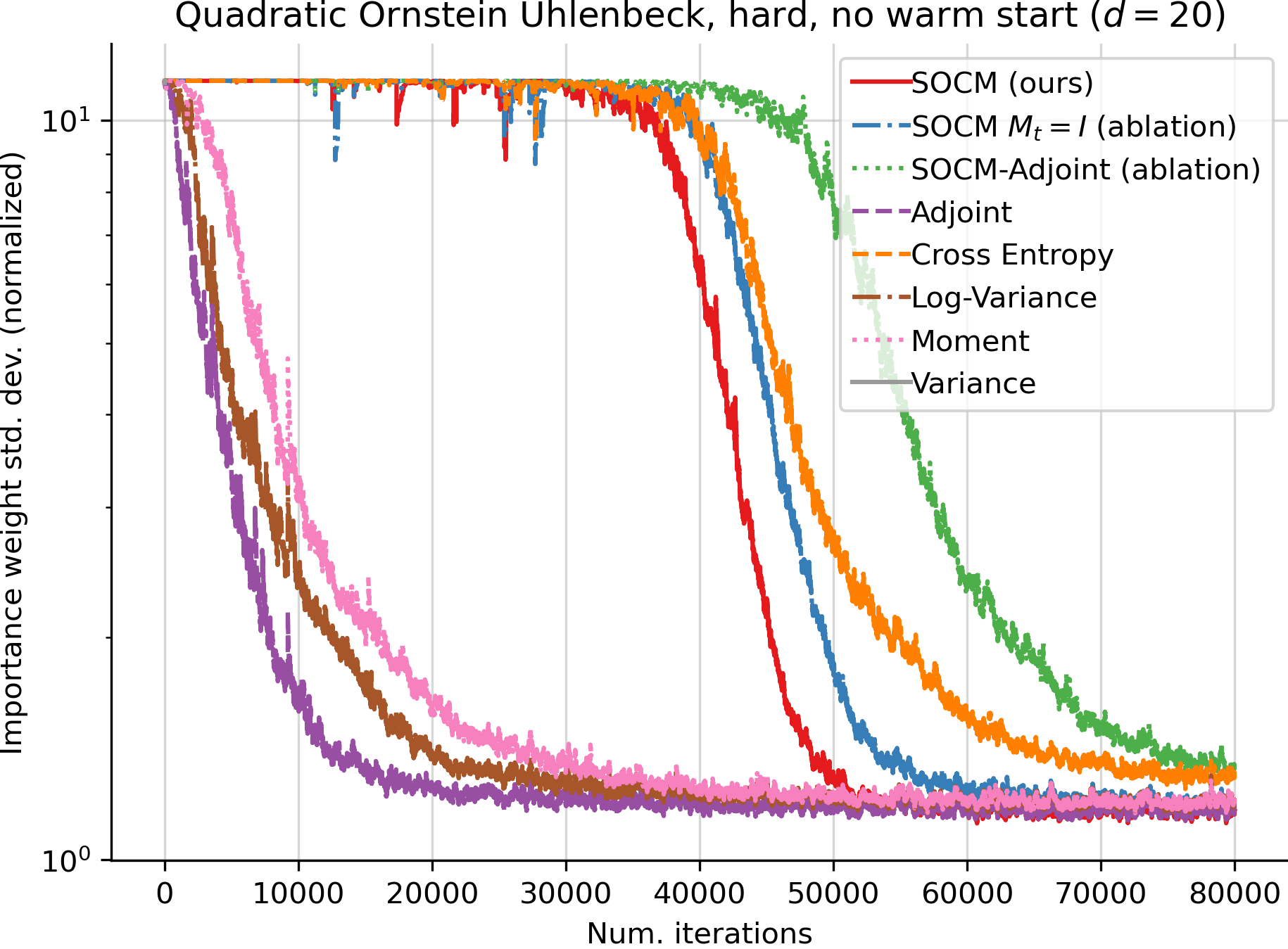}
    \\
    \includegraphics[width=0.48\textwidth]{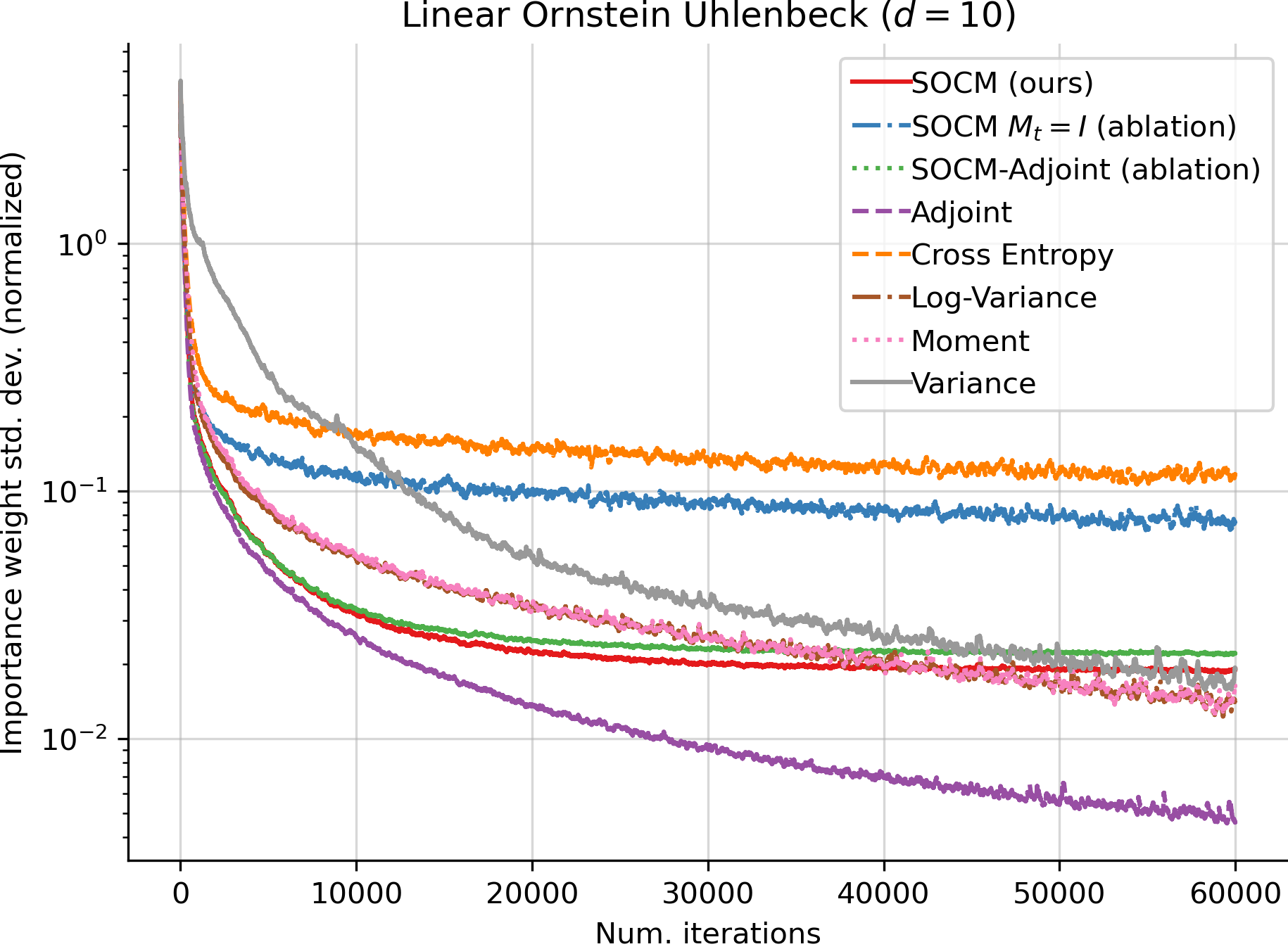}
    \includegraphics[width=0.48\textwidth]{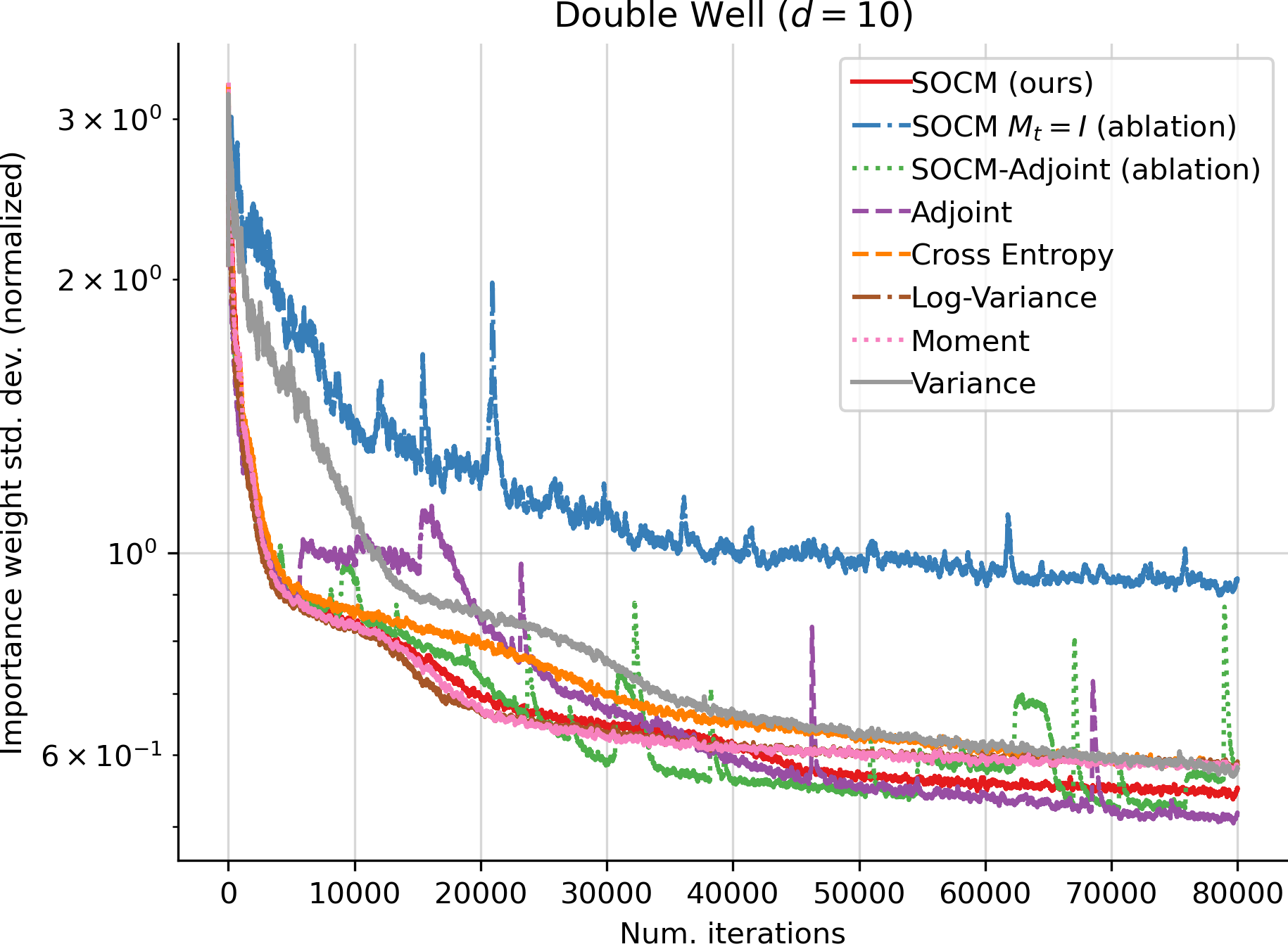}
    \caption{Plots of the normalized standard deviation of the importance weights: $\sqrt{\mathrm{Var}[\alpha(u,X^u,B)]}/\mathbb{E}[\alpha(u,X^u,B)]$.}
    \label{fig:variance_alpha_all}
\end{figure}

\begin{figure}[h]
    \centering
    \includegraphics[width=0.48\textwidth]{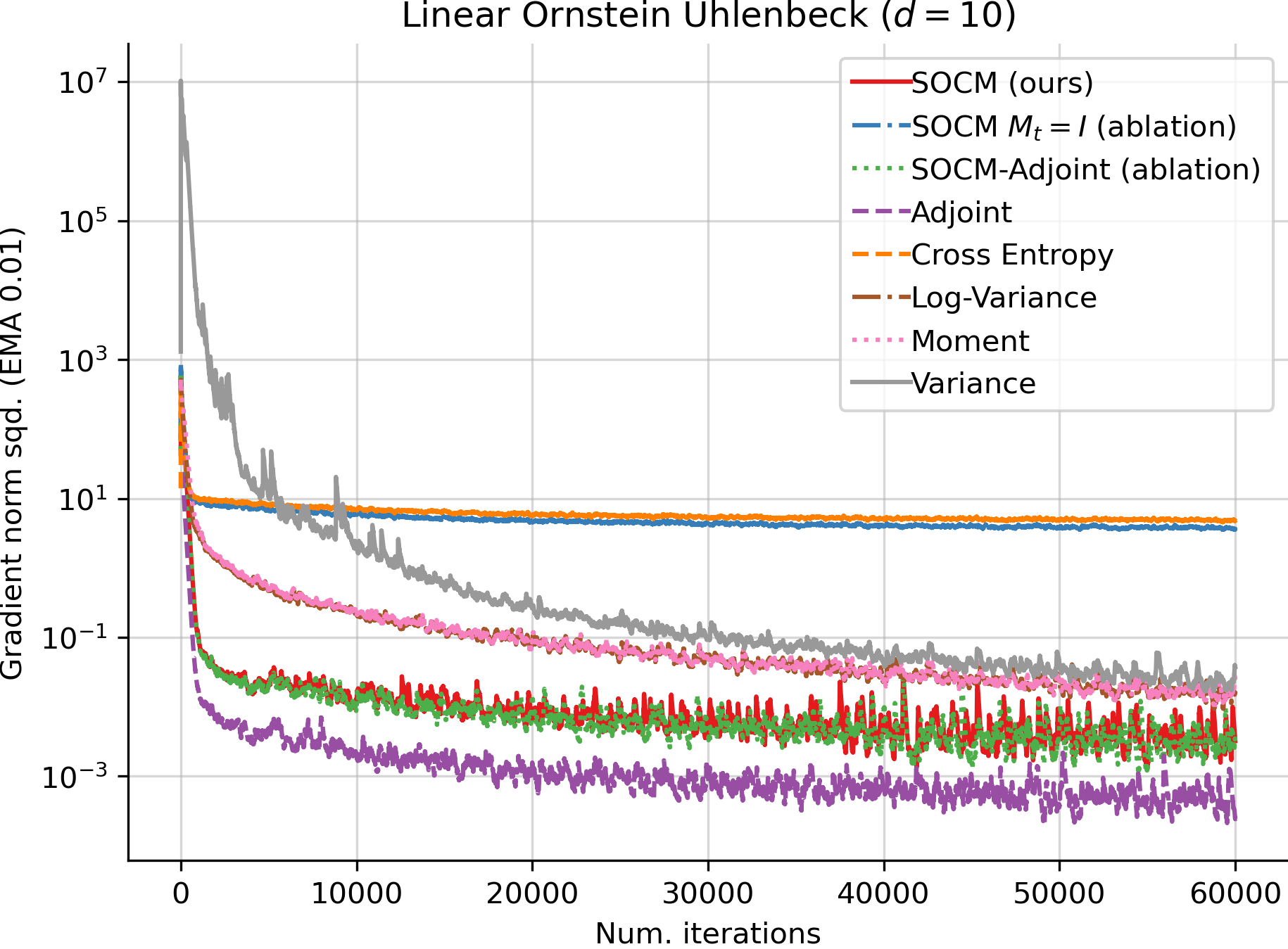}
    \includegraphics[width=0.48\textwidth]{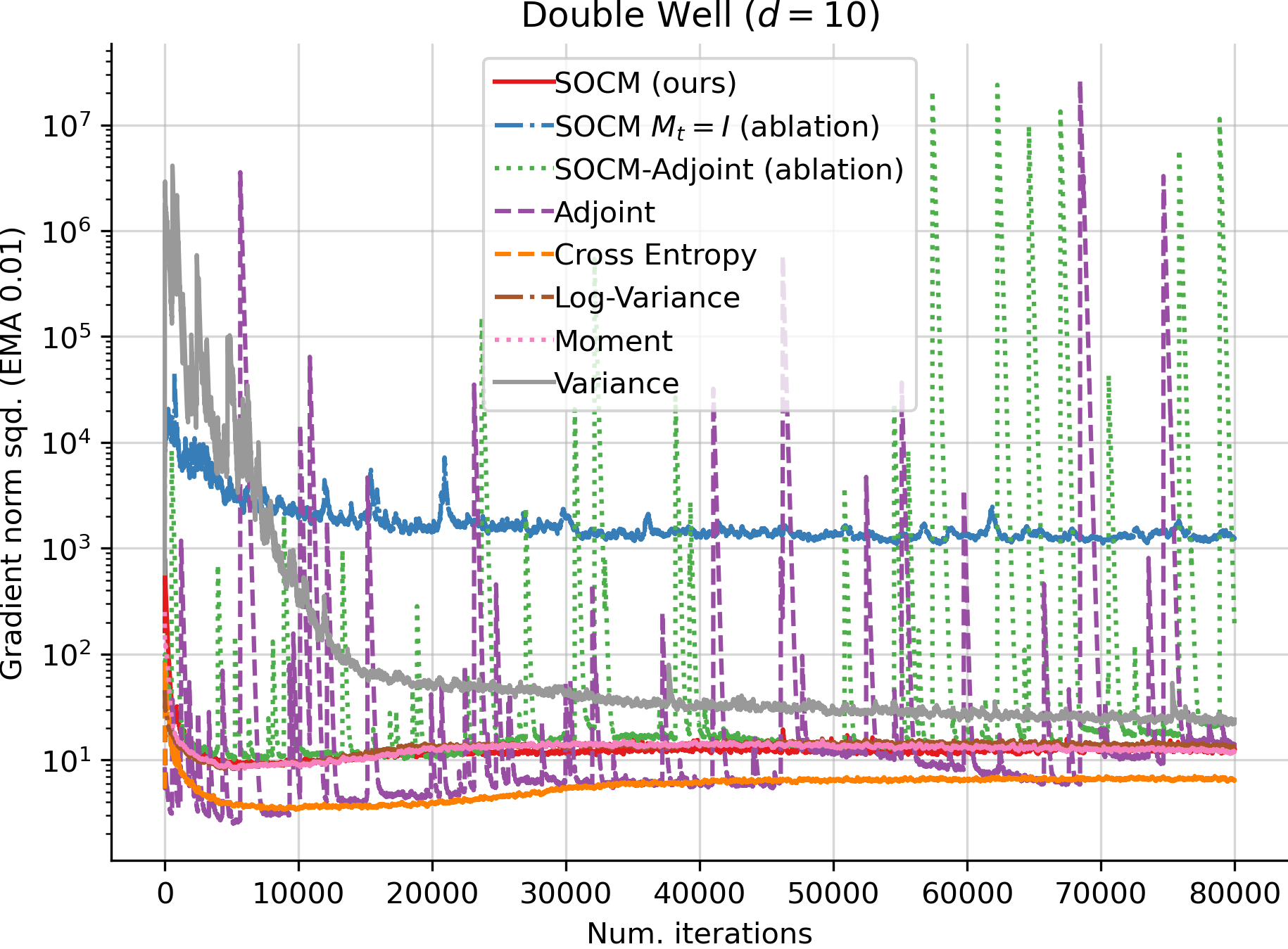}
    \caption{Plots of the norm squared of the gradient for the \textsc{Linear Ornstein Uhlenbeck} and \textsc{Double Well} settings.}
    \label{fig:grad_norm_squared_ou_linear_double_well}
\end{figure}

\begin{figure}[h]
    \centering
    \includegraphics[width=0.49\textwidth]{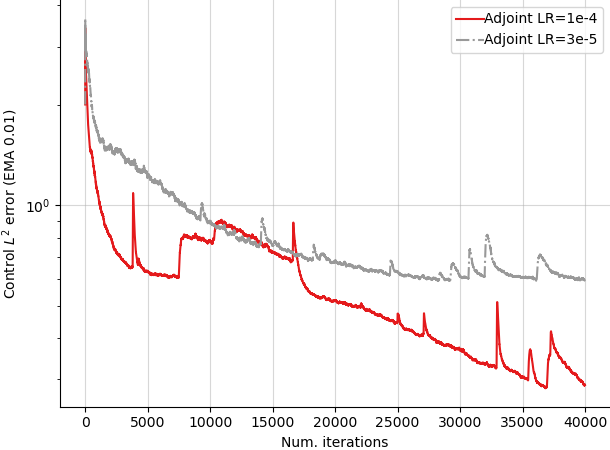}
    \includegraphics[width=0.49\textwidth]{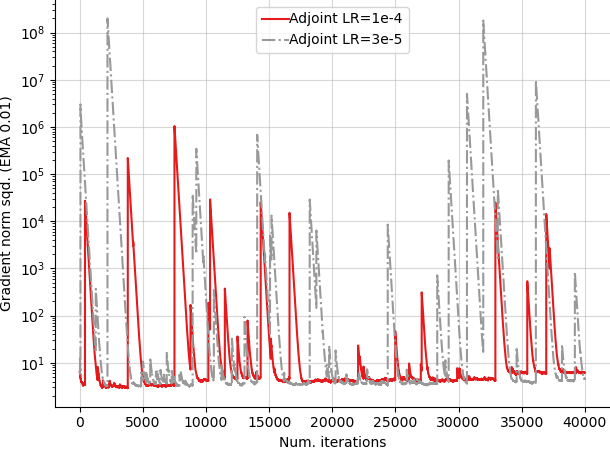}
    \caption{Plots of the control $L^2$ error and the norm squared of the gradient for the adjoint method on \textsc{Double Well}, for two different values of the Adam learning rate. The instabilities of the adjoint method persist for small learning rates, signaling an inherent issue with the loss.}
    \label{fig:double_well_l2_error}
\end{figure}

\begin{figure}[h]
    \centering
    \includegraphics[width=0.49\textwidth]{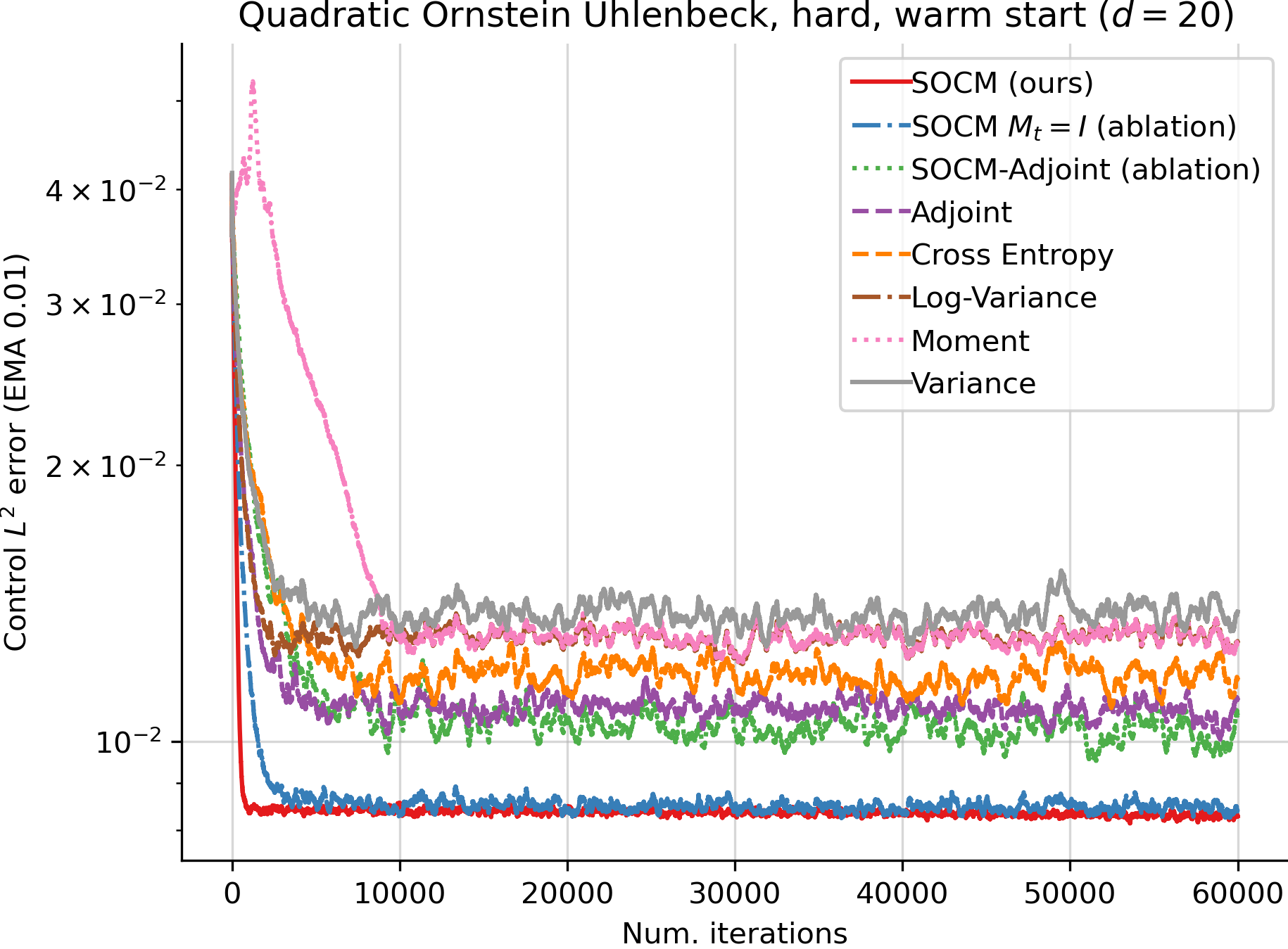}
    \includegraphics[width=0.49\textwidth]{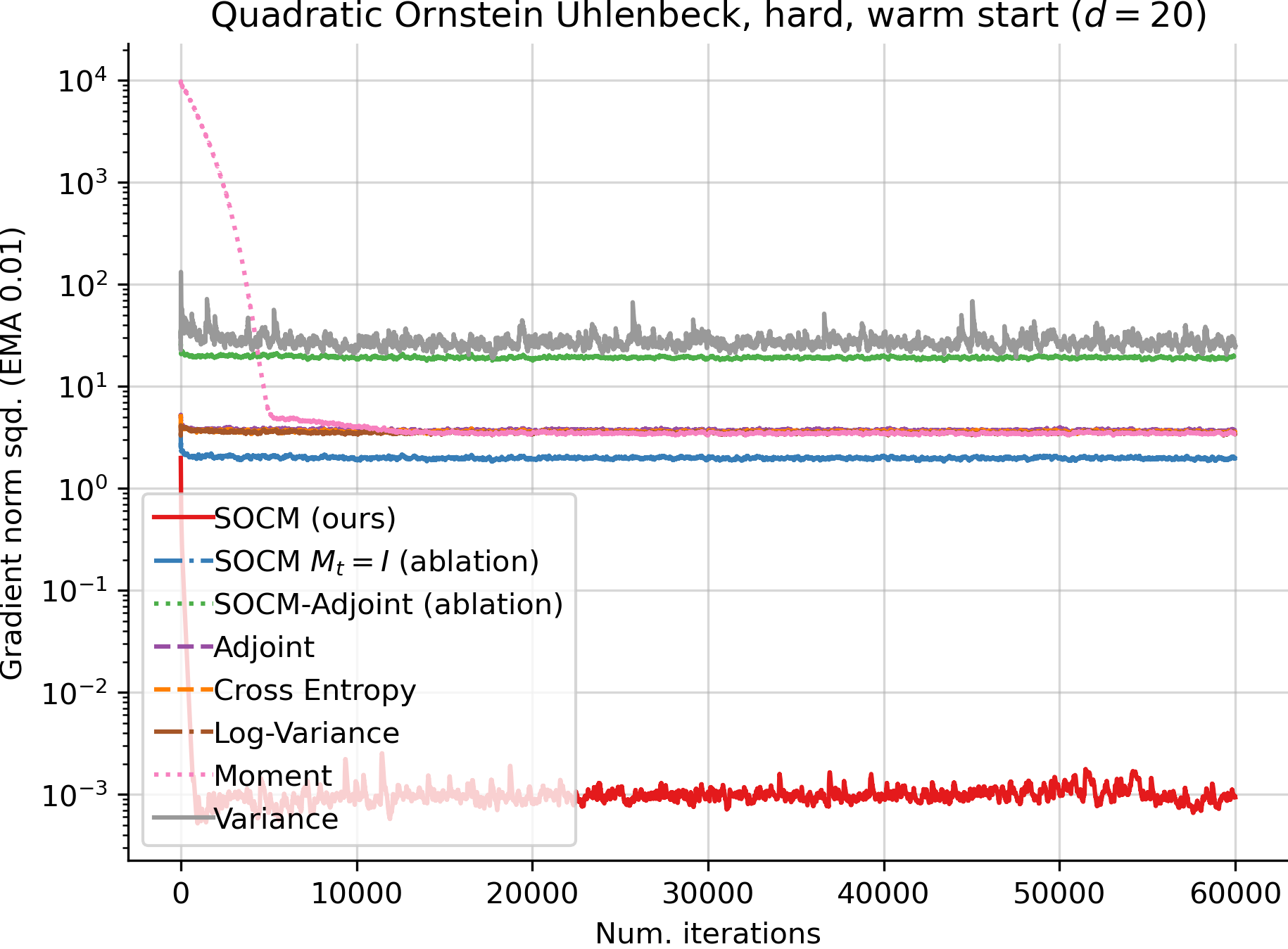}
    \includegraphics[width=0.48\textwidth]{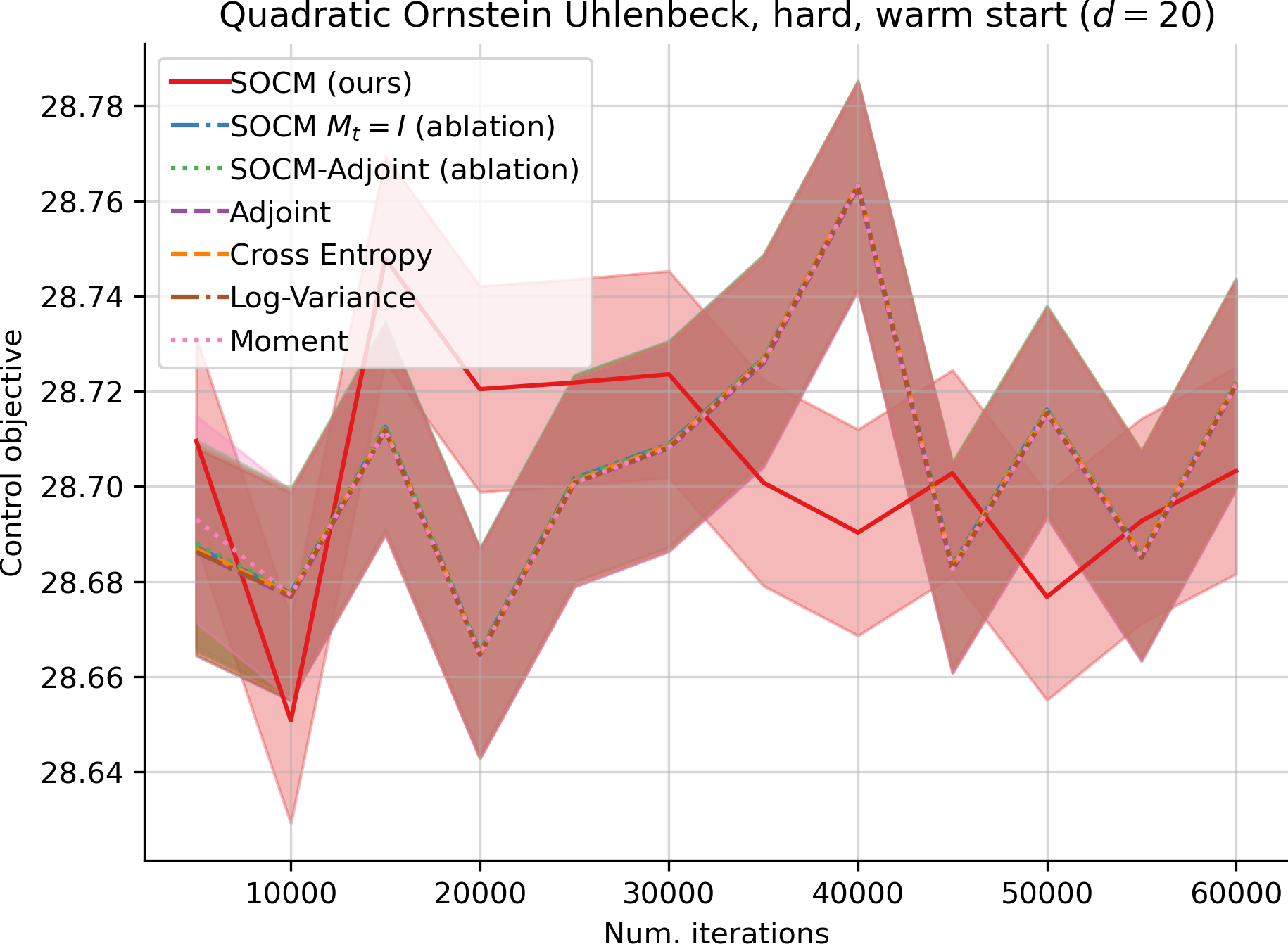}
    \includegraphics[width=0.48\textwidth]{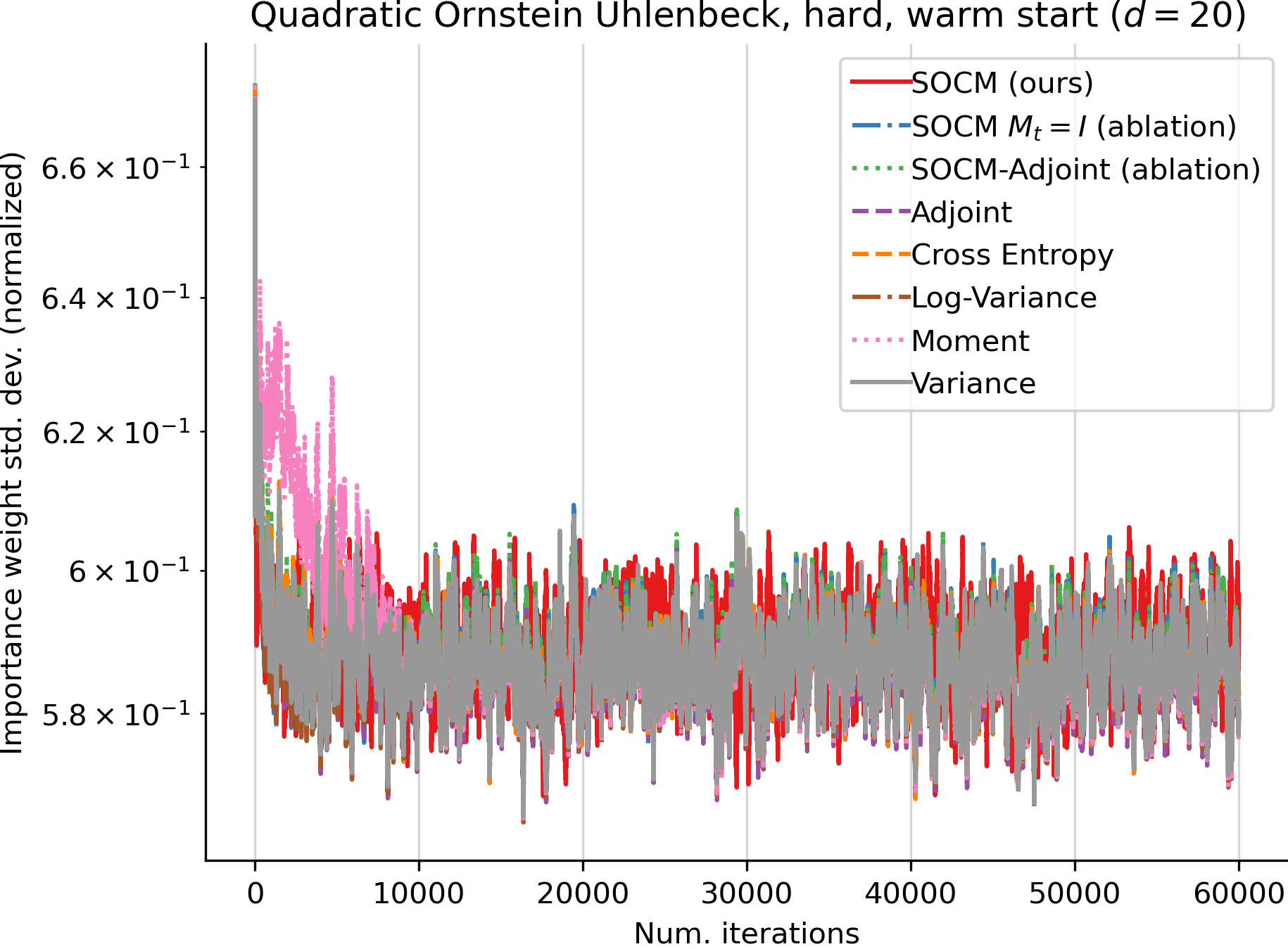}
    \caption{Plots of the control $L^2$ error, the norm squared of the gradient, and the control objective for the \textsc{Quadratic Ornstein-Uhlenbeck (hard)} setting, without using warm-start.}
    \label{fig:hard_no_ws}
\end{figure}

\begin{figure}[h]
    \centering
    \includegraphics[width=0.48\textwidth]{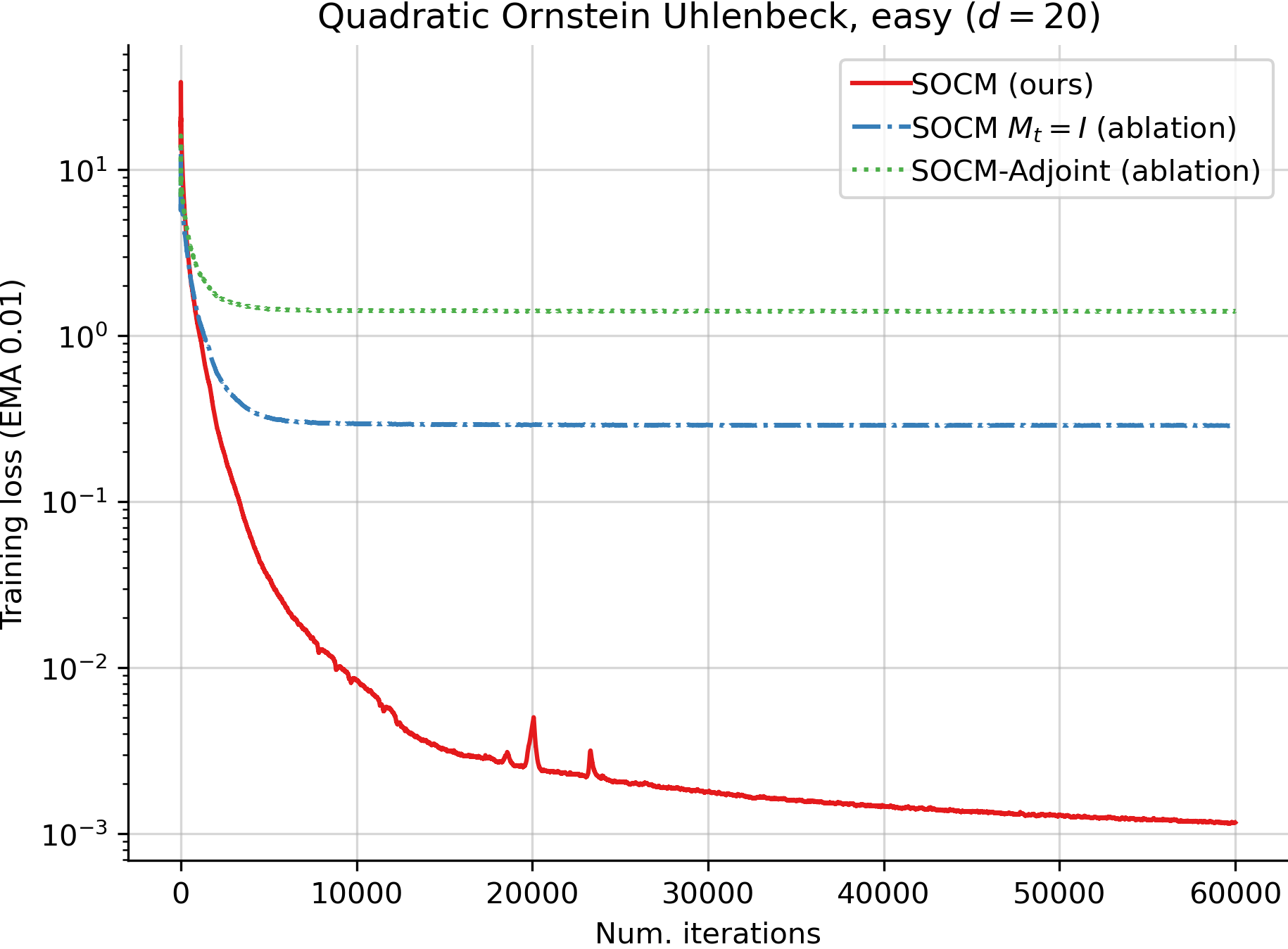}
    \includegraphics[width=0.48\textwidth]{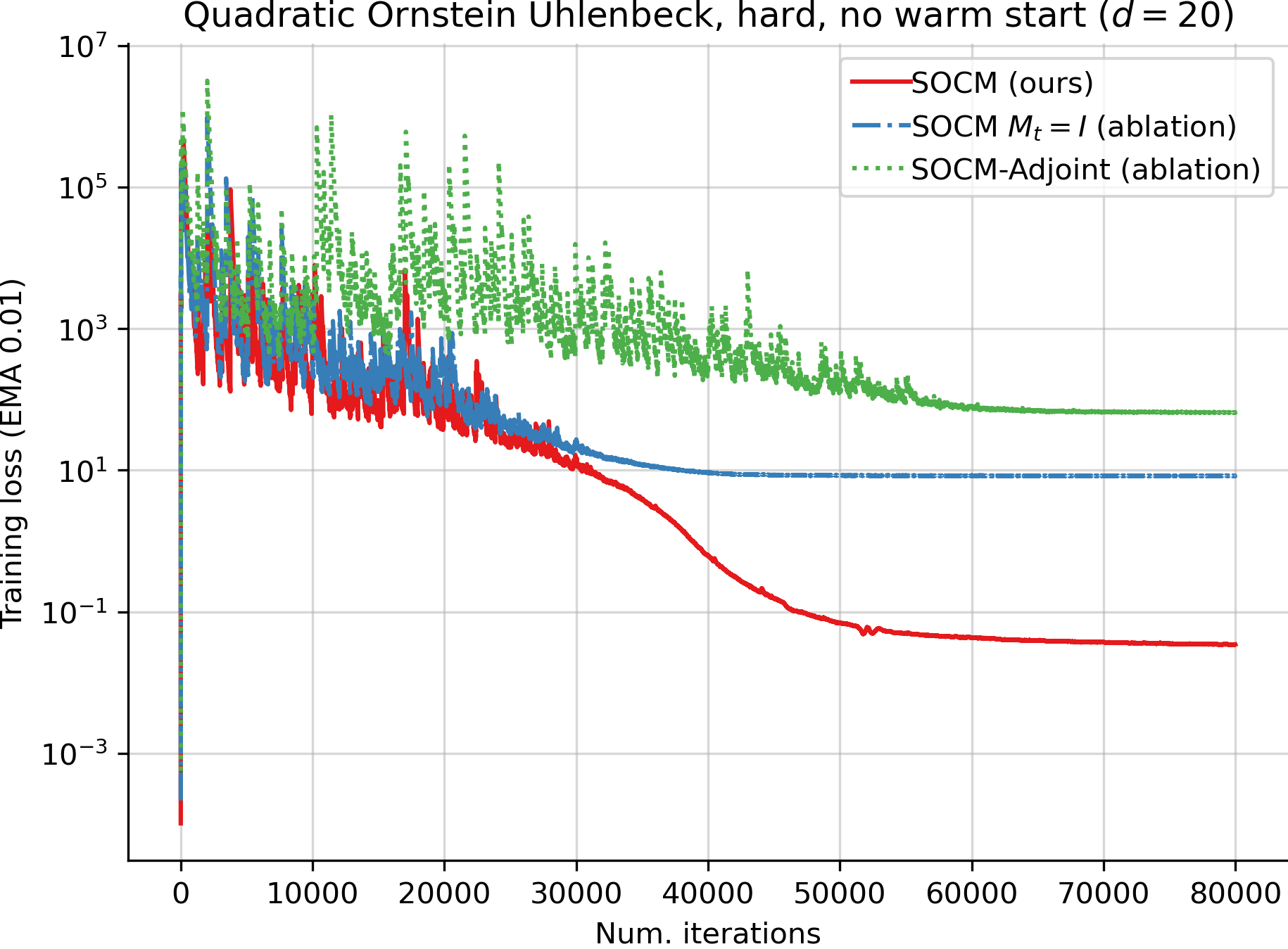}
    \\
    \includegraphics[width=0.48\textwidth]{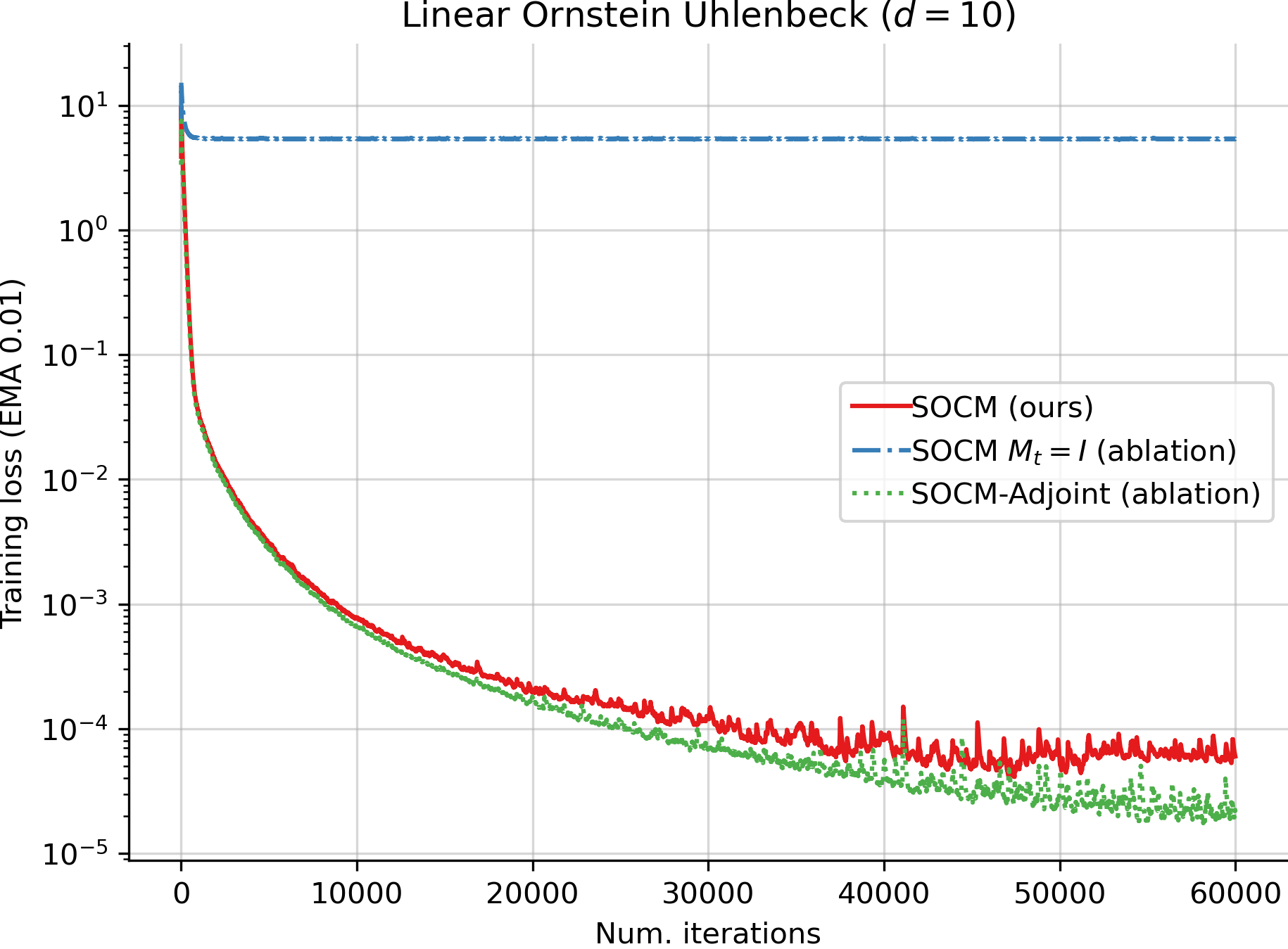}
    \includegraphics[width=0.48\textwidth]{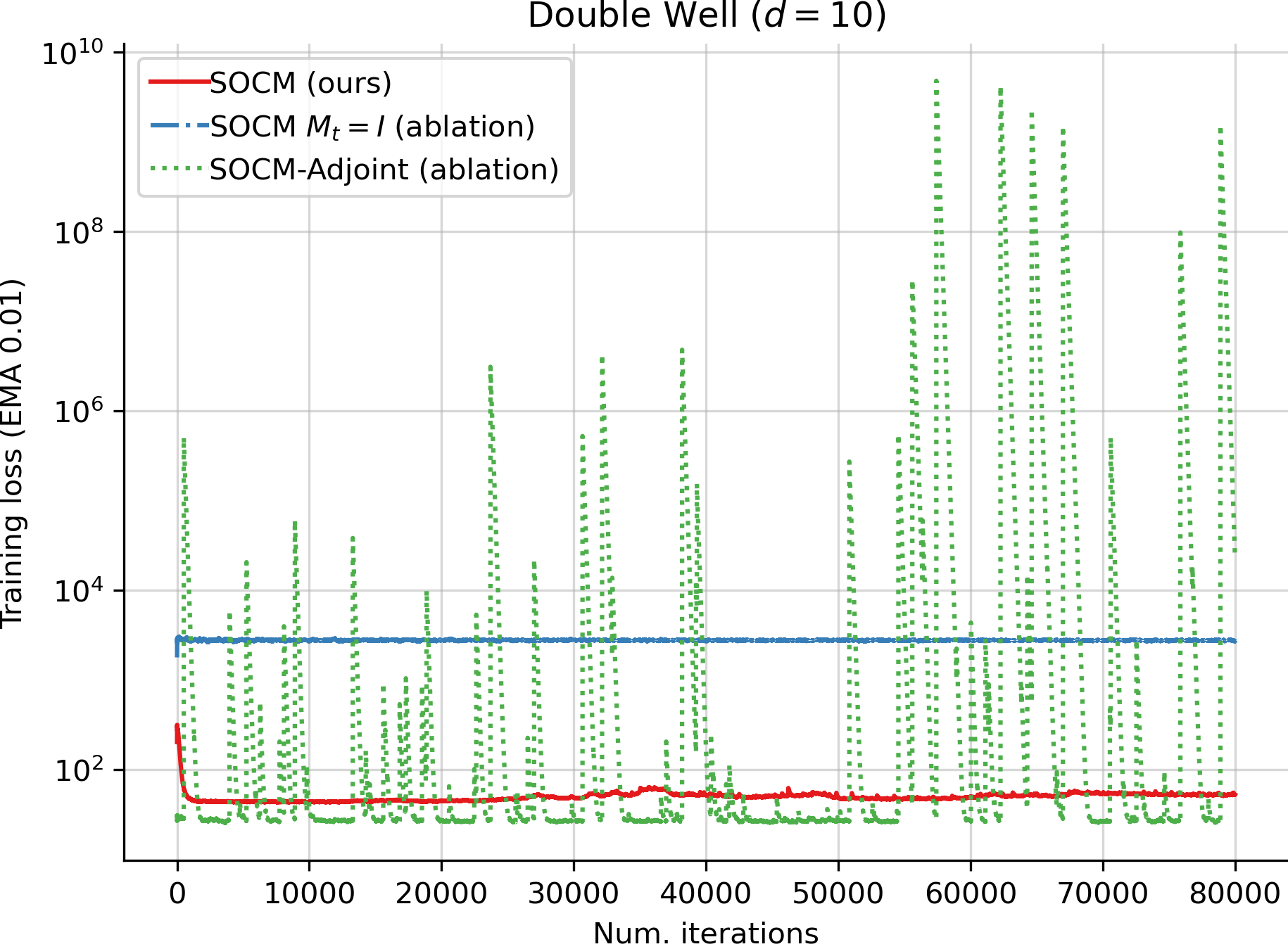}
    \caption{Plots of the training loss for SOCM and its two ablations: SOCM with constant $M_t = I$, and SOCM-Adjoint.}
    \label{fig:training_loss_SOCM}
\end{figure}

\end{document}